\newcommand{\I}{\mathbb{I}}
\newcommand{\W}{\mathbf{W}}
\newcommand{\INT}{\mathbb{Z}}
\newcommand{\RAT}{\mathbb{Q}}
\newcommand{\GG}{\mathbb{G}}
\newcommand{\A}{\mathcal {A}}
\newcommand{\V}{\mathcal {V}}
\newcommand{\ES}{\mathscr{S}}
\newcommand{\dr}{\mathrm{dR}}
\newcommand{\cris}{\mathrm{cris}}
\newcommand{\sdr}{s_{\dr}}
\newcommand{\EH}{\mathcal {H}}
\newcommand{\G}{\mathcal {G}}
\newcommand{\R}{\mathcal {R}}
\newcommand{\M}{\mathcal {M}}
\newcommand{\bM}{\mathbb {M}}
\newcommand{\intG}{G_{\mathbb{Z}_p}}
\newcommand{\inttG}{G_{\mathbb{Z}_{(p)}}}
\newcommand{\intV}{V_{\mathbb{Z}_p}}
\newcommand{\IIsom}{\mathbf{Isom}}
\newcommand{\Isom}{\mathrm{Isom}}
\newcommand{\AAut}{\mathbf{Aut}}
\newcommand{\Aut}{\mathrm{Aut}}
\newcommand{\Hom}{\mathrm{Hom}}
\newcommand{\Sh}{\mathrm{Sh}}
\newcommand{\HH}{\mathrm{H}}
\newcommand{\Hdr}{\mathrm{H}^1_{\mathrm{dR}}}
\newcommand{\lin}{\mathrm{lin}}
\newcommand{\GL}{\mathrm{GL}}
\newcommand{\GSp}{\mathrm{GSp}}
\newcommand{\Dieu}{\mathbb{D}}
\newcommand{\cent}{\mathrm{Ct}}
\newcommand{\newt}{\mathrm{Nt}}
\newcommand{\sigmu}{\mu^\sigma}
\newcommand{\RRZ}{\mathcal{R}\mathcal{Z}}
\newcommand{\RZ}{\mathrm{RZ}}
\newcommand{\Nil}{\mathrm{Nil}}
\newcommand{\ANil}{\mathrm{ANil}}
\newcommand{\nil}{\mathrm{nil}}
\newcommand{\fsm}{\mathrm{fsm}}
\newcommand{\Spf}{\mathrm{Spf}}
\DeclareMathOperator{\Spec}{\mathrm{Spec}}
\DeclareMathOperator{\Fil}{\mathrm{Fil}}
\begin{document}
\newtheorem{theorem}[subsubsection]{Theorem}
\newtheorem{lemma}[subsubsection]{Lemma}
\newtheorem{proposition}[subsubsection]{Proposition}
\newtheorem{corollary}[subsubsection]{Corollary}
\theoremstyle{definition}
\newtheorem{definition}[subsubsection]{Definition}
\theoremstyle{definition}
\newtheorem{construction}[subsubsection]{Construction}
\theoremstyle{definition}
\newtheorem{notations}[subsubsection]{Notations}
\theoremstyle{definition}
\newtheorem{asp}[subsubsection]{Assumption}
\theoremstyle{definition}
\newtheorem{set}[subsubsection]{Setting}
\theoremstyle{remark}
\newtheorem{remark}[subsubsection]{Remark}
\theoremstyle{remark}
\newtheorem{example}[subsubsection]{Example}
\theoremstyle{plain}
\newtheorem{introth}[section]{Theorem}

\makeatletter
\newenvironment{subeqn}{\refstepcounter{subsubsection}
$$}{\leqno{\rm(\thesubsubsection)}$$\global\@ignoretrue}
\makeatother
\author{Chao Zhang}

\subjclass[2010]{14G35}
\address{
Yau Mathematical Sciences Center, Tsinghua University, Beijing, 100084, China.}

\email{czhang@math.tsinghua.edu.cn}

\title[Level $m$ strata and foliations]{Stratifications and foliations for good reductions of Shimura varieties of Hodge type}

\maketitle \setcounter{section}{-1} \setcounter{tocdepth}{2}
\begin{abstract}
Level $m$-stratifications on PEL Shimura varieties are defined and studied by Wedhorn using BT-$m$s with PEL structure, and then by Vasiu for general Hodge type Shimura varieties using Shimura $F$-crystals. The theory of foliations is established by Oort for Siegel modular varieties, and by Mantovan for PEL Shimura varieties. It plays an important role in Hamacher's work to compute the dimension of Newton strata of PEL Shimura varieties.
 
We study level $m$ stratifications on good reductions at $p>2$ of Shimura varieties of Hodge type by constructing certain torsors together with equivariant morphisms, and relating them to truncated displays. We then use the results obtained to extend the theory of foliations to these reductions. As a consequence, combined with results of Nie and Zhu, we get a dimension formula for Newton strata.
\end{abstract}
\tableofcontents
\newpage
\section[Introduction]{Introduction}

The theory of foliations is established by Oort in \cite{foliation-Oort} for Siegel modular varieties, and by Mantovan in \cite{coho of PEL} for good reductions of PEL shimura varieties. The theory roughly says that a Newton stratum is an ``almost product'' of a central leaf with an isogeny leaf. The goal of this paper is to extend such a theory to good reductions at char $p\geq 3$ of Shimura varieties of Hodge type. As a consequence, thanks to results and ideas in \cite{geo of newt PEL}, \cite{Nie Sian}, \cite{VW} and \cite{zhu-aff gras in mixed char}, we get a dimension formula for Newton strata.

The first step is to understand level $m$-stratifications on these reductions. They are defined and studied by Wedhorn in \cite{dimw} for those of PEL type using BT-$m$s with PEL structure, and by Vasiu in \cite{levmstra} for general Hodge type Shimura varieties using Shimura $F$-crystals.

We introduce a new way to understand level $m$-stratifications. It relies on explicit constructions of some torsors on the reductions, and it is closely related to truncated displays in \cite{smooth truncated display}. Our construction uses group schemes defined in \cite{levmstra}, and is greatly influenced by ideas and methods there. We refer to section 2 and 3 for details. We also consider \emph{classical level} $m$-\emph{stratifications} defined by geometric isomorphism types of BT-$m$s. We summarize the main results as follows.
\begin{introth}
Let $\ES_0$ be the good reduction of a Shimura variety of Hodge type.

(1) Each level $m$ stratum $\ES_0^s$ is a smooth locally closed subscheme of $\ES_{0,\overline{\kappa}}$. The closure $\overline{\ES_0^s}$ is a union of level $m$ strata, and $\ES_0^s\hookrightarrow\overline{\ES_0^s}$ is an affine immersion.

(2) There is a quasi-finite fppf cover $T=\Spec A\rightarrow\ES_0^s$, such that the BT-$m$ with additional structure is constant.

(3) Each level $m$ stratum is open and closed in its classical level $m$ stratum.
\end{introth}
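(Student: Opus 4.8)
The plan is to derive all three assertions from the package built in Sections 2 and 3: a smooth affine $\overline{\kappa}$-group scheme $\G_m$ (a level-$m$ truncation of the relevant integral-model group with its Frobenius datum), a $\overline{\kappa}$-scheme $\bM_m$ of finite type carrying a $\G_m$-action with only finitely many orbits, a $\G_m$-torsor $\mathcal{P}_m\to\ES_{0,\overline{\kappa}}$, and a \emph{smooth} $\G_m$-equivariant morphism $\zeta_m\colon\mathcal{P}_m\to\bM_m$ (closely related to the truncated displays of \cite{smooth truncated display}), normalised so that $\ES_0^s$ is the image of $\zeta_m^{-1}(O_s)$ for $O_s\subseteq\bM_m$ the orbit attached to $s$; equivalently $\ES_0^s$ is the fibre over $s$ of the induced smooth morphism $\ES_{0,\overline{\kappa}}\to[\G_m\backslash\bM_m]$. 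Granting this, the soft parts of (1) are immediate: each $O_s$ is locally closed in $\bM_m$ by Chevalley's theorem, so $\ES_0^s$ is locally closed; $\zeta_m^{-1}(O_s)\to O_s$ is smooth by base change and $O_s=\G_m/S_s$ (with $S_s$ the stabiliser of a base point) is smooth over $\overline{\kappa}$ as an fppf quotient of a smooth group, so $\zeta_m^{-1}(O_s)$ is smooth over $\overline{\kappa}$, whence $\ES_0^s$ is smooth over $\overline{\kappa}$ by fppf descent along the $\G_m$-torsor $\zeta_m^{-1}(O_s)\to\ES_0^s$; and $\overline{O_s}$ is a finite union of orbits (being $\G_m$-stable), so, the stratification morphism being flat and open, $\overline{\ES_0^s}$ equals its preimage $\zeta_m^{-1}(\overline{O_s})/\G_m$, a union of level $m$ strata.

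The one genuinely non-formal point in (1) is that $\ES_0^s\hookrightarrow\overline{\ES_0^s}$ is an affine immersion. Here the plan is to reduce, by base change along $\zeta_m$ and fppf descent of affineness, to the statement that the open orbit $O_s\hookrightarrow\overline{O_s}$ is affine, and to prove that by producing a line bundle on $\overline{O_s}$ together with a section vanishing exactly along the boundary $\overline{O_s}\setminus O_s$ (which is of pure codimension one): given such a section, $O_s$ is a basic open subset inside each affine open of $\overline{O_s}$, hence affine over it. Constructing these ``level $m$ Hasse invariants'' is the step I expect to be the main obstacle; I would follow the group-theoretic construction of Hasse invariants for zip data (Koskivirta, Wedhorn, Goldring), adapted to the group schemes of \cite{levmstra} and to truncated displays, rather than trying to build them directly on $\overline{\ES_0^s}$ as sections of a power of the Hodge line bundle.

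For (2), by construction $\ES_0^s$ is exactly the locus where the universal truncated display with additional structure has a single geometric isomorphism type, that of an object $\bJ_s$ over $\overline{\kappa}$. I would form $T_0=\IIsom(\mathrm{univ},\bJ_s)$, the fppf sheaf of structure-preserving isomorphisms on $\ES_0^s$; it is a torsor under $\AAut(\bJ_s)$, and the latter is a \emph{finite} $\overline{\kappa}$-group scheme (automorphism group schemes of truncated displays, and of $\mathrm{BT}_m$'s with additional structure, are finite --- part of the formalism of \cite{smooth truncated display}). Hence $T_0\to\ES_0^s$ is finite, faithfully flat and of finite presentation, and the universal object becomes constant on $T_0$; taking $T=\Spec A$ to be the disjoint union of finitely many affine opens covering $T_0$ yields the desired quasi-finite fppf cover.

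Finally (3). Sections 2 and 3 also yield the forgetful zip datum $(\widetilde{\G}_m,\widetilde{\bM}_m,\widetilde{\mathcal{P}}_m,\widetilde{\zeta}_m)$ classifying $\mathrm{BT}_m$'s without additional structure, together with a morphism of zip data inducing $[\G_m\backslash\bM_m]\to[\widetilde{\G}_m\backslash\widetilde{\bM}_m]$ under which the level $m$ stratification refines the classical one; so each classical level $m$ stratum is a finite union of level $m$ strata, each locally closed in $\ES_{0,\overline{\kappa}}$ by (1). It then suffices to show that these level $m$ strata form an antichain for the closure order, for then each is closed, and --- the union being finite --- also open, in the classical stratum. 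By the closure relation in (1) this reduces to excluding $s'\prec s$ for two level $m$ strata of the same classical type, which I would obtain from the structure of the group schemes of \cite{levmstra} (equivalently, from the fact that the dimension of a level $m$ stratum depends only on its classical type, so that the two strata in question are locally closed of equal dimension, hence incomparable); this is the global counterpart of the local statement that the finite automorphism group scheme of a fixed $\mathrm{BT}_m$ acts on the scheme of its level-$m$ $G$-structures with open, hence clopen, orbits.
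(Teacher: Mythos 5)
Your framework for (1) matches the paper's: the paper constructs a $G'_m$-torsor $\I'_m$ over a Zariski cover $U_0$ of $\ES_0$ together with a $G'_m$-equivariant morphism $\zeta_m\colon\I'_m\to G_m$, proves $\zeta_m$ is smooth (Theorem \ref{smoothness}), and the soft parts of (1) fall out exactly as you say. For the affine immersion you correctly reduce to affineness of the orbit $O_s\hookrightarrow\overline{O_s}$, but you propose to prove that by constructing level $m$ Hasse invariants; the paper instead simply cites the purity theorem of Wedhorn--Yatsyshyn (\cite{purity} Theorem 2.2) for the orbit at level $1$ and lifts it to level $m$ via the argument of \cite{purity-N-V-W} Lemma 5.2. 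Your route might also work, but you rightly flag it as speculative, whereas the paper's is complete with existing references.

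For (2) there is a genuine gap. You assert that $\AAut(\bJ_s)$, the automorphism group scheme of a single truncated display (or BT-$m$ with $G$-structure) over $\overline{\kappa}$, is finite, and cite the formalism of \cite{smooth truncated display} for this. That is not what Lau proves: his Theorem 4.7 (quoted here as Theorem \ref{Lau's main results}(3)) says only that $\AAut^o(X)=\ker\bigl(\AAut(X)\to\AAut(\Phi_m(X))\bigr)$, the kernel measuring the difference between BT-$m$ automorphisms and truncated-display automorphisms, is finite. The full $\AAut(\Phi_m(X))$ is in general a positive-dimensional affine group scheme (already for $m=1$, e.g.\ $\AAut(\mu_p\oplus\INT/p)\supseteq\GG_m$), so your $\AAut(\bJ_s)$-torsor $T_0\to\ES_0^s$ is \emph{not} quasi-finite, and the proposal as written does not produce the cover claimed. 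The paper avoids this by working with the $G'_m$-torsor $\I'_m$ rather than the $\IIsom$-torsor: after an \'{e}tale trivialisation of $\I'_m$, one lands in the orbit $G^s_m\cong G'_m/H_g$, and the key nontrivial point (Proposition \ref{constancy theorem}) is that $(H_g)^0_{\mathrm{red}}$ is \emph{unipotent} (deduced from \cite{zipaddi} Theorem 8.1 at level $1$ and lifted), so the quotient $G'_m\to G'_m/(H_g)^0_{\mathrm{red}}$ has a section and $G'_m/(H_g)^0_{\mathrm{red}}\to G^s_m$ is finite flat; only then does the genuine finiteness of $\AAut^o$ enter, to pass from truncated displays back to BT-$m$s.

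For (3), your reduction to the claim that level $m$ strata in a fixed classical stratum are incomparable for the closure order is sound, but neither of your two justifications is secured. The ``local'' one rests on the same false finiteness of $\AAut$ of a BT-$m$. The dimension argument requires knowing that the dimension of a level $m$ stratum depends only on its classical type; this is not established in the paper, and it is not obvious (the paper does prove the strata are equi-dimensional, but gives no comparison across strata of the same classical type, and such a comparison is not a formal consequence of anything in Sections 2--3). The paper's actual proof of Proposition \ref{open-closed level m} is an adaptation of Oort's argument (\cite{foliation-Oort} Theorem 3.3): it uses the constancy result \emph{without} additional structure (Lemma \ref{constancy theorem without add str}) together with the finite-type automorphism scheme $\AAut_{\overline{\kappa}}(\A_x[p^m])$ and a spreading-out argument to show that if the generic point of an irreducible component of the classical stratum lies in a level $m$ stratum, then so does its whole closure; no dimension comparison is needed. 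You should replace your dimension argument with this direct specialisation argument, or else actually prove the dimension invariance you invoke.
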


By \cite{crysboud}, for $m$ big enough, all level $m$ strata are central leaves. Using the above theorem, as well results in \cite{geo of newt PEL}, \cite{LRKisin}, \cite{Nie Sian}, \cite{foliation-Oort} and \cite{Manin problem}, we can prove the followings.
\begin{introth}
Let $\ES_0$ be as above. Then

(1) Each central leaf $\ES_0^c$ is a smooth locally closed subscheme of $\ES_{0,\overline{\kappa}}$. The closure $\overline{\ES_0^c}$ is a union of central leaves, and $\ES_0^c\hookrightarrow\overline{\ES_0^c}$ is an affine immersion.

(2) Each central leaf is open and closed in its central leaf, and it is also closed in its Newton stratum.

(3) Central leaves in a Newton stratum $\ES_0^b$ are of the same dimension $\langle2\rho,\nu_G(b)\rangle$.
\end{introth}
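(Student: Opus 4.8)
The whole statement is obtained from the previous theorem by passing to large level $m$. By the crystalline boundedness principle \cite{crysboud} there is an $m_0$ such that for $m\geq m_0$ the isomorphism class of the $p$-divisible group with $G$-structure at any geometric point of $\ES_{0,\overline\kappa}$ is determined by that of its $p^m$-torsion; hence each level $m$ stratum is a central leaf, and since both families are partitions of $\ES_{0,\overline\kappa}$ the level $m$ stratification coincides with the partition into central leaves, and likewise for the classical variants. Fixing such an $m$, part (1) here is part (1) of the previous theorem, and the first assertion of (2) is its part (3) (read with ``classical central leaf''). It remains to prove the dimension statement (3) and that a central leaf is closed in its Newton stratum.

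\textbf{Proof of (3).} I would proceed in two steps. \emph{(i) Equidimensionality within a Newton stratum.} Given central leaves $\mathcal{C}_1,\mathcal{C}_2\subseteq\ES_0^b$ and geometric points $x_i\in\mathcal{C}_i$, the $p$-divisible groups with $G$-structure at $x_1$ and $x_2$ are quasi-isogenous; spreading a fixed quasi-isogeny out over the two (now constant) families and using the $p$-power Hecke correspondences on the Kisin integral model \cite{LRKisin} --- equivalently, the Rapoport--Zink uniformization of $\ES_0^b$ --- one builds, after passing to suitable étale neighbourhoods, a correspondence $\mathcal{C}_1\leftarrow\mathcal{D}\rightarrow\mathcal{C}_2$ with both legs finite and surjective, as in Oort's isogeny correspondences \cite{foliation-Oort}; hence all central leaves in $\ES_0^b$ have a common dimension $d$. \emph{(ii) The value of $d$.} It then suffices to compute $\dim\mathcal{C}$ for one convenient central leaf, which I would do via the group-theoretic description of central leaves in terms of the torsors and truncated displays of the preceding sections together with the deformation theory of the universal $p$-divisible group, invoking the dimension formulas for affine Deligne--Lusztig varieties of \cite{Nie Sian} and \cite{zhu-aff gras in mixed char} and the explicit $p$-divisible groups with prescribed isocrystal and endomorphisms of \cite{Manin problem}, as in \cite{geo of newt PEL}; this yields $d=\langle2\rho,\nu_G(b)\rangle$.

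\textbf{Closedness in (2).} Let $\mathcal{C}\subseteq\ES_0^b$ be a central leaf. By part (1) already proved, $\overline{\mathcal{C}}=\mathcal{C}\sqcup\bigsqcup_i\mathcal{C}_i$ with the $\mathcal{C}_i$ central leaves, and $\mathcal{C}$ is open and dense in $\overline{\mathcal{C}}$, so $\overline{\mathcal{C}}\setminus\mathcal{C}$ is closed in $\overline{\mathcal{C}}$ of dimension strictly less than $\dim\overline{\mathcal{C}}=\dim\mathcal{C}$. Each $\mathcal{C}_i$ lies in a single Newton stratum, whose Newton point is $\geq\nu_G(b)$ by Grothendieck--Katz specialization; if it equalled $\nu_G(b)$, i.e.\ $\mathcal{C}_i\subseteq\ES_0^b$, then (3) would force $\dim\mathcal{C}_i=\langle2\rho,\nu_G(b)\rangle=\dim\mathcal{C}$, contradicting $\mathcal{C}_i\subseteq\overline{\mathcal{C}}\setminus\mathcal{C}$. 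Hence no $\mathcal{C}_i$ meets $\ES_0^b$, so $\overline{\mathcal{C}}\cap\ES_0^b=\mathcal{C}$ and $\mathcal{C}$ is closed in $\ES_0^b$.

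\textbf{Main difficulty.} Parts (1) and (2) are then formal, so the real work is (3): producing the finite surjective correspondences between central leaves in the Hodge-type situation, where isogenies are not directly moduli-theoretic and one must exploit the Kisin model together with its Hecke action and étale-local structure; and, more delicately, pinning the resulting common dimension to the exact value $\langle2\rho,\nu_G(b)\rangle$ --- here the input from \cite{geo of newt PEL}, \cite{Nie Sian}, \cite{zhu-aff gras in mixed char} and \cite{Manin problem} is essential.
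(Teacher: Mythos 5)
Your overall architecture matches the paper: apply the crystalline boundedness principle to identify central leaves with level-$m$ strata for large $m$, read off part (1) and the first half of (2) from the level-$m$ theorem, prove equidimensionality within a Newton stratum by an isogeny correspondence built via Kisin's \cite{LRKisin} (1.4.4, 1.4.9), and then compute the common dimension group-theoretically. There are, however, two genuine divergences worth flagging. For the closedness in (2), the paper does \emph{not} use the dimension count you propose; it observes directly that a classical central leaf is closed in its classical Newton stratum (\cite{foliation-Oort} Theorem 2.2), and since your central leaf is closed in its classical central leaf by the first half of (2), it is closed in the classical Newton stratum, hence in the (finer) Newton stratum. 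This is simpler and does not require (3) as input. Your dimension argument is also valid (once equidimensionality of each leaf is secured from the level-$m$ theorem), but the appeal to Grothendieck--Katz is superfluous, and the direction of the specialization inequality you wrote ($\geq\nu_G(b)$) is backwards in the paper's convention; neither issue affects the conclusion. For the numerical value in (3), the paper's mechanism is more specific than your sketch suggests: \cite{Nie Sian} supplies a \emph{fundamental} element in $B(\intG,\mu^\sigma)$, whose minimality forces the corresponding Ekedahl--Oort (level-1) stratum to coincide with a central leaf; its dimension is then $l(w)=\langle 2\rho,\nu_G(b)\rangle$ by the group-theoretic computation of \cite{geo of newt PEL} 7.3 combined with the E-O dimension formula of \cite{EOZ}. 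The citations of \cite{zhu-aff gras in mixed char} and \cite{Manin problem} in your sketch are misplaced here: the former is used for the dimension of the Rapoport--Zink space in the Newton-stratum dimension formula (the third theorem), and the latter is used to show Newton strata are unions of connected components of classical Newton strata. You rightly identify the Hodge-type subtleties in the correspondence construction; the paper's resolution is the technical claim (3) in the proof of Proposition 4.2.6, extending the Hodge tensor $s_{\dr}$ across codimension-one points by a Nisnevich/normality argument.
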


We define Igusa towers in 4.2, and give the almost product morphism in 4.3. The ideals and technics are from \cite{RZ for spin}, \cite{LRKisin}, \cite{coho of PEL}, and \cite{foliation-Oort}. With results in \cite{zhu-aff gras in mixed char}, we deduce that
\begin{introth}
The Newton stratum $\ES_0^b$ is of dimension $\langle\rho,\mu+\nu(b)\rangle-\frac{1}{2}\mathrm{def}(b).$
\end{introth}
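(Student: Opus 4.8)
The plan is to read the dimension off the almost product decomposition of the Newton stratum constructed in section 4.3, feeding in two inputs that are by then available: the dimension of central leaves from the previous theorem, and the dimension of affine Deligne--Lusztig varieties in the Witt-vector affine Grassmannian. Concretely, the almost product morphism exhibits $\ES_0^b$, after a suitable base change and up to a quasi-finite surjective map, as built out of the Igusa tower over a central leaf $\ES_0^c\subset\ES_0^b$ together with an isogeny-leaf (equivalently, Rapoport--Zink type) factor. Since such a map preserves dimension, it suffices to add the dimensions of the two factors and to know that $\ES_0^b$ is equidimensional.

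First I would record that the Igusa tower $\mathrm{Ig}_m\to\ES_0^c$ of section 4.2 has finite, indeed finite étale, transition and structure maps, so $\dim\mathrm{Ig}_m=\dim\ES_0^c$, which by the central-leaf theorem equals $\langle 2\rho,\nu_G(b)\rangle$ (recall $\nu_G(b)=\nu(b)$). Next, the isogeny-leaf factor in the almost product morphism is, geometrically and after passing to perfections, governed by the affine Deligne--Lusztig variety $X_\mu(b)$ in the mixed-characteristic affine Grassmannian of \cite{zhu-aff gras in mixed char}; its dimension, via the mixed-characteristic dimension formula together with the group-theoretic computations of \cite{Nie Sian} and \cite{VW} and paralleling \cite{geo of newt PEL}, is $\langle\rho,\mu-\nu(b)\rangle-\tfrac12\mathrm{def}(b)$. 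Here one must check that this factor genuinely contributes this dimension, i.e. that the comparison morphism from (Igusa tower)$\times$(Rapoport--Zink datum) to $\ES_0^b$ is quasi-finite and surjective on $\overline{\kappa}$-points, which is precisely the content of section 4.3.

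Assembling the pieces,
$$\dim\ES_0^b \;=\; \langle 2\rho,\nu_G(b)\rangle \;+\; \langle\rho,\mu-\nu(b)\rangle-\tfrac12\mathrm{def}(b)\;=\;\langle\rho,\mu+\nu(b)\rangle-\tfrac12\mathrm{def}(b),$$
the last equality being the identity $2\langle\rho,\nu(b)\rangle+\langle\rho,\mu-\nu(b)\rangle=\langle\rho,\mu+\nu(b)\rangle$. The main obstacle is not this bookkeeping but the construction and control of the almost product morphism in the Hodge-type setting, where there is no PEL moduli interpretation to fall back on: one needs the integral Rapoport--Zink spaces for Hodge type in the style of \cite{RZ for spin}, together with Kisin's integral models \cite{LRKisin}, and one must verify quasi-finiteness and surjectivity of the comparison morphism after perfection so that dimension is genuinely additive. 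Importing the exact value of $\dim X_\mu(b)$ in mixed characteristic, including the defect term $\tfrac12\mathrm{def}(b)$, is the second, more group-theoretic, ingredient, supplied by \cite{zhu-aff gras in mixed char}, \cite{Nie Sian} and \cite{VW}.
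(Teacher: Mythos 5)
Your proposal is correct and follows essentially the same route as the paper: section 4.3 constructs the almost product morphism $\pi\colon J_{b,m}\times\overline{\bM}^{n,d}\to\ES_0^b$, shows it is quasi-finite (Lemma \ref{finiteness of almost prod map}) and a finite surjection on $\overline{\kappa}$-points for $d$ large, so that $\dim\ES_0^b=\dim C_x+\dim\overline{\bM}$, and then plugs in $\dim C_x=\langle2\rho,\nu_G(b)\rangle$ from Proposition \ref{dim of cent} and $\dim\overline{\bM}=\langle\rho,\mu-\nu(b)\rangle-\tfrac12\mathrm{def}(b)$ from Zhu's Corollary 3.13. The only cosmetic difference is that the paper works with the reduced special fiber $\overline{\bM}^{n,d}$ and the smooth locus of an irreducible component rather than invoking perfections, but the dimension-counting logic is identical.
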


\newpage
\setcounter{section}{0}
\section[Preliminaries]{Preliminaries}

\subsection[Dieudonn\'{e} crystals and Dieudonn\'{e} modules]{Dieudonn\'{e} crystals and Dieudonn\'{e} modules}\label{Dieudonne modules}

We will first recall results related to crystals of $p$-divisible groups that will be used in this paper. Our main references are \cite{crystali dieudonne}, \cite{crys dieu via rigid} and \cite{crys rep and F-crys}.

Let $\kappa$ be a perfect field of characteristic $p>0$ and $T$ be a $W(\kappa)$-scheme such that $p$ is locally nilpotent. We refer to \cite{crystali dieudonne} for the definition of crystalline cite and crystals. Let $(T/W(\kappa))_{\cris}$ be the crystalline site of $T$ over $W(\kappa)$. There is a contravariant functor $G\mapsto \mathbb{D}(G)$ from the category of $p$-divisible groups over $T$ to the category of crystals (of quasi-coherent sheaves) over $(T/W(\kappa))_{\cris}$. This functor is defined using the Lie algebra of the universal vector extension of the dual $p$-divisible group $G^\vee$. The formation of $\mathbb{D}$ is compatible with base change. In particular, if $p=0$ on $T$, then the absolute Frobenius $\sigma$ on $T$ induces the relative Frobenius on $G$, and hence a morphism of crystals $\sigma^*(\mathbb{D}(G))\rightarrow \mathbb{D}(G)$.

Suppose now that $T_0$ is a $\kappa$-scheme, and $G_0$ is a
$p$-divisible group over $T_0$. Let $T_0\rightarrow T$ be an
object of $(T_0/W(\kappa))_{\cris}$ on which $p$ is locally
nilpotent, and $G$ be a lifting of $G_0$ to $T$. By construction
of $\mathbb{D}$, we have an isomorphism $\mathbb{D}(G_0)(T)
\stackrel{\simeq}{\rightarrow}\mathbb{D}(G)(T)$. Moreover, the
$O_T$-module $\mathbb{D}(G)(T)$ sits in an exact sequence
$$0\rightarrow(\mathrm{Lie}G)^\vee\rightarrow\mathbb{D}(G)(T)\rightarrow \mathrm{Lie}G^\vee
\rightarrow 0.$$

\begin{definition}
Let $T$ be a $\kappa$-scheme. A Dieudonn\'{e} crystal over $T$ is triple $(\mathcal {E},\varphi,v)$ where

(1) $\mathcal {E}$ is a crystal of finite locally free modules over $(T/W(\kappa))_{\cris}$,

(2) $\varphi:\sigma^*\mathcal {E}\rightarrow \mathcal {E}$ and $v:\mathcal {E}\rightarrow\sigma^*\mathcal {E}$ are homomorphisms of $O_{T,\cris}$-modules such that $\varphi\circ v=p\cdot \mathrm{id}_{\mathcal {E}}$ and $v\circ \varphi=p\cdot \mathrm{id}_{\sigma^*\mathcal {E}}$.
\end{definition}
It is well known that $\mathbb{D}(G)$ is a Dieudonn\'{e} crystal, for a $p$-divisible group $G/T$.

Let $A_0$ be a formally smooth $\kappa$-algebra, by a \emph{lifting} of $A_0$, we mean a $p$-adically complete flat $W(\kappa)$-algebra $A$ such that $A\otimes \kappa\cong A_0$. Such a lifting always exists by \cite{crys dieu via rigid} Lemma 1.1.2 and Lemma 1.2.2, it is unique by \cite{crys dieu via rigid} Remark 1.2.3 (b). Moreover, $A$ is formally smooth over $W(\kappa)$ (with respect to the $p$-adic topology), and the Frobenius $\sigma:A_0\rightarrow A_0$ lifts to $A$ (but NOT necessarily unique).

If $A_0$ is formally finitely generated (see \ref{R-Z for Hodge} for the definition), then $A$ is necessarily regular. This is because $W(\kappa)\rightarrow A$ is flat with geometrically regular special fiber, while $A$ is a quotient of $W(\kappa)[[x_1,\cdots,x_r]]\{y_1,\cdots,y_s\}$, the $p$-adic completion of $W(\kappa)[[x_1,\cdots,x_r]][y_1,\cdots,y_s]$, and hence all maximal ideals contain $p$. If $A=W(\kappa)[[x_1,\cdots,x_n]]$, the homomorphism given by Frobenius on $W(\kappa)$ and $p$-th power on indeterminants is a lift of the Frobenius on Let $A_0:=A\otimes \kappa$.

\begin{definition}
Fixing the pair $(A,\sigma)$, a Dieudonn\'{e} module over $A_0$ is quadruple $(M,\varphi,v,\nabla)$ where

(1) $M$ is a locally free $A$-module,

(2) $\nabla:M\rightarrow M\otimes \Omega^{1\wedge}_{A/W(k)}$ is an integrable, topologically quasi-nilpotent connection,

(3) $\varphi:\sigma^*M\rightarrow M$ and $v:M\rightarrow\sigma^*M$ are horizontal homomorphisms of $A$-modules such that $\varphi\circ v=p\cdot \mathrm{id}_{M}$ and $v\circ \varphi=p\cdot \mathrm{id}_{\sigma^*M}$.
\end{definition}
This definition is a special case of \cite{crys dieu via rigid} Definition 2.3.4, as $\mathrm{ker}(A\rightarrow A_0)=(p)$ is equipped with the natural PD-structure, and $\widehat{D}$ there is our $A$. We refer to \cite{crys dieu via rigid} Remark 2.2.4 c) for the definition of a topologically quasi-nilpotent connection.

\begin{proposition}\label{crystal vs module with connec}
The category of Dieudonn\'{e} crystals over $\Spec A_0$ is equivalent to the category of Dieudonn\'{e} modules over $A_0$.
\end{proposition}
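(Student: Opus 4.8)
The plan is to establish the equivalence by exhibiting quasi-inverse functors and invoking the corresponding comparison for crystals on the formally smooth affine base $\Spec A_0$ with the chosen lift $(A,\sigma)$. In one direction, given a Dieudonn\'{e} crystal $(\mathcal{E},\varphi,v)$ over $T=\Spec A_0$, I would evaluate $\mathcal{E}$ on the PD-thickening $\Spec A_0\hookrightarrow\Spf A$ (more precisely on each $\Spec(A/p^n)$, using that $\ker(A\to A_0)=(p)$ carries its standard PD-structure, so $\Spf A$ is an object of the crystalline site in the pro-sense). This yields a locally free $A$-module $M:=\mathcal{E}(\Spf A)$, and the crystal structure on $\mathcal{E}$ equips $M$ with a canonical integrable connection $\nabla$ by the usual Grothendieck linearization / stratification argument; the topological quasi-nilpotence of $\nabla$ comes from the quasi-nilpotence built into the notion of crystal on a base where $p$ is (locally, pro-)nilpotent. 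The maps $\varphi$ and $v$ evaluate to horizontal $A$-linear maps $\sigma^*M\to M$ and $M\to\sigma^*M$ with $\varphi v=v\varphi=p$, since $\sigma:A\to A$ lifts the absolute Frobenius on $A_0$ and the Frobenius-pullback of a crystal is computed compatibly. This produces a functor from Dieudonn\'{e} crystals to Dieudonn\'{e} modules over $A_0$.

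Conversely, given $(M,\varphi,v,\nabla)$, I would use the standard fact that over the formally smooth $W(\kappa)$-algebra $A$ with PD-ideal $(p)$, a finite locally free $A$-module with integrable topologically quasi-nilpotent connection extends uniquely to a crystal of finite locally free modules on $(\Spec A_0/W(\kappa))_{\cris}$: this is exactly the affine, $p$-adic incarnation of the equivalence ``crystals $\leftrightarrow$ modules with quasi-nilpotent connection'' over a smooth base, and is precisely the content of \cite{crys dieu via rigid} around Definition 2.3.4. One then transports $\varphi$ and $v$ to the crystal via the comparison isomorphisms between evaluations at different PD-thickenings, using horizontality to check that the resulting maps $\sigma^*\mathcal{E}\to\mathcal{E}$ and $\mathcal{E}\to\sigma^*\mathcal{E}$ are well defined on the whole site (independent of the auxiliary Frobenius lift, via the connection). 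That the two functors are mutually quasi-inverse is then formal: in one composite one recovers $M$ as $\mathcal{E}(\Spf A)$ by construction, and in the other the crystal is reconstructed from its value on $\Spf A$ together with its connection, which is the defining property of the crystal--module correspondence.

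Concretely I would not reprove the crystal--module dictionary from scratch but cite it: the proposition is asserted in the excerpt to be ``a special case of \cite{crys dieu via rigid} Definition 2.3.4'', so the real work is only to match conventions, namely that $\widehat{D}$ in loc.\ cit.\ is our $A$, that the PD-structure on $(p)$ is the standard one, and that the pair $(\varphi,v)$ with $\varphi v=v\varphi=p$ corresponds on both sides. I would also note that since the correspondence is compatible with localization on $\Spec A_0$ and all objects are finite locally free, it suffices to check everything after passing to a suitable affine cover, which reduces to the case already treated.

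The main obstacle is the careful treatment of the Frobenius structure: unlike the connection, the lift $\sigma:A\to A$ of Frobenius is not canonical, so one must verify that the maps $\varphi,v$ on the crystal side are independent of this choice, which is where horizontality with respect to $\nabla$ is essential — a change of Frobenius lift differs by a nilpotent ``Taylor-type'' operator built from $\nabla$, and one checks $\varphi$ intertwines the two via the canonical crystal isomorphism. Once that compatibility is nailed down, the rest is bookkeeping with the equivalence of \cite{crys dieu via rigid}.
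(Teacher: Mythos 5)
Your proof is correct and follows essentially the same route as the paper, which deduces the proposition by citing de Jong's crystal--module comparison (\cite{crys dieu via rigid} Proposition 2.2.2 and Remark 2.2.4 b), h)); your discussion of reading off the connection from the stratification and of horizontality making the Frobenius data independent of the chosen lift $\sigma$ is exactly what the paper records in Constructions \ref{construction of connection} and \ref{frob over diff rings}. Your only slip is the reference to Definition 2.3.4 of de Jong, which merely introduces the notion of Dieudonn\'e module; the equivalence itself is Proposition 2.2.2 together with Remark 2.2.4 of loc.\ cit.
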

\begin{proof}
This is a direct consequence of \cite{crys dieu via rigid} Proposition 2.2.2 and Remark 2.2.4 b) and h).
\end{proof}

We will use some constructions that are needed for the proof.

\begin{construction}\label{construction of connection}
The Dieudonn\'{e} module attached to a Dieudonn\'{e} crystal $\mathcal {E}$ is as follows. The $A$-module $M$ is $\varprojlim_n\mathcal {E}(A_n)$. Let $A(2)$ be the $p$-adic completion of the PD-envelop of $A\widehat{\otimes} A$ with respect to $\mathrm{ker}(A\widehat{\otimes} A\rightarrow A_0)$. There are homomorphisms $i_1:A\rightarrow A(2)$, $a\rightarrow a\otimes 1$ and $i_2:A\rightarrow A(2)$, $a\rightarrow 1\otimes a$. The crystal property of $\mathcal {E}$ gives an isomorphism $\varepsilon:i_2^*M\rightarrow i_1^*M$. Let $\theta:M\rightarrow i_2^*M$ be $m\mapsto 1\otimes m$, then $\nabla=\theta-\mathrm{id}_M\otimes 1$, $m\mapsto \varepsilon (1\otimes m)-m\otimes 1\in M\otimes (K/K^{[2]})=M\otimes \Omega^{1\wedge}_{A/W(\kappa)}$. Here $K$ is the kernel of $A(2)\rightarrow A$. The maps $F$ and $V$ follows from \cite{crys dieu via rigid} Remark 2.2.4 h).
\end{construction}
\begin{construction}\label{frob over diff rings}
Notations as above. Let $A'$ be a $p$-adically complete and $p$-torsion free $W(\kappa)$-algebra, equipped with a list of Frobenius $\sigma'$. Let $\iota:A\rightarrow A'$ be a homomorphism of $W(\kappa)$-algebras. Then $\iota^*(\mathcal {E})(A')$ is again an $A'$-module with connection and Frobenius. The module is just $M\otimes A'$ and the connection is just $\iota^*\nabla$. Since $\iota\circ \sigma$ and $\sigma'\circ \iota$ has the same reduction modulo $p$, the crystal property of $\mathcal {E}$ induces a canonical isomorphism $\varepsilon':\sigma'^*\iota^*(M)\rightarrow \iota^*\sigma^*(M)$ as follows. Let $\varepsilon^{[2]}:\big(A(2)/K^{[2]}\big)\otimes M\rightarrow M\otimes \big(A(2)/K^{[2]}\big)$ be the reduction modulo $K^{[2]}$ of $\varepsilon$, which is the isomorphism given by $\nabla+\mathrm{id}_M\otimes 1$, then $\varepsilon'=(\sigma'\iota\cdot \iota\sigma)^*(\varepsilon^{[2]})$. The Frobenius on $M\otimes A'$ is given by
$$\sigma'^*(M_{A'})=\sigma'^*\iota^*(M)\stackrel{\varepsilon'}{\longrightarrow}\iota^*\sigma^*M\rightarrow \iota^*M=M_{A'}.$$

Explicitly, $\varepsilon'$ has the following description: let $a_1,\cdots,a_n\in A$ be such that $da_1,\cdots,da_n$ form a basis of $\Omega^{1\wedge}_{A/W(\kappa)}$. Note that they exist Zariski locally. Then
\begin{subeqn}\label{eqn expli connection}
\varepsilon'(m\otimes 1)=\sum_{\underline{i}}\nabla(\partial)^{\underline{i}}(m\otimes1)\otimes \frac{z^{\underline{i}}}{\underline{i}!}.
\end{subeqn}
Here $\underline{i}=(i_1,\cdots, i_n)$ is a multi-index,
$\nabla(\partial)^{\underline{i}}=\nabla(\frac{\partial}{\partial
a_1})^{i_1}\cdots \nabla(\frac{\partial}{\partial a_n})^{i_n}$,
$z^{\underline{i}}=z_1^{i_1}\cdots z_n^{i_n}$ where
$z_i=\sigma'\circ \iota(a_i)-\iota\circ \sigma(a_i)$. The map
$\varepsilon'$ is well defined and independent of choices of
$a_1,\cdots,a_n$, and hence we always have $\varepsilon'$ by
Zariski gluing.
\end{construction}

\subsection[Integral canonical models]{Integral canonical models}\label{intcanmod}

We recall Kisin's construction of good reductions of Shimura varieties of Hodge type.

Let $(G,X)$ be a Shimura datum of Hodge type. Let $p$ be a prime. Assume that $G_{\mathbb{Q}_p}$ is quasi-split and split over an unramified extension of $\mathbb{Q}_p$. Then $G_{\mathbb{Q}_p}$ extends to a reductive group scheme $\intG$ over $\mathbb{Z}_p$. Let $K_p=\intG(\mathbb{Z}_p)$, for any compact open subgroup $K^p\subseteq G(\mathbb{A}_f^p)$ that is small enough, Kisin proves in \cite{CIMK} that the Shimura variety $\Sh_{K_pK^p}(G,X)$ has an integral canonical model provided that $p>2$.

We recall the constructions in \cite{CIMK}. Let
$i:(G,X)\hookrightarrow (\mathrm{GSp}(V,\psi),X')$ be a symplectic
embedding. By \cite{CIMK} Lemma 2.3.1, there exists a
$\mathbb{Z}_p$-lattice $V_{\mathbb{Z}_p}\subseteq
V_{\mathbb{Q}_p}$, such that
$i_{\mathbb{Q}_p}:G_{\mathbb{Q}_p}\subseteq\mathrm{GL}(V_{\mathbb{Q}_p})$
extends uniquely to a closed embedding $\intG\hookrightarrow
\mathrm{GL}(V_{\mathbb{Z}_p}).$ So there is a $\mathbb{Z}$-lattice
$V_{\INT}\subseteq V$ such that $\inttG$, the Zariski closure of $G$ in
$\mathrm{GL}(V_{\mathbb{Z}_{(p)}})$, is reductive, as the base
change to $\mathbb{Z}_p$ of $\inttG$ is $\intG$. Moreover, we can
assume $V_{\INT}$ is such that $V_{\INT}^\vee\supseteq V_{\INT}$. Let $d=|V_{\INT}^\vee/V_{\INT}|$,
$g=\frac{1}{2}\mathrm{dim}(V)$, $K=K_pK^p$, $E$ be the reflex
field of $(G,X)$ and $v$ be a place of $E$ over $p$, then the
integral canonical model $\ES_K(G,X)$ of $\mathrm{Sh}_K(G,X)$ is
constructed as follows. We can choose
$K'\subseteq\mathrm{GSp}(V,\psi)(\mathbb{A}_f)$ small enough such
that $K'\supseteq K$, that
$\mathrm{Sh}_{K'}(\mathrm{GSp}(V,\psi),X)$ affords a moduli
interpretation, and that the natural morphism
$$f:\mathrm{Sh}_K(G,X)\rightarrow
\mathrm{Sh}_{K'}(\mathrm{GSp}(V,\psi),X)_E$$ is a closed embedding.
Let $\mathscr{A}_{g,d,K'/\mathbb{Z}_{(p)}}$ be the moduli scheme
of abelian schemes over $\mathbb{Z}_{(p)}$-schemes with a
polarization of degree $d$ and level $K'$ structure. Then
$\mathscr{A}_{g,d,K'/\mathbb{Z}_{(p)}}\otimes
\mathbb{Q}=\mathrm{Sh}_{K'}(\mathrm{GSp}(V,\psi),X)$. The integral
canonical model $\ES_K(G,X)$ is the normalization of the Zariski
closure of $\mathrm{Sh}_K(G,X)$ in
$\mathscr{A}_{g,d,K'/\mathbb{Z}_{(p)}}\otimes O_{E,(v)}$.

\begin{theorem}
The $O_{E,(v)}$-scheme $\ES_K(G,X)$ is smooth, and morphisms in the inverse system $\varprojlim_{K^p}\ES_K(G,X)$ are \'{e}tale.
\end{theorem}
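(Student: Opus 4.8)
The plan is to reduce the statement to an explicit description of the completed local rings of $\ES_K(G,X)$ at closed points of the special fibre, following \cite{CIMK} and using the deformation theory of $p$-divisible groups equipped with crystalline Tate tensors. Since $\ES_K(G,X)$ is by construction normal, flat and of finite type over $O_{E,(v)}$, and its generic fibre $\Sh_K(G,X)$ is smooth over $E$, it suffices to show that for every closed point $x$ in the special fibre the ring $\widehat{O}_{\ES_K(G,X),x}$ is formally smooth over $O_{E,(v)}$; because $\ES_K(G,X)$ is obtained from $\mathscr{A}_{g,d,K'/\mathbb{Z}_{(p)}}\otimes O_{E,(v)}$ by forming a Zariski closure and normalizing, I would in fact aim to identify this completed local ring with a concretely given formally smooth ring.

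First I would recall that the closed immersion $\intG\hookrightarrow\GL(V_{\mathbb{Z}_p})$ is the stabilizer of a finite family of tensors $s_\alpha\in V_{\mathbb{Z}_p}^{\otimes}$. Pulling back the universal abelian scheme along $f$ and along the embedding into the Siegel space, these tensors become Hodge cycles, and hence give horizontal, Frobenius-invariant tensors $s_{\alpha,\dr}$ in the tensor algebra of $\Hdr$ of the universal abelian scheme over $\ES_K(G,X)$; at a closed point $x$ of the special fibre they specialize to tensors $s_{\alpha,0}$ in the Dieudonn\'{e} module of $X_x:=\mathscr{A}_x[p^\infty]$ whose pointwise stabilizer is a reductive group isomorphic to $\intG$ over $\kappa(x)$. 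I would then study the functor classifying deformations of the pair $(X_x,(s_{\alpha,0}))$ to Artinian local $W(\kappa(x))$-algebras. By Grothendieck--Messing theory a deformation of $X_x$ amounts to a lift of the Hodge filtration inside the Dieudonn\'{e} module, and the requirement that the $s_\alpha$ remain in the correct position translates into lifting the filtration through the Hodge cocharacter $\mu$ inside the subgroup $\intG$. Since $\intG$ is reductive, the associated parabolic $P_\mu\subset\intG$ has formally smooth quotient, and I would conclude that this deformation functor is pro-represented by a complete local $W(\kappa(x))$-algebra $\widehat{R}_{\intG}$ which is formally smooth over $W(\kappa(x))$ of relative dimension $\dim_E\Sh_K(G,X)$.

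The next step is to match $\widehat{R}_{\intG}$ with $\widehat{O}_{\ES_K(G,X),x}$. In one direction, the universal tensor-preserving deformation, together with the induced polarization and prime-to-$p$ level structure, yields a morphism $\Spf\widehat{R}_{\intG}\to\widehat{\mathscr{A}}_{g,d,K',x}$, and, using full faithfulness of the crystalline Dieudonn\'{e} functor together with a Breuil--Kisin type classification of $p$-divisible groups over such bases, I would show it factors through the formal completion of the Zariski closure of $\Sh_K(G,X)$. In the other direction, the horizontality and Frobenius-invariance of the $s_{\alpha,\dr}$ force that Zariski closure, near $x$, to be contained in the locus where the de Rham tensors occupy the $\intG$-position, which is precisely the image of $\Spf\widehat{R}_{\intG}$. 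Combining the two, $\widehat{O}_{\ES_K(G,X),x}$ is at the same time a finite quotient of, and dominated by, the regular domain $\widehat{R}_{\intG}$; comparing dimensions via the generic fibre gives $\widehat{O}_{\ES_K(G,X),x}\cong\widehat{R}_{\intG}$, which proves smoothness.

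For the \'{e}taleness of the transition maps, I would note that for $K'^p\subseteq K^p$ the morphism $\Sh_{K_pK'^p}(G,X)\to\Sh_{K_pK^p}(G,X)$ on generic fibres is finite \'{e}tale, hence, after taking normalizations inside $\mathscr{A}_{g,d,K'/\mathbb{Z}_{(p)}}\otimes O_{E,(v)}$, the induced map of integral models is finite between regular schemes that are both smooth over $O_{E,(v)}$ of the same relative dimension and is \'{e}tale on generic fibres, so it is \'{e}tale; alternatively, the deformation-theoretic description above shows directly that it induces isomorphisms on completed local rings, since neither $X_x$ nor its tensors involve the prime-to-$p$ level. I expect the main obstacle to be the deformation-theoretic core of the second and third steps: correctly transporting the $s_\alpha$ into crystalline/de Rham cohomology and establishing their horizontality and Frobenius-invariance, identifying the tensor-preserving deformation functor with the linear-algebra deformation problem, and---above all---exploiting the reductivity of $\intG$ (that is, the hyperspecial level hypothesis at $p$) to obtain the formal smoothness of $\widehat{R}_{\intG}$.
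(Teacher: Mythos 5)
The paper's proof of this theorem consists of a single sentence: it cites \cite{CIMK} Theorem 2.3.8 and moves on. There is therefore no argument in the paper itself to compare against, and what you have written is essentially a reconstruction of Kisin's proof from that reference. Your sketch captures the main ideas of that argument correctly: formal smoothness is checked on completed local rings at closed points of the special fibre; the tensors $s_\alpha$ give crystalline Tate tensors; Faltings's deformation theory, run through $\intG$ and the Hodge cocharacter $\mu$ rather than through the ambient $\GSp$, produces a formally smooth deformation ring because the relevant moduli of filtrations is the formal completion of the opposite unipotent, hence a power-series ring; and the transition maps in $\varprojlim_{K^p}\ES_K(G,X)$ are \'{e}tale because the local deformation problem is insensitive to the prime-to-$p$ level.

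One part of the identification step is framed more strongly than what is actually proved (or needed). You claim that ``in the other direction'' the Zariski closure of $\Sh_K(G,X)$ near $x$ is forced, by horizontality and Frobenius-invariance of $s_{\alpha,\dr}$, to lie in the locus cut out by the $\intG$-position of the tensors, i.e.\ inside the image of $\Spf\widehat{R}_{\intG}$. That a priori inclusion is neither established by Kisin nor required for the conclusion. What is shown is the single inclusion $\Spf\widehat{R}_{\intG}\hookrightarrow\Spf\widehat{O}_{\mathscr{A}_{g,d,K'},x}$ as a closed formal subscheme whose generic fibre lands in $\Sh_K(G,X)$ (by the \'{e}tale--crystalline comparison at a characteristic-zero lift of $x$, rather than by a Breuil--Kisin classification, which is not the tool used here). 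Because $\widehat{R}_{\intG}$ is $p$-torsion-free and normal, the map factors through the normalization $\ES_K(G,X)$, yielding a surjection $\widehat{O}_{\ES_K(G,X),x}\twoheadrightarrow\widehat{R}_{\intG}$ of complete local domains of the same Krull dimension, hence an isomorphism. So the ``sandwich from both sides'' you describe is replaced by normality-plus-dimension; the conclusion and most of the structure are right, but the mechanism in the matching step differs from (and is cleaner than) the two-sided containment you propose, and the converse containment you invoke is not something one actually has in hand.
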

\begin{proof}
This is \cite{CIMK} Theorem 2.3.8.
\end{proof}
\begin{remark}
The morphism $\ES_K(G,X)\rightarrow \mathscr{A}_{g,d,K'/O_{E,(v)}}$ is finite, as $\mathscr{A}_{g,d,K'/O_{E,(v)}}$ is Nagata.
\end{remark}
Moreover, the scheme $\ES_K(G,X)$ is uniquely determined by the Shimura datum and the group $K$ in the sense that $\varprojlim_{K^p}\ES_K(G,X)$ satisfies a certain extension property (see \cite{CIMK} 2.3.7 for a precise statement) and there is an $G(\mathbb{A}_f^p)$-action on $\varprojlim_{K^p}\ES_K(G,X)$ extending the one on $\varprojlim_{K^p}\Sh_K(G,X)$.

Let $\A\rightarrow \ES_K(G,X)$ be the pull back to $\ES_K(G,X)$ of the universal abelian scheme on $\mathscr{A}_{g,d,K'/\mathbb{Z}_{(p)}}$, and $\V$ be $\Hdr(\A/\ES_K(G,K))$. Kisin also constructed in \cite{CIMK} certain sections of $\V^\otimes$ which will play an important role in this paper. By \cite{CIMK} Proposition 1.3.2, there is a tensor $s\in V_{\INT_{(p)}}^\otimes$ defining $\inttG\subseteq \GL(V_{\INT_{(p)}})$. This tensor gives a section $s_{\dr/E}$ of $\V_{\Sh_K(G,X)}^\otimes$, which is actually defined over $O_{E,(v)}$. More precisely, we have the following result.

\begin{proposition}
The section $s_{\dr/E}$ of $\V_{\Sh_K(G,X)}^\otimes$ extends to a section $s_{\dr}$ of~$\V^\otimes$.
\end{proposition}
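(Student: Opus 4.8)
The plan is to prove the proposition by checking that the rational section $s_{\dr/E}$ of the vector bundle $\V^\otimes$ on $\ES_K(G,X)$ has no pole along the special fibre. Since $\ES_K(G,X)$ is smooth over the discrete valuation ring $O_{E,(v)}$, hence regular, and $\V^\otimes$ is locally free, a section defined over the dense open $\ES_{K,E}$ extends to all of $\ES_K(G,X)$ as soon as it lies in $\V^\otimes\otimes\mathcal{O}_{\ES_K(G,X),\mathfrak{p}}$ for every height one prime $\mathfrak{p}$ of $\ES_K(G,X)$ lying over $p$; for the remaining height one primes this is automatic. The special fibre $\ES_{K,\kappa}$ is smooth over $\kappa$, so its irreducible components are disjoint and each is a prime divisor of $\ES_K(G,X)$ of multiplicity one; moreover every closed point of $\ES_{K,\kappa}$ is the reduction of an $\mathcal{O}_L$-valued point of $\ES_K(G,X)$ for $L/E_v$ finite unramified. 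An elementary leading-term argument --- a uniformiser of $O_{E,(v)}$ pulls back to a uniformiser of such an $\mathcal{O}_L$, and on each component $D$ one finds $\mathcal{O}_L$-points at which the leading term of $s_{\dr/E}$ along $D$ does not vanish --- then reduces the proposition to the following local statement: for every point $x\colon\Spec\mathcal{O}_L\to\ES_K(G,X)$, writing $\A_{\mathcal{O}_L}:=x^\ast\A$, the pull-back $x^\ast s_{\dr/E}\in\Hdr(\A_L/L)^\otimes$ actually lies in the $\mathcal{O}_L$-lattice $\Hdr(\A_{\mathcal{O}_L}/\mathcal{O}_L)^\otimes=x^\ast(\V^\otimes)$.

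I would next bring in the characteristic zero picture. By \cite{CIMK} Proposition 1.3.2 and the compatibility of the Betti, de Rham and $p$-adic \'etale realisations recorded there, under the de Rham comparison isomorphism $\Hdr(\A_L/L)\otimes_L B_{\dr}\cong H^1_{\mathrm{et}}(\A_{\overline L},\RAT_p)\otimes_{\RAT_p}B_{\dr}$ the tensor $x^\ast s_{\dr/E}$ corresponds to an \'etale tensor $s_{\mathrm{et}}$. This $s_{\mathrm{et}}$ is integral: the $\intG(\INT_p)$-level structure carried by $\ES_K(G,X)$ over $E$ gives, on the generic fibre, an identification of $H^1_{\mathrm{et}}(\A_{\overline L},\INT_p)$ with $V_{\INT_p}$ well defined up to $\intG(\INT_p)$, under which $s_{\mathrm{et}}$ is sent to the tensor $s\in V_{\INT_{(p)}}^\otimes$ defining $\inttG$; since $\intG$ fixes $s$ this image is independent of the choice, so $s_{\mathrm{et}}\in V_{\INT_p}^\otimes$. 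Here it is crucial that $\ES_K(G,X)$ is built as the normalisation of a Zariski closure inside the moduli scheme $\mathscr{A}_{g,d,K'/O_{E,(v)}}$, so that the integral level structure at $p$ survives on the integral model.

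The remaining, and main, step is to carry this integrality across $p$-adic Hodge theory. The representation $H^1_{\mathrm{et}}(\A_{\overline L},\INT_p)$ is crystalline with Hodge--Tate weights in $\{0,1\}$, so by Kisin's theory of $\mathfrak{S}$-modules (\cite{crys rep and F-crys}) it corresponds to a Breuil--Kisin module $\mathfrak{M}$ over $\mathfrak{S}=W(\kappa(x))[[u]]$, with $u$ mapping to a uniformiser $\pi_L$ of $\mathcal{O}_L$, and $s_{\mathrm{et}}$ yields a tensor in $\mathfrak{M}^\otimes$. On one side $\mathfrak{M}\otimes_{\mathfrak{S}}W(\kappa(x))$ is the Dieudonn\'e module $\Dieu(\A_{\mathcal{O}_L}[p^\infty])\big(W(\kappa(x))\big)$ of the special fibre; on the other, $\mathfrak{M}$ determines the filtered $\mathcal{O}_L$-module $\Hdr(\A_{\mathcal{O}_L}/\mathcal{O}_L)\subseteq\Hdr(\A_L/L)$, compatibly with the evaluations of the Dieudonn\'e crystal $\Dieu(\A_{\mathcal{O}_L}[p^\infty])$ described in Construction \ref{construction of connection} and the exact sequence of \S\ref{Dieudonne modules}. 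Following the tensor in $\mathfrak{M}^\otimes$ through to this lattice gives $x^\ast s_{\dr/E}\in\Hdr(\A_{\mathcal{O}_L}/\mathcal{O}_L)^\otimes$, which is precisely the local statement above; and since $\ES_{K,E}$ is schematically dense in $\ES_K(G,X)$, the resulting extension $s_{\dr}\in H^0(\ES_K(G,X),\V^\otimes)$ is unique.

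I expect this last step to be the real obstacle: one must set up integral $p$-adic Hodge theory --- the interplay between the integral \'etale lattice, the Breuil--Kisin module $\mathfrak{M}$, the crystalline/Dieudonn\'e lattice, and the de Rham lattice $\Hdr(\A_{\mathcal{O}_L}/\mathcal{O}_L)$ of the chosen integral model --- in a manner compatible with the full tensor algebra, so that integrality on the \'etale side genuinely forces it on the de Rham side; when $L/\RAT_p$ is ramified with absolute ramification index $\geq p-1$ this compels one to use the $\mathfrak{S}$-module formalism rather than Fontaine--Laffaille theory. By contrast, the reduction to height one primes, the density and structure of $\mathcal{O}_L$-points, and the ``no pole'' leading-term bookkeeping on the regular scheme $\ES_K(G,X)$ are routine.
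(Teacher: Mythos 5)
The paper's proof of this proposition is a one-line citation: ``This is \cite{CIMK} Corollary 2.3.9.'' Your sketch is a reconstruction of the strategy behind Kisin's proof rather than a competing argument, and at that level it is on target. The reduction from a global extension statement to a pointwise integrality statement --- $\ES_K(G,X)$ is regular, $\V^\otimes$ locally free, so extension across the special fibre is a codimension-one check; smoothness over the unramified $O_{E,(v)}$ gives density of $\mathcal{O}_L$-points with $L/\RAT_p$ unramified; and then a uniformiser/leading-term argument --- is sound, and the two substantive ingredients you isolate (integrality of $s_{\mathrm{et}}$ from the $\intG(\INT_p)$-level structure carried by the Zariski closure inside $\mathscr{A}_{g,d,K'}$, followed by a transfer of that integrality across $p$-adic Hodge theory to the de Rham lattice of the $\mathcal{O}_L$-model) are exactly the ones \cite{CIMK} uses. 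What you label the ``real obstacle'' is not so much a gap in your argument as the actual content of the cited result: the tensor-compatible comparison between the \'etale lattice, the $\mathfrak{S}$-module of the $p$-divisible group, the crystalline Dieudonn\'e lattice, and $\Hdr(\A_{\mathcal{O}_L}/\mathcal{O}_L)$ is precisely \cite{CIMK} \S1.3--1.4 (in particular Proposition 1.3.4 and Corollary 1.4.3, which this paper uses elsewhere under the guise of $s_{\cris,\widetilde{x}}$), so it is fair to point to it rather than rederive it. Two small remarks: since $E_v/\RAT_p$ is unramified in the present Hodge-type setting and you reduce to unramified $L/E_v$, the absolute ramification index is $1$, so the caveat about $e\geq p-1$ forcing $\mathfrak{S}$-modules over Fontaine--Laffaille is vacuous here --- Kisin uses the $\mathfrak{S}$-module framework for uniformity and because he also needs it for $\widehat{R}$-versions, not because of ramification at these points; and Kisin's own write-up of 2.3.9 leans on the identification $R_G\cong\widehat{O}_{\ES_K,\bar x}$ from his Proposition 2.3.5 to produce the extension formal-locally at closed points and then glue, which is logically parallel to, but organised a bit differently from, your height-one-prime reduction.
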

\begin{proof}
This is \cite{CIMK} Corollary 2.3.9.
\end{proof}

The extended section $s_{\dr}$ has a lot of good properties. One sees easily that it is a locally direct summand of $\V^\otimes$. Let $x$ be a closed point in the special fiber of $\ES_K(G,X)$, $\widetilde{x}$ be a lifting to a $W(k(x))$-point of $x$, then $s_{\dr,\widetilde{x}}\in\V^\otimes_{\widetilde{x}}$ is invariant under the Frobenius on $\V^\otimes_{\widetilde{x}}$ (see \cite{CIMK} Corollary 1.4.3). Here we use the canonical isomorphism $\V_{\widetilde{x}}\cong \HH^1_{\cris}(\A_x/k(x))$. Let $\widehat{R}$ be the completion of the stalk at $x$ of $\ES_K(G,X)$ with respect to the maximal ideal, and $\Dieu$ be the contravariant Dieudonn\'{e} functor, then $\sdr$ is parallel with respect to the connection on $\Dieu(\A_{\widehat{R}})(\widehat{R})$.

\subsection[Definitions of some stratifications]{Definitions of some stratifications}

We define various stratifications on reductions of Shimura varieties of Hodge type and varieties over them. Note that it is technically necessary to define stratifications on varieties over reductions of Shimura varieties of Hodge type.

Let $\ES_0$ be the special fiber of $\ES_K(G,X)$. It is smooth over $\kappa=O_{E,(v)}/(v)$. Using the morphism $\ES_0\rightarrow \mathscr{A}_{g,d,K'/\kappa}$, one gets \emph{classical Newton stratification} (resp. \emph{classical level $m$ stratification}, resp. \emph{classical central leaves}) on an $\ES_0$-scheme $X$ by putting together points of $X_{\overline{\kappa}}$ whose attached $p$-divisible groups (resp. BT-$m$s, resp. $p$-divisible groups) are geometrically isogenous (resp. geometrically isomorphic, resp. geometrically isomorphic). Let $i$ be a geometrically isogenous (resp. isomorphism, resp. isomorphism) class of $p$-divisible groups (resp. BT-$m$s, resp. $p$-divisible groups), we will write $X^{\mathrm{cN},i}$ (resp. $X^{\mathrm{cL},i}$, resp. $X^{\mathrm{cC},i}$) for the corresponding stratum. The following statement should be well known.
\begin{proposition}
Notations as above, we have

(1) Each $X^{\mathrm{cN},i}$ (resp. $X^{\mathrm{cL},i}$, resp. $X^{\mathrm{cC},i}$) is locally closed in $X_{\overline{\kappa}}$.

(2) Each classical central leaf is closed in the classical Newton stratum containing it.
\end{proposition}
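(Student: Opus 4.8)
The plan is to deduce both statements from the corresponding facts for the Siegel case, i.e.\ for $\mathscr{A}_{g,d,K'/\kappa}$, together with the fact that the structure morphism $\pi\colon X\to\mathscr{A}_{g,d,K'/\kappa}$ (the composite $X\to\ES_0\to\mathscr{A}_{g,d,K'/\kappa}$) is the one along which the $p$-divisible group, resp.\ BT-$m$, is pulled back. Indeed, by construction $X^{\mathrm{cN},i}$ (resp.\ $X^{\mathrm{cL},i}$, resp.\ $X^{\mathrm{cC},i}$) is the preimage under $\pi_{\overline{\kappa}}$ of the analogous stratum in $\mathscr{A}_{g,d,K'/\overline{\kappa}}$, so it suffices to treat $X=\mathscr{A}_{g,d,K'/\kappa}$ itself (or rather its base change to $\overline{\kappa}$), where a universal $p$-divisible group $\mathcal{G}$ with its level structure and polarization is available.

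For part (1), the classical Newton stratification: the Newton polygon of the fibers of a $p$-divisible group over a base is lower semicontinuous (Grothendieck--Katz), so the locus where it is $\geq$ a fixed polygon is closed; intersecting with the complement of the finitely many larger closed loci where it is strictly bigger shows each Newton stratum is locally closed. For the classical level $m$ stratification, I would invoke the fact that BT-$m$s are classified by a finite-type algebraic stack (a quotient of the smooth affine scheme of truncated displays, or Wedhorn's moduli description), so the isomorphism type of $\mathcal{G}[p^m]$ induces a stratification into locally closed subschemes — equivalently, over $\overline{\kappa}$ there are only finitely many geometric isomorphism classes of BT-$m$s of fixed height and dimension, the locus where $\mathcal{G}[p^m]$ lies in a given class is constructible, and constructible plus stable-under-generization-and-specialization-within-its-stratum arguments, or directly the local constancy of the classifying map to a finite set with the closure relation coming from specialization, give local closedness. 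For the classical central leaves, one uses that $\mathcal{G}_x\cong\mathcal{G}_y$ over $\overline{\kappa}$ iff $\mathcal{G}_x[p^m]\cong\mathcal{G}_y[p^m]$ for all $m$, so a central leaf is the intersection $\bigcap_m$ of level $m$ strata; by \cite{crysboud} this intersection stabilizes at a finite stage (for $m$ large the level $m$ stratum inside a fixed Newton stratum is already the central leaf), hence is locally closed.

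For part (2): I would fix a classical Newton stratum $X^{\mathrm{cN},i}$ and show that each classical central leaf $\mathcal{C}$ inside it is closed in $X^{\mathrm{cN},i}$. Working over $\overline{\kappa}$ and reducing as above to the Siegel case, this is the statement that, within a single Newton stratum of $\mathscr{A}_{g,d}$, the locus where $\mathcal{G}$ has a fixed geometric isomorphism type is closed. This follows from \cite{crysboud}: there is $m_0$ such that on the Newton stratum the level $m_0$ stratification already separates central leaves, i.e.\ $\mathcal{C}=X^{\mathrm{cN},i}\cap X^{\mathrm{cL},j}$ for the appropriate $j$; since $X^{\mathrm{cL},j}$ is locally closed in $X_{\overline{\kappa}}$ and the isomorphism type of $\mathcal{G}[p^{m_0}]$ on a Newton stratum is \emph{locally constant} (the point is that a specialization which changes the $p^{m_0}$-torsion would change the central leaf, but all specializations inside the Newton stratum stay in the closure, and by boundedness the image of $X^{\mathrm{cN},i}$ in the finite set of level $m_0$ types is already discrete after $m_0$ is chosen large), the trace $X^{\mathrm{cN},i}\cap X^{\mathrm{cL},j}$ is both open and closed in $X^{\mathrm{cN},i}$, in particular closed. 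Alternatively one can cite Oort's result for $\mathscr{A}_g$ directly (\cite{foliation-Oort}), which gives that central leaves are closed in Newton strata, and pull it back.

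The main obstacle is pinning down the classical level $m$ stratification cleanly: one needs the finiteness of geometric isomorphism classes of BT-$m$s of bounded height/dimension and a functorial classifying map so that local closedness of the strata is not circular. I would handle this by citing the representability results on truncated displays in \cite{smooth truncated display} (or Wedhorn's \cite{dimw}), which present BT-$m$s over $\overline{\kappa}$-schemes via a smooth affine scheme with an algebraic group action, so that the isomorphism type induces a locally closed stratification in the usual way; everything else is then a formal pullback argument along $\pi$ plus \cite{crysboud} for the stabilization needed in part (2).
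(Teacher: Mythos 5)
Your proposal follows essentially the same route as the paper: reduce to the Siegel modular variety via the finite morphism $X\to\mathscr{A}_{g,d,K'/\kappa}$, then cite Wedhorn's algebraicity of the stack of BT-$m$s for local closedness of $X^{\mathrm{cL},i}$, Grothendieck--Katz/Oort for $X^{\mathrm{cN},i}$, and Oort's Theorem 2.2 for statement (2), from which local closedness of $X^{\mathrm{cC},i}$ then falls out. That is exactly what the paper does (it cites \cite{dimw} Proposition 1.8, \cite{foliation-Oort} 2.1, and \cite{foliation-Oort} Theorem 2.2 respectively).

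One warning about your ``primary'' argument for part (2), before you fall back on citing Oort. You claim that after choosing $m_0$ large (via \cite{crysboud}), the isomorphism type of $\mathcal G[p^{m_0}]$ is \emph{locally constant} on a Newton stratum, with the justification ``a specialization which changes the $p^{m_0}$-torsion would change the central leaf\ldots the image of $X^{\mathrm{cN},i}$ in the finite set of level $m_0$ types is already discrete.'' That is circular: the assertion that the level $m_0$ type cannot jump under a specialization that preserves the Newton polygon is precisely what closedness of central leaves inside Newton strata is saying, and \cite{crysboud} on its own only tells you that the level $m$ stratification stabilizes for large $m$, not that the resulting map to the finite set of types is locally constant on a Newton stratum. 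Oort's proof of Theorem 2.2 is genuinely nontrivial (it goes through completely slope divisible $p$-divisible groups and specialization arguments) and cannot be replaced by this observation. So the only working version of your part (2) is the one where you cite \cite{foliation-Oort} Theorem 2.2 directly and pull it back along $\pi$, which is what the paper does.

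A smaller point: in your level $m$ discussion the phrase ``constructible plus stable-under-generization-and-specialization-within-its-stratum'' is not a valid route to local closedness (constructible sets closed under specialization are closed, under generization open, but a stratum is neither). The clean argument is the one you also gesture at: the stack $\mathscr{BT}_m$ of BT-$m$s of given height is a noetherian algebraic stack whose underlying topological space is a finite $T_0$ space, so each point of $|\mathscr{BT}_m|$ is locally closed, and the strata are the preimages under the continuous classifying map $|X_{\overline\kappa}|\to|\mathscr{BT}_m|$.
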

\begin{proof}
The proof of \cite{dimw} Proposition 1.8 implies that the stack of BT-$m$s (of given height) is an algebraic stack. So $X^{\mathrm{cL},i}$ is locally closed. The locally closedness for $X^{\mathrm{cN},i}$ is in \cite{foliation-Oort} 2.1. Statement (2) is \cite{foliation-Oort} Theorem 2.2. This implies that $X^{\mathrm{cC},i}$ is locally closed.
\end{proof}
We are interested in refinements of the above stratifications. Let $x\in X$ be a point (not necessarily closed) and $\overline{x}$ be the geometric point of $X$ induced by embedding $k(x)$ to an algebraically closed field $k(\overline{x})$. Then we have a Frobenius invariant tensor $s_{\cris,\overline{x}}\in \V^\otimes_{W(k(\overline{x}))}=\HH^1_{\cris}(\A_{\overline{x}},k(\overline{x}))^\otimes$. Let $W_m$ be the ring of Witt vectors of length $m$, then by passing to $W_m(k(\overline{x}))$, we have a tensor $s_{\cris,\overline{x}}\in \V^\otimes_{W_m(k(\overline{x}))}=\Dieu(\A_{\overline{x}}[p^m])^\otimes$.

\begin{definition}
The Newton stratification on $X$ is a decomposition of the topological space $X=\coprod_{c\in C} X^{\mathrm{N},c}$ such that $x,y\in X$ are in the same subset $X^{\mathrm{N},c}$ if and only if there is an algebraically closed field $k$ with embeddings $k(x)\hookrightarrow k$ and $k(y)\hookrightarrow k$ and induced geometric points $\overline{x}$ and $\overline{y}$, such that there is an isomorphism $\HH^1_{\cris}(\A_{\overline{x}},k(\overline{x}))\otimes B(k)\rightarrow\HH^1_{\cris}(\A_{\overline{y}},k(\overline{y}))\otimes B(k)$ mapping $s_{\cris,\overline{x}}$ to $s_{\cris,\overline{y}}$. Each $X^{\mathrm{N},c}$ is called a Newton stratum.
\end{definition}

We can also define level $m$-stratifications.

\begin{definition}
The level $m$ stratification on $X$ is a decomposition of the topological space $X=\coprod_{j\in J} X^{\mathrm{L},j}$ such that $x,y\in X$ are in the same subset $X^{\mathrm{L},j}$ if and only if there is an algebraically closed field $k$ with embeddings $k(x)\hookrightarrow k$ and $k(y)\hookrightarrow k$ and induced geometric points $\overline{x}$ and $\overline{y}$, such that there is an isomorphism $\Dieu(\A_{\overline{x}}[p^m])\rightarrow\Dieu(\A_{\overline{y}}[p^m])$ mapping $s_{\cris,\overline{x}}$ to $s_{\cris,\overline{y}}$. Each $X^{\mathrm{L},j}$ is called a level $m$ stratum.
\end{definition}
\begin{definition}
Let $x\in \ES_0(\overline{\kappa})$ be a point. The central leaf $C_x$ of $\ES_0$ crossing $x$ is the subset $y\in \ES_{0,\overline{\kappa}}$ such that there exists an algebraically closed field $k$ with embeddings $k(x)\hookrightarrow k$ and $k(y)\hookrightarrow k$ and induced geometric points $\overline{x}$ and $\overline{y}$, and an isomorphism $\HH^1_{\cris}(\A_{\overline{x}},k(\overline{x}))\rightarrow\HH^1_{\cris}(\A_{\overline{y}},k(\overline{y}))$ of Dieudonn\'{e} modules mapping $s_{\cris,\overline{x}}$ to $s_{\cris,\overline{y}}$.
\end{definition}

For simplicity, we will skip the superscript $\mathrm{cN}$, $\mathrm{cL}$, $\mathrm{cC}$, $\mathrm{N}$ and $\mathrm{L}$ when there is no risk of confusion.

The level $1$ stratification is precisely the Ekedahl-Oort stratification defined and studied in \cite{EOZ}. Each classical Newton stratum (resp. classical level $m$ stratum, resp. classical central leaf) is a union of Newton strata (resp. level $m$ strata, resp. central leaves). Moreover, we have the following result of Vasiu.
\begin{proposition}
Each Newton stratum is a union of connected components of the classical Newton stratum containing it. In particular, Newton strata are locally closed in $X_{\overline{\kappa}}$.
\end{proposition}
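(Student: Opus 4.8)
The plan is to reduce everything to the statement that the refined Newton datum is locally constant along each classical Newton stratum. Fix a classical Newton stratum $Z\subseteq X_{\overline\kappa}$; it is locally closed by the proposition above, and I give it its reduced subscheme structure. For a point $x\in Z$ let $[b_x]\in B(G)$ be the $\sigma$-conjugacy class (in the Kottwitz set of the generic fibre of $\intG$) attached to the $F$-isocrystal $(\HH^1_{\cris}(\A_{\overline x}),\varphi)$ together with the Frobenius-invariant tensor $s_{\cris,\overline x}$; by the definition of the Newton stratification, $x$ and $y$ lie in the same Newton stratum iff $[b_x]=[b_y]$. Since the $\A$'s come with a polarization and Hodge cocharacter $\mu$, every $[b_x]$ lies in the finite set $B(G,\mu)$, so only finitely many Newton strata $S_{[b_1]},\dots,S_{[b_r]}$ meet $Z$, with $S_{[b_i]}=\{x\in Z:[b_x]=[b_i]\}$. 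I would then show that each $S_{[b_i]}$ is open and closed in $Z$: being finitely many and partitioning $Z$, they are unions of connected components of $Z$, and a clopen subset of the locally closed subset $Z\subseteq X_{\overline\kappa}$ is itself locally closed in $X_{\overline\kappa}$, which is exactly the assertion.

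Next I would record the combinatorics of $B(G)$. Because $\intG\hookrightarrow\GL(V_{\INT_{(p)}})$ is a closed immersion, the induced map of Newton cones $\mathcal N(G)\to\mathcal N(\GL)$ is injective (it is a Galois-equivariant inclusion of cocharacter lattices tensored with $\RAT$). All points of $Z$ have geometrically isogenous $p$-divisible groups, i.e.\ the image of $[b_x]$ in $B(\GL)$ — equivalently the Newton polygon of $\A_{\overline x}[p^\infty]$ — is one fixed element, so by that injectivity the Newton point $\nu_{b_x}\in\mathcal N(G)$ equals one fixed $\nu_0$ for all $x\in Z$. Using the standard description of the partial order on $B(G)$ — namely $[b]\preceq[b']$ iff $\kappa_G(b)=\kappa_G(b')$ and $\nu_b\le\nu_{b'}$ — together with Kottwitz's injection $B(G)\hookrightarrow\mathcal N(G)\times\pi_1(G)_{\Gamma}$, this forces $\{[b_1],\dots,[b_r]\}$ to be an antichain in $B(G)$: any two comparable elements among them share the Newton point $\nu_0$ and hence the Kottwitz point, so they coincide.

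Finally I would invoke semicontinuity of the Newton point. The tensor $s_{\cris}$ is parallel for the connection on $\Dieu(\A[p^\infty])$ and Frobenius-invariant at every point, and $\intG$ is smooth affine, so fppf-locally on $Z$ the pair $(\Dieu(\A[p^\infty])[1/p],\,s_{\cris})$ is an $F$-isocrystal with $\intG\otimes\mathbb{Q}_p$-structure in the sense of Rapoport--Richartz; by their specialization theorem (the $G$-equivariant analogue of Grothendieck's), a specialization $\eta\rightsquigarrow x$ inside $Z$ yields $[b_\eta]\preceq[b_x]$ in $B(G)$. Granting this: if $\eta\in S_{[b_i]}$ and $\eta\rightsquigarrow x$ in $Z$, then $[b_i]=[b_\eta]\preceq[b_x]$, and since both sides lie in the antichain we get $[b_x]=[b_i]$; thus $S_{[b_i]}$ is stable under specialization in $Z$, and the same inequality shows it is stable under generization, so — the $S_{[b_i]}$ being constructible, since the Newton point is a constructible function — each $S_{[b_i]}$ is clopen in $Z$, finishing the argument as in the first paragraph. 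The hard part is precisely the $G$-equivariant specialization inequality in $B(G)$: it needs the formalism of $F$-isocrystals with additional structure rather than the classical statement for the underlying $p$-divisible group; this input is also contained in Vasiu's \cite{levmstra}, from which the proposition originates.
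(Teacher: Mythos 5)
The paper's own ``proof'' is a one-line citation to Vasiu's Manin-problems paper, so there is no in-paper argument to compare against; your proposal sketches an independent argument, but it has a genuine gap at the step that produces the antichain. The claimed injectivity of $\mathcal{N}(G)\to\mathcal{N}(\GL(V))$ is false, and the parenthetical justification (``Galois-equivariant inclusion of cocharacter lattices tensored with $\RAT$'') forgets that $\mathcal{N}(G)$ is a quotient $X_*(T)_{\RAT}/W_G$, not $X_*(T)_{\RAT}$: injectivity of the lattice map does not descend after dividing by the two different Weyl groups. For example, with $G=\GL_2\times\GL_2\hookrightarrow\GL_4$, the $G$-dominant cocharacters $(1,0;0,0)$ and $(0,0;1,0)$ are $W_{\GL_4}$-conjugate but distinct in $\mathcal{N}(G)$; in a product-of-modular-curves situation the refined strata (supersingular $\times$ ordinary) and (ordinary $\times$ supersingular) have identical $\GL$-Newton polygons but different Newton points $\nu_G(b)$. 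So the stronger claim you derive from the injectivity, that $\nu_{b_x}$ is constant along a classical stratum $Z$, is simply wrong.

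What you actually need, and what is true, is only the antichain statement, and it admits a quick fix. Suppose $[b]\preceq[b']$ and both lie in the same fibre of $B(G)\to B(\GL(V))$; since $\kappa_G(b)=\kappa_G(b')$, it suffices to show $\nu:=\nu_G(b)$ equals $\nu':=\nu_G(b')$. The hypothesis gives that $\nu,\nu'$ are $W_{\GL(V)}$-conjugate, so $|\nu|=|\nu'|$ for a $W_{\GL(V)}$- and $\Gamma$-invariant inner product on $X_*(T')_{\RAT}$; on the other hand $\delta:=\nu'-\nu$ is a nonnegative combination of positive $G$-coroots, hence $(\delta,\nu)\geq 0$ because $\nu$ is $G$-dominant, and then $0=|\nu'|^2-|\nu|^2=(\delta,\nu'+\nu)=2(\delta,\nu)+|\delta|^2\geq|\delta|^2$ forces $\delta=0$. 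With the antichain established this way, the rest of your outline (Rapoport--Richartz specialization for $F$-isocrystals with $G$-structure, constructibility, stability under specialization and generization, hence clopen in $Z$) is structurally sound, though you should be aware that the constructibility/semicontinuity input for $F$-isocrystals with $G$-structure over the Hodge-type integral model is itself a substantial ingredient, and it is exactly what the cited reference \cite{Manin problem} (not \cite{levmstra}) supplies.
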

\begin{proof}
This is \cite{Manin problem} Theorem 5.3.1 (b).
\end{proof}

\subsection[Dilatations and some group theoretic settings]{Dilatations and some group theoretic settings}

The Shimura datum $(G,X)$ determine a cocharacter $\mu:\GG_{m,W(\kappa)}\rightarrow G_{\INT_p}\otimes W(\kappa)$ which is unique up to $G_{\INT_p}(W(\kappa))$-conjugacy. We introduce in this subsection some group theoretic settings which are essentially from \cite{levmstra} section 4. For simplicity, we write $G_R$ for $G_{\INT_p}\otimes R$, for a $\INT_p$-algebra $R$. Let $P_+\subseteq G_{W(\kappa)}$ (resp. $L\subseteq G_{W(\kappa)},\ P_-\subseteq G_{W(\kappa)}$) be the subgroup whose Lie algebra is the submodule of $\mathrm{Lie}(G_{W(\kappa)})$ of non-negative weights (resp. of weight 0, of non-positive weights) with respect to $\mu$ composed with the adjoint action of $G_{W(\kappa)}$ on $\mathrm{Lie}(G_{W(\kappa)})$. Then $P_+$ and $P_-$ are parabolic subgroups of $G_{W(\kappa)}$ in opposite position, and $L$ is the common Levi subgroup of $P_+$ and $P_-$.

To construct what we need, we need to introduce dilatations. Let $R$ be a D.V.R. with uniformizer $t$ and residue field $k$, $X$ be a smooth scheme over $R$ and $Y_k\subseteq X_k$ be a closed subscheme which is smooth over $k$. Let $I$ be the ideal defining $Y_k\subseteq X$, $\widetilde{X}$ be the blow up of $Y_k$ on $X$ and $X'\subseteq \widetilde{X}$ be the open subscheme such that $IO_{\widetilde{X}}$ is generated by $t$. Following \cite{Neron Model}, $X'$ is called the dilatation of $Y_k$ on $X$.
\begin{proposition}\label{dilatation}
\

(1) The $R$-scheme $X'$ is smooth.

(2) The natural $R$-morphism $u:X'\rightarrow X$ whose generic fiber is an isomorphism is universal in the following sense.

For for any flat $R$-scheme $Z$ and any $R$-morphism $v:Z\rightarrow X$ such that $v_k$ factors through $Y_k$, there is a unique $R$-morphism $v':Z\rightarrow X'$ satisfying $v=u\circ v'$.

(3) Dilatations commute with products: Let $X_i, i=1,2$ be smooth $R$-schemes, and $Y_i\subseteq X_i\otimes k$ be closed smooth subvarieties, then $(X_1\times_R X_2)'$, the dilatation of $Y_1\times_kY_2$ on $X_1\times_R X_2$, is canonically isomorphic to $X'_1\times_R X'_2$. In particular, if $X$ is an $R$-group scheme and $Y_k\subseteq X_k$ is a subgroup scheme, then $X'$ is a group scheme over $R$, and the natural morphism $u:X'\rightarrow X$ is a group homomorphism.
\end{proposition}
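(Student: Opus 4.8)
The statement to prove is Proposition~\ref{dilatation}, the basic properties of dilatations (Néron blow-ups). Here is how I would organize the argument, following the treatment in \cite{Neron Model} but keeping things self-contained.

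\textbf{Setup and part (1).} First I would work Zariski-locally on $X$, so that $X = \Spec B$ is affine and $Y_k \subseteq X_k$ is cut out by $I = (t, f_1, \ldots, f_r)$, where $f_1, \ldots, f_r \in B$ restrict to a regular sequence generating the ideal of $Y_k$ in $B/tB$. By the smoothness of $X$ over $R$ and of $Y_k$ over $k$, after possibly shrinking we may extend $f_1, \ldots, f_r$ to a system of coordinates, i.e.\ choose $f_{r+1}, \ldots, f_n \in B$ so that $df_1, \ldots, df_n$ freely generate $\Omega^1_{B/R}$ near the relevant points. The open subscheme $X' \subseteq \widetilde{X}$ where $I\mathcal{O}_{\widetilde X}$ is generated by $t$ is then the affine scheme $\Spec B'$ with $B' = B[f_1/t, \ldots, f_r/t] \subseteq B[1/t]$ (the chart of the blow-up corresponding to the generator $t$); one checks $I\mathcal O_{X'} = tB'$. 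Smoothness of $X'$ over $R$: in these coordinates $B' $ is, étale-locally, a polynomial ring over $R$ in the classes of $f_1/t, \ldots, f_r/t, f_{r+1}, \ldots, f_n$, because $\Omega^1_{B'/R}$ is free of rank $n$ (the $d(f_i/t) = t^{-1}df_i$ for $i \le r$ together with $df_i$ for $i > r$) and $B'$ is $R$-flat (it is a subring of the $R$-flat ring $B[1/t]$ and is visibly $t$-torsion free, hence flat over the DVR $R$). The main point requiring care is that $B'$ is \emph{finitely presented} and flat with the correct fibre dimensions; this follows from the explicit description plus the regular-sequence hypothesis on the $f_i \bmod t$.

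\textbf{Part (2), the universal property.} Let $Z = \Spec C$ be flat over $R$ (so $C$ is $t$-torsion free) with a map $v \colon Z \to X$, i.e.\ a ring map $B \to C$, such that $v_k$ factors through $Y_k$. The latter means exactly that the images of $f_1, \ldots, f_r$ in $C/tC$ vanish, i.e.\ each $f_i$ maps into $tC$. Since $C$ is $t$-torsion free, the elements $f_i/t \in C[1/t]$ actually lie in $C$ and are uniquely determined; hence $B \to C$ extends uniquely to $B' = B[f_1/t, \ldots, f_r/t] \to C$, giving the unique $v' \colon Z \to X'$ with $u \circ v' = v$. Globalizing over $X$ is immediate since the construction is compatible with the localizations used above, and uniqueness is local. (The generic fibre of $u$ is an isomorphism because $f_i \in tB[1/t]$ already, so $B'[1/t] = B[1/t]$.)

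\textbf{Part (3), compatibility with products, and the group case.} For $X_1 \times_R X_2$ with $Y_1 \times_k Y_2 \subseteq (X_1 \times_R X_2)_k = X_{1,k} \times_k X_{2,k}$: the ideal of $Y_1 \times_k Y_2$ is generated by $t$ together with pullbacks of the $f_i^{(1)}$ and the $f_j^{(2)}$, and the blow-up chart gives $(B_1 \otimes_R B_2)[\, f_i^{(1)}/t,\, f_j^{(2)}/t\,]$, which is manifestly $B_1' \otimes_R B_2'$. Rather than do this by hand, I would prefer to deduce it cleanly from the universal property in (2): both $(X_1\times_R X_2)'$ and $X_1' \times_R X_2'$ are flat $R$-schemes, both admit maps to $X_1 \times_R X_2$ whose special fibre lands in $Y_1 \times_k Y_2$, and each satisfies the universal property characterizing such flat schemes (using that a flat map to $X_1 \times_R X_2$ with special fibre in $Y_1 \times_k Y_2$ is the same as a pair of flat maps to $X_i$ with special fibres in $Y_i$ — here flatness of the product over $R$ lets one factor), so they are canonically isomorphic by Yoneda. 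For the group scheme statement: if $X$ is an $R$-group and $Y_k \subseteq X_k$ a closed subgroup (smooth over $k$), then the multiplication $m \colon X \times_R X \to X$ satisfies $m_k(Y_k \times_k Y_k) \subseteq Y_k$, so by (2) and (3) it induces $X' \times_R X' \cong (X\times_R X)' \to X'$; similarly for inversion and the identity section, and the group axioms for $X'$ follow from those for $X$ because $u$ is a monomorphism on the generic fibre and all schemes in sight are $R$-flat (so an identity of morphisms may be checked after inverting $t$, i.e.\ on the generic fibre, where $X' = X$). This last flatness-plus-generic-fibre trick is the clean way to avoid re-verifying associativity etc.\ directly.

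\textbf{Main obstacle.} The only genuinely delicate point is part (1): showing $X'$ is \emph{smooth} (not merely flat and finitely presented). This rests on the hypothesis that $Y_k$ is smooth over $k$, which guarantees that locally the equations $f_1, \ldots, f_r$ cutting out $Y_k$ in $X_k$ form part of a regular system of parameters and can be completed to coordinates on $X$ over $R$; without this one only gets flatness. Once the coordinate description $B' \cong$ (polynomial ring, étale-locally) is in place, everything else — the universal property, the product formula, the group structure — is formal, obtained by reducing statements to the generic fibre via $R$-flatness and invoking the universal property of (2).
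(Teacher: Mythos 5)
The paper's own ``proof'' is simply a citation of \cite{Neron Model} 3.2 Propositions 1, 2(d) and 3, so there is no in-paper argument to compare against; your proposal reconstructs the BLR proofs, and the reconstruction is substantively correct and follows the same route.

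One small caveat in part (1): the inference ``$\Omega^1_{B'/R}$ is free of rank $n$ and $B'$ is $R$-flat, hence $B'$ is smooth'' is not valid as stated. A finite-type flat $R$-algebra can have $\Omega^1$ locally free of a rank exceeding the fibre dimension without being smooth (for example $k[x,y]/(y^p)$ over a field $k$ of characteristic $p$ has $\Omega^1$ free of rank $2$ but is not smooth). You do flag that ``the correct fibre dimensions'' need checking, but you do not actually close that gap. The cleanest way to discharge it, and the way BLR argue, is to avoid the $\Omega^1$ criterion altogether: once one has an \'etale map $(f_1,\dots,f_n)\colon X\to\mathbb{A}^n_R$ carrying $Y_k$ into the coordinate subspace $\{x_1=\cdots=x_r=0\}$, the universal property of part (2) shows that dilatation is compatible with flat (in particular \'etale) base change over $X$, the dilatation of that coordinate subspace in $\mathbb{A}^n_R$ is visibly again $\mathbb{A}^n_R$ (substituting $x_i\mapsto t\,x_i'$ for $i\le r$), and therefore $X'$ is \'etale over $\mathbb{A}^n_R$, hence smooth. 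Everything else in your write-up --- the $t$-torsion-free argument for (2), the Yoneda-style identification of $(X_1\times_R X_2)'$ with $X_1'\times_R X_2'$, and deducing the group structure by lifting $m$, inversion and the unit via (2)+(3) and verifying the axioms on the schematically dense generic fibre --- is correct and matches the BLR treatment.
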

\begin{proof}
The first statement is \cite{Neron Model}  3.2 Proposition 3, the second statement is \cite{Neron Model} 3.2 Proposition 1, and the last statement is \cite{Neron Model} 3.2 Proposition 2 (d).
\end{proof}

Let $P_{+,0}$ be the special fiber of $P_+$, and $G'$ be the dilatation of $P_{+,0}$ on $G_{W(\kappa)}$ with natural homomorphism $u:G'\rightarrow G$. Let $U_+$ (resp. $U_-$) be the unipotent subgroup of $P_+$ (resp. $P_-$), and $H$ be $U_+\times L\times U_-$. Let $f:H\rightarrow G_{W(\kappa)}$ be the natural morphism induced by product, and $f_-:H\rightarrow H$ be given by $(a,b,c)\rightarrow (a,b,c^p)$. Let $f_{0-}:H\rightarrow G_{W(\kappa)}$ be the composition $f\circ f_-$. Then $f_{0-}$ is generically an open embedding. Note that on the special fiber, $f_{0-}$ factors through $P_{+,0}$, so by Proposition \ref{dilatation} (2), $f_{0-}$ factors through $u:G'\rightarrow G$. The morphism $H\rightarrow G'$ is denoted by~$u'$.

Note that $u'$ is also generically an open embedding, so we have sequences of injections
$$H(W(\overline{\kappa}))\hookrightarrow G'(W(\overline{\kappa}))\hookrightarrow G_{W(\kappa)}(W(\overline{\kappa}))\ \ \ \ \ \text{and}$$
$$H(W(\overline{\kappa})[\varepsilon]/(\varepsilon^2))\hookrightarrow G'(W(\overline{\kappa})[\varepsilon]/(\varepsilon^2))\hookrightarrow G_{W(\kappa)}(W(\overline{\kappa})[\varepsilon]/(\varepsilon^2)).$$
By Proposition \ref{dilatation} (2),
$G'(W(\overline{\kappa}))=\{g\in
G_{W(\kappa)}(W(\overline{\kappa}))\mid g\ \mathrm{mod}\ p\in
P_{+,0}(\overline{\kappa})\}$ and
$G'(W(\overline{\kappa})[\varepsilon]/(\varepsilon^2))=\{g\in
G_{W(\kappa)}(W(\overline{\kappa})[\varepsilon]/(\varepsilon^2))\mid
g\ \mathrm{mod}\ p\in
P_{+,0}(\overline{\kappa}[\varepsilon]/(\varepsilon^2))\}$. One
sees easily that the image of $H(W(\overline{\kappa}))$ (resp.
$H(W(\overline{\kappa})[\varepsilon]/(\varepsilon^2))$) in
$G_{W(\kappa)}(W(\overline{\kappa}))$ (resp.
$G_{W(\kappa)}(W(\overline{\kappa})[\varepsilon]/(\varepsilon^2))$)
equals to $G'(W(\overline{\kappa}))$ (resp.
$G'(W(\overline{\kappa})[\varepsilon]/(\varepsilon^2))$). So the
natural morphism of smooth affine $\kappa$-schemes (resp.
$W_m(\kappa)$-schemes) $u'_1:H_\kappa\rightarrow G'_\kappa$ (resp.
$u'_m:H_{W_m(\kappa)}\rightarrow G'_{W_m(\kappa)}$) is an
isomorphism.

From now on, we will identify $H_\kappa$ (resp. $H_{W_m(\kappa)}$) with $G'_\kappa$ (resp. $G'_{W_m(\kappa)}$) via $u'_1$ (resp. $u'_m$), and view $H_\kappa$ (resp. $H_{W_m(\kappa)}$) as a group scheme via the identification. We will write $H_m$ (resp. $G'_m$, $U_{+,m}$, $U_{-,m}$, $L_m$, $G_m$) for $H_{W_m(\kappa)}$ (resp. $G'_{W_m(\kappa)}$, $U_{+,W_m(\kappa)}$, $U_{-,W_m(\kappa)}$, $L_{W_m(\kappa)}$, $G_{W_m(\kappa)}$). We need a precise description of the group structure on $H_m$. We start with $u':H\rightarrow G'$. To describe $u'$, it suffices to consider the map $H(H)\rightarrow G'(H)$, but $H(H)\subseteq H_{\RAT}(H_{\RAT})$ as well as $G'(H)\subseteq G'_{\RAT}(H_{\RAT})$, so for $(h_1,h_2,h_3)\in H(H)$, $u'((h_1,h_2,h_3))=h_1h_2h_3^p$. This means that For $(h_1,h_2,h_3)$ and $(g_1,g_2,g_3)$ in $H(H)$, the product $u'((h_1,h_2,h_3))\cdot u'((g_1,g_2,g_3))$ is $h_1h_2h_3^pg_1g_2g_3^p$. When we reduce $u'$ by $p^m$, we get an isomorphism, and the group structure on $H_{m}$ is such that $(h_1,h_2,h_3)\cdot(g_1,g_2,g_3)=(f_1,f_2,f_3)$, where $f_1f_2f_3=h_1h_2h_3^pg_1g_2g_3^p$.

There is a homomorphism $p_m:G'_{m}\rightarrow P_{+,m}$ given by $(h_1,h_2,h_3)\mapsto (h_1,h_2)$. It is direct that $p_m\circ u'_m|_{P_{+,m}}$ is the identity. We remark that the construction of $p_m$ relies on the fact that $u'_m$ is an isomorphism, and such a homomorphism does NOT exist over $W(\kappa)$.

\subsubsection{The functor $\W_m$}\label{subsection greenberg}

Let $X/W(\kappa)$ be of finite type. Let $\W_m(X)$ be the presheaf on the category of affine $\kappa$-schemes such that $\W_m(X)(R)=X(W_m(R))$, for all $\kappa$-algebra~$R$. Clearly, a morphism $f:X\rightarrow Y$ of $W(\kappa)$-schemes induces a natural transformation $\W(f):\W_m(X)\rightarrow \W_m(Y)$. The functor $\W_m(-)$ has the following properties.

\begin{theorem}\label{greenberg functor}\

(1) The functor $\W_m(X)$ is represented by a $\kappa$-scheme, $\W_m(-)$ is compatible with fiber products.

(2) If $X$ is affine (resp. separated), then $\W_m(X)$ is also affine (resp. separated).

(3) If $f:X\rightarrow Y$ is an immersion (resp. open immersion, resp. closed immersion), then $\W(f):\W_m(X)\rightarrow \W_m(Y)$ is also an immersion (resp. open immersion, resp. closed immersion). Moreover, if $\{X_i\}_{i\in I}$ is an open covering of $X$, then $\{\W(X_i)\}_{i\in I}$ is an open covering of $\W_m(X)$.

(4) Let $X_0$ be the special fiber of $X$, and $F:X_0\rightarrow X_0$ be the absolute Frobenius. If $X/W(\kappa)$ is smooth, then the natural morphism $\W_{m+1}(X)\rightarrow \W_m(X)$ is a $F^{m*}\Omega^{1\vee}_{X_0/\kappa}$-torsor. In particular, $\W_n(X)$ is smooth.
\end{theorem}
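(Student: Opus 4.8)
The plan is to prove Theorem \ref{greenberg functor} by reducing everything to the case of affine space $\mathbb{A}^n_{W(\kappa)}$ and then bootstrapping via the functorial properties established along the way. First I would recall the classical description of the Greenberg functor: for $X = \Spec W(\kappa)[x_1,\dots,x_n]/(f_1,\dots,f_r)$, writing Witt components $x_i = (x_{i,0},\dots,x_{i,m-1})$ gives, via the universal polynomials for the Witt ring operations, a presentation of $\W_m(X)$ as a closed subscheme of $\mathbb{A}^{nm}_\kappa$ cut out by the Witt components of the $f_j$. This immediately yields representability for affine $X$, and that $\W_m$ of a closed immersion is a closed immersion. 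Compatibility with fiber products is formal from the fact that $W_m(R\times_{R'}R'') = W_m(R)\times_{W_m(R')}W_m(R'')$ as functors, which follows from $W_m$ being a limit-preserving functor of rings. For part (2) and the rest of (1): separatedness and the open-immersion statement follow by checking on affine charts, using part (1) and the valuative-type criterion or directly the fact that $W_m(-)$ preserves monomorphisms; the open covering claim is where one passes from affines to general finite-type $X$, gluing the affine $\W_m$'s along $\W_m$ of the overlaps, which works because $\W_m$ sends open immersions to open immersions (checked affine-locally from the polynomial description, since inverting an element corresponds to inverting its Witt components plus the Teichmüller-type identities).

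The heart of the matter is part (4), the torsor statement for the transition map $\W_{m+1}(X)\to\W_m(X)$ when $X/W(\kappa)$ is smooth. The plan is: by (3) the assertion is local on $X$, so I may assume $X$ is étale over $\mathbb{A}^n_{W(\kappa)}$, and by (1) (compatibility with fiber products) and the étale case it suffices to treat $X = \mathbb{A}^n_{W(\kappa)}$ and then transport along étale maps. Concretely, I would first do $X=\mathbb{A}^1$: here $\W_{m+1}(\mathbb{A}^1)(R) = W_{m+1}(R)\to W_m(R) = \W_m(\mathbb{A}^1)(R)$ is the truncation, whose kernel is $R$ via the last Witt component, but the splitting/action is only additive after a Frobenius twist — the standard fact is that the "ghost"/Verschiebung picture identifies the kernel of $W_{m+1}(R)\to W_m(R)$ with $R$ where $r\in R$ acts by adding $V^m([r])$, and the composite of this identification with base change is $\sigma^m$-semilinear, which is exactly the appearance of $F^{m*}$. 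Taking $n$-fold products gives $\mathbb{A}^n$ with kernel $\mathbb{A}^n_\kappa$ but with the $\kappa$-structure twisted by $F^m$, i.e. $F^{m*}\Omega^{1\vee}_{\mathbb{A}^n_0/\kappa}$ since $\Omega^{1\vee}$ of affine space is free of rank $n$.

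For the transport to étale $X\to\mathbb{A}^n$: here I would use that $\W_{m+1}\to\W_m$ fits in a cartesian-up-to-torsor square over the corresponding map for $\mathbb{A}^n$, invoking that an étale morphism $X\to Y$ of $W(\kappa)$-schemes induces, on Greenberg functors, a square $\W_{m+1}(X)\to\W_m(X)$, $\W_{m+1}(Y)\to\W_m(Y)$ which is cartesian — this is the infinitesimal lifting/formal étaleness of $X/Y$ applied to the square-zero extension $W_{m+1}(R)\twoheadrightarrow W_m(R)$ (its kernel has square zero), so that $\W_{m+1}(X) = \W_{m+1}(Y)\times_{\W_m(Y)}\W_m(X)$. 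Then the torsor structure, and the identification of the sheaf acting, pulls back, and $F^{m*}\Omega^{1\vee}_{Y_0/\kappa}$ pulls back to $F^{m*}\Omega^{1\vee}_{X_0/\kappa}$ since $\Omega^1$ pulls back correctly along étale maps and $F^{m*}$ commutes with this pullback. Smoothness of $\W_m(X)$ then follows by induction on $m$: $\W_1(X) = X_0$ is smooth over $\kappa$, and each $\W_{m+1}(X)\to\W_m(X)$ is a torsor under a vector bundle hence smooth surjective, so $\W_{m+1}(X)$ is smooth.

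The main obstacle I anticipate is pinning down the torsor structure in part (4) precisely — in particular getting the Frobenius twist $F^{m*}$ correct rather than an untwisted $\Omega^{1\vee}$, and verifying that the étale base change square for Greenberg functors is genuinely cartesian (which rests on the kernel of $W_{m+1}(R)\to W_m(R)$ being a square-zero ideal and on formal étaleness being preserved by the Greenberg functor). Once the affine-line computation is done carefully and the square-zero/étale-lifting input is in hand, the globalization via (3) and the compatibility with fiber products from (1) is routine bookkeeping, and I would only sketch it. The representability and functoriality parts (1)--(3) I would treat quickly, citing the universal Witt polynomials, as they are standard properties of the Greenberg functor.
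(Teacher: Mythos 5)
The paper proves Theorem \ref{greenberg functor} by bare citation to Greenberg's two papers (page 643 of \cite{greenberg1} for (1)--(3), Proposition 2 of \cite{greenberg2} for (4)), so your self-contained argument is a different presentation, though in the same spirit as the original. Your reduction of (1)--(3) to the affine case via the Witt addition/multiplication polynomials, together with the gluing observation in (3), is the standard route and is correct as outlined. For (4) the essential ingredients you isolate are right: the kernel $I=\ker\big(W_{m+1}(R)\to W_m(R)\big)$ equals $V^mW_1(R)\cong R$ additively; $I$ is square-zero because $V^m(r)\cdot V^m(r')=V\big(V^{m-1}(r)\cdot F V^m(r')\big)$ and $FV^m(r')=0$ in $W_m(R)$ in characteristic $p$; and the $W_m(R)$-action on $I$ is $x\cdot V^m(r)=V^m(F^m(x)\,r)$ with $F^m(x)=x_0^{p^m}$, so it factors as $W_m(R)\to R\xrightarrow{a\mapsto a^{p^m}}R$. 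That twist is exactly what makes the sheaf of lifts $\Hom_R\big(t_0^*\Omega^1_{X_0/\kappa},I\big)\cong t_0^*F^{m*}\Omega^{1\vee}_{X_0/\kappa}$. Passing from $\mathbb{A}^n$ to general smooth $X$ via an \'etale $X\to\mathbb{A}^n$ and the cartesian square $\W_{m+1}(X)=\W_{m+1}(\mathbb{A}^n)\times_{\W_m(\mathbb{A}^n)}\W_m(X)$ is clean; do note explicitly that this cartesianity comes from formal \'etaleness applied to the square-zero thickening and is a different input from the fiber-product compatibility in (1), which your roadmap momentarily conflates, and that surjectivity of $\W_{m+1}(X)\to\W_m(X)$ (so that the pseudo-torsor is a torsor) follows because $\Spec W_{m+1}(R)$ is affine and $X$ is smooth, so the lifting obstruction vanishes. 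With those two remarks made precise the proposal is correct, and relative to the paper's citation-only proof it has the merit of making the origin of the $F^{m*}$ twist visible.
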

\begin{proof}
The first three statements are in \cite{greenberg1} page 643, and the last statement follows from \cite{greenberg2} Proposition 2 by using Case 2 on page 263.
\end{proof}
\begin{remark}
For $m=1$, we have $\W_1(X)=X_\kappa$. If $X/W(\kappa)$ is smooth (resp. a group scheme), then $\W_m(X)$ is of dimension $m\cdot\mathrm{dim}(X_\kappa)$ (resp. a group scheme over $\kappa$).
\end{remark}
\begin{remark}
If $f:X\rightarrow S$ is a smooth morphism of $W(\kappa)$-schemes that are of finite type, then the induced morphism $\W_m(X)\rightarrow \W_m(S)$ is also smooth, as one can check that nilpotent immersions lifts locally.
\end{remark}
Now we come back to notations introduced before 1.4.2. Let $H_m$ be $\W_m(H)$ which is the same as $\W_m(G')$ and $G_m$ be $\W_m(G_{W(\kappa)})$. Let $\sigma:H_m\rightarrow H_m$ be the morphism which maps $h\in H_m(R)=H(W_m(R))$ to $W_m(R)\stackrel{\text{Frob}}{\longrightarrow}W_m(R)\stackrel{h}{\rightarrow}G'$, for any $\kappa$-algebra $R$. Then there is an action $T_m:H_m\times_\kappa G_m\rightarrow G_m$ given by $(h,g)\rightarrow hg\sigma(h)^{-1}$, where the multiplication on the right-hand-side is induced by the composition $\xymatrix{H_{W_m(\kappa)}\ar[r]^{u'_m}_{\cong} &G'_{W_m(\kappa)}\ar[r]^u &G_{W_m(\kappa)}}.$ We remark that our notations here conflicts with previous notations where $H_m$ is defined to be the reduction mod $p^m$ of $H$. BUT we INTEND to do this, as these two schemes are related by the functor $\W_m$, and one should be viewed as the realization of the other in the different world.

By the smoothness of $H_m$ and $G_m$, we have the following corollary.

\begin{corollary}
The quotient $[H_m\backslash G_m]$ induced by $T_m$ is a smooth Artin stack over $\kappa$.
\end{corollary}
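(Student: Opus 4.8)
The plan is to realize $[H_m\backslash G_m]$ as the quotient of a smooth affine $\kappa$-scheme by the action of a smooth affine group scheme over $\kappa$, and then to quote the standard algebraicity and smoothness statements for such quotients.

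First I would collect the inputs. By Theorem~\ref{greenberg functor}~(2), (4) and the remarks following it, $H_m=\W_m(H)$ and $G_m=\W_m(G_{W(\kappa)})$ are smooth affine group schemes over $\kappa$, and $T_m$ is a left action of $H_m$ on the smooth affine $\kappa$-scheme $G_m$. Thus $[H_m\backslash G_m]$ is, by definition, the stack quotient of the groupoid $H_m\times_\kappa G_m\rightrightarrows G_m$ whose source is the projection $\mathrm{pr}_2$ and whose target is $T_m$. Since $H_m$ and $G_m$ are affine, this groupoid is affine, and its source map $\mathrm{pr}_2$ is smooth because $H_m$ is smooth over $\kappa$. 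For the target map I would use the usual trick: the $\kappa$-morphism $H_m\times_\kappa G_m\to H_m\times_\kappa G_m$, $(h,g)\mapsto\big(h,\,hg\sigma(h)^{-1}\big)$, is an isomorphism, with inverse $(h,g')\mapsto\big(h,\,h^{-1}g'\sigma(h)\big)$; as $T_m$ is the composition of this isomorphism with $\mathrm{pr}_2$, it is smooth as well. Hence $H_m\times_\kappa G_m\rightrightarrows G_m$ is a smooth affine groupoid in $\kappa$-schemes.

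Next I would invoke the general theory of algebraic stacks: the quotient of a scheme by the action of a smooth affine group scheme is an Artin stack. Concretely, the canonical morphism $\pi\colon G_m\to[H_m\backslash G_m]$ is an $H_m$-torsor for the fppf topology, hence faithfully flat, affine and smooth; it provides a smooth surjective atlas, and the diagonal of $[H_m\backslash G_m]$ is representable and affine via the identification $G_m\times_{[H_m\backslash G_m]}G_m\cong H_m\times_\kappa G_m$. This shows that $[H_m\backslash G_m]$ is an Artin stack over $\kappa$. Finally, smoothness over $\kappa$ follows by descent along the faithfully flat smooth morphism $\pi$: since $G_m$ is smooth over $\kappa$, so is $[H_m\backslash G_m]$. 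I do not anticipate any genuine obstacle here; the only point requiring (minor) attention is the smoothness of the target map $T_m$, which the automorphism displayed above takes care of.
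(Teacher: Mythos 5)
Your proof is correct and fills in exactly the argument that the paper leaves implicit: the paper simply remarks that the corollary follows ``by the smoothness of $H_m$ and $G_m$'' without spelling out the groupoid argument. Your use of the automorphism $(h,g)\mapsto(h,hg\sigma(h)^{-1})$ to get smoothness of the target map, together with the standard algebraicity criterion for smooth affine groupoid quotients and descent of smoothness along the atlas $G_m\to[H_m\backslash G_m]$, is precisely the intended route.
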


For $m'\geq m$, we have natural morphisms $H_{m'}\rightarrow H_m$ and $G_{m'}\rightarrow G_m$ inducing a commutative diagram
$$\xymatrix{
H_{m'}\times G_{m'}\ar[r]^(0.65){T_{m'}}\ar[d]&G_{m'}\ar[d]\\
H_m\times G_m\ar[r]^(0.65){T_m}& G_m.
}$$
Hence we have a natural morphism of algebraic stacks $[H_{m'}\backslash G_{m'}]\rightarrow[H_m\backslash G_m]$.
\begin{theorem}\label{cent are big level m}
There exists an integer $N>0$, such that for any $m'\geq m\geq N$, the natural morphism $[H_{m'}\backslash G_{m'}]\rightarrow[H_m\backslash G_m]$ induces a homeomorphism on topological spaces.
\end{theorem}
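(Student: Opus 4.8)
The strategy is to show that for $m'\geq m\geq N$ the transition morphism $q\colon[H_{m'}\backslash G_{m'}]\to[H_m\backslash G_m]$ is smooth and induces a bijection on underlying topological spaces; since smooth morphisms of algebraic stacks are open and a continuous open bijection is a homeomorphism, this gives the claim. Smoothness and surjectivity are formal consequences of the behaviour of the Greenberg functor, whereas injectivity, together with the existence of a uniform $N$, is where the crystalline boundedness principle of Vasiu \cite{crysboud} enters; this is the only non-formal point.

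For the formal half, Theorem \ref{greenberg functor}(4) shows that for a smooth $W(\kappa)$-scheme $X$ the map $\W_{i+1}(X)\to\W_i(X)$ is a torsor under a vector bundle on the special fibre, hence smooth and surjective with affine-space fibres; iterating, $G_{m'}\to G_m$ and $H_{m'}\to H_m$ are smooth and surjective. The kernel $K:=\ker(H_{m'}\to H_m)$ acts trivially on $G_m$, because the $H_{m'}$-action on $G_m$ occurring in the defining commutative square factors through $H_m$; consequently $q$ factors as
$$[H_{m'}\backslash G_{m'}]\xrightarrow{\ a\ }[H_{m'}\backslash G_m]\longrightarrow[H_m\backslash G_m],$$
in which the second morphism is a $K$-gerbe, hence a surjective homeomorphism on topological spaces, while $a$ is induced by the $H_{m'}$-equivariant affine-space bundle $G_{m'}\to G_m$, hence smooth and surjective. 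Therefore $q$ is smooth and $|q|$ is surjective.

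It remains to prove that every geometric fibre of $q$ is topologically a single point. Fix an algebraically closed field $k\supseteq\kappa$ and an element $\overline{g}\in G(W_m(k))$ representing a point of $[H_m\backslash G_m]$. Let $\Omega$ be the affine space of lifts of $\overline{g}$ to $G(W_{m'}(k))$ and $\Gamma=\mathrm{Stab}_{H_{m'}}(\overline{g})$ (a group containing $K$); then the orbit-stabiliser relation for the $H_{m'}$-action identifies $q^{-1}(\overline{g})$ with $[\Omega/\Gamma]$. Under the translation between the stacks $[H_\bullet\backslash G_\bullet]$ and truncated Dieudonn\'e modules with $G$-structure of Hodge type $\mu$ (Vasiu \cite{levmstra}; see also sections~2--3), the $\Gamma(k)$-orbits on $\Omega(k)$ are precisely the isomorphism classes of BT-$m'$'s with $G$-structure which truncate to the fixed BT-$m$ attached to $\overline{g}$. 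The crystalline boundedness principle \cite{crysboud} provides an integer $N$, independent of $k$, such that for $m\geq N$ there is exactly one such class; equivalently, $\Gamma(k)$ acts transitively on $\Omega(k)$. Since a $k$-scheme of finite type all of whose $k$-points lie in a single orbit coincides with that orbit, $\Omega$ is one $\Gamma$-orbit, so $q^{-1}(\overline{g})$ is a classifying stack and in particular has a one-point topological space.

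Combining the two halves, $|q|$ is continuous, open, surjective and injective, hence a homeomorphism, which proves the theorem for this $N$. I expect the genuine difficulty to be concentrated in the appeal to \cite{crysboud}: one has to match the group-theoretic fibre $[\Omega/\Gamma]$ with the level-$m$ versus level-$m'$ comparison of BT's with $G$-structure, and, crucially, check that the bound $N$ is uniform in the algebraically closed field $k$, since it is this uniformity that lets the fibrewise statement control the non-closed points of $|[H_m\backslash G_m]|$ as well.
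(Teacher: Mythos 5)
Your proof is correct and rests on exactly the same pillar as the paper's: the paper's entire proof is the single line ``This is Vasiu's \cite{crysboud} MAIN THEOREM A,'' so the topological reformulation is simply being read off from Vasiu's crystalline boundedness principle, which is what your appeal to \cite{crysboud} for the transitivity of $\Gamma(k)$ on $\Omega(k)$ amounts to. The scaffolding you add (factoring $q$ through $[H_{m'}\backslash G_m]$, identifying the geometric fibre with $[\Omega/\Gamma]$, passing from orbits on $k$-points to a topological homeomorphism via smoothness and universal injectivity) is a faithful and useful unpacking of what the citation leaves implicit, and your worry about uniformity of $N$ in $k$ is precisely what Vasiu's theorem supplies: the bound depends only on the group-theoretic data $(G_{W(\kappa)},\mu)$, not on the algebraically closed field.
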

\begin{proof}
This is Vasiu's \cite{crysboud} MAIN THEOREM A.
\end{proof}
\newpage

\section[Level $m$ stratifications]{Level $m$ stratifications}

\subsection[Constructing torsors over $\ES$ and $\ES_0$]{Constructing torsors over $\ES$ and $\ES_0$}
We expect a smooth morphism $\ES_0\rightarrow [H_m\backslash G_m]$ whose fibers are level $m$ strata. But we can only do this over a Zariski cover. Fortunately, this is enough for the study of level $m$ stratifications. The construction here is a generalization of our construction in \cite{EOZ}. For simplicity, we will write $\ES$ for $\ES_K(G,X)\otimes W(\kappa)$.

We use notations as in 1.1. Let $\I=\IIsom\big((V_{\INT_p}^\vee,s)\otimes O_{\ES}, (\V,s_{\dr})\otimes O_{\ES}\big)$, it is a $G_{W(\kappa)}$-torsor over $\ES$ (see e.g. \cite{EOZ}). Let $L^1\subseteq L_{\INT_p}^\vee$ be the submodule of weight 1 with respect to $\mu$, let $\V^1\subseteq \V$ be the Hodge filtration. Let $\I_+\subseteq \I$ be the closed subscheme given by $\I_+=\IIsom\big((V_{\INT_p}^\vee\supseteq L^1,s)\otimes O_{\ES}, (\V\supseteq \V^1,s_{\dr})\otimes O_{\ES}\big)$. Then $\I_+$ is a $P_+$-torsor over $\ES$ (see \cite{EOZ}).


Let $\I_{+,0}$ be the special fiber of $\I_+$ viewed as a $W(\kappa)$-scheme. Then $\I_{+,0}$ is smooth over $\ES_0$ and hence smooth over $\kappa$. Let $\I'$ be the dilatation of $\I_{+,0}$ on $\I$. Then by Proposition \ref{dilatation}, $\I'$ is smooth over $W(\kappa)$.
\begin{proposition}\label{dilatation is a torsor}
The $W(\kappa)$-scheme $\I'$ is naturally a $G'$-torsor over $\ES$. Moreover, we have a commutative diagram of $\ES$-morphisms
$$\xymatrix{
G'\times \I'\ar[r]\ar[d]^{u_1\times u}&\I'\ar[d]^u\\
G_{W(\kappa)}\times \I\ar[r]^(0.65)\rho& \I,
}$$
where the horizontal morphisms are actions, and the vertical ones are those induced by dilatations.
\end{proposition}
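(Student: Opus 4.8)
The plan is to descend the $G'$-torsor structure on $\I'$ from the $G_{W(\kappa)}$-torsor structure on $\I$ using the universal property of dilatations (Proposition \ref{dilatation}(2)), and to verify compatibility of the two actions on the nose.

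First I would establish the action morphism. The group $G_{W(\kappa)}$ acts on $\I$ via $\rho$, and this restricts to an action of $P_+$ on $\I_+$, hence on the special fiber $\I_{+,0}$ of the subgroup $P_{+,0}\subseteq G_{W(\kappa),0}$. By Proposition \ref{dilatation}(3), applied to the group $G_{W(\kappa)}$ with subgroup $P_{+,0}$ and to the $\I$-scheme with subscheme $\I_{+,0}$, the product $G'\times_{W(\kappa)}\I'$ is canonically the dilatation of $P_{+,0}\times_\kappa \I_{+,0}$ on $G_{W(\kappa)}\times_{W(\kappa)}\I$. Now consider the composite $G'\times\I'\xrightarrow{u_1\times u} G_{W(\kappa)}\times\I\xrightarrow{\rho}\I$. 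On special fibers this factors through $P_{+,0}\times\I_{+,0}$, whose image under the action lands in $\I_{+,0}$ (since $P_{+,0}$ preserves the filtration defining $\I_{+,0}$); since $G'\times\I'$ is flat over $W(\kappa)$, the universal property of the dilatation $\I'\to\I$ (Proposition \ref{dilatation}(2)) yields a unique $W(\kappa)$-morphism $G'\times\I'\to\I'$ lifting $\rho\circ(u_1\times u)$. This is the desired action, and it makes the square commute by construction; the action axioms (associativity, identity) follow from uniqueness in the universal property, exactly as in Proposition \ref{dilatation}(3) for the group structure.

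Next I would check that $\I'\to\ES$ is a $G'$-torsor, i.e. fppf-locally trivial and that the map $G'\times\I'\to\I'\times_\ES\I'$, $(g,x)\mapsto(gx,x)$, is an isomorphism. Working Zariski-locally on $\ES$ where $\I$ is trivialized, $\I\cong G_{W(\kappa)}\times\ES$, and under such a trivialization $\I_{+,0}$ corresponds to $P_{+,0}\times\ES_0$ (after translating the trivialization so that the chosen section reduces into $\I_{+,0}$, which one can do since $\I_+\to\ES$ is smooth surjective); then by Proposition \ref{dilatation}(3) the dilatation $\I'$ becomes $G'\times\ES$ with its standard left $G'$-action, which is manifestly a trivial torsor. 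Since $\I'\to\ES$ is smooth by Proposition \ref{dilatation}(1), it is in particular fppf, so this local triviality suffices. The isomorphism $G'\times\I'\xrightarrow{\sim}\I'\times_\ES\I'$ can be checked after the generic fiber and the special fiber, or directly via the local trivialization; alternatively it follows from the corresponding statement for $\I$ together with the universal property, since both sides are dilatations of the $P_{+,0}$-version inside the $G_{W(\kappa)}$-version.

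I expect the main obstacle to be the bookkeeping around trivializations: one must arrange a Zariski-local trivialization of $\I$ whose reduction mod $p$ carries $\I_{+,0}$ precisely to $P_{+,0}$ (not merely to a $P_{+,0}$-torsor), so that Proposition \ref{dilatation}(3)'s "dilatations commute with products" applies cleanly to identify $\I'$ locally with $G'\times\ES$. This is possible because $\I_+\subseteq\I$ is a $P_+$-torsor inside a $G_{W(\kappa)}$-torsor and $P_+$ is smooth, so sections of $\I_+$ exist fppf- (indeed smooth-) locally and can be used to adjust the trivialization; but care is needed since we want a Zariski trivialization of $\I$ and only a compatible section of $\I_{+,0}$ over the special fiber. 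Everything else — the existence and uniqueness of the action, commutativity of the square, the action axioms — is formal from the universal property of dilatations, so the content of the proof is really this local model computation.
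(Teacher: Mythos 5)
Your construction of the action morphism $\rho'$ via the universal property of dilatations is correct and matches the paper. The divergence is in the torsor verification, where you propose a local-trivialization argument while the paper instead constructs the inverse of $(\rho',\mathrm{pr}_2)$ directly by applying the universal property once more (in each direction) and separately proves smoothness by a local-coordinate computation and surjectivity by a fiberwise argument.

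There is a genuine gap in your local-trivialization route as written: you propose to trivialize $\I$ \emph{Zariski}-locally and then ``translate the trivialization so that the chosen section reduces into $\I_{+,0}$.'' Two problems. First, $\I$ is a torsor under the reductive group $G_{W(\kappa)}$, and such torsors are only guaranteed to be trivial fppf- (equivalently \'{e}tale- or smooth-) locally, not Zariski-locally. Second, the translation step requires lifting a section $U_0\to\I_{+,0}$ to a section $U\to\I$; since $U_0\hookrightarrow U$ is cut out by $(p)$, which is not nilpotent in $O_U$, smoothness of $G_{W(\kappa)}$ does not give such a lift Zariski-locally. You flag this as ``care is needed'' but do not resolve it, and it is precisely the point where the argument as stated would fail.

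The fix is simply to commit to \'{e}tale- or smooth-local trivialization: since $\I_+\to\ES$ is smooth and surjective, it admits sections after \'{e}tale base change $U\to\ES$, and a single section $s\colon U\to\I_+|_U$ trivializes $\I_+|_U\cong P_+\times U$ and hence $\I|_U\cong G_{W(\kappa)}\times U$ in a way that visibly carries $\I_{+,0}|_{U_0}$ to $P_{+,0}\times U_0$. Proposition \ref{dilatation}(3), together with the (true but unstated) fact that dilatation commutes with flat base change, then gives $\I'|_U\cong G'\times U$, and since the torsor condition is fppf-local on $\ES$ one is done; this also subsumes the paper's separate smoothness and surjectivity checks. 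With that correction your route is valid and in some ways cleaner than the paper's, at the cost of needing the base-change compatibility of dilatations, which Proposition \ref{dilatation} does not record.
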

\begin{proof}
The natural composition $\I'\stackrel{u}{\rightarrow} \I\stackrel{\pi}{\rightarrow} \ES$ makes $\I'$ an $\ES$-scheme. We first prove that $\I'$ is faithfully flat over $\ES$.

We claim that $\I'$ is smooth over $\ES$. We only need to check this at points on $\ES_0$, as over $\Sh_K(G,X)$, the morphism $\I'\rightarrow \I$ induces a canonical isomorphism $\I'|_{\Sh_K(G,X)}\rightarrow \I|_{\Sh_K(G,X)}$. Let $\mathscr{I}$ be the ideal sheaf of $O_{\I}$ defining $\I_+$, then $O_{\I'}=O_{\I}[\frac{\mathscr{I}}{p}]$. Let $x'\in \I'|_{\ES_0}$ be a point, and $x=\pi\circ u(x')$ be its image in $\ES_0$, we will find open affine neighbourhoods $x'\in U'\subseteq \I'$ and $x\in V\subseteq \ES$, such that $\pi\circ u(U')\subseteq V$ and that $U'\rightarrow V$ is smooth. Still write $x'$ for its image in $\I$, then by the $\ES$-smoothness of $\I$ and $\I_+$, there is an open affine neighbourhood $U\subseteq \I$ of $x'$ and an open affine neighbourhood $V\subseteq \ES$ of $x$, such that

1. there are elements $a_1,\cdots, a_i\in \mathscr{I}_U$ such that $(a_1,\cdots, a_i)=\mathscr{I}_U$

2. there are elements $b_1,\cdots, b_t\in O_U$ such that $da_1,\cdots, da_i,db_1,\cdots,db_t$ form a basis of $\Omega^1_{O_U/O_V}$.\\
The homomorphism $R[x_1,\cdots,x_i,y_1,\cdots,y_t]\rightarrow O_U$ given by $x_j\rightarrow a_j$, $ 1\leq j\leq i$ and $y_s\rightarrow b_s$, $1\leq s\leq t$ induces a decomposition $U\stackrel{v}{\rightarrow}\mathbb{A}_V^{i+t}\rightarrow V$ with $v$ \'{e}tale. Now take $U'\subseteq \I'$ to be $u^{-1}(U)$, then $O_{U'}=O_U[\frac{\mathscr{I}_U}{p}]=O_U\otimes_{O_{\mathbb{A}_V^{i+t}}}
O_{\mathbb{A}_V^{i+t}}[\frac{x_1}{p},\cdots,\frac{x_i}{p}]$ is smooth over $O_{\mathbb{A}_V^{i+t}}[\frac{x_1}{p},\cdots,\frac{x_i}{p}]$, which is a polynomial algebra over $O_V$, and hence $U'$ is smooth over $V$.

Now we prove that $\I'\rightarrow \ES$ is surjective on topological spaces.

Let $x$ be a geometric point of $\ES_0$ with residue field $k=\overline{k}$, and $\widetilde{x}\in \ES(W(k))$ be a lifting of $x$. The $\ES$-scheme $\I_+$ is flat, so $\widetilde{x}$ induces an exact sequence $0\rightarrow \mathscr{I}\otimes_{O_\ES}W(k)\rightarrow O_{\I_{\widetilde{x}}}\rightarrow O_{\I_{+,\widetilde{x}}}\rightarrow 0.$ Let $(\I_{\widetilde{x}})'$ be the dilatation of the special fiber of $\I_{+,\widetilde{x}}$ on~$\I_{\widetilde{x}}$, then $O_{(\I_{\widetilde{x}})'}=O_{\I_{\widetilde{x}}}[\frac{\mathscr{I}\otimes_{O_\ES}W(k)}{p}]$. Note that there is a natural surjection $O_{\I'}\otimes_{O_\ES} W(k)\rightarrow O_{(\I_{\widetilde{x}})'}$, so $\I'_x$ contains the special fiber of $(\I_{\widetilde{x}})'$ which is non-empty. Note that the closed embedding $(\I_{\widetilde{x}})'\hookrightarrow \I'_{\widetilde{x}}$ is actually an isomorphism, as they are both smooth affine over $W(k)$ of the same dimension.

To prove that $\I'$ is a $G'$-torsor over $\ES$, we still need to construct an action $\rho':G'_\ES\times_\ES \I'\rightarrow \I'$ and show that the morphism $\xymatrix@C=1.2cm{G'_\ES\times_\ES \I'\ar[r]^(0.55){(\rho', pr_2)} &\I'\times_\ES \I'}$ is an isomorphism.

To construct $\rho'$, consider the diagram of $\ES$-morphism
$$\xymatrix@C=0.2cm{
G'_\ES\times_\ES\I'\ar@{=}[r]&G'\times_{W(\kappa)}\I'\ar[d]^{u_1\times u} &&& \I'\ar[d]^{u}\\
& G_{W(\kappa)\times_{W(\kappa)}}\I'\ar[rrr]^(0.7){\rho}&&& \I
}$$
By Proposition \ref{dilatation} 3, the morphism $u_1\times u$ is the dilatation of $P_{+,0}\times_\kappa \I_{+,0}$ on $G_{W(\kappa)\times_{W(\kappa)}}\I'$, and hence $(G'\times_{W(\kappa)}\I')_\kappa$ is mapped to $P_{+,0}\times_\kappa \I_{+,0}$, which is mapped to $\I_{+,0}$ via $\rho$. So by the universality of $\I'\stackrel{u}{\rightarrow}\I$, there is an unique $\rho':G'\times_{W(\kappa)}\I'\rightarrow \I'$ making the above diagram commutative. It is clearly an $\ES$-morphism. That $\rho'$ satisfies the association law follows similarly from Proposition \ref{dilatation}.

Now we prove that the morphism $\xymatrix@C=1.2cm{G'_\ES\times_\ES \I'\ar[r]^(0.55){(\rho', pr_2)} &\I'\times_\ES \I'}$ is an isomorphism. We consider the commutative diagram $$\xymatrix@C=1.2cm{
G'\times_{W(\kappa)} \I'\ar[r]^(0.55){(\rho', pr_2)}\ar[d]^{u_1\times u} &\I'\times_\ES \I'\ar[d]^{u\times u}\\
G_{W(\kappa)}\times_{W(\kappa)} \I\ar[r]^(0.6){(\rho, pr_2)} &\I\times_\ES \I.
}$$
The morphism $\rho\times pr_2$ is an isomorphism, and hence has an inverse $i$. Note that $\I'_0\times_{\ES_0}\I'_0$ is mapped to $\I_{+,0}\times_{\ES_0}\I_{+,0}$, which is mapped to $P_{+,0}\times_{\kappa}\I_{+,0}$ via $i$, so by the universality of $G'\times_{W(\kappa)} \I'\rightarrow G_{W(\kappa)}\times_{W(\kappa)} \I$, there is an unique morphism $i':\I'\times_\ES \I'\rightarrow G'\times_{W(\kappa)} \I'$ such that $(u_1\times u)\circ i'=i\circ (u\times u)$. The universality also implies that $i'\circ((\rho', pr_2))=id$.

For an $\ES$-scheme $T$ which is flat over $W(\kappa)$, and a morphism $(t_1,t_2):T\rightarrow \I\times_\ES \I$, such that $t_i(T\times_\ES \ES_0)\subseteq \I_{+,0}$, $i=1,2$, the universality of $u:\I'\rightarrow I$ implies that there is a unique morphism $(t'_1,t'_2):T\rightarrow \I'\times_\ES\I'$ such that $(t_1,t_2)=(u\times u)\circ(t'_1,t'_2)$. One applies this to $T=\I'\times \I'$ and finds immediately $((\rho', pr_2))\circ i'=id$.
\end{proof}
Let $v':\I_+\rightarrow \I'$ be the natural morphism which is equivariant with respect to their torsor structures. Let $v'_m:\I_{+,m}\rightarrow \I'_m$ be the reduction mod $p^m$.

Applying the functor $\W_m(-)$ to the commutative diagram in the above proposition, by Theorem \ref{greenberg functor} and its remarks, we have that $\W_m(\I')\rightarrow \W_m(\ES)$ is a torsor under $\W_m(G')=G'_m$, and a commutative diagram of $\W_m(\ES)$-morphisms$$\xymatrix@C=1.2cm{
G'_m\times_{W_m(\kappa)} \W_m(\I')\ar[r]\ar[d]^{\W_m(u_1)\times \W_m(u)}&\W_m(\I')\ar[d]^{\W_m(u)}\\
G_{W(\kappa),m}\times_{\W_m(\kappa)} \W_m(\I)\ar[r]^(0.65){\W_m(\rho)}& \W_m(\I).
}$$
\subsubsection{Constructing certain torsors over $\ES_0$}\label{construcion of torsors}

To do our job, we have to construct torsors over a Zariski cover of $\ES_0$, rather than over $\W_m(\ES)$.
We write $\mathcal{G}$ for the group $G_{W(\kappa)}$ (resp. $G'$, resp. $P_+$) and $\mathcal{T}^\circ$ for the $\mathcal{G}$-torsor $\I$ (resp. $\I'$, resp. $\I_+$) over $\ES$. By applying $\W_m(-)$, we get a $\W_m(\mathcal{G})$-torsor $\W_m(\mathcal{T}^\circ)$ over $\W_m(\ES)$.

Let $U^i=\Spec R^i, 1\leq i \leq r$ be an open covering of
$\ES$. Let $U^i_m=U^i\times W_{m}(\kappa)$,
$R^i_m=R^i\otimes_{W(\kappa)} W_{m}(\kappa)$, then by smoothness
of $U^i_m$, there is a homomorphism of $W_{m}(\kappa)$-algebras
$w^i_m:R^i_m\rightarrow W_{m}(R^i_0)$ such that $w^i_m$ induces
the identity map on $R^i_0$. We will fix once and for all a system
of $w^i_m$s such that $w^i_{m-1}$ is induced by $w^i_m$ mod $p^{m-1}$.
Note that $w^i_m$ is the same as a morphism $U^i_0\rightarrow
\W_{m}(U^i)\subseteq \W_{m}(\ES)$, which will also be denoted
by $w^i_m$. By pulling back the $\W_{m}(\mathcal{G})$-torsor
$\W_{m}(\mathcal{T}^\circ)$ along $w_i$, we get a
$\W_{m}(\mathcal{G})$-torsor $\mathcal{T}^i_m$ over $U^i_0$. Let
$U_0=\coprod\limits_{i=1}^r U_0^i$, then we get a
$\W_{m}(\mathcal{G})$-torsor $\mathcal{T}_m$ over $U_0$ by
putting together $\mathcal{T}^i_m$s.

The scheme $\mathcal{T}_m/U_0$ should be viewed as a kind of ``relative $\W_{m}$ functor''. More precisely, for an affine scheme $T=\Spec A$ over $U_0$, $\mathcal{T}_m(T)$ is the subset of elements $t\in (\coprod_{i=1}^rU_m^i)(W_{m}(A))$ such that the diagram $$\xymatrix@C=1.2cm{
\Spec W_{m}(A)\ar[r]\ar[d]&\coprod_{i=1}^r\mathcal{T}^\circ|_{U_m^i}\ar[d]\\
\Spec W_{m}(\prod_{i=1}^r R_0)\ar[r]& \coprod_{i=1}^rU_m^i.
}$$ is commutative, where the lower horizontal morphism is induced by the $w^i_m$s. This implies that $\mathcal{T}_{m+1}\rightarrow \mathcal{T}_m$ is a (trivial) torsor under $F^{m*}(\Omega^{1\vee}_{\mathcal{T}^\circ_0/\ES_0})\otimes (\prod_{i=1}^r R_0)$

We apply the above constructions to $\mathcal{T}^\circ=\I_+$ (resp. $\I'$) and get a $G'_m$-torsor (resp. $P_{+,m}$-torsor) over $U_0$. By abusing notations, the $G'_m$-torsor (resp. $P_{+,m}$-torsor) will be denoted by $\I_{+,m}$ (resp.~$\I'_m$).

\subsection[The morphism $\zeta_m$]{The morphism
$\zeta_m$}\label{construction of zeta}

We want a $G'_m$-equivariant morphism $\zeta_m:\I'_m\rightarrow G_m$ which induces the level $m$ stratification. The $\kappa$-scheme $\I'_m$ is smooth and affine, so to define the morphism $\zeta_m:\I'_m\rightarrow G_m$, it suffices to work with $\I'_m$-points. This means that we only need to determine how to attach an element in $G_m(T)$ to an element in $\I'_m(T)$ with $T$ affine and smooth over $U_0$. Let $T=\Spec A$ and $t:T\rightarrow \I'_m$ be a morphism. Then $t$ corresponds to a morphism $W_{m}(T)\rightarrow \I'$ such that the diagram
$$\xymatrix{
W_{m}(T)\ar[r]\ar[d] &\I'\ar[d]\\
W_{m}(U_0)\ar[r]& \ES.
}$$
is commutative. Here the lower horizontal morphism is induced by $w^i_{m}$s at the beginning of this section. This diagram lifts to a commutative diagram $$\xymatrix{
W(T)\ar[r]^{\widetilde{t}}\ar[d] &\I'\ar[d]\\
W(U_0)\ar[r]& \ES.
}$$
We remak that $W(A)$ is $p$-adically complete and $p$-torsion free.

We fix some notations first. Let $S_0$ be an $\mathbb{F}_p$-scheme and $S$ be a $\INT_p$-scheme equipped with an automorphism $\sigma:S\rightarrow S$ lifting the absolute Frobenius on $S_0$. For an $S$-scheme $T$, we write $\sigma^*T$ or $T^{(\sigma)}$ for the pull back of $T$ via $\sigma$. For a coherent $O_S$-module $N$ and an $O_S$-linear homomorphism~$f$, we write $\sigma^*f$ or $f^{(\sigma)}$ for the pull back of $f$ via $\sigma$. If $N=M\otimes O_S$ for some finitely generated $\INT_p$-module $M$, then there is an $O_S$-linear isomorphism $\xi:N\rightarrow \sigma^*N$ given by $m\otimes s\rightarrow m\otimes1\otimes s$. The inverse of $\xi$ is given by $m\otimes t\otimes s\rightarrow m\otimes \sigma(t)s$. It is easy to check that $\sigma(f)=\xi^{-1}f^{(\sigma)}\xi$.

We now follow \cite{crys rep and F-crys} Appendix A and \cite{displayandcrysdieu}. Let $\sigma$ be the Frobenius on $W(A)$, and $\nu$ be the Verschiebung. Note that $\nu$ is injective. We write $\nu^{-1}:\nu(W(A))\rightarrow W(A)$ for the map by taking preimages. The tuple $(W(A),\nu(W(A)), A, \sigma, \nu^{-1})$ is a lifting frame (see \cite{displayandcrysdieu} page 12 for the definition, and 2.2 for this statement).

Let $\mathbb{D}(\A_T)$ be the Dieudonn\'{e} crystal attached to $\A_T[p^\infty]$. There is a canonical isomorphism $\mathbb{D}(\A_T)(W(A))\cong \V\otimes W(A)$. The Frobenius on $\mathbb{D}(\A_T)(W(A))$ induces via the isomorphism a $\sigma$-linear map $\varphi:\V\otimes W(A)\rightarrow \V\otimes W(A)$, and a linear map $v:\V\otimes W(A)\rightarrow (\V\otimes W(A))^{(\sigma)}$. We also have $\varphi^\lin\circ v=p, v\circ \varphi^\lin=p$. Let $F^1\subseteq \V\otimes W(A)$ be the preimage of $\text{Lie}(\A_T)\subseteq \mathbb{D}(\A_T)(A)=\V\otimes A$, then by the proof of \cite{displayandcrysdieu} Proposition 3.15, $\frac{\varphi}{p}$ is well defined on $F^1$, and induces a surjection $ (\frac{\varphi}{p})^\lin:\sigma^*F^1\rightarrow \V\otimes W(A)$. Note that $v^\vee$ induces the Frobenius ${}^\vee\!\varphi$ on $\V^\vee\otimes W(A)$. One defines the Frobenius on $(\V\otimes W(A))^\otimes[\frac{1}{p}]$ to be the one induced by $\varphi$ on $\V\otimes W(A)$
and $\frac{{}^\vee\!\varphi}{p}$ on $(\V^\vee\otimes W(A))[\frac{1}{p}]$.
Let $\varphi^{\lin}:(\V\otimes W(A))^{(\sigma)}\rightarrow \V\otimes W(A)$ be the linearization of $\varphi$. Then for a normal decomposition $\V\otimes W(A)=L\oplus M$ (see \cite{displayandcrysdieu} page 12 for the definition) where $L$ is a direct summand of $\V\otimes W(A)$ lifting $\text{Lie}(\A_T)$, the map $(\frac{\varphi}{p})^\lin\oplus \varphi^\lin:(L\oplus M)^{(\sigma)}\rightarrow \V\otimes W(A)$ is an isomorphism.

Let $\widetilde{t}\in \I'(W(T))$ be as before. Let us still write $\widetilde{t}$ for its image in $\I$. Then $\widetilde{t}(L^1\otimes W(A))$ is a lifting of $\V^1\otimes A$, and $\widetilde{t}(L^1\otimes W(A))\bigoplus \widetilde{t}(L^0\otimes W(A))$ is a normal decomposition of $\V\otimes W(A)$. Let $g_{\widetilde{t}}$ be the composition of
$$\xymatrix{L\otimes W(A)\ar[r]^(0.45){\xi}&(L\otimes W(A))^{(\sigma)}\ar[r]^(0.45){\widetilde{t}^{(\sigma)}}&(\V\otimes W(A))^{(\sigma)}}$$
$$\xymatrix@C=0.2cm{(\V\otimes W(A))^{(\sigma)}=\widetilde{t}^{(\sigma)}((L^1\otimes W(A))^{(\sigma)})\bigoplus \widetilde{t}^{(\sigma)}((L^0\otimes W(A))^{(\sigma)})\ar[rrr]^(0.8){(\frac{\varphi}{p})^\lin\oplus \varphi^\lin}&&& \V\otimes W(A)
}$$
$\text{and }\ \ \ \ \ \ \ \ \ \ \ \ \ \ \V\otimes W(A)\stackrel{\widetilde{t}^{-1}}{\longrightarrow} L\otimes W(A).$
\begin{proposition}\label{g_t lies in G}
The element $g_{\widetilde{t}}\in \GL(L)(W(A))$ is in $G_{W(\kappa)}(W(A))$.
\end{proposition}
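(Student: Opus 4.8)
The plan is to show that $g_{\widetilde t}$ preserves the tensor $s$, since $G_{W(\kappa)}$ is by construction the stabilizer of $s\in V_{\INT_p}^{\vee\otimes}$ inside $\GL(V_{\INT_p}^\vee)$, and the formation of stabilizers commutes with the flat base change $W(\kappa)\to W(A)$. So it suffices to check that the automorphism of $L\otimes W(A)$ given by $g_{\widetilde t}$, transported via $\widetilde t$ to an automorphism of $\V\otimes W(A)$, fixes $s_{\dr}$. Concretely, $\widetilde t\circ g_{\widetilde t}\circ\widetilde t^{-1}$ is, by unwinding the definition, the composite
$$\V\otimes W(A)\xrightarrow{\ \xi\ }(\V\otimes W(A))^{(\sigma)}\xrightarrow{\ (\frac{\varphi}{p})^{\lin}\oplus\varphi^{\lin}\ }\V\otimes W(A),$$
where the middle map uses the normal decomposition $\widetilde t(L^1\otimes W(A))\oplus\widetilde t(L^0\otimes W(A))$. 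Call this endomorphism $h$. The point is that $h$ is exactly the restriction to $\V\otimes W(A)$ of the Frobenius $\Phi$ that was defined on $(\V\otimes W(A))^\otimes[\frac1p]$ (built from $\varphi$ on $\V\otimes W(A)$ and $\frac{{}^\vee\varphi}{p}$ on the dual): indeed on the summand $\widetilde t(L^0\otimes W(A))$ it is $\varphi^{\lin}\circ\xi = \varphi$, and on $\widetilde t(L^1\otimes W(A))$ it is $\frac{\varphi}{p}$, and these two prescriptions glue to an integral, $W(A)$-linear isomorphism precisely because $L^1$ is the weight-one part for $\mu$, i.e. $h = \sigma^*(\Phi)$ realized $W(A)$-linearly, or more precisely $h$ is $\varphi^{\lin}$ composed with the canonical comparison $\xi$. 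Since $\Phi$ is a Frobenius on a Dieudonné crystal with a Frobenius-invariant tensor, $h$ fixes $s_{\dr}\otimes 1$.

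The key input for Frobenius-invariance of $s_{\dr}$ is the $p$-adic/crystalline statement recalled in \ref{intcanmod}: over a $W(k(x))$-point the crystalline tensor $s_{\cris,\widetilde x}$ is Frobenius-invariant, and $s_{\dr}$ is horizontal for the connection on $\Dieu(\A_{\widehat R})(\widehat R)$. One propagates this to the universal situation over $W(A)$ using Construction \ref{construction of connection} and Construction \ref{frob over diff rings}: the frame $(W(A),\nu(W(A)),A,\sigma,\nu^{-1})$ computes $\Dieu(\A_T)(W(A))\cong\V\otimes W(A)$ together with its $\varphi$ and $\nabla$, and the tensor $s_{\dr}$ pulled back along $W(T)\to\ES$ is horizontal, hence — by the usual rigidity argument, since $s_{\dr}$ is Frobenius-invariant at all $W(k)$-points and $W(A)$ is $p$-torsion free with $A_0$ formally smooth — it is $\varphi$-invariant in $(\V\otimes W(A))^\otimes[\frac1p]$. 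Then $h$, being built from $\varphi$ and $\frac{{}^\vee\varphi}{p}$, automatically fixes $s_{\dr}$; in particular it fixes the integral summand $s_{\dr}\in\V^\otimes\otimes W(A)$, so $\widetilde t^{-1}h\,\widetilde t = g_{\widetilde t}$ fixes $s = \widetilde t^{-1}(s_{\dr})$.

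The main obstacle I expect is the last propagation step: making rigorous that Frobenius-invariance of $s_{\dr}$, known pointwise on $W(k)$-points from \cite{CIMK}, together with horizontality, forces $\varphi$-invariance over the base ring $W(A)$ attached to a smooth affine $T=\Spec A$. This needs (i) identifying the Frobenius on $\Dieu(\A_T)(W(A))$ produced by the lifting frame with the one coming from a lift of Frobenius on a formally smooth lift of $A_0$ via Construction \ref{frob over diff rings}, and (ii) a density/injectivity argument: an element of $(\V^\otimes\otimes W(A))[\frac1p]$ that is horizontal and killed by $\varphi-\mathrm{id}$ at every $W(k)$-point, with $A$ integral (regular, by the discussion after Proposition \ref{crystal vs module with connec} when $A_0$ is formally finitely generated, or one reduces to that case), must vanish — because $W(A)\hookrightarrow\prod_x W(k(x))$ is injective after inverting $p$ for such $A$. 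One should also note the harmless subtlety that $\frac{\varphi}{p}$ is only defined on $F^1$ and $\frac{{}^\vee\varphi}{p}$ on the appropriate submodule of the dual, so the computation of $h$ is carried out using the normal decomposition, exactly as $g_{\widetilde t}$ was defined, and no denominators actually appear in the end. Once these points are in place the proposition follows formally.
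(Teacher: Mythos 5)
Your overall strategy — reduce to showing that $\widetilde t\,g_{\widetilde t}\,\widetilde t^{-1}$ fixes $s_{\dr}$, then reduce that to Frobenius-invariance of $s_{\dr}$, and finally propagate that invariance from points — is the same as the paper's. However, there is a real gap in the middle reduction. You claim that $h:=\bigl((\tfrac{\varphi}{p})^{\lin}\oplus\varphi^{\lin}\bigr)\circ\xi$ ``is exactly the restriction to $\V\otimes W(A)$ of the Frobenius $\Phi$'' and, more precisely, that ``$h$ is $\varphi^{\lin}$ composed with the canonical comparison $\xi$.'' That identification is false: on the summand $\widetilde t(L^1\otimes W(A))$ your $h$ acts by $\varphi/p$, whereas $\Phi|_{\V}$ and $\varphi^{\lin}\circ\xi$ both act there by $\varphi$; the two differ by $\mu(p)$. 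The conclusion ``Since $\Phi$ fixes $s_{\dr}$, $h$ fixes $s_{\dr}$'' therefore does not follow from what you wrote. The correct (and necessary) statement, which is what the paper actually proves, is weaker and more subtle: the endomorphism $h^{\otimes}$ induced on $\V^{\otimes}\otimes W(A)$ and the Frobenius $\Phi$ \emph{agree after restriction to the weight-$0$ graded piece} $(\V^{\otimes}\otimes W(A))^{0}$ for the $\mu$-grading, because on a monomial of multidegree $(a,b,c,d)$ with $a=d$ the factors $p^{-a}$ and $p^{d}$ cancel. Combined with the fact that $s_{\dr}$ lies in that weight-$0$ piece (which is where $\varphi$ even takes integral values, so the statement makes sense integrally), this gives the desired conclusion. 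Without singling out the weight-$0$ submodule, your identification and the ensuing deduction break down — this is exactly the content of Lemma~\ref{working over R hat}, which your proposal effectively skips.

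On the propagation step, you propose to argue directly over $W(A)$ via a pointwise density argument, using that $W(A)[1/p]\hookrightarrow\prod_x W(k(x))[1/p]$. This could be made to work, but it is a genuinely different route from the paper. The paper first establishes $\varphi'$-invariance and horizontality of $s_{\dr}$ over $\widehat R$ in Lemma~\ref{working over R hat}, using Faltings's deformation theory at each closed point $x$ to reduce to the explicit description of the Frobenius on the universal deformation ring $R_G$, and then transfers to $W(A)$ by Construction~\ref{frob over diff rings} applied to the map $\iota:\widehat R\to W(A)$, using that $\iota\circ\sigma'\equiv\sigma\circ\iota\pmod p$. Going through $\widehat R$ buys you a clean comparison of Frobenii (the frame $(\widehat R,(p),R_0,\sigma',\sigma'/p)$ is much tamer than $(W(A),\nu W(A),A,\sigma,\nu^{-1})$), whereas the direct density argument over $W(A)$ needs a careful compatibility check between the Witt-frame Frobenius on $\mathbb D(\A_T)(W(A))$ and the pointwise Frobenii on $\mathbb D(\A_x)(W(k(x)))$, which you acknowledge but do not actually carry out. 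So: the first gap (the weight-$0$ reduction) must be filled for the argument to be correct, and the second point is a different, plausible but unverified, path to the Frobenius-invariance.
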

\begin{remark}\label{independence of tilda t}
Let $g_t$ be the image of $g_{\widetilde{t}}$ in $\GL(L)(W_{m}(A))$. Then $g_t$ depends only on $t$ and not on the choice of the lifting $\widetilde{t}$. Let $\overline{(\frac{\varphi}{p})^\lin}$ (resp. $\overline{\varphi^\lin}$) be the reduction mod $V_{m}(A)$ of $(\frac{\varphi}{p})^\lin$ (resp. $\varphi^\lin$). Then $g_t$ is the composition of
$$\xymatrix{L\otimes W_{m}(A)\ar[r]^(0.45){\xi}&(L\otimes W_{m}(A))^{(\sigma)}\ar[r]^(0.45){t^{(\sigma)}}&(\V\otimes W_{m}(A))^{(\sigma)}}$$
$$\xymatrix@C=0.3cm{(\V_{W_{m}(A)})^{(\sigma)}=t^{(\sigma)}((L^1_{W_{m}(A)})^{(\sigma)})\bigoplus t^{(\sigma)}((L^0_{W_{m}(A)})^{(\sigma)})\ar[rrr]^(0.8){\overline{(\frac{\varphi}{p})^\lin}\oplus \overline{\varphi^\lin}}&&& \V_{W_{m}(A)}
}$$
and $\V_{W_{m}(A)}\stackrel{t^{-1}}{\longrightarrow} L_{W_{m}(A)}$
\end{remark}
We need preparations to prove this statement.
Let $R_i$ be as before, $R$ be $\Pi_{i=1}^rR_i$ and $\widehat{R}$ be the $p$-adic completion of $R$. By smoothness of $R$, there is a homomorphism $\sigma':\widehat{R}\rightarrow \widehat{R}$ lifting the Frobenius of $R_0$. The tuple $(\widehat{R}, (p), R_0, \sigma',\frac{\sigma'}{p})$ is lifting frame, one can transfer statements for $W(A)$ without any problem. Among there statements, I would like to mention that there is a Frobenius $\varphi':\V\otimes \widehat{R}\rightarrow \V\otimes \widehat{R}$ and a Verschiebung $v':\V\otimes \widehat{R}\rightarrow (\V\otimes \widehat{R})^{(\sigma')}$ inducing a Frobenius on $(\V^\otimes\otimes \widehat{R})[\frac{1}{p}]$ which is still denoted by $\varphi'$. Moreover, by Proposition \ref{crystal vs module with connec}, there is a topologically quasi-nilpotent integrable connection $\nabla':\V_{\widehat{R}}\rightarrow \V_{\widehat{R}}\hat{\otimes}_{\widehat{R}} \hat{\Omega}^1_{\widehat{R}}$ such that $\sigma'^*\V_{\widehat{R}}\rightarrow \V_{\widehat{R}}$ is $\nabla'$-parallel.

We need to work with normal decompositions. Let $\mathbf{G}$ be the closed subscheme of $\mathbf{Aut}_{\widehat{R}}(\V\otimes\widehat{R})$ that respects $s_{\dr}$. Then $\mathbf{G}$ is a reductive group scheme over $\widehat{R}$. Let $\mathbf{P}$ be the stabilizer in $\mathbf{G}$ of $\V^1\otimes \widehat{R}\subseteq \V\otimes \widehat{R}$, and $\mathbf{U}$ be the subgroup of $\mathbf{G}$ acting trivially on $\V^1_{\widehat{R}}\oplus (\V_{\widehat{R}}/\V^1_{\widehat{R}})$. Then $\mathbf{P}$ is a parabolic subgroup of $\mathbf{G}$ with unipotent radical $\mathbf{U}$. Using terminologies in \cite{CIMK} 1.1.2, we have the following result.
\begin{lemma}
The filtration $\V^1\otimes \widehat{R}\subseteq \V\otimes \widehat{R}$ is $\mathbf{G}$-split.
\end{lemma}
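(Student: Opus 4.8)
The plan is to exhibit an explicit cocharacter of $\mathbf{G}$ defined over $\widehat{R}$ whose weight-$1$ (resp. weight-$0$, resp. negative-weight) space recovers the filtration $\V^1\otimes\widehat{R}\subseteq \V\otimes\widehat{R}$, so that in the sense of \cite{CIMK} 1.1.2 this filtration is $\mathbf{G}$-split. The natural candidate comes from the Hodge cocharacter $\mu:\GG_{m,W(\kappa)}\to G_{W(\kappa)}$: since $\I_+$ is a $P_+$-torsor over $\ES$ and the tensor $s_{\dr}$ cuts out $\mathbf{G}$ inside $\AAut_{\widehat{R}}(\V\otimes\widehat{R})$, a local section of $\I_+$ over $\Spec\widehat{R}$ identifies $(\V\otimes\widehat{R},\ \V^1\otimes\widehat{R},\ s_{\dr})$ with $(V_{\INT_p}^\vee\otimes\widehat{R},\ L^1\otimes\widehat{R},\ s)$, and transports $\mu\otimes\widehat{R}$ to a cocharacter of $\mathbf{G}$ splitting the filtration. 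So the first step is to produce such a section, i.e. to trivialize $\I_+$ Zariski-locally on $\ES$; this is available because $\I_+$ is a $P_+$-torsor for a smooth affine group and $\ES$ is smooth (indeed over the relevant local rings one may even work after completing, where torsors under smooth affine groups are trivial).

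The second step is purely formal: one checks that the image of $\mu$ under the trivialization lands in $\mathbf{G}$ — which is immediate because $\mu$ factors through $G_{W(\kappa)}$, which by definition of the symplectic embedding preserves $s$, hence the transported cocharacter preserves $s_{\dr}$ — and that its weight spaces on $\V\otimes\widehat{R}$ are exactly $\V^1\otimes\widehat{R}$ in weight $1$ and a complement in weight $0$ (negative weights do not occur, the Shimura cocharacter being minuscule). This is precisely the statement that the filtration is split by a cocharacter of $\mathbf{G}$, i.e. $\mathbf{G}$-split in the terminology of \cite{CIMK} 1.1.2. Alternatively, and perhaps more cleanly, one invokes that $\mathbf{P}$ is a parabolic subgroup of the reductive $\widehat{R}$-group $\mathbf{G}$ with unipotent radical $\mathbf{U}$ (already recorded just above the lemma); a parabolic of a reductive group scheme over an affine base admits a Levi subgroup after a Zariski localization, and over the local rings of $\ES$ — or their completions, which are what matter here — the obstruction vanishes, so $\mathbf{P}$ has a Levi $\mathbf{L}\subseteq\mathbf{G}$, and the resulting grading of $\V\otimes\widehat{R}$ splits $\V^1\otimes\widehat{R}$.

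The main obstacle is the local triviality in the first step, or rather making sure one is allowed to pass to a Zariski cover: the lemma is stated over $\widehat{R}$, which is a product of complete local rings, so one must either argue that a $P_+$-torsor over each such complete regular local ring is trivial (using smoothness of $P_+$ and lifting a trivialization from the closed point, as in the construction of $w^i_m$ earlier), or re-derive everything from the existence of a Levi of $\mathbf{P}$ over $\widehat{R}$. I expect the Levi-subgroup route to be the intended one, since $\mathbf{P}$, $\mathbf{U}$ and the reductivity of $\mathbf{G}$ have just been set up for exactly this purpose; once a Levi $\mathbf{L}$ is in hand, the cocharacter splitting $\V^1\subseteq\V$ is the one through the center of $\mathbf{L}$ acting with the Hodge weights, and $\mathbf{G}$-splitness is immediate. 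Everything else — compatibility with $s_{\dr}$, the weights being $0$ and $1$ only — is bookkeeping inherited from the corresponding facts about $\mu$, $L^1\subseteq V_{\INT_p}^\vee$ and $s$ on the standard side.
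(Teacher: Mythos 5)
Your proposal circles the right objects — the parabolic $\mathbf{P}$, the idea of transporting $\mu$, the existence of a Levi — but both of your suggested routes skip the step that is actually the content of the lemma, and the paper's proof is genuinely different in the one place that matters. The paper does \emph{not} try to trivialize $\I_+$ Zariski-locally. Instead it invokes \cite{CIMK} Lemma 1.1.1 to reduce $\mathbf{G}$-splitness to the purely schematic assertion that the canonical cocharacter $\chi:\GG_m\to\mathbf{P}'/\mathbf{U}'$ inducing the grading $\V^1_{\widehat R}\oplus(\V_{\widehat R}/\V^1_{\widehat R})$ factors through the closed subgroup $\mathbf{P}/\mathbf{U}$. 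This is a closed condition on the base, so it may be checked after \emph{any} faithfully flat base change. One then takes an fppf $\widehat R$-algebra $A$ with $\I_+(A)\neq\emptyset$ (such an $A$ always exists, since $\I_+$ is an fppf torsor under the smooth group $P_+$), over which a section $t\in\I_+(A)$ produces the splitting $\V^1_A\oplus t(L^0\otimes A)$ and hence the factorization; fppf descent then gives the factorization over $\widehat R$ itself.

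Your first approach requires Zariski-local (or at least honest local) triviality of $\I_+$, which is not automatic: $\widehat R$ is $p$-adically complete but not a product of complete (or Henselian) local rings, so the lifting-from-the-closed-fiber argument you sketch does not apply as stated, and smoothness of $P_+$ only guarantees fppf or étale local sections. Your second approach has a subtler and more consequential gap. It is true that a Levi $\mathbf{L}\subseteq\mathbf{P}$ exists over the affine base $\widehat R$ (the space of Levis is a torsor under the unipotent radical, whose higher cohomology vanishes over affine schemes), and it is true that once one knows $\chi$ factors through $\mathbf{P}/\mathbf{U}$ the Levi furnishes a lift and hence a splitting cocharacter. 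But you assert that the central cocharacter of $\mathbf{L}$ ``acts with the Hodge weights'' and dismiss this as bookkeeping. That assertion is exactly the factorization statement: a priori the cocharacter through $Z(\mathbf{L})$ attached to the parabolic $\mathbf{P}$ stabilizes $\V^1$, but there is no formal reason its weight-$\geq 1$ part is precisely $\V^1$ — many $\mathbf{P}$-stable filtrations exist, and the Levi cocharacter splits only the one coming from its own weight filtration. Establishing that this coincides with the Hodge filtration is what requires passing to an fppf cover trivializing $\I_+$ and descending, which is the paper's argument. So the ``bookkeeping'' you defer is the heart of the proof, and the fppf descent mechanism is the missing ingredient.
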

\begin{proof}
Let $\mathbf{P}'$ be the stabilizer in $\mathbf{Aut}_{\widehat{R}}(\V\otimes\widehat{R})$ of $\V^1\otimes \widehat{R}\subseteq \V\otimes \widehat{R}$, and $\mathbf{U}'$ be its unipotent radical. Let $\chi:\mathbb{G}_m\rightarrow \mathbf{P}'/\mathbf{U}'$ be the cocharacter inducing the grading $\V^1_{\widehat{R}}\oplus (\V_{\widehat{R}}/\V^1_{\widehat{R}})$, then by \cite{CIMK} Lemma 1.1.1, we only need to prove that $\chi$ factors through $\mathbf{P}/\mathbf{U}$. There exists an fppf $\widehat{R}$-algebra $A$ such that $\I_+(A)\neq \emptyset$. Take a $t\in \I_+(A)$, then $\V^1_A\oplus t(L^0\otimes A)$ is a $\mathbf{G}_A$-splitting. So by \cite{CIMK} Lemma 1.1.1 again, the base change to $A$ of $\chi$ factors through $\mathbf{P}_A/\mathbf{U}_A$, but this implies that $\chi$ factors through $\mathbf{P}/\mathbf{U}$.
\end{proof}
Let $\V\otimes \widehat{R}=F^1\oplus F^0$ be normal decomposition induced by a cocharacter $\mu'$ of $\mathbf{G}$ (one can take, for example, $F^1=\V^1\otimes\widehat{R}$ and $F^0$ a complement of it induced by a cocharacter lifting $\chi$), and $\V^\vee\otimes \widehat{R}=(F^0)^\vee\oplus (F^1)^\vee$ be the induced normal decomposition. Let $(\V^\otimes\otimes\widehat{R})^0$ be the submodule generated by elements in $(F^1)^a\otimes (F^0)^b\otimes(F^{0\vee})^c\otimes (F^{1\vee})^d$ with $a=d$. Note that $(\V^\otimes\otimes\widehat{R})^0$ is the submodule of weight 0 in $\V^\otimes\otimes\widehat{R}$ with respect to $\mu'$.

\begin{lemma}\label{working over R hat}
The map $\varphi'$ takes integral values on $(\V^\otimes\otimes\widehat{R})^0$. In other words, $\varphi'$ induces a map $(\V^\otimes\otimes\widehat{R})^0\rightarrow\V^\otimes\otimes\widehat{R}$. The section $s_\dr\in \V^\otimes \otimes \widehat{R}$ lies in $(\V^\otimes\otimes\widehat{R})^0$, it is $\varphi'$-invariant and annihilated by $\nabla'$.
\end{lemma}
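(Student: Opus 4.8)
I would establish the four assertions in the order (2), (1), (4), (3). Assertion (2) is pure linear algebra of cocharacters: $\mathbf{G}$ is \emph{defined} as the stabiliser of $s_\dr$ inside $\mathbf{Aut}_{\widehat R}(\V\otimes\widehat R)$, so $s_\dr$ is fixed by every point of $\mathbf{G}$, in particular by the image of the cocharacter $\mu'$; thus $s_\dr$ has weight $0$ for $\mu'$, and by the remark in the text that weight space is exactly $(\V^\otimes\otimes\widehat R)^0$.

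The substance of the lemma is the integrality statement (1), and the plan is a bookkeeping with the display structure. Transporting the formalism of \cite{displayandcrysdieu} to the lifting frame $(\widehat R,(p),R_0,\sigma',\sigma'/p)$ — as is already done just before the lemma for $\V\otimes\widehat R$ — one gets, for the normal decomposition $\V\otimes\widehat R=F^1\oplus F^0$ with $F^1=\V^1\otimes\widehat R$, that $\varphi'$ is integral on $\V\otimes\widehat R$ with moreover $\varphi'(F^1)\subseteq p\,(\V\otimes\widehat R)$, while dually $v'$ induces the integral Frobenius ${}^\vee\!\varphi'$ on $\V^\vee\otimes\widehat R$ whose window is that of $\A^\vee[p^\infty]$, with Hodge filtration the annihilator $(F^0)^\vee$ of $F^1$; hence ${}^\vee\!\varphi'((F^0)^\vee)\subseteq p\,(\V^\vee\otimes\widehat R)$, so the twisted Frobenius $\tfrac{{}^\vee\!\varphi'}{p}$ used on the dual factors is integral on $(F^0)^\vee$ and has a pole of order at most one on $(F^1)^\vee$. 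Feeding into the tensor Frobenius a generator of $(\V^\otimes\otimes\widehat R)^0$ lying in $(F^1)^{\otimes a}\otimes(F^0)^{\otimes b}\otimes((F^0)^\vee)^{\otimes c}\otimes((F^1)^\vee)^{\otimes d}$ with $a=d$, the $a$ slots from $F^1$ produce a factor $p^a$, the $d$ slots from $(F^1)^\vee$ produce a factor $p^{-d}$, the remaining slots produce integral factors, and as $a=d$ the output is integral; this is the first two sentences, and then $\varphi'(s_\dr)\in\V^\otimes\otimes\widehat R$ by (2).

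For (4) and (3) the idea is to propagate Kisin's pointwise assertions over the base by rigidity. For (4): Kisin's results recalled in \S\ref{intcanmod} give, for every closed point $x$ of $\ES_0$ with $R_x=\widehat{\mathcal{O}}_{\ES,x}$, that $s_\dr$ is parallel over $\V^\otimes\otimes R_x$; since $\nabla'$ is compatible with the base change $\widehat R\to R_x$, the element $\nabla'(s_\dr)$ vanishes in $\V^\otimes\otimes R_x\,\widehat{\otimes}\,\widehat{\Omega}^1_{R_x}$ for all such $x$. Because $\widehat R$ is Noetherian and all of its maximal ideals contain $p$, so correspond to the closed points of $\ES_0$, an element of a finite $\widehat R$-module which dies in the completion at every maximal ideal is $0$ (Krull's intersection theorem, then localise); hence $\nabla'(s_\dr)=0$. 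For (3): set $\eta=\varphi'(s_\dr)-s_\dr\in\V^\otimes\otimes\widehat R$ (integral by (1)). The crystal Frobenius $\sigma'^*\V_{\widehat R}\to\V_{\widehat R}$ is $\nabla'$-parallel, hence so is its $\tfrac1p$-twisted tensor extension, and together with (4) this gives $\nabla'(\eta)=0$; at each closed point $x$ with a chosen $W(k(x))$-lift $\widetilde x$, the tensor Frobenius specialises, via Construction \ref{frob over diff rings}, to the crystalline tensor Frobenius on $\V^\otimes_{\widetilde x}$, so Kisin's Frobenius-invariance of $s_{\dr,\widetilde x}$ gives $\eta_{\widetilde x}=0$. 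Finally the vanishing locus of a parallel section of a finite module with topologically quasi-nilpotent integrable connection over $\widehat R$ is closed and open (Taylor expansion inside each $R_x$), hence a union of connected components; each connected component of $\Spec\widehat R$ contains such a point $\widetilde x$ — reduce mod $p$ to the finite-type $\kappa$-scheme $\widehat R/p$, take a closed point, and lift by smoothness — so $\eta$ vanishes on every component, i.e. $\eta=0$.

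The hard step is (1): the whole argument hinges on the cancellation $p^a\cdot p^{-d}=1$ for $a=d$, so one must be scrupulous about the display/window conventions, in particular about identifying the Hodge filtration of $\A^\vee[p^\infty]$ with $(F^0)^\vee=\mathrm{Ann}(F^1)$ and about the precise $\tfrac1p$-twist entering the tensor Frobenius on the dual factors. The remaining assertions are comparatively soft; the only point there that needs a little care is the base-change compatibility (Construction \ref{frob over diff rings}) used to identify $\eta_{\widetilde x}$ with a difference of honest crystalline tensor Frobenius values.
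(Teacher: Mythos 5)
Your proof is correct and follows the paper's approach for the weight--and--integrality computation and for the parallelism of $s_{\dr}$ (specialising to completions $\widehat{R}_x$, using the Kisin parallelism recalled in \S1.2, and invoking Noetherianity plus the fact that every maximal ideal of $\widehat{R}$ contains $p$). For the Frobenius-invariance you take a genuinely different, slightly more streamlined route. The paper specialises via $\iota\colon\widehat{R}\to\widehat{R}_x\cong R_G$, uses Faltings's explicit description $\varphi''=u\circ\varphi_x$ (so that $\varphi''$-invariance of $s_{\dr}$ follows from $\varphi_x$-invariance of $s_{\cris,x}$ together with $u\in U_-\subseteq\mathbf{G}$), and then combines this with the $\varepsilon$ of Construction \ref{frob over diff rings} to get $\iota^*\bigl(\varphi'(s_{\dr})-s_{\dr}\bigr)=0$, concluding by arbitrariness of $x$. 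You instead set $\eta=\varphi'(s_{\dr})-s_{\dr}$, observe $\nabla'(\eta)=0$, pass to a $W(k(x))$-lift $\widetilde{x}$, use Construction \ref{frob over diff rings} to identify $\widetilde{x}^*\varphi'$ with the crystalline Frobenius (here, as you note, the vanishing of $\nabla'(s_{\dr})$ makes $\varepsilon'$ fix $s_{\dr}$), and deduce $\eta_{\widetilde{x}}=0$ from Kisin's Corollary 1.4.3; then you propagate via the parallelism of $\eta$. Your variant avoids spelling out Faltings's $\varphi''$, at the cost of the extra parallelism-propagation step, which you could in fact replace by the same Jacobson argument you already use in (4): since $\eta$ is parallel, $\eta_{\widetilde{x}}=0$ already forces $\eta$ to vanish in $\widehat{R}_x$, and then $\eta=0$ follows exactly as for $\nabla'(s_{\dr})$. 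Both routes are correct; the essential inputs (Kisin's pointwise parallelism and Frobenius-invariance, Construction \ref{frob over diff rings}, and a Noetherian globalisation) are shared.
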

\begin{proof}
The map $\varphi'$ acts on $F^1$ and $F^0$ as $\varphi'$, and on $F^{1\vee}$ and $F^{0\vee}$ as $\frac{{}^\vee\!\varphi'}{p}$. We know that $\varphi'$ (resp. $\frac{{}^\vee\!\varphi'}{p}$, ${}^\vee\!\varphi'$) takes integral values on $F^0$ (resp. $F^{0\vee}$, $F^{1\vee}$), and that $\varphi'(F^1)\subset p (\V\otimes \widehat{R})$, so $\varphi'$ takes integral values on $(\V^\otimes\otimes\widehat{R})^0$, as $a=d$.

To see that $s_\dr\in (\V^\otimes\otimes\widehat{R})^0$, one only needs to notice that by definition of $\I$, $\mathbf{G}$ acts trivially on $s_\dr$, and hence $s_\dr$ is of weight 0 with respect to $\mu'$, which means that $s_\dr\in (\V^\otimes\otimes\widehat{R})^0$.

To prove the last statement, we need to use Faltings's deformation theory. Let $x$ be a closed point in $\Spec\widehat{R}$, and $\widehat{R}_x$ be the completion with respect to the maximal ideal defining $x$. Note that $x$ corresponds to a closed point of $U_0=\Spec R_0$. Let $k$ be the residue field of $x$, and $R_G$ be the completion at identity of $U_{-,W(k)}$. Then $\A[p^\infty]_{\widehat{R}_x}$ is a deformation of $\A[p^\infty]_x$, and induces an isomorphism $\widehat{R}_x\cong R_G$ by \cite{CIMK} Proposition 2.3.5. Moreover, $s_{\dr}=s_{\mathrm{cirs},x}\otimes 1$ over $\widehat{R}_x$.

Let $\widetilde{x}$ be a lifting of $x$, then $\V^1_{\widetilde{x}}\subseteq \V_{\widetilde{x}}$ gives the Hodge filtration. Let $\sigma'':R_G\rightarrow R_G$ be Frobenius on $W(k)$ and $p$-th power in indeterminants. The Dieudonn\'{e} module of $\A[p^\infty]_{\widehat{R}_x}$ is the tuple $(M,F,\varphi'',\nabla'')$, where $M=\V_{\widetilde{x}}\otimes R_G=\V_{R_G}$, $F=\V^1_{\widetilde{x}}\otimes R_G\subseteq M$ is the Hodge filtration, $\varphi''=u\circ \varphi_{x}$ with $u$ the universal element of $U_{-}$ and $\varphi_x$ the Frobenius on $\V_{\widetilde{x}}$, and $\nabla''$ the connection. We have $\nabla''(s_{\dr})=0$ by \cite{CIMK} 1.5.4. We also have $s_{\dr}\in (\V^\otimes \otimes R_G)^0$ and is $\varphi''$-invariant. Here the $\varphi''$-action on $\V^\otimes \otimes R_G[\frac{1}{p}]$ and $(\V^\otimes \otimes R_G)^0$ are constructed similarly. The only difference is that when defining $(\V^\otimes \otimes R_G)^0$, we fix an isomorphism $t:L\otimes R_G\rightarrow \V\otimes R_G$ which maps  $s\otimes 1$ to $s_{\dr}$ and $L^1\otimes R_G$ to $\V^1\otimes R_G$. The normal decomposition in this case is the one induced by $t$.

Let $\iota:\widehat{R}\rightarrow \widehat{R}_x=R_G$ be the injection. Then by Construction \ref{construction of connection}, $\nabla''=\iota^*\nabla'$, and hence $\nabla'(s_{\dr})=0$. Since $\iota\circ \sigma'$ and $\sigma''\circ \iota$ has the same reduction modulo $p$, by Construction \ref{frob over diff rings}, there is a canonical isomorphism $\varepsilon:\sigma''^*\iota^*(\V_{\widehat{R}})\rightarrow \iota^*\sigma'^*(\V_{\widehat{R}})$. The linearization of $\varphi''$ is given by $$\sigma''^*(\V_{R_G})=\sigma''^*\iota^*(\V_{\widehat{R}})\stackrel{\varepsilon}{\longrightarrow}
\iota^*\sigma'^*(\V_{\widehat{R}})\stackrel{\iota^*\varphi'^{\lin}}{\longrightarrow} \iota^*(\V_{\widehat{R}})=\V_{R_G}.$$
The description of $\varepsilon$ in Construction \ref{frob over diff rings} shows that it respects $s_{\dr}$, as $\nabla'(s_{\dr})=0$. But then $\varphi'$ respects $s_{\dr}$, as both $\varphi''$ and $\varepsilon$ respect $s_{\dr}$, and $x$ is an arbitrary point.
\end{proof}


\begin{proof}\emph{(of Proposition \ref{g_t lies in G})}
We use notations as before Proposition \ref{g_t lies in G}. We will show that $g_{\widetilde{t}}\in \GL(L)(W(A))$ fixes $s_{\dr}$. The homomorphism $\widehat{R}\rightarrow W(A)$ induces $R_0\rightarrow W(A)/(p)\rightarrow A$, and hence canonical isomorphisms $$\V_{W(A)}\cong \mathbb{D}(\A_{W(A)/(p)})(W(A))\cong \mathbb{D}(\A_A)(W(A)).$$

Let $F^i=\widetilde{t}(L^i\otimes W(A))$, then $\V_{W(A)}=F^1\oplus F^0$ is a normal decomposition of $\V_{W(A)}$. Moreover, the splitting is induced by a cocharacter of $\mathbf{G}_{W(A)}$. Let $(\V^\otimes_{W(A)})^0$ be the submodule of weight 0 in $\V^\otimes_{W(A)}$. Then by the same argument as in the proof of Lemma \ref{working over R hat}, $\varphi$ takes integral value on $(\V^\otimes_{W(A)})^0$, and that $\varphi^\lin:(\V^\otimes_{W(A)})^{0,(\sigma)}\rightarrow \V^\otimes_{W(A)}$ is the same as the restriction to $(\V^\otimes_{W(A)})^{0,(\sigma)}$ of $(\frac{\varphi}{p})^\lin\oplus \varphi^\lin=\widetilde{t}g_{\widetilde{t}}\xi^{-1}\widetilde{t}^{-1,(\sigma)}$.

So we reduce to check that $s_\dr$ is $\varphi$-invariant. Let $\iota:\widehat{R}\rightarrow W(A)$ be the composition of $w:\widehat{R}\rightarrow W(R_0)$ induced by the systems of morphisms $w^i_m$s at the beginning of \ref{construcion of torsors} and the natural homomorphism $W(R_0)\rightarrow W(A)$. Noting that $W(A)$ is $p$-adically complete and $p$-torsion free, and that $\iota\circ \sigma'=\sigma\circ \iota\ \mathrm{mod}\ p$, the statement follows from the same argument as in the second half of the proof of Lemma \ref{working over R hat}, by using that $s_{\dr}$ is $\varphi'$-invariant and annihilated by $\nabla'$.
\end{proof}

Applying Proposition \ref{g_t lies in G} and the remark after it to $A$ such that $\Spec A=\I'_m$, we get a morphism $\zeta_m:\I'_m\rightarrow G_m$, given by mapping $t\in \I'_m(\I'_m)$ to $g_t$.
\begin{proposition}
\

(1) The morphism $\zeta_m$ is $G'_m$-equivariant considering left $G'_m$-actions, and hence induces a morphism $\zeta_{m,\#}:U_0\rightarrow [G'_m\backslash G_m]$.

(2) The level $m$ stratification on $U_{0,\overline{\kappa}}$ is induced by fibers of $\zeta_{m,\#}\otimes \overline{\kappa}$, and hence each stratum on $U_{0,\overline{\kappa}}$ is locally closed.
\end{proposition}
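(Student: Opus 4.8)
The plan is to verify the two assertions by unwinding the construction of $\zeta_m$ and comparing it with the definition of the level $m$ stratification. For (1), I would first check equivariance at the level of $\I'_m$-points. Recall that $g_t$ is built out of a chosen isomorphism $\widetilde t\in\I'(W(T))$ by conjugating the linearized divided Frobenius $(\frac{\varphi}{p})^{\lin}\oplus\varphi^{\lin}$ back to $L\otimes W(A)$ via $\widetilde t$ and $\widetilde t^{(\sigma)}$. If we replace $t$ by $h\cdot t$ for $h\in G'_m(T)$ (equivalently $h\in H_m(T)$ under the identification $u'_m$), then $\widetilde t$ is replaced by $\widetilde h\widetilde t$ for a lift $\widetilde h$, and the normal decomposition $F^i=\widetilde t(L^i\otimes W(A))$ gets moved by $\widetilde h$; a direct bookkeeping of the four arrows defining $g_t$ shows $g_{h\cdot t}=h\,g_t\,\sigma(h)^{-1}$, where $\sigma$ is the Frobenius twist on $H_m$ introduced in Section 1.4. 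This is exactly the formula for the action $T_m$ defining $[G'_m\backslash G_m]=[H_m\backslash G_m]$, so $\zeta_m$ descends to a morphism $\zeta_{m,\#}\colon U_0\to[G'_m\backslash G_m]$. (The fact that $g_t$ is independent of the lift $\widetilde t$, already recorded in Remark \ref{independence of tilda t}, is what makes the $\sigma$ on $H_m$ rather than on $G_m$ the relevant operation, since changing $\widetilde t$ modifies things only modulo $V_m(A)$.)

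For (2), the key point is to identify the fiber of $\zeta_{m,\#}$ over a geometric point $x\in U_{0}(\overline\kappa)$ with the set of geometric points carrying the "same" truncated Dieudonné module with tensor. Over a geometric point the torsor $\I'_m$ is trivial, so a point $t$ in the fiber amounts to an element $g_t\in G_m(\overline\kappa)$ well-defined up to the $\sigma$-twisted conjugation by $G'_m(\overline\kappa)$; and by construction $g_t$ encodes precisely the $F$-structure on $\Dieu(\A_x[p^m])$ in the frame given by the trivialization $t$, together with the assertion (Proposition \ref{g_t lies in G}) that this $F$-structure preserves $s_{\dr}=s_{\cris,x}$. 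Two geometric points $x,y$ lie in the same fiber of $\zeta_{m,\#}\otimes\overline\kappa$ iff their classes in $[G'_m\backslash G_m](\overline\kappa)$ coincide, iff there is $h\in G'_m(\overline\kappa)$ with $g_t^{(y)}=h\,g_t^{(x)}\,\sigma(h)^{-1}$, and such an $h$ is exactly an isomorphism $\Dieu(\A_x[p^m])\to\Dieu(\A_y[p^m])$ respecting the Hodge filtrations and the tensors (the condition $h\in G'_m$ rather than $h\in G_m$ is what records compatibility with the filtration, by the dilatation description of $G'$). This matches the definition of the level $m$ stratification verbatim once one remembers that the filtration datum is recoverable from the BT-$m$ with tensor, so the $\overline\kappa$-fibers of $\zeta_{m,\#}$ are exactly the level $m$ strata on $U_{0,\overline\kappa}$. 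Local closedness then follows because points of the algebraic stack $[G'_m\backslash G_m]$ are locally closed (it is a quotient of a scheme of finite type by a smooth affine group scheme, so it has finitely many points and each is locally closed), and preimages of locally closed substacks under $\zeta_{m,\#}$ are locally closed.

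I expect the main obstacle to be the bookkeeping in step (2): carefully matching the $\sigma$-conjugacy class of $g_t$ in $[G'_m\backslash G_m]$ with the isomorphism class of the pair $(\Dieu(\A_x[p^m]), s_{\cris,x})$ \emph{together with its Hodge filtration}, and checking that the filtration carries no extra information beyond the BT-$m$ with tensor. In particular one must be careful that the "divided Frobenius" $(\frac{\varphi}{p})^{\lin}$ on $F^1$ is genuinely well-defined mod $p^m$ (this uses the proof of \cite{displayandcrysdieu} Proposition 3.15, already invoked) and that the resulting $g_t$ really is the standard structure constant of the truncated display attached to $\A_x[p^m]$ in the sense of \cite{smooth truncated display}; the link to truncated displays is what ultimately guarantees that the fiber of $\zeta_{m,\#}$ sees neither more nor less than the geometric isomorphism type of $\Dieu(\A_x[p^m])$ with its tensor. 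The equivariance computation in step (1), by contrast, is routine once the four-arrow definition of $g_t$ is written out.
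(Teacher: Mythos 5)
Your proposal follows the paper's own argument essentially verbatim: the equivariance in (1) is verified by the one-line computation $(h\cdot t)^{(\sigma)}\circ\xi=(t^{(\sigma)}\circ\xi)\circ\sigma(h)^{-1}$ giving $g_{h\cdot t}=h\,g_t\,\sigma(h)^{-1}$, and (2) is proved by matching $\sigma$-conjugacy under $G'_m$ with isomorphisms of truncated Dieudonn\'{e} modules respecting the tensor (the filtration compatibility being automatic, exactly as you note), and then using local closedness of orbits of $G'_{m,\overline\kappa}$ in $G_{m,\overline\kappa}$. One small caution: the parenthetical claim that $[G'_m\backslash G_m]$ ``has finitely many points'' is neither needed nor obviously true for $m>1$; what is used (and suffices) is only that each orbit of the smooth algebraic group $G'_{m,\overline\kappa}$ acting on the finite-type scheme $G_{m,\overline\kappa}$ is locally closed.
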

\begin{proof}
For 1, we have a morphism $\I'_m\rightarrow \I_m$, which is equivariant with respect to $G'_m\rightarrow G_m$. Here the left action of $h$ in $G'_m$ (resp. $G_m$) on $\I'_m$ (resp. $\I_m$) is given by $h\cdot t=t\circ h^{-1}$. We use notations as in Remark \ref{independence of tilda t}. We have $(h\cdot t)^{(\sigma)}\circ \xi=t^{(\sigma)}\circ h^{-1,(\sigma)}\circ \xi=(t^{(\sigma)}\circ \xi)\circ (\xi^{-1}\circ h^{-1,(\sigma)}\circ \xi)=(t^{(\sigma)}\circ \xi)\circ \sigma(h)^{-1}$. Then the correspondence $t\mapsto t\circ h^{-1}$ induces the correspondence $g_t\mapsto hg_t\sigma(h)^{-1}$. This proves 1.

For 2, by definition of level $m$-stratification on $U_0$, we work with algebraically closed fields. Let $k$ be such a field, and $x,y$ be two $k$-points of $U_0$. Then $x$ and $y$ are in the same stratum if there is an isomorphism of Dieudonn\'{e} modules $\rho:\mathbb{D}(\A_x[p^\infty])(W_{m}(k))\cong\mathbb{D}(\A_y[p^\infty])(W_{m}(k))$ mapping $s_{\cris,x}$ to $s_{\cris,y}$. Let $t_x$ (resp. $t_y$) be a section of $\I'_{m,x}$ (resp. $\I'_{m,y}$). The constructions as in Proposition \ref{g_t lies in G} give $g_{t_x}$ (resp. $g_{t_y}$) in $G_m(k)$. Let $\varphi_x$ (resp. $\varphi_y$) be the Frobenius, then $\varphi_y=\rho\circ \varphi_x(\rho^{-1}\cdot)$. There is an $h\in G'_m(k)$ such that $t_y=h(\rho\circ t_x)=\rho\circ t_x \circ h^{-1}$. The formulas for $\varphi_y$ and $t_y$ imply immediately that $g_{t_y}=hg_{t_x}\sigma(h)^{-1}$.

Conversely, if for $x,y,t_x,t_y$ as above, there exists an $h\in G'_m(k)$ such that $g_{t_y}=hg_{t_x}\sigma(h)^{-1}$, then $\rho=(h^{-1}\cdot t_y)\circ t_x^{-1}$ is an isomorphism of Dieudonn\'{e} modules   $\mathbb{D}(\A_x[p^\infty])(W_{m}(k))\cong\mathbb{D}(\A_y[p^\infty])(W_{m}(k))$ mapping $s_{\cris,x}$ to $s_{\cris,y}$.

Let $o$ be a point in the topological space of $[G'_m\backslash G_m]$. Noting that $o$ is locally closed in $[G'_m\backslash G_m]$ and that it admits a representative over $\overline{\kappa}$, we know that $\zeta_{m,\#}^{-1}(o)\subseteq U_{0,\overline{\kappa}}$ is locally closed. This proves 2.
\end{proof}
\begin{theorem}\label{smoothness}
The morphism $\zeta_{m,\#}$ is smooth.
\end{theorem}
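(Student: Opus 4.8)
The plan is to reduce the statement to a surjectivity assertion for the differential of $\zeta_m$ and then to establish that by a local computation based on Kisin's description of the formal completions of $\ES$. For the reduction, write $\pi\colon\I'_m\to U_0$ and $q\colon G_m\to[G'_m\backslash G_m]$ for the two quotient maps; both are torsors under $G'_m=\W_m(G')$, which is smooth over $\kappa$ by Theorem~\ref{greenberg functor}, hence both are smooth and surjective, and by construction $\zeta_{m,\#}\circ\pi=q\circ\zeta_m$. Thus if $\zeta_m$ is smooth then so is $q\circ\zeta_m=\zeta_{m,\#}\circ\pi$, and $\zeta_{m,\#}$ is smooth by fppf descent along $\pi$; so it suffices to show that $\zeta_m\colon\I'_m\to G_m$ is smooth. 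Now $\I'_m$ (a $G'_m$-torsor over the smooth $\kappa$-scheme $U_0$) and $G_m=\W_m(G_{W(\kappa)})$ are both smooth over $\kappa$ by Theorem~\ref{greenberg functor}(4). For a morphism of smooth $\kappa$-schemes, smoothness is equivalent to surjectivity of the differential at all $\overline\kappa$-points (surjectivity of $d\zeta_m$ forces $\Omega^1_{\I'_m/G_m}$ to be locally free of the expected rank, hence the fibres smooth of the expected dimension, and flatness then follows by ``miracle flatness''); so I am reduced to proving that $d\zeta_m\colon T_{t_0}\I'_m\to T_{\zeta_m(t_0)}G_m$ is surjective for every $t_0\in\I'_m(\overline\kappa)$.

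Fix such a $t_0$, let $x\in U_0(\overline\kappa)$ be its image, $k=\overline\kappa$, and $g_0=\zeta_m(t_0)$. The torsor structure of $\pi$ gives a short exact sequence $0\to\mathrm{Lie}(G'_m)\xrightarrow{\,a\,}T_{t_0}\I'_m\xrightarrow{\,d\pi\,}T_xU_0\to0$, and I would compute $d\zeta_m$ on each summand. On $a(\mathrm{Lie}(G'_m))$, the $G'_m$-equivariance of $\zeta_m$ for the twisted conjugation $g\mapsto hg\sigma(h)^{-1}$ (the previous Proposition) identifies $d\zeta_m\circ a$ with the infinitesimal orbit map at $g_0$, which is governed by the dilatation morphism $\W_m(u)\colon G'_m\to G_m$ and by $\sigma$. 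For the complementary ``deformation directions'' $T_xU_0$, I would invoke Kisin's isomorphism $\widehat{\mathcal O}_{\ES,x}\cong R_G$ of \cite{CIMK} Proposition 2.3.5 (already used in the proof of Lemma~\ref{working over R hat}), where $R_G$ is the completion of $U_{-,W(k)}$ at the identity and the Frobenius on the Dieudonn\'e module over $R_G$ is $u_{\mathrm{univ}}\varphi_x$ with $u_{\mathrm{univ}}\in U_-(R_G)$ universal; feeding this, together with a trivialization, into the construction of $\zeta_m$ turns $\zeta_m$ near $x$ into the explicit recipe $t\mapsto g_t$ built from the isomorphism $(\varphi/p)^\lin\oplus\varphi^\lin$. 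One then checks that the orbit directions and the $U_-$-deformation directions together span $T_{g_0}G_m$; the key point is that $(\varphi/p)^\lin\oplus\varphi^\lin$ is an isomorphism, the divided Frobenius on the Hodge-filtered part being exactly what makes the $U_-$-deformations ``visible'' in $G_m$ and compensates for what $\W_m(u)$ collapses on Lie algebras. (For $m=1$ this is essentially the smoothness of the Ekedahl--Oort morphism of \cite{EOZ}; one may alternatively run an induction on $m$ using that $\I'_{m+1}\to\I'_m$ and $G_{m+1}\to G_m$ are torsors under Frobenius pullbacks of duals of sheaves of differentials, cf.\ Theorem~\ref{greenberg functor}(4) and~\ref{construcion of torsors}, although the inductive step still needs the $R_G$-input at each level.)

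The hard part is precisely this computation. The delicate issue is keeping track of three Frobenii simultaneously — the Witt-vector Frobenius entering the definition of $\zeta_m$ and of the twisted conjugation, the divided Frobenius $\varphi/p$, and the coordinate Frobenius $\sigma''$ on $R_G$ — and verifying that they fit together via Constructions~\ref{construction of connection} and~\ref{frob over diff rings} (and formula~(\ref{eqn expli connection})), level by level in the Greenberg tower, so that the image of $d\zeta_m$ exhausts $T_{g_0}G_m$. Everything else — the reduction to $\zeta_m$, the tangent-space criterion, and the exact sequence of the torsor — is formal.
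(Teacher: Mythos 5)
Your reduction to the surjectivity of $d\zeta_m$ is exactly the paper's: descend smoothness of $\zeta_{m,\#}$ to smoothness of $\zeta_m$, then use the Jacobian criterion for a morphism of smooth $\kappa$-schemes (the paper cites Hartshorne III, Prop.\ 10.4). The split of $T_{t_0}\I'_m$ into the $G'_m$-orbit direction and the deformation direction furnished by Kisin's isomorphism $\widehat{\mathcal O}_{\ES,x}\cong R_G$ is also the right framework. However, the proposal stops precisely where the argument has content, and what you name as "the key point" is a red herring: the fact that $(\frac{\varphi}{p})^\lin\oplus\varphi^\lin$ is an isomorphism is just what makes $g_{\widetilde{t}}$ a well-defined element of $G(W(A))$ (Proposition \ref{g_t lies in G}); it plays no role in seeing that the differential is onto, and it does not "compensate" for anything.

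The mechanism the paper actually uses rests on two vanishing observations and one big-cell fact, none of which your proposal identifies. First, for $h\in\ker\big(G'(W(\overline\kappa[\varepsilon]))\to G'(W(\overline\kappa))\big)$ one has $\sigma(h)=\mathrm{id}$, because the Witt Frobenius kills the ideal $\ker(W(\overline\kappa[\varepsilon])\to W(\overline\kappa))$ (each Witt component lies in $\varepsilon\overline\kappa$ and $\varepsilon^p=0$); so the $\sigma$-twisted conjugation $h\mapsto hg\sigma(h)^{-1}$ collapses to \emph{left translation} on the $\varepsilon$-tangent space, and the orbit direction contributes all of the image of $\mathrm{Lie}(G'_m)$ in $\mathrm{Lie}(G_m)$ under $\W_m(u)$. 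Second, for the same Frobenius-kills-$\varepsilon$ reason, the crystal-correction factor $\xi^{-1}\big((\sigma\iota\cdot\iota\sigma'')^*\theta_{\widetilde{t}}\big)\xi$ from Construction \ref{frob over diff rings} is the identity on $\varepsilon$-tangents. After these two collapses, $\zeta$ on $W(R_{G,0})$-dual numbers is literally $(h,u)\mapsto hug_{\widetilde{t}}$, with $u$ the universal element of $U_-$ coming from Faltings--Kisin deformation theory. Third, the needed surjectivity is then the statement that $G'\times R_G\to G$, $(h,u)\mapsto hu$, is smooth at $(1,1)$, which reduces to étaleness of the big-cell map $P_+\times U_-\to G$ at the identity. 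Your draft flags "keeping track of three Frobenii" as the delicate issue, but the resolution is not a level-by-level bookkeeping through the Greenberg tower: it is that the $\sigma$-twist and the connection correction both die on dual numbers, after which the claim becomes an open-cell statement. Without this, the assertion that "one then checks the orbit and deformation directions span $T_{g_0}G_m$" remains unsubstantiated.
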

\begin{proof}
By faithfully flat descent, it is equivalent to that $\zeta_m$ is smooth. By \cite{AG} III Proposition 10.4, we only need to show that for any $\overline{\kappa}$-point $t$ of $\I'_m$, the induced map on tangent spaces $T_{\zeta_m}:T_t\I'_m\rightarrow T_{\zeta_m(t)}G_m$ is surjective. The element $t$ (resp. $\zeta_m(t)$) corresponds to an element in $\I'(W_{m}(\overline{\kappa}))$ (resp. $G(W_{m}(\overline{\kappa}))$) which is still denoted by $t$ (resp. $\zeta_m(t)$). Note that $t$ lifts to an element $\widetilde{t}\in \I'(W(\overline{\kappa}))$ with image $g_{\widetilde{t}}\in G(W(\overline{\kappa}))$ which is a lift of $\zeta_m(t)$.

Let $\overline{\kappa}[\varepsilon]$ be such that $\varepsilon^2=0$. We will prove the following statement which implies the subjectiveness of $T_{\zeta_m}$.

$(*)$  for any $ g[\varepsilon]\in G(W(\overline{\kappa}[\varepsilon]))$ whose image in $G(W(\overline{\kappa}))$ deforms $g_{\widetilde{t}}$, there is a $\widetilde{t}[\varepsilon]\in \I'(W(\overline{\kappa}[\varepsilon]))$ deforming $\widetilde{t}$, such that $g_{\widetilde{t}[\varepsilon]}=g[\varepsilon]$.

Let $\pi:\I'_m\rightarrow U_0$ be the projection. We fixed a homomorphism $O_U\rightarrow W(O_{U_0})$, it induces a homomorphism $\iota:R_G\cong O_{U,\pi(t)}^\wedge\rightarrow W(O_{U_0,\pi(t)}^\wedge)\cong W(R_{G,0})$. Here $(\cdot)^\wedge$ means the completion with respect to the ideal defining $\pi(t)$. Viewing $\widetilde{t}$ as an $R_G$-point, we get a trivialization $G'\times_{W(\overline{\kappa})}R_G\stackrel{\simeq}{\longrightarrow}\I'\times_U R_G$ of $\I'$, $h\mapsto h\cdot \widetilde{t}$. Using notations as in the proof of Lemma \ref{working over R hat}, by Faltings's deformation theory, the construction in \ref{g_t lies in G} gives $ug_{\widetilde{t}}\in G(R_G)$ when applied to $\widetilde{t}$ and $\V_{R_G}$, and $hug_{\widetilde{t}}\sigma''(h)^{-1}$ when applied to $h\circ \widetilde{t}$.

Let $\sigma$ be the Frobenius on $W(R_{G,0})$. Let $R_G(2)$ be $p$-adic completion of the PD-envelope of $R_G\widehat{\otimes} R_G$ with respect to the $\mathrm{ker}(R_G\widehat{\otimes} R_G\rightarrow R_{G,0})$, and $K$ be $\mathrm{ker}(R_G\widehat{\otimes} R_G\rightarrow R_{G})$. The Dieudonn\'{e} crystal structure of $\mathbb{D}(\A_{R_{G,0}})$ induces an isomorphism $$\theta:\big(R_G(2)/K^{[2]}\big)\otimes_{R_G} \V_{R_G}\rightarrow \V_{R_G}\otimes_{R_G} \big(R_G(2)/K^{[2]}\big).$$ There are also canonical isomorphisms
$$
\xymatrix@C=0.33cm{\V_{R_G}\otimes_{R_G} R_G(2)\ar[r]^(0.45)\alpha_(0.45){\simeq}&\V_{W(\overline{\kappa})}\otimes_{W(\overline{\kappa})}R_G(2)\ar[r]^i_{\simeq}&R_G(2)
\otimes_{W(\overline{\kappa})}\V_{W(\overline{\kappa})}\ar[r]^(0.55)\beta_(0.55){\simeq}
&R_G(2)\otimes_{R_G}\V_{R_G}.}
$$
By \cite{CIMK} 1.5.4, the composition $\theta_{\widetilde{t}}=\widetilde{t}^{-1}\circ\alpha\circ\theta\circ\beta\circ \widetilde{t}$ is an element in $U_-(R_G(1))$ via the identification~$i$.

Now by Construction \ref{frob over diff rings}, the morphism $\zeta$ on $W(R_{G,0})$-points is given by $$h\mapsto hug_{\widetilde{t}}\xi^{-1}((\sigma\iota\cdot \iota\sigma'')^*\theta_{\widetilde{t}})\xi\sigma(h)^{-1}, \ \ h\in G'(W(R_{G,0})).$$
Let $\widetilde{t}[\varepsilon]$ be a deformation of $\widetilde{t}$. Then its image gives a deformation $\pi(\widetilde{t}[\varepsilon])$ of $\pi(\widetilde{t})$. For $h\in\mathrm{ker}(G'(W(\overline{\kappa}[\varepsilon]))\rightarrow G'(W(\overline{\kappa})))$, we have $\sigma(h)=\mathrm{id}$, as $\sigma(W(\overline{\kappa}[\varepsilon]))\subseteq W(\overline{\kappa})$. Similarly, we have $(\xi^{-1}((\sigma\iota\cdot \iota\sigma'')^*\theta_{\widetilde{t}}\xi)_{\pi(\widetilde{t}[\varepsilon])}=\mathrm{id}$  as $((\sigma\iota\cdot \iota\sigma'')^*\theta_{\widetilde{t}})_{\pi(\widetilde{t})}=\mathrm{id}$. The multiplication morphism $P_+\times U_-\rightarrow G$ is \'{e}tale at $(1,1)\in P_+\times U_-$, so by the description of $G'$, the morphism $G'\times R_G\rightarrow G$, $(h,u)\mapsto hu$ is smooth at $(1,1)$. This implies $(*)$ and hence the theorem.
\end{proof}

We will use the morphism $\zeta_{m,\#}$ to study level $m$
stratifications. Before stating the main theorem, we introduce the
following terminology.

\begin{definition}
An immersion of schemes $S\rightarrow T$ is said to be pure if it is an affine morphism.
\end{definition}



\begin{theorem}\label{main theorem level m--locally closed+sm}
\

(1) Each level $m$ stratum is locally closed in $\ES_{0,\overline{\kappa}}$. Moreover, (given the reduced induced scheme structure) it is smooth and equi-dimensional.

(2) The Zariski closure of a level $m$ stratum is a union of level $m$ strata.

(3) Each level $m$ stratum is and pure in $\ES_{0,\overline{\kappa}}$.
\end{theorem}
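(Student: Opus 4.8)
The plan is to deduce all three statements from the smooth morphism $\zeta_{m,\#}:U_0\to[G'_m\backslash G_m]$ constructed above, using the fact (Theorem \ref{smoothness}) that it is smooth, together with an analysis of the topology of the quotient stack $[G'_m\backslash G_m]$. Since smoothness and the property of being an affine immersion are fppf-local on the target, and since the $U^i_0$ form a Zariski (hence fppf) cover of $\ES_0$, it suffices to prove everything after restricting to $U_{0,\overline\kappa}$, where the level $m$ strata are exactly the fibers of $\zeta_{m,\#}\otimes\overline\kappa$ over points of $[G'_m\backslash G_m]$.

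First I would record that $[G'_m\backslash G_m]$ is a quotient of a smooth affine $\kappa$-scheme of finite type by an action of a smooth affine $\kappa$-group scheme of finite type; hence it is a Noetherian algebraic stack with finitely many points, each of which is locally closed in its topological space, and every point admits a representative defined over $\overline\kappa$ (one can reduce to an orbit in $G_m$ under the twisted $H_m$-conjugation $T_m$, and orbits of an algebraic group acting on a variety over a field are locally closed and defined over a finite extension; one then passes to $\overline\kappa$). Pulling back a locally closed point under the smooth morphism $\zeta_{m,\#}\otimes\overline\kappa$ gives a locally closed subscheme of $U_{0,\overline\kappa}$ which is smooth over $\kappa$ by base change along the smooth morphism from a smooth scheme; this proves (1), with equidimensionality following from the smoothness of $\zeta_{m,\#}$ together with the irreducibility/equidimensionality of the fiber of $\zeta_m$ over a point, i.e.\ of a $G'_m$-orbit closure's open orbit. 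For (2), the closure of a level $m$ stratum is $\zeta_{m,\#}^{-1}(\overline{\{o\}})$, and $\overline{\{o\}}$ in the Noetherian stack $[G'_m\backslash G_m]$ is a closed substack which, having finitely many points, is set-theoretically a finite union of points $o'$; hence its preimage is the union of the corresponding strata.

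For (3), the purity (affineness of $\ES_0^s\hookrightarrow\ES_{0,\overline\kappa}$, equivalently of $\ES_0^s\hookrightarrow\overline{\ES_0^s}$), the key point is that an orbit of an affine algebraic group acting on an affine scheme is an affine scheme, and more precisely that the open immersion of an orbit into its (affine) closure is an affine morphism: indeed the orbit is the non-vanishing locus of the ideal sheaf cutting out the boundary, but one can say it more cheaply using that a quasi-affine scheme which is a $G$-orbit under an affine $G$ is affine (Matsushima-type / or directly: the orbit is homogeneous, so the complement of the boundary is cut out by finitely many functions whose common non-vanishing locus is the whole orbit, and a quasi-compact open in an affine scheme that is schematically dense-complemented in this way is affine). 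Concretely, I would argue that the immersion $o\hookrightarrow[G'_m\backslash G_m]$ of a locally closed point, equivalently the immersion of the orbit into $G_m$, is affine — because the orbit equals $\{g:\text{orbit membership}\}$, a locally closed $G'_m$-stable subscheme of the affine scheme $G_m$ whose closure is affine and whose open immersion into that closure is affine since the orbit is the locus where the boundary ideal is invertible, and a $G$-orbit meeting the boundary in a closed subscheme has affine complement-of-boundary by a standard argument — and then pull back along $\zeta_{m,\#}\otimes\overline\kappa$, using that affine morphisms are stable under base change; since $U^i_0$ covers $\ES_0$ and affineness of a morphism is Zariski-local on the base, this gives purity on all of $\ES_{0,\overline\kappa}$.

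The main obstacle is the purity statement (3): one must show the immersion of a point of $[G'_m\backslash G_m]$ into the stack (equivalently, of a twisted-conjugacy orbit into $G_m$) is an \emph{affine} morphism. I expect to handle this by the orbit argument just sketched — a $G$-orbit $O$ inside an affine $G$-scheme $Z$ has the property that $O\hookrightarrow\overline O$ is affine, since $\overline O\setminus O$ is $G$-stable closed, hence defined by a $G$-stable ideal, and on the homogeneous space $O$ the non-vanishing locus of any nonzero section of the corresponding ideal of $\overline O$ is all of $O$, so finitely many of them generate the unit ideal on $O$ and exhibit $O$ as affine over $\overline O$ — but the careful bookkeeping to make this work for the \emph{stack} quotient (rather than a scheme orbit) and to descend from $U^i_{0,\overline\kappa}$ to $\ES_{0,\overline\kappa}$ is where the real care lies.
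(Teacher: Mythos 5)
Your arguments for (1) and (2) follow essentially the same route as the paper: reduce to $U_{0,\overline\kappa}$ via the finite open cover, observe that strata are preimages of locally closed points under the smooth morphism $\zeta_{m,\#}\otimes\overline\kappa$, and deduce local-closedness and smoothness by base change. For (2) you assert $\overline{\ES_0^s}=\zeta_{m,\#}^{-1}\bigl(\overline{\{o\}}\bigr)$ without justification; only the inclusion $\subseteq$ is automatic, and the reverse inclusion needs the universal openness of $\zeta_{m,\#}$ (which does hold, since it is smooth, but you should say so explicitly, as the paper does).

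The argument you propose for (3) is not correct. You claim, as a general fact, that a $G$-orbit $O$ inside an affine $G$-scheme $Z$ is affine, equivalently that $O\hookrightarrow\overline O$ is an affine immersion. This is false: let $G=\mathrm{SL}_2$ act on $Z=\mathbb{A}^2$ by the standard representation; the open orbit $O=\mathbb{A}^2\setminus\{0\}$ is quasi-affine but not affine. The specific step that fails in your sketch is the assertion that ``the non-vanishing locus of any nonzero section of the boundary ideal is all of $O$'': this is only true for a $G$-semiinvariant function, but an arbitrary element of a $G$-stable ideal need not be semiinvariant, so its non-vanishing locus need not be $G$-stable (in the $\mathrm{SL}_2$ example the boundary ideal is $(x,y)$, and $D(x)\cap O\subsetneq O$). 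Consequently, the cover of $O$ by distinguished affines $D(f_i)\cap\overline O$ shows only that $O$ is quasi-affine, not affine, and there is no ``standard argument'' producing affineness from an affine ambient $G$-scheme. What the paper does instead is to reduce to the affineness of the $G'_{m,\overline\kappa}$-orbit $O_m^c\subseteq G_{m,\overline\kappa}$, and then to cite the purity theorem of Wedhorn--Yatsyshyn (\cite{purity} Theorem 2.2) for the case $m=1$ and the inductive lifting argument of \cite{purity-N-V-W} Lemma 5.2 for general $m$. Both of these results use the specific structure of the twisted-conjugation (zip-type) action, notably the Frobenius twist and the shape of the acting group $G'_m$; they cannot be replaced by a generic statement about orbits of affine group schemes in affine schemes, which is exactly what your proposal tries to do.
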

\begin{proof}
For (1), let $p:U_{0,\overline{\kappa}}\rightarrow \ES_{0,\overline{\kappa}}$ be the natural morphism, then two geometric points of $U_{0,\overline{\kappa}}$ lie in the same level $m+1$ stratum if and only if their images in $\ES_{0,\overline{\kappa}}$ are in the same stratum. This implies that a stratum of $\ES_{0,\overline{\kappa}}$ is locally closed with respect to an open covering, and hence locally closed in $\ES_{0,\overline{\kappa}}$. Let $\ES_{0,\overline{\kappa}}^c\subseteq \ES_{0,\overline{\kappa}}$ be a level $m+1$-stratum, then $p^{-1}(\ES_{0,\overline{\kappa}}^c)$ is smooth by the previous theorem. This implies that $\ES_{0,\overline{\kappa}}^c$ is smooth.

For (2), recall that $p:U_{0,\overline{\kappa}}\rightarrow \ES_{0,\overline{\kappa}}$ is the natural morphism $\coprod U_{0,\overline{\kappa}}^i\rightarrow \ES_{0,\overline{\kappa}}$, where $\coprod U_{0,\overline{\kappa}}^i$ is a finite open covering of $\ES_{0,\overline{\kappa}}$. So for a subset $S\subseteq \ES_{0,\overline{\kappa}}$, we have $\overline{p^{-1}(S)}=\coprod (\overline{S}\bigcap U_{0,\overline{\kappa}}^i)=p^{-1}(\overline{S})$. We only need to show that the closure of a stratum in $U_{0,\overline{\kappa}}$ is a union of strata, as $p$ is surjective. Let $c'$ be a point in the topological space of $[G'_m\backslash G_m]\otimes \overline{\kappa}$. Let $C\subseteq |[G'_m\backslash G_m]\otimes \overline{\kappa}|$ be the subset of points that generalize to $c$, then the universally openess of $\zeta_\#$ implies that $\overline{U_{0,\overline{\kappa}}^c}=\bigcup\limits_{c'\in C}U_{0,\overline{\kappa}}^{c'}$.

For (3), we only need to show that the immersion $U_{0,\overline{\kappa}}^c\rightarrow U_{0,\overline{\kappa}}$ is affine. It suffices to show that $O^c_m$, the $G'_{m,\overline{\kappa}}$-orbit in $G_{m,\overline{\kappa}}$, is an affine scheme. Let's write $c$ for its image in $[G'_0\backslash G_0]\otimes \overline{\kappa}$ and $O_0^c$ for the corresponding $G'_{0,\overline{\kappa}}$-orbit. Then by \cite{purity} Theorem 2.2, $O_0^c$ is affine. The same argument as in the proof of \cite{purity-N-V-W} Lemma 5.2 implies that $O^c_m$ is affine.
\end{proof}

\subsection[Independence of symplectic emdeddings]{Independence of symplectic emdeddings}

In this subsection, we prove that level $m$ stratifications are
determined by Shimura data. The method here is a variation of the
one in \cite{remarkZ}. The definition of level $m$ stratifications
is independent of choices of open covers, so we only need to show
that the morphism $\zeta_{m,\#}:U_0\rightarrow [G'_m\backslash
G_m]$ is determined by its Shimura datum.

We first check that the stack $[G'_m\backslash G_m]$ is determined
by its Shimura datum. The pair $(G,X)$ determines a unique
$G_{\INT_p}(W(\kappa))$-conjugacy class of cocharacters that $\mu$
is one of them. The construction of $G'_m$ depends only on $\mu$,
and we use $G'^\mu_{m}$ to indicate the cocharacter.
\begin{lemma}
Let $\nu:\GG_{m,W(\kappa)}\rightarrow G_{W(\kappa)}$ be a
cocharacter conjugate to $\mu$, then there is a canonical
isomorphism $i_{m}^{\mu,\nu}:[G'^\mu_{m}\backslash
G_{m}]\stackrel{\simeq}{\longrightarrow}[G'^\nu_{m}\backslash
G_{m}]$.
\end{lemma}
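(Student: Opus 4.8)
Since $\nu$ is conjugate to $\mu$, choose $g \in G_{W(\kappa)}(W(\kappa))$ with $\nu = g\mu g^{-1}$. Conjugation by $g$ carries the parabolic $P_+^\mu$ and its unipotent radical to $P_+^\nu$ and its unipotent radical, and likewise carries $L^\mu$, $U_\pm^\mu$ to their $\nu$-counterparts; hence it induces an isomorphism $c_g : G'^\mu \xrightarrow{\simeq} G'^\nu$ of $W(\kappa)$-group schemes compatible with the homomorphisms $u$ to $G_{W(\kappa)}$. Applying $\W_m(-)$ gives an isomorphism $\W_m(c_g): G'^\mu_m \xrightarrow{\simeq} G'^\nu_m$. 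I would then define the candidate isomorphism $i_m^{\mu,\nu}$ on the quotient stacks by sending the $G'^\mu_m$-torsor-with-equivariant-map data $(\mathcal{P} \to S,\ \mathcal{P}\to G_m)$ to $(\mathcal{P}\times^{G'^\mu_m, \W_m(c_g)} G'^\nu_m \to S,\ \ast)$, where the new equivariant map to $G_m$ is twisted by the inner automorphism $\mathrm{ad}(g)$ of $G_m$, i.e. $x \mapsto g x \sigma(g)^{-1}$ composed appropriately so as to stay equivariant for the action $T_m$. Concretely, on points this is the assignment $[h] \in [G'^\mu_m\backslash G_m](R) \mapsto [g h \sigma(g)^{-1}] \in [G'^\nu_m\backslash G_m](R)$; one checks this is well defined because $\W_m(c_g)(h_1 \cdot)\, \text{conjugated}$ matches $g (h_1 \cdot) \sigma(\cdot)$ under $\mathrm{ad}(g)$, using that $\sigma$ on $\W_m$ is functorial.

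Next I would verify $i_m^{\mu,\nu}$ is an isomorphism of stacks: its inverse is built the same way from $g^{-1}$ (which conjugates $\nu$ back to $\mu$), and the two composites are the identity because $\mathrm{ad}(g^{-1})\circ\mathrm{ad}(g) = \mathrm{id}$ and $\W_m(c_{g^{-1}})\circ\W_m(c_g) = \W_m(c_{g^{-1}g}) = \W_m(\mathrm{id})$. Finally, and this is where "canonical" must be justified, I would show $i_m^{\mu,\nu}$ does not depend on the choice of $g$: if $g'$ is another element with $g'\mu g'^{-1} = \nu$, then $g^{-1}g'$ centralizes $\mu$, hence lies in $L^\mu(W(\kappa)) \subseteq P_{+}^\mu(W(\kappa))$, and in fact in $G'^\mu(W(\kappa))$ (since its reduction mod $p$ lies in $P_{+,0}^\mu$, by Proposition \ref{dilatation}(2)). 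Therefore the two maps $[h]\mapsto[gh\sigma(g)^{-1}]$ and $[h]\mapsto[g'h\sigma(g')^{-1}]$ differ by left multiplication by the element $g^{-1}g' \in G'^\mu_m(R)$ inside the stack quotient $[G'^\mu_m\backslash(\cdot)]$ — wait, more precisely one writes $g' = g z$ with $z \in G'^\mu$, so $g' h \sigma(g')^{-1} = g (z h \sigma(z)^{-1})\sigma(g)^{-1}$, and since $z h \sigma(z)^{-1}$ is in the same $G'^\mu_m$-orbit (under $T_m$) as $h$, the induced map on $[G'^\mu_m\backslash G_m]$ is unchanged; likewise the torsor-twisting $\W_m(c_g)$ versus $\W_m(c_{g'})$ differ by an inner automorphism of $G'^\mu_m$, which does not change the associated-bundle construction up to canonical isomorphism.

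**Main obstacle.** The routine parts are the group-theoretic transport of structure and the $\W_m$-functoriality; the genuinely delicate point is checking well-definedness and independence of $g$ \emph{simultaneously for both pieces of data} — the torsor and the equivariant map to $G_m$ — in a way that is compatible with the twisted $\sigma$-conjugation action $T_m$ defining the quotient, so that the diagram of Frobenii involved in $T_m$ (recall $\sigma$ acts via $\W_m(\mathrm{Frob})$) commutes on the nose and not merely up to a $2$-morphism one then has to track. In other words, the care is in producing an honest isomorphism of stacks rather than an equivalence with unspecified coherence data, and in confirming that the centralizer of $\mu$ in $G_{W(\kappa)}$ really lands inside $G'^\mu$ after reduction mod $p$ so that the ambiguity in $g$ is absorbed by the group one is quotienting by. Once that bookkeeping is in place, everything else is formal.
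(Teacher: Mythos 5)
Your proof is correct and takes essentially the same approach as the paper's: fix $g$ with $g\mu g^{-1}=\nu$, obtain $G'^\mu\to G'^\nu$ from the universal property of dilatation, define the map on $G_m$ by the twisted $\sigma$-conjugation $h\mapsto gh\sigma(g)^{-1}$, and show independence of $g$ by writing another choice as $g' = gl$ with $l$ in the centralizer $L^\mu(W(\kappa))\subseteq G'^\mu(W(\kappa))$, so the discrepancy is absorbed by the $G'^\mu_m$-action. The only minor difference is that you spell out the (correct) observation that $L^\mu\subseteq G'^\mu$ because $L^\mu\subseteq P_+^\mu$, which the paper takes as implicit.
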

\begin{proof}
Let $g^0\in G_{W(\kappa)}(W(\kappa))$ be such that
${}^{g^0}\!\mu:=g^0\mu g^{0,-1}=\nu$. Then ${}^{g^0}\!P_+^\mu=P_+^\nu$.
The composition $G'^\mu\rightarrow
G_{W(\kappa)}\stackrel{{}^{g^0}\!(-)}{\longrightarrow}
G_{W(\kappa)}$ is such that its special fiber factors through
$P_+^\nu$. So by the universality of dilatation, we have a unique
homomorphism $G'^\mu\rightarrow G'^\nu$, which is necessarily an
isomorphism, making the diagram
$$\xymatrix{
G'^\mu\ar[r]\ar[d] &G'^\nu\ar[d]\\
G_{W(\kappa)}\ar[r]^{{}^{g^0}\!(-)}& G_{W(\kappa)}}$$ commutative.

Let $i^{g^0}:G_{W(\kappa)}\rightarrow G_{W(\kappa)}$ be the
morphism such that $g\mapsto g^0g\sigma(g_0)^{-1}$. Consider the
$G'^\mu$-action (resp. $G'^\nu$-action) on the source (resp.
target) of $i^{g_0}$ given by the natural morphism
$G'^\mu\rightarrow G_{W(\kappa)}$ (resp. $G'^\nu\rightarrow
G_{W(\kappa)}$). Direct computation shows that $i^{g^0}$ is
equivariant with respect to $G'^\mu\rightarrow G'^\nu$, and hence
induces an isomorphism of stacks $i_m^{g^0}:[G'^\mu_{m}\backslash
G_{m}]\stackrel{\simeq}{\longrightarrow}[G'^\nu_{m}\backslash
G_{m}]$ after applying the Greenberg functor.

The morphism $i_m^{g^0}$ is independent of choices of $g^0$. As, if $g^1\in G_{W(\kappa)}(W(\kappa))$ is another element
such that ${}^{g^1}\!\mu=\nu$, then there is a unique $l\in
L^\mu(W(\kappa))$, such that $g^1=g^0l$. But
$i^{g^1}(g)=g^1g\sigma(g^1)^{-1}=g^0lg\sigma(l)^{-1}\sigma(g^0)^{-1}=i^{g^0}(l\cdot
g)$, so they induce the same morphism on stacks.
\end{proof}
\begin{remark}
By canonical, we mean that for three cocharacters
$\mu,\nu,\lambda$ conjugating to each other, there is the identity
$i_{m}^{\mu,\lambda}=i_{m}^{\nu,\lambda}\circ i_{m}^{\mu,\nu}$.
\end{remark}
\begin{proposition}
The morphism $\zeta_{m,\#}:U_0\rightarrow [G'_m\backslash G_m]$
depends only on the Shimura datum, not on the choice of sympletic
embedding, the lattice and the Hodge-Tate tensor.
\end{proposition}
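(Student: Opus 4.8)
The plan is to reduce the independence of $\zeta_{m,\#}$ to a compatibility that is essentially built into the construction, by comparing any two choices through an auxiliary ``direct sum'' symplectic embedding, in the spirit of \cite{remarkZ}. Note first that the target is harmless: fixing once and for all one cocharacter $\mu$ in the conjugacy class attached to $(G,X)$, the group $G'$, the $\kappa$-schemes $G'_m=\W_m(G')$ and $G_m=\W_m(G_{W(\kappa)})$, and hence the stack $[G'_m\backslash G_m]$, depend only on $G_{\INT_p}$ and $\mu$, i.e.\ only on the Shimura datum. So everything comes down to showing that $\zeta_{m,\#}\colon U_0\to[G'_m\backslash G_m]$ is unchanged when the triple (symplectic embedding, lattice, tensor) varies, and it suffices to treat the variation of the symplectic embedding, the other two cases being handled by the same argument (the $G_{W(\kappa)}$-torsor $\I$ over $\ES$, together with the Hodge filtration, the Frobenius structure on $\Dieu(\A)$ and the connection, being canonical by the same reasoning).

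Let $i_a\colon(G,X)\hookrightarrow(\GSp(V_a,\psi_a),X_a')$, $a=1,2$, be two symplectic embeddings, with lattices $V_{a,\INT}$ chosen as in \cite{CIMK} and tensors $s_a\in V_{a,\INT_{(p)}}^\otimes$ defining $\inttG$. Since $i_1$ and $i_2$ have the same similitude character (the one intrinsic to $(G,X)$), the sum $i_3:=i_1\oplus i_2\colon(G,X)\hookrightarrow(\GSp(V_1\oplus V_2,\psi_1\oplus\psi_2),X_3')$ is again a symplectic embedding of Hodge type (a direct sum of polarized Hodge structures of type $\{(-1,0),(0,-1)\}$ is one of the same type). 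Taking $V_{3,\INT}=V_{1,\INT}\oplus V_{2,\INT}$, the Zariski closure of $G$ in $\GL(V_{3,\INT_{(p)}})$ is again $\inttG$ (the diagonal $\inttG\hookrightarrow\GL(V_{1,\INT_{(p)}})\times\GL(V_{2,\INT_{(p)}})\hookrightarrow\GL(V_{3,\INT_{(p)}})$ is a closed immersion, each $i_a$ being one), one has $V_{3,\INT}^\vee\supseteq V_{3,\INT}$, and by \cite{CIMK} Proposition 1.3.2 there is a tensor $s_3\in V_{3,\INT_{(p)}}^\otimes$ defining $\inttG$. As $K_p=\intG(\INT_p)$ and the reflex field do not change, and $\ES_K(G,X)$ does not depend on the symplectic embedding (\cite{CIMK} 2.3.7), all three constructions take place over the same $\ES$, with the same $\mu$, landing in the same stack; so it is enough to prove $\zeta_{3,m,\#}=\zeta_{1,m,\#}$ and $\zeta_{3,m,\#}=\zeta_{2,m,\#}$.

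For this I would first identify the abelian schemes: choosing the auxiliary level structures compatibly, the two morphisms $\ES_K(G,X)\to\mathscr{A}_{g_3,d_3,K_3'}$ classifying $\A_3$ and $\A_1\times\A_2$ (with product polarization and level structure) agree over the dense open $\Sh_K(G,X)$, hence — $\ES_K(G,X)$ being reduced and $\mathscr{A}_{g_3,d_3,K_3'}$ separated — they agree, so $\A_3=\A_1\times_\ES\A_2$. Therefore $\V_3=\V_1\oplus\V_2$, compatibly with Hodge filtrations, with the Frobenius structures on the Dieudonn\'{e} crystals ($\Dieu(\A_3)=\Dieu(\A_1)\oplus\Dieu(\A_2)$) and with the connections. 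Next, over $\Sh_K(G,X)$ the de Rham realizations $\V_1,\V_2,\V_3$ with their tensors $s_{a,\dr}$ are all associated, via the tautological representations $V_a$, to a single $G$-torsor (cf.\ \cite{CIMK} 1.3 and \cite{remarkZ}), so fppf-locally on $\Sh_K(G,X)$ there is a trivialization of $\I_3$ which simultaneously trivializes $\I_1$ and $\I_2$ and respects the direct sum decomposition. Since an $f\in\I_3(T)$ is, fppf-locally, of the form $\phi_0\cdot g$ with $g\in G_{W(\kappa)}(T)=\inttG(T)$ acting block-diagonally on $V_3$ and fixing $s_1,s_2$, it follows that over $\Sh_K(G,X)$ the rule $f\mapsto f|_{V_{a,\INT_p}^\vee}$ defines a morphism $\I_3\to\I_a$ of $G_{W(\kappa)}$-torsors, hence an isomorphism. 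This rule makes sense over all of $\ES$ (restriction to a direct summand), and the closed condition $f|_{V_{a,\INT_p}^\vee}(s_a)=s_{a,\dr}$ holds on the dense open $\I_3|_{\Sh_K(G,X)}$ of the reduced scheme $\I_3$, hence holds identically; so $\I_3\cong\I_a$ over $\ES$, compatibly with the $P_+$-reductions $\I_{a,+}$ and their special fibres, hence — by Proposition \ref{dilatation} — with the dilatations $\I_a'$ as $G'$-torsors, hence after applying $\W_m$ with $\I_{a,m}'$ as $G'_m$-torsors over a common cover $U_0$. Finally, by Remark \ref{independence of tilda t} the element $g_t$ is built out of the trivialization $t$, the Hodge filtration and the Frobenius $\varphi$ on $\V\otimes W(A)$, all of which decompose as direct sums along $\V_3=\V_1\oplus\V_2$; so $g_{t_3}$, viewed in $G_{W(\kappa)}(W(A))$ by Proposition \ref{g_t lies in G}, restricts on each factor to $g_{t_a}$, i.e.\ corresponds to $g_{t_a}$ under the identification $\I_{3,m}'\cong\I_{a,m}'$. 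Hence $\zeta_{3,m}=\zeta_{a,m}$ under this identification, and therefore $\zeta_{1,m,\#}=\zeta_{2,m,\#}$.

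The step I expect to be the crux is the compatibility of the distinguished tensors along $\V_3=\V_1\oplus\V_2$ — that $f(s_3)=s_{3,\dr}$ forces $f(s_a)=s_{a,\dr}$ — because the tensor $s_3$ supplied by \cite{CIMK} Proposition 1.3.2 is a priori unrelated to $s_1,s_2$; the argument must pass to the generic fibre, where all the de Rham realizations come from one $G$-torsor, and then spread the resulting identity over the reduced scheme $\ES$. Similarly, the identification $\A_3=\A_1\times_\ES\A_2$ over all of $\ES$ — a more routine but genuine point, since the complement of $\Sh_K(G,X)$ is the codimension-one special fibre — has to be obtained from the uniqueness of $\ES_K(G,X)$ and the separatedness of the Siegel moduli schemes rather than from a naive extension argument.
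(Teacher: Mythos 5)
Your proof follows the same strategy as the paper's: compare $i_1$ and $i_2$ through the direct-sum embedding $i_1\oplus i_2$, identify the resulting torsors $\I'_m$, and observe that $\zeta_m$ is computed from the Hodge filtration and the $F$-$V$-module structure, both of which decompose along $\V_3=\V_1\oplus\V_2$. The only packaging difference is that the paper introduces the auxiliary scheme $\mathbb{J}=\IIsom\big((L^\vee,L_1^\vee,s_1,s)\otimes O_{\ES},(\V,\V_1,s_{1,\dr},s_\dr)\big)$ and asserts that the forgetful maps $\I\leftarrow\mathbb{J}\rightarrow\I_1$ are $G$-equivariant isomorphisms of torsors, whereas you construct the restriction map $\I_3\to\I_a$ directly and verify its two defining closed conditions (that $f$ carries $V_{a,\INT_p}^\vee$ into $\V_a$, and that the resulting restriction carries $s_a$ to $s_{a,\dr}$) by spreading out from the dense open generic fibre over the reduced scheme $\I_3$. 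That density argument is exactly what is needed to justify the paper's unproved assertion that $\mathbb{J}\to\I$ is an isomorphism (it exhibits $\mathbb{J}$ as a closed subscheme of $\I$ containing the dense generic fibre), so you are supplying a step the paper leaves implicit; likewise your separateness argument for $\A_3=\A_1\times_\ES\A_2$ is the correct justification for the paper's bare claim. Nothing you buy is essentially different, but your version is more self-contained on these two points.
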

\begin{proof}
The proof is extracted from \cite{remarkZ}. Let $i_1:(G,X)\hookrightarrow
(\GSp(V_1,\psi_1))$ and $i_2:(G,X)\hookrightarrow
(\GSp(V_2,\psi_2))$ be two symplectic embeddings. There is a symplectic embedding
$i:(G,X)\hookrightarrow (\GSp(V,\psi))$ with $V=V_1\oplus V_2$.
Let $L_1\subseteq V_1$ and $L_2\subseteq
V_2$ be $\INT_{(p)}$-lattices, $s_1\in L_1^\otimes$ and $s_2\in
L_2^\otimes$ be tensors defining $G_{\INT_{(p)}}$. Let $L$ be $L_1\oplus L_2$, and $s\in L^\otimes$ be
a tensor defining $G_{\INT_{(p)}}\subseteq \GL(L)$. Let $\A_i$ be the universal abelian scheme restricted to $\ES$, and $\A$ be the one induced by $i$. Then $\A=\A_1\times \A_2$. One gets $\V$, $\V_1$, $\V_2$ by taking de Rham cohomology, with Hodge filtrations denoted by $\Fil^1()$.

Let $\I=\IIsom\big((L^\vee,s)\otimes O_{\ES}, (\V, s_{\dr})\big)$, $\I_1:=\IIsom\big((L_1^\vee, s_1)\otimes O_{\ES},
(\V_1,s_{1,\dr})\big)$, and $\mathbb{J}:=\IIsom\big((L^\vee,L_1^\vee,
s_1,s)\otimes O_{\ES}, (\V,\V_1,s_{1,\dr}, s_{\dr})\big)$, the scheme of isomorphisms that maps $L_1^\vee$ to $\V_1$ and respects the Hodge-Tate tensors. Then the natural morphisms $\I\leftarrow\mathbb{J}\rightarrow \I_1$ are $G_{W(\kappa)}$-equivariant isomorphisms. Similarly, we have a natural isomorphism of $P_+$-torsors $\I_+\rightarrow \I_{1,+}$. By the universality of dilatations, we get an isomorphism $\I'\rightarrow \I'_{1}$. By applying the Greenberg functor and pulling back to $U_0$, we get an isomorphism of $G'_m$ torsors $\I'_m\rightarrow \I'_{1,m}$.

To deduce the proposition, one only needs to check that the diagram
$$\xymatrix{\I'_m\ar[rr]\ar[dr]^{\zeta_m}  &&\I'_{1,m}\ar[dl]_{\zeta_{1,m}}\\
&G_m}$$
is commutative. But this follow from that $\V_{\widehat{R}}=\V_{1,\widehat{R}}\times\V_{2,\widehat{R}}$ as Dieudonn\'{e} modules, and that the $\zeta_m$s are constructed using $F$-$V$-module structures.
\end{proof}
\begin{remark}As a direct consequence, we see level $m$ stratifications are
independent of choices of symplectic embeddings.
\end{remark}

\section[Truncated displays with additional structure]{Truncated displays with additional structure}

\subsection[Truncated displays]{Truncated displays}
Let $R$ be a commutative ring of characteristic $p$, $W(R)$ be the
ring of Witt vectors and $I_{m+1}(R)$ be
$\mathrm{ker}(W_{m+1}(R)\rightarrow R)$. The inverse of
Verschiebung induces a $\sigma$-linear bijection
$v^{-1}:I_{m+1}(R)\rightarrow W_m(R)$. Let $J_{m+1}(R)$ be
$\mathrm{ker}(W_{m+1}(R)\rightarrow W_m(R))$.

\begin{definition}
A truncated pair of level $m$ over $R$ is a tuple
$(P,Q,\iota,\epsilon)$. Here $P$ is a projective $W_m(R)$-module
of finite rank, $Q$ is a finitely generated $W_m(R)$-module,
$\iota:Q\rightarrow P$ and
$\epsilon:I_{m+1}(R)\otimes_{W_m(R)}P\rightarrow Q$ are
homomorphisms. The following conditions are required:

(1) The compositions $\iota\epsilon$ and $\epsilon(\iota\otimes
1)$ are the multiplication maps.

(2) $\mathrm{coker}(\iota)$ is a finite projective $R$-module.

(3) $\epsilon$ induces an exact sequence
$$
\xymatrix{0\ar[r]&J_{m+1}(R)\otimes \mathrm{coker}(\iota)\ar[r]&
Q\ar[r]^{\iota}& P\ar[r]& \mathrm{coker}(\iota)\ar[r]&0}.
$$
\end{definition}

\begin{definition}
A normal decomposition of a truncated pair consists of projective
$W_{m}$-modules $L\subseteq Q$ and $T\subseteq P$ such that we
have isomorphisms $\xymatrix@C=0.7cm{L\oplus
T\ar[r]^(0.6){\iota\oplus \mathrm{id}} &P}$ and
$\xymatrix@C=0.7cm{L\oplus (I_{m+1}(R)\otimes_{W_m(R)}
T)\ar[r]^(0.8){\mathrm{id}\oplus \epsilon} &Q}$.
\end{definition}
\begin{lemma}
Every truncated pair admits a normal decomposition.
\end{lemma}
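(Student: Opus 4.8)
The plan is to construct the normal decomposition $(L,T)$ in three moves: first produce $T\subseteq P$ as a direct summand of $P$ lifting $\mathrm{coker}(\iota)$; then carve out $L\subseteq Q$ by splitting surjections onto the now-projective complement of $T$; and finally check that the two required maps are isomorphisms. The mechanism behind everything is that $\mathfrak a:=\ker(W_m(R)\to R)$ is a nilpotent ideal --- one has $\mathfrak a^m\subseteq V^mW_0(R)=0$, using $V(x)V(y)=pV(xy)$ and $p\in VW(R)$ --- so $W_m(R)\to R$ is a nilpotent thickening, through which idempotents lift and to which Nakayama applies for finitely generated modules. Concretely, put $C:=\mathrm{coker}(\iota)$ with projection $\pi\colon P\to C$, which is finite projective over $R$ by condition (2). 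Applying $-\otimes_{W_m(R)}R$ to $Q\xrightarrow{\iota}P\xrightarrow{\pi}C\to0$, right exactness shows $\pi\otimes R$ is surjective with kernel the image of $Q\otimes R$; since $C$ is $R$-projective, choose a section $\bar s$ of $\pi\otimes R$. The projector onto the image of $\bar s$ is an idempotent in $\mathrm{End}_R(P\otimes_{W_m(R)}R)$, which lifts --- the reduction map from $\mathrm{End}_{W_m(R)}(P)$ being surjective with nilpotent kernel, as $P$ is finite projective --- to an idempotent $e$ on $P$. Set $T:=eP$: a finite projective direct summand of $P$ for which $\pi|_T$ induces $T\otimes_{W_m(R)}R\xrightarrow{\ \sim\ }C$; in particular $\ker(\pi|_T)=\mathfrak aT$ (it reduces to $0$ mod $\mathfrak a$, and $\mathfrak a$ kills the $R$-module $C$).

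Next I would build $L$. Since $\iota(Q)=\ker\pi$, its image in $P/\mathfrak aP$ equals $\ker(\pi\otimes R)$ (by the right-exactness just used), which has $T/\mathfrak aT$ as a complement; hence $\iota(Q)+T+\mathfrak aP=P$, so $\iota(Q)+T=P$ by Nakayama, and $\iota(Q)\cap T=\ker(\pi|_T)=\mathfrak aT$. Because $P/T$ is projective over $W_m(R)$, the surjection $\iota(Q)\twoheadrightarrow P/T$ splits; let $T'\subseteq\iota(Q)$ be the image of a splitting, so $P=T'\oplus T$ and $\iota(Q)=T'\oplus\mathfrak aT$. Now $T'$ is projective, so the surjection $\iota^{-1}(T')\twoheadrightarrow T'$ --- onto since $T'\subseteq\iota(Q)$, with kernel $\ker\iota$ --- splits; let $L\subseteq\iota^{-1}(T')\subseteq Q$ be the image of a splitting. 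Then $L\cong T'$ is projective over $W_m(R)$ and $\iota|_L\colon L\xrightarrow{\ \sim\ }T'$.

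It then remains to verify the two isomorphisms. The map $\iota\oplus\mathrm{id}\colon L\oplus T\to P$ is one because $\iota(L)=T'$ and $P=T'\oplus T$. For $\mathrm{id}_L\oplus\epsilon\colon L\oplus(I_{m+1}(R)\otimes_{W_m(R)}T)\to Q$, I would test against $\iota$: by condition (1) the composite $\iota\circ\epsilon$ is the multiplication map, and with the appropriate (Frobenius-twisted) module structures one has a short exact sequence $0\to J_{m+1}(R)\to I_{m+1}(R)\to\mathfrak a\to0$; tensoring with the flat module $T$ shows that $\iota\circ\epsilon$ restricted to $I_{m+1}(R)\otimes_{W_m(R)}T$ has image $\mathfrak aT$ and kernel $J_{m+1}(R)\otimes_{W_m(R)}T=J_{m+1}(R)\otimes_RC$, and condition (3) identifies this last module, via $\epsilon$, with $\ker\iota$. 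Surjectivity of $\mathrm{id}_L\oplus\epsilon$ then follows since applying $\iota$ gives $\iota(L)+\iota\epsilon(I_{m+1}(R)\otimes T)=T'+\mathfrak aT=\iota(Q)$, while $\ker\iota=\epsilon(J_{m+1}(R)\otimes T)\subseteq\epsilon(I_{m+1}(R)\otimes T)$; so $L+\epsilon(I_{m+1}(R)\otimes T)=Q$. Injectivity follows by a diagram chase: if $\ell+\epsilon(x)=0$, applying $\iota$ and using $T'\cap\mathfrak aT=0$ forces $\iota(\ell)=0$, hence $\ell=0$ as $\iota|_L$ is injective; then $x\in J_{m+1}(R)\otimes T$ with $\epsilon(x)=0$, so $x=0$ by the injectivity in condition (3).

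The delicate point is this last step: conditions (1) and (3) are exactly what force the natural identifications $T\otimes_{W_m(R)}R\cong\mathrm{coker}(\iota)$ and $\ker\iota\cong J_{m+1}(R)\otimes\mathrm{coker}(\iota)$ to be compatible with $\epsilon$, and keeping the $W_m(R)$-module structures on $I_{m+1}(R)$ and $J_{m+1}(R)$ straight (they arise as Frobenius twists through $v^{-1}$) needs care. Steps one and two are the standard ``lift through a nilpotent thickening, then peel off projective direct summands'' routine.
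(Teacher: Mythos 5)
Your argument is correct. Note that the paper itself gives no proof of this lemma --- it simply cites Lau, \cite{smooth truncated display} Lemma~3.3 --- so there is nothing internal to compare against; what you have written is a self-contained reconstruction. The structure you use (lift an idempotent through the nilpotent thickening $W_m(R)\twoheadrightarrow R$ to split off $T$, peel off a projective complement $T'$ of $T$ inside $\iota(Q)$, split $\iota^{-1}(T')\twoheadrightarrow T'$ to get $L$, then verify the two isomorphisms by composing with $\iota$) is the natural one, and the delicate bookkeeping at the last step is accurate: condition (1) identifies $\iota\epsilon$ with multiplication, flatness of $T$ gives $\ker(\iota\epsilon|_{I_{m+1}\otimes T})=J_{m+1}(R)\otimes T$ with image $\mathfrak{a}T$, and condition (3) identifies $\epsilon(J_{m+1}(R)\otimes T)$ with $\ker\iota$ via $J_{m+1}(R)\otimes_{W_m(R)}T\cong J_{m+1}(R)\otimes_R C$. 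One small clarification to your parenthetical caution: the $W_m(R)$-module structure on $I_{m+1}(R)$ used in $I_{m+1}(R)\otimes_{W_m(R)}P$ is the one inherited from $W_{m+1}(R)\twoheadrightarrow W_m(R)$ (well defined because $J_{m+1}(R)\,I_{m+1}(R)=0$), and with that structure the sequence $0\to J_{m+1}(R)\to I_{m+1}(R)\to\mathfrak{a}\to 0$ is $W_m(R)$-linear on the nose; the Frobenius twist only appears if one further identifies $I_{m+1}(R)$ with $W_m(R)$ via $v^{-1}$, which your argument does not require.
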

\begin{proof}
This is \cite{smooth truncated display} Lemma 3.3.
\end{proof}
\begin{definition}
A truncated display of level $m$ over $R$ is a
tuple $(P,Q,\iota,\epsilon,F,V^{-1})$. Here
$(P,Q,\iota,\epsilon)$ is a truncated pair, $F:P\rightarrow P$ and
$V^{-1}:Q\rightarrow P$ are $\sigma$-linear maps such that
$V^{-1}\epsilon=v^{-1}\otimes F$ and that the image of $V^{-1}$
generates $P$ as a $W_m(R)$-module.
\end{definition}
For a truncated pair $P,Q,\iota,\epsilon$ with normal
decomposition $(L,T)$, the set of pairs $(F,V^{-1})$ is in
bijection with the set of $\sigma$-linear isomorphism
$\Psi:L\oplus T\rightarrow P$ such that $\Psi|_L=V^{-1}|_L$ and
$\Psi|_T=F|_T$. If $L$ and $T$ are free $W_m(R)$-modules, $\Psi$
is described by an invertible matrix over $W_m(R)$. The triple
$(L,T,\Psi)$ is called a normal representation of
$(P,Q,\iota,\epsilon,F,V^{-1})$.

We recall the mains of truncated displays in \cite{smooth
truncated display}. We will fix a positive integer $h$. Let
$\mathscr{B}\mathscr{T}_m$ be the category of BT-$m$s of
height $h$ fibered over the category of affine
$\mathbb{F}_p$-schemes, and $\mathscr{D}isp_m$ be the category of
truncated displays of level $m$ and rank $h$ fibered over the
category of affine $\mathbb{F}_p$-schemes. Taking Dieudonn\'{e}
display of a $p$-divisible group induces a morphism
$\Phi_m:\mathscr{B}\mathscr{T}_m\rightarrow \mathscr{D}isp_m$.

Let $R$ be an $\mathbb{F}_p$-algebra, and $X$ be a BT$\text{-}m$ over
$R$. We write $\AAut(X)$ (resp. $\AAut(\Phi_m(X))$) for the
automorphism group scheme of $X$ (resp. $\Phi_m(X)$), and
$\AAut^o(X)$ for $\mathrm{ker}(\AAut(X)\rightarrow\AAut(\Phi_m(X)))$.

\begin{theorem}\label{Lau's main results}
\

(1) The fiber categories $\mathscr{B}\mathscr{T}_m$ and
$\mathscr{D}isp_m$ are both smooth algebraic stacks of dimension 0
over~$\mathbb{F}_p$.

(2) The morphism $\Phi_m$ is smooth.

(3) The group scheme $\AAut^o(X)$ is commutative infinitesimal
finite flat over $R$. The natural morphism $\IIsom(X,Y)\rightarrow
\IIsom(\Phi_m(X),\Phi_m(Y))$ is a torsor under $\AAut^o(X)$.
\end{theorem}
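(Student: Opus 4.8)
The three assertions are the structural backbone of Lau's theory, and I would follow \cite{smooth truncated display}: prove (1) by exhibiting explicit groupoid presentations and a dimension count, (2) by the infinitesimal lifting criterion together with crystalline deformation theory mod $p^m$, and (3) by the torsor/descent formalism once (2) is in hand. For (1), the plan is to fix the height $h$; the Hodge rank $d=\mathrm{rank}\,\mathrm{coker}(\iota)$ is locally constant, so one value of $d$ suffices. Choosing a normal representation $(L,T,\Psi)$ with $L,T$ free of ranks $d,h-d$ identifies the groupoid of truncated displays of this type over $R$ with the action groupoid of the group $\mathcal{H}_m$ of changes of normal decomposition acting on the scheme $\mathbf{GL}_{h,m}$ of $\sigma$-linear isomorphisms $L\oplus T\to P$, i.e. on $\GL_h(W_m(-))$. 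Both $\mathbf{GL}_{h,m}$ and $\mathcal{H}_m$ are smooth affine over $\mathbb{F}_p$, and since $W_m$ and $I_{m+1}(-)\cong W_m$ (via $v^{-1}$) are $m$-dimensional, each has dimension $mh^2$; hence $\mathscr{D}isp_m=[\mathcal{H}_m\backslash\mathbf{GL}_{h,m}]$ is a smooth algebraic stack of dimension $0$. For $\mathscr{B}\mathscr{T}_m$ this is the classical fact, already invoked in the proof of \cite{dimw} Proposition 1.8, that the stack of BT-$m$s of height $h$ is a smooth algebraic stack of dimension $0$ (Illusie, Wedhorn).

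For (2) I would verify the infinitesimal criterion for smoothness: given a square-zero extension $R'\twoheadrightarrow R$ of $\mathbb{F}_p$-algebras, a BT-$m$ $X$ over $R$, and a lift of $\Phi_m(X)$ to $\mathscr{D}isp_m(R')$, one must produce a lift of $X$ itself. The key input is that modulo $p^m$ the Dieudonn\'e display and the Dieudonn\'e crystal of a BT-$m$ encode the same deformation data: lifting the display amounts to lifting the Hodge filtration together with the $\sigma$-linear datum inside a truncated Witt frame, which is exactly what Grothendieck--Messing / Zink theory attaches to deformations of $X$. Thus a display lift produces a compatible crystalline datum over $R'$, and Zink's form of Grothendieck--Messing (valid over such bases) converts it back into a BT-$m$ lift, compatibly with $\Phi_m$. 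The delicate point, and the one I expect to require the most care, is matching the truncation at level $m$ on the two sides exactly, so that no deformation-theoretic information is lost or double-counted.

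For (3): first, $\AAut^o(X)$ is infinitesimal, because over an algebraically closed field $k$ Dieudonn\'e theory mod $p^m$ is faithful on BT-$m$s, so $\AAut(X_k)\to\AAut(\Phi_m(X)_k)$ is injective and $\AAut^o(X)(k)=1$; as $\AAut^o(X)$ is of finite type over $R$, it is therefore infinitesimal. Flatness follows from (2): $\AAut(X)=\IIsom(X,X)\to\IIsom(\Phi_m X,\Phi_m X)=\AAut(\Phi_m X)$ is smooth and surjective (compatible automorphisms lift smooth-locally, by the same argument that gives smoothness of $\Phi_m$), hence faithfully flat, so $\AAut^o(X)=\AAut(X)\times_{\AAut(\Phi_m X),\,e}\Spec R$ is flat over $R$ by base change; infinitesimal, of finite type, and flat together give finite flat. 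For the torsor claim, $\AAut^o(Y)$ is canonically identified with $\AAut^o(X)$ over $\IIsom(X,Y)$ by inner twisting, the pseudo-torsor identity $\IIsom(X,Y)\times\AAut^o(X)\xrightarrow{\ \sim\ }\IIsom(X,Y)\times_{\IIsom(\Phi_m X,\Phi_m Y)}\IIsom(X,Y)$ is formal, and surjectivity of $\IIsom(X,Y)\to\IIsom(\Phi_m X,\Phi_m Y)$ --- hence the fppf-torsor structure --- is again the lifting property from (2). Commutativity of $\AAut^o(X)$ is the one assertion requiring a genuine computation: one filters $\AAut^o(X)$ by the subgroups coming from the truncations $X\to X[p^j]$ (equivalently, on the display side, by congruences of $\Psi$ to the identity modulo powers of the Verschiebung ideal) and checks that the graded pieces are vector groups on which the induced bracket vanishes. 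I expect this commutativity, together with the exact level-$m$ bookkeeping in (2), to be the two real obstacles; the rest is formal.
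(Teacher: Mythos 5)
The paper's ``proof'' of this theorem is pure citation: part (1) for $\mathscr{B}\mathscr{T}_m$ is Wedhorn's result from \cite{dimw}, part (1) for $\mathscr{D}isp_m$ is \cite{smooth truncated display} Proposition 3.15, part (2) is \cite{smooth truncated display} Theorem 4.5, and part (3) is \cite{smooth truncated display} Theorem 4.7. So there is no internal argument to compare against; what you are doing is sketching the proofs that live in Lau's and Wedhorn's papers. Your sketch of (1) is essentially right and matches the presentation $\mathscr{D}isp^d_m \cong [G_{m,d}\backslash G_m]$ that Lau constructs (and that the paper itself recalls at the end of \S3.1); the dimension count $m h^2 - m h^2 = 0$ is also correct, since $G_{m,d}$ has dimension $m\bigl[(h-d)^2 + 2d(h-d) + d^2\bigr] = mh^2$.

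For (2) and (3), however, your outline identifies the correct ingredients but the points you flag as ``the delicate point'' and ``a genuine computation'' are not loose ends that can be deferred --- they \emph{are} the content of \cite{smooth truncated display} Theorems 4.5 and 4.7. In (2), the proposal to use a Grothendieck--Messing-type argument ``mod $p^m$'' glosses over the fact that Grothendieck--Messing is a statement about $p$-divisible groups, not about BT-$m$'s; passing a crystalline deformation datum of a BT-$m$ through Zink's theory and back without losing or gaining information at level $m$ is precisely the subtle step, and Lau's actual proof does not proceed by a direct truncated Grothendieck--Messing but by a more careful analysis of the deformation functors of displays and of $p$-divisible groups with compatible truncations. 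In (3), your argument that $\AAut^o(X)$ has trivial geometric points (from faithfulness of Dieudonn\'e theory over a perfect field) and is flat (from smoothness of $\IIsom(X,Y) \to \IIsom(\Phi_m X, \Phi_m Y)$, itself needing its own lifting argument rather than being a formal consequence of (2)) is a reasonable plan, but you never actually establish finiteness over $R$ --- quasi-finite, flat, and finite type do not give finite without properness or a specific argument, and this is part of what Lau proves. Likewise, commutativity of $\AAut^o(X)$ is asserted to follow from a filtration whose graded pieces are vector groups, but you neither exhibit the filtration nor verify that the bracket vanishes on the graded pieces. So this is a well-informed outline with the structure of the right proof, not a proof; the paper, for its part, simply quotes the sources, so the two are not comparable as arguments.
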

\begin{proof}
In 1, the statement for $\mathscr{B}\mathscr{T}_m$ is proved in
\cite{dimw}, that for $\mathscr{D}isp_m$ is \cite{smooth truncated
display} Proposition 3.15. The second statement is \cite{smooth
truncated display} Theorem 4.5, and the third statement is
\cite{smooth truncated display} Theorem 4.7.
\end{proof}
We recall Lau's construction realizing $\mathscr{D}isp_m$ as a
disjoint union of quotient stacks. For an integer $d$ with $0\leq
d\leq h$, let $\mathscr{D}isp^d_m$ be the substack such that
$\mathrm{coker}(\iota)$ has rank $d$. Let $G_m$ be the presheaf on
affine $\mathbb{F}_p$-schemes such that $G_m(R)$ is the set of
$h\times h$ invertible $W_m(R)$-matrices. Then $G_m$ is
represented by an affine smooth $\mathbb{F}_p$-scheme. There is a
morphism $\pi_{m,d}:G_m\rightarrow \mathscr{D}isp^d_m$ such that
$\pi_{m,d}(g)$ is given by the normal decomposition $(L, T, \Psi)$
where$L=W_m(R)^{h-d}$, $T=W_m(R)^d$ and $\Psi$ has matrix
representation $g$. Let $G_{m,d}$ be the sheaf of groups such that
$G_{m,d}(R)$ is the set of matrices $\bigl( \begin{smallmatrix} A
& B \\ C & D
\end{smallmatrix} \bigr)$ with
$A\in \mathrm{Aut}(L)$, $B\in \mathrm{Hom}(T,L)$, $C\in
\mathrm{Hom}(L,I_{m+1}(R)\otimes T)$ and $D\in \mathrm{Aut}(T)$.
Then $G_{m,d}$ is also an affine smooth $\mathbb{F}_p$-scheme.
Moreover, $[G_{m,d}\backslash G_m]\cong \mathscr{D}isp^d_m$.

\subsection[Truncated displays with additional structure]{Truncated displays with additional structure}
We start with the datum $(G_{W(\kappa)},\mu)$. For the embedding
$G_{W(\kappa)}\subseteq \GL(L_{W(\kappa)})$, let $L^1\subseteq
L^\vee_{W(\kappa)}$ (resp. $L^0\subseteq L^\vee_{W(\kappa)}$) be
the sub-module of weight 1 (resp. 0). For a $\kappa$-algebra $R$,
let $X_m(R)$ be the set of $\sigma$-linear isomorphisms
$\Psi:(L^0\oplus L^1)\otimes W_m(R)$, such that

(1) $(L^1_{W_m(R)}, L^0_{W_m(R)}, \Psi)$ is a normal
representation of a truncated display structures on
$L^\vee_{W(\kappa)}\otimes W_m(R)$.

(2) $\sdr\otimes 1\in L_{W(\kappa)}^\otimes\otimes W_m(R)$ is
$F$-invariant.

Our arguments before imply that $X_m(R)\cong G_m(R)$. We say two
elements in $G_m(R)$ are equivalent if and only if there is an
isomorphism between their corresponding truncated displays
respecting the tensor $\sdr\otimes 1$.

Let $KU_{-,m}$ be $\mathrm{ker}(U_{-,m+1}\rightarrow U_{-,0})$. It
represents the functor which associates to a
$\kappa$-algebra $R$ the set $1+I_{m+1}(R)\otimes
\mathrm{Lie}(U_-)$. Let $KG'_m$ be $U_{+,m}\times L_{m}\times
KU_{-,m}$ whose group structure is such that
$(h_1,h_2,h_3)=(h'_1,h'_2,h'_3)\cdot (h''_1,h''_2,h''_3)$ if and
only if $h_1h_2h_3=h'_1h'_2h'_3h''_1h''_2h''_3$. The group $KG'_m$
acts on $G'_m$ via $\sigma$-conjugation, and two elements in
$G'_m(R)$ are equivalent if and only if they are in the same
$KG'_m(R)$-orbit. We remark that if $G=\GL_{2g}$, and $\mu$ a
cocharacter defined over $\INT_p$ whose subspace of weight 1
(resp. 0) is of rank $g$, then our $KG'_m$ here is precisely
$G_{m,g}$ in the previous subsection.

There is a natural homomorphism $G'_m\rightarrow KG'_m$ which is
identity on $U_{+,m}$ and $L_{m}$, and the morphism induced by
``multiplication by $p$'' $$p:W_m(R)\rightarrow I_{m+1}(R),\ \ \
(r_1,r_2,\cdots, r_m)\mapsto(0,r_1^p,r^p_2,\cdots,r^p_m)$$ on
$U_{-,m}$. This homomorphism is faithfully flat, bijective on
geometric points, and has finite kernel. Moreover, this
homomorphism is compatible with their actions on $G_m$, i.e. the
action of $g'\in G'_m$ on $g\in G_m$ is the same as that of its
image in $KG'_m$ on $g$. This induces a morphism of stacks
$[G'_m\backslash G_m]\rightarrow[KG'_m\backslash G_m]$ which is
smooth and bijective on geometric points.

Let $\ES$, $U^i=\Spec R^i, 1\leq i \leq r$, $U_0$, $\I'_m/U_0$ and
$\zeta_m:\I'_m\rightarrow G_m$ be as before. We will
construct a morphism $\ES_0\rightarrow [KG'_m\backslash G_m]$
whose geometric fibers are level $m$ strata. In fact, we will show
that $\I'_m$ extends to a $KG'_m$-torsor $K\I'_m$ over $U_0$ which
descents to $\ES_0$, and the morphism $\zeta_m$ extends to a
$KG'_m$-equivariant morphism $K\zeta_m:K\I'_m\rightarrow G_m$
which also descents to $\ES_0$.

The constructions of $K\I'_m$ and $K\zeta_m$ are straight forward:
one takes $K\I'_m$ to be $\I_m\times^{G'_m}KG'_m$ and $K\zeta_m$
to be $\zeta_m\times^{G'_m}\mathrm{id}_{KG'_m}$. Here we use the
easy fact the the morphism $G_m=G_m\times^{G'_m}G'_m\rightarrow
G_m\times^{G'_m}KG'_m$ is an isomorphism. We need a better
description of $\I'_m\rightarrow K\I'_m$. We describe $\I'_m$
first. It is regular, so it suffices to work with $\I'_m(A)$ with
$A/U_0$ regular. Noting that $W(A)$ is $p$-torsion free, an
element $t\in \I'_m(A)$ lifts to a $W(A)$-point $\widetilde{t}$ of
$\I'$, where $\widetilde{t}$ is an isomorphism
$L_{\INT_p}^\vee\otimes W(A)\rightarrow \V\otimes W(A)$ mapping
$s$ to $\sdr$, such that $\widetilde{t}\otimes W(A)/(p)$ maps
$L^1\otimes (W(A)/(p))$ to $\V^1\otimes (W(A)/(p))$. Then $t$ is
described as follows. Let $\mathbf{G}$ be $\mathbf{Aut}(\V,\sdr)$.
Then $\V^1\otimes (W(A)/(p))\subseteq \V\otimes (W(A)/(p))$ is
induced by a cocharacter of $\mathbf{G}\otimes (W(A)/(p))$ which
lifts to a cocharacter of $\mathbf{G}\otimes W(A)$. So we have a
splitting $\V_{W(A)}=\widehat{\V^1}\oplus \widehat{\V^0}$ such
that $\widehat{\V^1}\otimes (W(A)/(p))=\V^1\otimes (W(A)/(p))$.
The isomorphism $\widetilde{t}$ is then of the form $\bigl(
\begin{smallmatrix} a & b \\ c & d
\end{smallmatrix} \bigr)$, with $a\in
\mathrm{Isom}(L^1_{W(A)},\widehat{\V^1})$, $b\in
\mathrm{Hom}(L^0_{W(A)},\widehat{\V^1})$, $c\in p\cdot
\mathrm{Hom}(L^1_{W(A)},\widehat{\V^0})=(p)\otimes
\mathrm{Hom}(L^1_{W(A)},\widehat{\V^0})$, and $d\in
\mathrm{Isom}(L^0_{W(A)},\widehat{\V^0})$. Now $t$ is given by
base-change to $W_m(A)$ of the corresponding elements in Hom or
Isom, denoted by $\bigl(
\begin{smallmatrix} a_m & b_m \\ c_m & d_m
\end{smallmatrix} \bigr)$. We remark that $c_m$ is of the form $p\otimes e_m$, with $e_m$
an element in $(\mathrm{Hom}(L^1_{W(A)},\widehat{\V^0}))\otimes
W_m(A)$. The element $t$ is independent of choices of splittings,
although we need a splitting to write down the matrix.

The morphism $I'_m\rightarrow K\I'_m$ is then given by mapping
$\bigl(
\begin{smallmatrix} a_m & b_m \\ c_m & d_m
\end{smallmatrix} \bigr)$ to $\bigl(
\begin{smallmatrix} a_m & b_m \\ p \cdot e_m & d_m
\end{smallmatrix} \bigr)$. Here $p \cdot e_m\in (\mathrm{Hom}(L^1_{W(A)},\widehat{\V^0}))\otimes
I_{m+1}(A)$ is induced by the multiplication by $p$ map
$W_m(A)\rightarrow I_{m+1}(A)$.

Now we explain why $K\I'_m$ and $K\zeta_m$ descent to $\ES_0$. Let
$U^{ij}$ be $U^i\cap U^j$ and $R^{ij}$ be its affine coordinate
ring. Let $K\I'^{i}_m$ be the restriction of $K\I'_m$ to $U^i_0$,
and $K\zeta^i_m: K\I'^{i}_m\rightarrow G_m$ be the restriction of
$K\zeta_m$ to $U^i_0$. There is a canonical isomorphism
$c^{ij}:\V_{U^i}|_{W(R^{ij}_0)}\rightarrow
\V_{U^j}|_{W(R^{ij}_0)}$ given by the composition of canonical
isomorphisms $\V_{U^i}|_{W(R^{ij}_0)}\rightarrow
\mathbb{D}(\A_{\ES_0})(W(R^{ij}_0))\rightarrow
\V_{U^j}|_{W(R^{ij}_0)}$. The isomorphism $c^{ij}$ respects $\sdr$
and the $F-V$ module structures. The universal element
$K\I'^{i}_m|_{U^{ij}_0}\stackrel{\mathrm{id}}{\rightarrow}K\I'^{i}_m|_{U^{ij}_0}$
is such that it lifts to a section of $\I'_m$ over an affine
regular fppf cover (namely, $\I'^i_m|_{U^{ij}_0}$), so to define a
morphism, it suffices to work with $\Spec A$-points with $\Spec
A/U^{ij}_0$ regular such that there is a regular fppf affine cover
$\Spec A'\rightarrow \Spec A$ with a commutative diagram
$$\xymatrix{\Spec A'\ar[r]^{t'}\ar[d]&\I'^i_m|_{U^{ij}_0}\ar[d]^{\pi}\\
\Spec A\ar[r]^{t}&K\I'^i_m|_{U^{ij}_0}.}$$

Viewed as a $A'$-point, $t=\pi\circ t'$ is induced by a
$W(A')$-point $\widetilde{t'}$ of $\I'$ described as before. Let
$A''=A'\otimes_A A'$ and $i_1:A'\rightarrow A''$,
$i_2:A'\rightarrow A''$ be the two embeddings, then
$i_1^*t=i_2^*t$. Composing $c^{ij}$ induces a well defined
$B$-point of $K\I'^{j}_m|_{U^{ij}_0}$, denoted by $c^{ij}\circ t$.
Noting that $i_1^*(c^{ij}\circ t)=i_1^*c^{ij}\circ
i_1^*t=i_2^*c^{ij}\circ i_2^*t=i_1^*(c^{ij}\circ t)$, $c^{ij}\circ
t$ descents to an $A$-point of $K\I'^{j}_m|_{U^{ij}_0}$ by
faithfully flat descent. It is independent of the choices of $A'$,
as one could work with the universal morphism
$\I'^i_m|_{U^{ij}_0}\rightarrow K\I'^i_m|_{U^{ij}_0}$ and then
base-change to $A$. So $c^{ij}$ induces an isomorphism
$K\I'^{i}_m|_{U^{ij}_0}\rightarrow K\I'^{j}_m|_{U^{ij}_0}$,
denoted by $c^{ij}$, which is clearly $KG'_m$-equivariant. But the
$c^{ij}$s satisfy the cocycle condition as they are canonical, so
one glues $K\I'^{i}_m$s to a $KG'_m$-torsor over $\ES_0$. By
construction of $\zeta_m$ in \ref{construction of zeta} which uses
only the $F-V$ module structure on
$\mathbb{D}(\A_{\ES_0})(W(R^{ij}_0))$, we see that
$K\zeta^i_m|_{U^{ij}_0}=K\zeta^j_m|_{U^{ij}_0}\circ c^{ij}.$ So
the $K\zeta^i_m$s also glue.

One has to fix a finite open affine cover $\{U^i\}_{1\leq i\leq
r}$ of $\ES$ as well as a collection of homomorphisms
$f^i:R^i=O_{U^i}\rightarrow W(R^i_0)$ to construct $K\I'_m$ as
well as $K\zeta_m$. It turns out that the glued scheme and
morphism are unique.

\begin{lemma}
The glued torsor and equivariant morphism are independent of
choices of $\{(U^i,f^i)\}_i$s.
\end{lemma}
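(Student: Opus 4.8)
The plan is to show that a change of the combinatorial data $\{(U^i,f^i)\}_i$ induces a canonical isomorphism between the two glued $KG'_m$-torsors-with-equivariant-morphism, compatibly with the descent data, hence an isomorphism over $\ES_0$. First I would reduce to the case of a common refinement: given two systems $\{(U^i,f^i)\}_{i\in I}$ and $\{(U^j,g^j)\}_{j\in J}$, pass to the open cover indexed by $I\times J$ with the restricted opens $U^{ij}=U^i\cap U^j$, and observe that each of the two original systems induces, by restriction, a system on this finer cover; it therefore suffices to compare a system with its restriction to a refinement, and to compare two systems $(U^i, f^i)$ and $(U^i, f'^i)$ on the \emph{same} cover but with different lifts of Frobenius. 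The first comparison is essentially formal, since the glued object is by construction obtained by descent from the pieces over the $U^i_0$ and these pieces are literally unchanged under passing to a refinement.

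For the second comparison — same cover, different $f^i, f'^i : R^i \to W(R^i_0)$ lifting the identity on $R^i_0$ — the key point is that on each piece $K\I'^i_m|_{U^i_0}$ the torsor and the morphism $K\zeta^i_m$ are built, via Construction \ref{frob over diff rings} and the formula \eqref{eqn expli connection}, out of the Dieudonn\'e crystal $\mathbb{D}(\A_{\ES_0})$ pulled back along $f^i$ versus $f'^i$. Since $f^i$ and $f'^i$ agree modulo $p$, the crystal property supplies a canonical isomorphism between the two pullbacks of $\V$ to $W(R^i_0)$ (an instance of the $\varepsilon'$ of Construction \ref{frob over diff rings}, with $\iota$ the identity of $R^i$ and the two Frobenius lifts $f^i\sigma$, $f'^i\sigma$), and this isomorphism respects $\sdr$ because $\nabla'(\sdr)=0$ and respects the $F$-$V$-structure by the explicit description of $\varepsilon'$. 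This gives a canonical $KG'_m$-equivariant isomorphism $K\I'^i_m|_{U^i_0}\to K\I'^{i,\prime}_m|_{U^i_0}$ intertwining $K\zeta^i_m$ and $K\zeta^{i,\prime}_m$, exactly as in the proof of Proposition \ref{g_t lies in G} and of Theorem \ref{smoothness}.

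It then remains to check that these piecewise isomorphisms are compatible with the gluing maps $c^{ij}$ and $c'^{ij}$ on overlaps $U^{ij}_0$, i.e. that on $U^{ij}_0$ the square relating the four local torsors commutes. This follows because all four comparison maps in sight — the two $c^{ij}$'s and the two $f\leftrightarrow f'$ comparison isomorphisms — are instances of the same canonical crystal isomorphism, and these compose according to the cocycle/compatibility bookkeeping for $\varepsilon'$ recorded in Construction \ref{frob over diff rings} (``well defined and independent of choices''); since each is canonical, any diagram built from them commutes. Hence the local isomorphisms descend to a single isomorphism of $KG'_m$-torsors over $\ES_0$ carrying $K\zeta_m$ to $K\zeta_m'$, proving independence.

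\textbf{Main obstacle.} I expect the only real work to be the overlap compatibility: one must verify carefully that the canonical isomorphism produced by two different Frobenius lifts on a single ring interacts correctly with the transition isomorphisms $c^{ij}$ coming from the crystal on $U^{ij}_0$ — in effect a coherence statement for the family of isomorphisms $\varepsilon'$ under composition along the three maps $\sigma'\iota$, $\iota\sigma$. Everything else (reduction to a common refinement, the fact that each local comparison respects $\sdr$ and the $F$-$V$-structure) is a routine reprise of arguments already carried out for Proposition \ref{g_t lies in G} and Theorem \ref{smoothness}.
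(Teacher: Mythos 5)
Your proposal is correct and rests on the same fundamental idea as the paper's proof: all comparison isomorphisms come from the canonical crystal identifications provided by $\mathbb{D}(\A_{\ES_0})$, and canonicity forces coherence. However, the organization is genuinely different. You reduce via a common refinement and then compare two systems on the same cover with different Frobenius lifts, treating the refinement step as formal and the same-cover step via the $\varepsilon'$ construction of Construction \ref{frob over diff rings}. The paper instead compares $\{(U^i,f^i)\}$ directly with an arbitrary refinement $\{(U^{i_k},g^{i_k})\}$ in one shot, making \emph{no} compatibility assumption between the new lifts and the old ones, and proves coherence by exhibiting explicitly that all transition and comparison isomorphisms factor through the single canonical reference object $\mathbb{D}(\A_{\ES_0})(W(R_0^{i_kj_l}))$ (the maps $d^i, d^{i_k}, d^j, d^{j_l}$ in the paper's commutative diagram), so that the required square commutes by inspection rather than by an appeal to a coherence principle. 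This buys a shorter and more transparent diagram chase. By contrast, your invocation of the ``well defined and independent of choices'' clause in Construction \ref{frob over diff rings} is slightly off target — that remark concerns independence of the local coordinates $a_1,\dots,a_n$, not a cocycle condition for composing $\varepsilon'$ built from three Frobenius lifts — so the overlap-compatibility step, which you correctly identify as the real work, is left somewhat hand-waved. Replacing that hand-wave by the paper's ``everything factors through $\mathbb{D}(\A_{\ES_0})(W(R_0))$'' observation would tighten your argument to a complete proof.
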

\begin{proof}
It suffices to show that for any collection
$\{U^{i_k},g^{i_k}\}_{i,k}$, the glued torsor and equivariant
morphism are the same as those given by $\{(U^i,f^i)\}_i$. Here
$\{U^{i_k}\}_k$ is a finite open affine cover of $U^i$, and
$g^{i_k}$ is a homomorphism $R^{i_k}=O_{U^{i_k}}\rightarrow
W(R^{i_k}_0)$. Note that we do not assume any compatibility in
$f^i$ and $g^{i_k}$.

For $U^{i_k}\subseteq U^i$ and $U^{j_l}\subseteq U^j$, we write
$R^{i_kj_l}$ for the coordinate ring of $U^{i_k}\cap U^{j_l}$. We
write $\V_{U^i}|^{f^i}_{W(R^{i_kj_l})}$ for
$\V_{U^i}\otimes_{W(R^i),f^i}{W(R^{i_kj_l})}$, here we write $f^i$
for the composition $R^i\stackrel{f^i}{\rightarrow}
W(R^i_0)\rightarrow W(R^{i_kj_l}_0)$ by abusing notations. We have
canonical isomorphisms
$$\xymatrix{&\V_{U^i}|^{f^i}_{W(R^{i_kj_l})}\ar[d]^{d^{i}}\\
\V_{U^{i_k}}|^{g^{i_k}}_{W(R^{i_kj_l})}\ar[r]^(0.45){d^{i_k}}&\mathbb{D}(\A_{\ES_0}(W(R_0^{i_kj_l})))\ar[d]^{(d^{j})^{-1}}\ar[r]^(0.55){(d^{j_l})^{-1}}&\V_{U^{j_l}}|^{g^{j_l}}_{W(R^{i_kj_l})}\\
&\V_{U^i}|^{f^i}_{W(R^{i_kj_l})} }$$ The composition of the
horizontal morphisms (resp. vertical morphisms) is $c^{i_kj_l}$
(resp. $c^{ij}$). So we have a commutative diagram
$$\xymatrix@C=0.3cm{K\I'^{i}_m|_{U^{i_kj_l}_0}\ar[rr]^{c^{ij}}\ar[dd]_{(d^{i_k})^{-1}\circ d^i}\ar[dr]&& K\I'^{j}_m|_{U^{i_kj_l}_0}\ar[dd]^{(d^{j_l})^{-1}\circ d^j}\ar[dl]\\
&G_m\\
K\I'^{i_k}_m|_{U^{i_kj_l}_0}\ar[rr]^{c^{i_kj_l}}\ar[ur]&&K\I'^{j_l}_m|_{U^{i_kj_l}_0}\ar[ul],}$$which
means that the glued torsors and morphisms are unique.
\end{proof}

The glued torsor and morphism over $\ES_0$ will still be denoted
by $K\I'_m$ and $K\zeta_m$ respectively. The induced morphism
$\ES_0\rightarrow [KG'_m\backslash KG_m]$ will be denoted by
$K\zeta_{m,\#}$. Fibers of $K\zeta_{m,\#}$ are level $m$ strata.
\begin{corollary}
The morphism $K\zeta_{m,\#}$ is smooth, and it is determined by
the Shimura datum.
\end{corollary}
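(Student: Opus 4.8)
The plan is to deduce both assertions from the corresponding facts already established for $\zeta_{m,\#}$ together with the explicit description of the faithfully flat bijective morphism $[G'_m\backslash G_m]\rightarrow[KG'_m\backslash G_m]$ given just above. First I would record that, by construction, the square
\[
\xymatrix{
\I'_m\ar[r]\ar[d]_{\zeta_m}&K\I'_m\ar[d]^{K\zeta_m}\\
G_m\ar@{=}[r]&G_m
}
\]
commutes $KG'_m$-equivariantly (the bottom map being the identity of $G_m$, using $G_m\times^{G'_m}KG'_m\cong G_m$), and that passing to quotient stacks yields a commutative square with $\zeta_{m,\#}=q\circ K\zeta_{m,\#}$, where $q:[G'_m\backslash G_m]\rightarrow[KG'_m\backslash G_m]$ is the smooth surjective morphism constructed above via $G'_m\rightarrow KG'_m$. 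Over each chart $U^i_0$ we have $\I'_m\cong K\I'_m\times^{KG'_m}_{\phantom{}}$ in the sense that $\I'^i_m\to K\I'^i_m$ is a $KU_{-,m}'$-type... more precisely, $\I'_m\to K\I'_m$ is surjective and flat, since $G'_m\to KG'_m$ is faithfully flat.

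For smoothness of $K\zeta_{m,\#}$: by faithfully flat descent along $U_{0}\to\ES_0$ it suffices to treat $K\zeta_m:K\I'_m\to G_m$ over $U_0$, and then along the faithfully flat cover $\I'_m\to K\I'_m$ it suffices to treat the composite $\I'_m\to K\I'_m\to G_m$, which is exactly $\zeta_m$. But $\zeta_m$ is smooth by Theorem~\ref{smoothness} (equivalently, smoothness of $\zeta_{m,\#}$ descends to $\zeta_m$). Alternatively, and perhaps more cleanly: $K\zeta_{m,\#}=q\circ{}$ is false in that direction; instead one uses that $q$ is smooth and $\zeta_{m,\#}=q\circ K\zeta_{m,\#}$ together with the fact that $q$ is also faithfully flat, so smoothness of the composite $\zeta_{m,\#}$ plus flatness of $q$ forces $K\zeta_{m,\#}$ to be smooth by descent of smoothness along the fppf map $q$ (using that $K\zeta_{m,\#}$ is the base change structure... ). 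I would phrase it via the $\I'_m\to K\I'_m$ cover to avoid any subtlety: $K\zeta_m\circ(\I'_m\to K\I'_m)=\zeta_m$ is smooth and $\I'_m\to K\I'_m$ is faithfully flat, hence $K\zeta_m$ is smooth, hence so is $K\zeta_{m,\#}$ by descent along $U_0\to\ES_0$.

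For independence of the Shimura datum: this follows by combining the corresponding statement for $\zeta_{m,\#}$ (the Proposition in \S2.3) with the fact that all the ingredients in the passage to $K\zeta_{m,\#}$ — the group homomorphism $G'_m\to KG'_m$, the torsor pushout $K\I'_m=\I'_m\times^{G'_m}KG'_m$, and the morphism $K\zeta_m=\zeta_m\times^{G'_m}\mathrm{id}$ — depend only on the cocharacter $\mu$, hence only on the conjugacy class determined by $(G,X)$. Concretely, given two symplectic embeddings one gets, as in the proof of the Proposition in \S2.3, a compatible isomorphism $\I'_m\cong\I'_{1,m}$ over $U_0$ intertwining $\zeta_m$ and $\zeta_{1,m}$; pushing out along $G'_m\to KG'_m$ gives $K\I'_m\cong K\I'_{1,m}$ intertwining $K\zeta_m$ and $K\zeta_{1,m}$, and this isomorphism is compatible with the glueing data $c^{ij}$ since the $c^{ij}$ are built from the canonical identifications of $\mathbb{D}(\A_{\ES_0})$, which are insensitive to the embedding. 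Likewise the isomorphism $i_m^{\mu,\nu}$ of \S2.3 descends through $q$ to an isomorphism $[KG'^\mu_m\backslash G_m]\cong[KG'^\nu_m\backslash G_m]$ identifying the two resulting $K\zeta_{m,\#}$'s.

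The main obstacle is essentially bookkeeping rather than a genuine difficulty: one must check that the descent data $c^{ij}$ used to glue $K\I'^i_m$ into $K\I'_m$ over $\ES_0$ are carried to one another under the identification coming from a change of symplectic embedding (or of cocharacter in its conjugacy class), and that this identification is itself $KG'_m$-equivariant and compatible with $K\zeta_m$. Since the $c^{ij}$ are defined purely in terms of the canonical isomorphisms between the various restrictions of $\mathbb{D}(\A_{\ES_0})$ — objects manifestly independent of the symplectic embedding — this compatibility is automatic; the only care needed is to track it through the pushout $\I'_m\mapsto\I'_m\times^{G'_m}KG'_m$ and through the Greenberg-functor formalism, exactly as in the lemma preceding this corollary. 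I would therefore present the argument as a short formal consequence of Theorem~\ref{smoothness}, the Proposition of \S2.3, and the explicit matrix description of $\I'_m\to K\I'_m$ given above.
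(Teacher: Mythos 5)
Your proposal matches the paper's proof: smoothness is reduced to that of $\zeta_m$ (Theorem~\ref{smoothness}) via the two faithfully flat covers $U_0\to\ES_0$ and $\I'_m\to K\I'_m\times_{\ES_0}U_0$, and independence of the Shimura datum combines the Proposition of \S2.3 with the lemma immediately preceding the corollary (uniqueness of the glued torsor and morphism across choices of $\{(U^i,f^i)\}$). One small remark: the parenthetical formula ``$\zeta_{m,\#}=q\circ K\zeta_{m,\#}$'' does not type-check, since $\zeta_{m,\#}$ has source $U_0$ while $K\zeta_{m,\#}$ has source $\ES_0$ and target the smaller quotient; the correct relation is $q\circ\zeta_{m,\#}=K\zeta_{m,\#}\circ(U_0\to\ES_0)$. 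You abandon that route in favor of descending along $\I'_m\to K\I'_m$, which is indeed the cleaner and correct path, so this does not affect the argument; but the intermediate ``alternatively'' discussion should be cut before inclusion, as it reads as unresolved thinking-aloud rather than a proof.
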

\begin{proof}
The smoothness follows from Theorem \ref{smoothness} and that the
natural morphism $\I'_m\rightarrow K\I'_m\times_{\ES_0} U_0$ is
faithfully flat. The morphism $\zeta_{m,\#}$ is uniquely
determined by the Shimura datum together with a collection
$\{(U^i,f^i)\}_i$, and so is $K\zeta_{m,\#}$. But by the previous
lemma, $K\zeta_{m,\#}$ is independent of choices of
$K\zeta_{m,\#}$s, so it is uniquely determined by the Shimura
datum.
\end{proof}

\subsection[More properties about level $m$ stratifications]{More properties about level $m$ stratifications}
We will start with an analog of \cite{foliation-Oort} Theorem 1.3 which
plays an essential role there. \begin{proposition}\label{constancy
theorem} Let $\ES_0^s$ be a level $m$ stratum, $x\in \ES_0^s$ be a
point. Then there is a quasi-finite fppf cover $T=\Spec A$ of $\ES_0^s$,
such that there exists an isomorphism $(\A_x[p^m])_T\rightarrow
\A_T[p^m]$ whose induced isomorphism on Dieudonn\'{e} modules
(truncated displays) respects the Hodge-Tate tensor.
\end{proposition}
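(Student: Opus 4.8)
The plan is to deduce the statement in two stages: first produce, over a quasi-finite fppf cover of $\ES_0^s$, an isomorphism of the associated truncated displays that respects the tensor, using the $KG'_m$-torsor $K\I'_m$ and the equivariant morphism $K\zeta_m$ built in Section~2; then lift this to an isomorphism of BT-$m$s by Lau's torsor theorem, Theorem \ref{Lau's main results} (3). Some harmless reductions first: since $\ES_0^s$ is of finite type over $\overline{\kappa}$ (Theorem \ref{main theorem level m--locally closed+sm}) it has a finite affine open cover, and the disjoint union of affine quasi-finite fppf covers of the members is again an affine quasi-finite fppf cover of $\ES_0^s$, so I may assume $\ES_0^s$ is affine; working inside $\ES_{0,\overline{\kappa}}$ I may also assume $x$ is a closed point, so $k(x)=\overline{\kappa}$ and $\A_x[p^m]$ is a BT-$m$ over $\overline{\kappa}$ that can be pulled back along any $\ES_0^s$-scheme.

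For the first stage, pick a trivialization $t_x\in (K\I'_m|_{\{x\}})(\overline{\kappa})$ (it exists since $K\I'_m|_{\{x\}}$ is a torsor under the smooth affine group $KG'_m$ over the algebraically closed field $\overline{\kappa}$), and set $g_x:=K\zeta_m(t_x)\in G_m(\overline{\kappa})$; this element encodes the truncated display of $\A_x[p^m]$ together with $s_{\cris,x}$, and its class in $[KG'_m\backslash G_m]$ is the point $c$ whose fibre under $K\zeta_{m,\#}$ is $\ES_0^s$. Let
$$\widetilde{T}\ :=\ K\I'_m|_{\ES_0^s}\ \times_{K\zeta_m,\,G_m,\,g_x}\ \Spec\overline{\kappa}.$$
Because $\ES_0^s$ is a level $m$ stratum, $K\zeta_m$ sends $K\I'_m|_{\ES_0^s}$ set-theoretically into the orbit $O$ of $g_x$ for the $\sigma$-conjugation action of $KG'_m$; as $K\I'_m|_{\ES_0^s}$ is reduced and $O\cong KG'_m/\AAut^s(c)$ is reduced, $K\zeta_m$ factors through $O$, and the resulting $KG'_m$-equivariant map $K\I'_m|_{\ES_0^s}\to O$ realizes $K\I'_m|_{\ES_0^s}$ as the extension of structure group of an $\AAut^s(c)$-torsor, namely of $\widetilde T$, over $\ES_0^s$. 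Here $\AAut^s(c)$ denotes the affine finite-type $\overline{\kappa}$-group scheme of automorphisms of $\Phi_m(\A_x[p^m])$ fixing $s_{\cris,x}$; it is flat over $\overline{\kappa}$, so $\widetilde T\to\ES_0^s$ is faithfully flat, affine and of finite type. Over $\widetilde T$ the tautological trivialization $t$ of $K\I'_m$ satisfies $K\zeta_m(t)=g_x=K\zeta_m(t_x)$, i.e. $t$ and $t_x$ transport the $F$-$V$-structures to the same element; hence $t\circ t_x^{-1}$ is an isomorphism of truncated displays $\Phi_m((\A_x[p^m])_{\widetilde T})\stackrel{\simeq}{\longrightarrow}\Phi_m(\A_{\widetilde T}[p^m])$ carrying $s_{\cris,x}$ to $s_{\cris}$.

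For the second stage, the morphism $\widetilde T\to\ES_0^s$ is faithfully flat, affine and of finite type, but need not be quasi-finite since $\AAut^s(c)$ can be positive-dimensional; as in the proof of \cite{foliation-Oort} Theorem 1.3 one extracts a closed subscheme $T_0\subseteq\widetilde T$ that is quasi-finite, flat and surjective over $\ES_0^s$. Then $T_0$ is affine, and over it we have an isomorphism of truncated displays with tensor $\psi:\Phi_m((\A_x[p^m])_{T_0})\stackrel{\simeq}{\longrightarrow}\Phi_m(\A_{T_0}[p^m])$. By Theorem \ref{Lau's main results} (3) the natural morphism
$$\IIsom\big((\A_x[p^m])_{T_0},\,\A_{T_0}[p^m]\big)\ \longrightarrow\ \IIsom\big(\Phi_m((\A_x[p^m])_{T_0}),\,\Phi_m(\A_{T_0}[p^m])\big)$$
is a torsor under $\AAut^o((\A_x[p^m])_{T_0})$, which is finite flat; pulling this torsor back along the section $\psi$ gives a finite faithfully flat cover $T\to T_0$ over which $\psi$ lifts to an isomorphism of BT-$m$s $f:(\A_x[p^m])_T\stackrel{\simeq}{\longrightarrow}\A_T[p^m]$ with $\Phi_m(f)=\psi|_{T}$, which in particular respects the Hodge--Tate tensor. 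The composite $T\to T_0\to\ES_0^s$ is affine, faithfully flat, of finite type and quasi-finite, and $T_0$ affine forces $T=\Spec A$; this is the required cover.

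The one step that is not formal, once the constructions of Section~2 and Theorem \ref{Lau's main results} are granted, is the extraction in the second stage of a quasi-finite flat subcover from the possibly positive-dimensional $\AAut^s(c)$-torsor $\widetilde T$: this is exactly where the argument must reproduce Oort's reduction in \cite{foliation-Oort} Theorem 1.3, and it is why one cannot take $T=\widetilde T$. Secondary points requiring some care are the identification of $\widetilde T$ as an $\AAut^s(c)$-torsor over $\ES_0^s$ — for which one uses the smoothness of $K\zeta_{m,\#}$, so that the stacky fibre over $c$ is the honest reduced stratum $\ES_0^s$, and the reducedness of $\ES_0^s$ and of the orbit $O$ — together with the harmless reductions to a closed point $x$ and to affine $\ES_0^s$.
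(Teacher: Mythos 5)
Your overall strategy matches the paper's: use the torsor and the equivariant morphism $\zeta_m$ (or $K\zeta_m$) to produce a cover over which the truncated display with tensor is constant, then lift to an isomorphism of BT-$m$s via Theorem \ref{Lau's main results} (3). The preliminary reductions (affine cover, closed point) are fine, and the identification of $\widetilde T$ as a torsor under the stabilizer $\AAut^s(c)$ of $g_x$ over $\ES_0^s$ is correct.

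However, there is a genuine gap precisely at the step you flag as ``not formal''. You say one ``extracts a closed subscheme $T_0\subseteq\widetilde T$ that is quasi-finite, flat and surjective over $\ES_0^s$, as in \cite{foliation-Oort} Theorem 1.3,'' but you give no argument, and the pointer to Oort is misleading: the paper does not perform Oort's extraction. Instead it shows directly that the orbit map $G'_{m,\overline{\kappa}}\to O$ admits a section after pulling back along the finite flat cover $G'_{m,\overline{\kappa}}/(H_g)^0_{\mathrm{red}}\to O$. The key input is that $(H_g)^0_{\mathrm{red}}$ (the reduced identity component of the stabilizer of $g$ in $G'_{m,\overline{\kappa}}$) is \emph{unipotent}: one first invokes \cite{zipaddi} Theorem 8.1 for the level-$1$ stabilizer and then propagates to level $m$, whence a torsor under $(H_g)^0_{\mathrm{red}}$ over an affine base is trivial. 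That unipotence is what makes a quasi-finite cover available, and it is specific to the $G$-zip / Shimura--Hodge situation; it is not a consequence of Oort's argument, which works without the Hodge--Tate tensor and with a different stabilizer. Your proposal should name this input explicitly and explain how to pass from it to a section (or equivalently, from $\widetilde T$ to a quasi-finite flat subcover). A secondary point: you work entirely with $KG'_m$ and its stabilizer $\AAut^s(c)$, whereas the unipotence statement is formulated for stabilizers in $G'_m$; since $G'_m\to KG'_m$ is only bijective on geometric points (with a nontrivial finite kernel), you would also need a word on why the unipotence result transfers to the $KG'_m$-stabilizer, or else carry out the argument with $\I'_m$ and $G'_m$ over an open cover $U_0$ as the paper does.
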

\begin{proof}
We work with $U_0$ and consider the commutative diagram $$\xymatrix{
\I'_m\ar[r]\ar[d]^{\zeta_m} &U_0\ar[d]^{\zeta_{m,\#}}\ar[r]&\mathscr{B}\mathscr{T}_m\otimes \kappa\ar[dr]\ar[r]^{\Phi_m}&\mathscr{D}isp_m\otimes \kappa\ar@{=}[d]\\
G_m\ar[r]& [G'_m\backslash G_m]\ar[r]&[KG'_m\backslash
G_m]\ar[r]&[K\GL'_m\backslash \GL_m]\otimes \kappa.}$$ Let $G_m^s$ be the
$G'_{m,\overline{\kappa}}$-orbit in $G_{m,\overline{\kappa}}$
corresponding to $U_0^s$. Let $g\in G_m(\overline\kappa)$ be the
image under $\zeta_m$ of a section of $\I'_{m, x}$. By \cite{EGA4} Corollary 17.16.3, there is a quasi-finite \'{e}tale cover $T=\Spec A$ of $U_0^s$, such that $\I'_{m}(T)\neq\emptyset$. The composition $\mathfrak{g}:T\rightarrow \I'_{m}\rightarrow G_{m}$
factors through $G_m^s$.

We claim that there is a finite flat
cover $T'$ of $T$, such that there exists $h\in G'_m(T')$ with $\mathfrak{g}_{T'}=h\cdot g_{T'}$. It suffices to show that, over a finite flat cover of $G_m^s$, the orbit morphism
$G'_{m,\overline{\kappa}}\rightarrow G_m^s$, $h\rightarrow h\cdot
g$ admits a section. Let $H_g$ be the stabilizer of $g$ in
$G'_{m,\overline{\kappa}}$, and $(H_g)_{\mathrm{red}}^0$ be the
reduced identity component. Then
$G'_{m,\overline{\kappa}}/(H_g)_{\mathrm{red}}^0$ is a finite flat
cover of $G_m^s$. We only need to show that
$G'_{m,\overline{\kappa}}\rightarrow
G'_{m,\overline{\kappa}}/(H_g)_{\mathrm{red}}^0$ admits a section.

Let $g_0$ be the image of $g$ in $G_0:=G_1$, and $H_{g_0}$ be the stabilizer of $g_0$ in $G'_0:=G'_1$. Then by \cite{zipaddi} Theorem 8.1, $(H_{g_0})^0_{\mathrm{red}}$ is unipotent. Let $H_{g_0}^m$ be the inverse image of $(H_{g_0})^0_{\mathrm{red}}$ in $G'_{m,\overline{\kappa}}$, then $H_{g_0}^m$ is unipotent and contains $(H_g)_{\mathrm{red}}^0$. This implies that $(H_g)_{\mathrm{red}}^0$ is unipotent, and hence $G'_{m,\overline{\kappa}}\rightarrow
G'_{m,\overline{\kappa}}/(H_g)_{\mathrm{red}}^0$ admits a section.

We still write Let $h$ for its image in $KG'_m(T')$ and $K\GL'_m(T')$. Then $h$
gives an isomorphism between truncated displays attached to
$(\A_x[m])_{T'}$ and $(\A_{U_0^s}[m])_{T'}$. But by Theorem \ref{Lau's
main results} (3), there is a finite flat cover $T''$ of $T'$, such
that $h_{T''}$ comes from an isomorphism of BT-$m$s.
\end{proof}

Fiber of $\ES_0\rightarrow[KG'_m\backslash G_m] \rightarrow [K\GL'_m\backslash \GL_m]\otimes \kappa$ are precisely classical level $m$ strata (see 1.3). For $s\in |[K\GL'_m\backslash \GL_m]|$, we write $\ES^{\mathrm{cL},s}_0$ for the corresponding stratum. It is a locally closed subscheme of $\ES_{0,\overline{\kappa}}$, and, as a set\footnote{One can replace ``set'' by ``scheme'', see Remark \ref{union as schemes}.}, it is a union of level~$m$ strata.
\begin{lemma}\label{constancy theorem without add str}
There is a quasi-finite fppf cover $T=\Spec A$ of $\ES^{\mathrm{cL},s}_0$ such that $\A_T[p^m]$ is constant.
\end{lemma}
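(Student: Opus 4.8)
The plan is to run the argument of Proposition \ref{constancy theorem} verbatim, but with the pair $(G_{W(\kappa)},\mu)$ replaced by $(\GL(L_{W(\kappa)}),\mu)$ and with every occurrence of the Hodge--Tate tensor $\sdr$ deleted. Concretely, I would repeat the constructions of \ref{construcion of torsors}, \ref{construction of zeta} and the second subsection of Section 2 using $\IIsom$ of the pairs \emph{without} the tensor $s$: this yields a $K\GL'_m$-torsor $K\J'_m$ over $\ES_0$ (the evident $\GL$-analogue of $K\I'_m$, e.g. the pushout of $K\I'_m$ along $KG'_m\to K\GL'_m$) together with a $K\GL'_m$-equivariant morphism $K\chi_m\colon K\J'_m\to\GL_m$, whose associated morphism $\ES_0\to[K\GL'_m\backslash\GL_m]\otimes\kappa$ is precisely the composite mentioned before the statement of the lemma, so that its fibres are the classical level $m$ strata. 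Compatibility of $K\J'_m$ and $K\chi_m$ with $K\I'_m$ and $K\zeta_m$ is immediate from functoriality of dilatations, of $\W_m(-)$, and of the construction of $\zeta_m$ out of the $F$-$V$-module structure. Fix once and for all a closed point $x\in\ES^{\mathrm{cL},s}_0$, put $X_0:=\A_x[p^m]$ (a $\mathrm{BT}$-$m$ over $\overline{\kappa}$ representing $s$), and let $\GL_m^s\subseteq\GL_{m,\overline{\kappa}}$ be the $K\GL'_{m,\overline{\kappa}}$-orbit corresponding to $s$.

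First I would apply \cite{EGA4} Corollary 17.16.3 to the smooth surjection $K\J'_m\to\ES_0$ to obtain a quasi-finite \'etale cover $T_1=\Spec A_1$ of $\ES^{\mathrm{cL},s}_0$ with $K\J'_m(T_1)\neq\emptyset$; a section $t$ produces $\mathfrak g:=K\chi_m(t)\in\GL_m(T_1)$, which factors through $\GL_m^s$. Next, exactly as in Proposition \ref{constancy theorem}, I would show that over a finite flat cover the orbit morphism $K\GL'_{m,\overline{\kappa}}\to\GL_m^s$, $h\mapsto h\cdot g$ (here $g\in\GL_m(\overline{\kappa})$ is the image of a section of $K\J'_{m,x}$) admits a section: writing $H_g$ for the stabiliser of $g$, it suffices that $(H_g)^0_{\mathrm{red}}$ be unipotent, for then $K\GL'_{m,\overline{\kappa}}\to K\GL'_{m,\overline{\kappa}}/(H_g)^0_{\mathrm{red}}$ is a torsor under a split unipotent group over the perfect field $\overline{\kappa}$, hence Zariski-locally trivial, while $K\GL'_{m,\overline{\kappa}}/(H_g)^0_{\mathrm{red}}\to\GL_m^s$ is finite flat. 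The unipotence of $(H_g)^0_{\mathrm{red}}$ is proved the same way as in Proposition \ref{constancy theorem}: for $m=1$ the stabiliser $H_{g_0}$ of the reduction $g_0$ has $(H_{g_0})^0_{\mathrm{red}}$ unipotent by \cite{zipaddi} Theorem 8.1 applied to the reductive group $\GL$ (this is the statement for $F$-zips), and the preimage of $(H_{g_0})^0_{\mathrm{red}}$ in $K\GL'_{m,\overline{\kappa}}$ is again unipotent --- the kernel of $K\GL'_{m,\overline{\kappa}}\to K\GL'_{1,\overline{\kappa}}$ being built from successive copies of $\GG_a$ --- and contains $(H_g)^0_{\mathrm{red}}$.

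Pulling the section back along $\mathfrak g$ gives a finite flat cover $T_2$ of $T_1$ together with $h\in K\GL'_m(T_2)$ satisfying $\mathfrak g_{T_2}=h\cdot g_{T_2}$; such an $h$ is an isomorphism between the level $m$ truncated displays attached to $(X_0)_{T_2}$ and to $\A_{T_2}[p^m]$. Finally, by Theorem \ref{Lau's main results}(3) the natural map $\IIsom\big((X_0)_{T_2},\A_{T_2}[p^m]\big)\to\IIsom\big(\Phi_m((X_0)_{T_2}),\Phi_m(\A_{T_2}[p^m])\big)$ is a torsor under the finite flat group scheme $\AAut^o((X_0)_{T_2})$, so after one further finite flat cover $T:=\Spec A$ of $T_2$ the isomorphism $h$ lifts to an isomorphism $(X_0)_T\xrightarrow{\ \sim\ }\A_T[p^m]$. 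The composite $T\to T_2\to T_1\to\ES^{\mathrm{cL},s}_0$ is a composition of an \'etale morphism and finite morphisms, hence quasi-finite, and it is faithfully flat and surjective, i.e. a quasi-finite fppf cover; and over $T$ the truncated $p$-divisible group $\A_T[p^m]$ is isomorphic to the constant one $(X_0)_T$.

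I expect the only genuinely non-bookkeeping point to be the invocation of \cite{zipaddi} Theorem 8.1 for $\GL$ rather than for $G$ --- that is, the unipotence of the reduced identity components of automorphism group schemes of $F$-zips; this is exactly parallel to what is used in Proposition \ref{constancy theorem} and is available. The rest is a faithful transcription of the earlier argument with $G$ enlarged to $\GL$ and the tensor condition dropped, so the main obstacle is simply to verify carefully that all the $\GL$-analogues ($K\J'_m$, $K\chi_m$, $\GL_m^s$, the level-$1$ reduction) are set up so that the cited results genuinely apply.
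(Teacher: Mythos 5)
Your proposal is correct and matches the paper's intent essentially verbatim: the paper's proof is literally ``the same as the previous proposition, but with the commutative diagram replaced by its $\GL$-analogue,'' i.e.\ the $\GL'_{m,\kappa}$-torsor $\I'_m\times^{G'_m}\GL'_{m,\kappa}$ over $U_0$ mapping to $\GL_{m,\kappa}$, followed by the passage to $[K\GL'_m\backslash\GL_m]$ and the application of \cite{zipaddi}~Theorem~8.1 for the plain $F$-zip case and Lau's Theorem~\ref{Lau's main results}(3). The only cosmetic deviation in your write-up is that you run the orbit argument with $K\GL'_m$ and the descended torsor $K\J'_m$ over $\ES_0$, whereas the paper stays with $\GL'_m$ and the torsor over $U_0$ (the map $\GL'_m\to K\GL'_m$ is a bijective isogeny, so the stabilizer/unipotence argument is unaffected, but it is slightly cleaner to reduce to $\GL'_1$, where \cite{zipaddi}~Theorem~8.1 applies directly, than to speak of $K\GL'_1$); this does not affect correctness.
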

\begin{proof}
The proof is the same as the previous proposition, but we should consider the following commutative diagram:
$$\xymatrix@C=0.75cm{
\I'_m\times^{G'_m}\GL'_{m,\kappa}\ar[r]\ar[d] &U_0\ar[d]\ar[r]&\mathscr{B}\mathscr{T}_m\otimes \kappa\ar[dr]\ar[r]^{\Phi_m}&\mathscr{D}isp_m\otimes \kappa\ar@{=}[d]\\
\GL_{m,\kappa}\ar[r]& [\GL'_m\backslash \GL_m]\otimes \kappa\ar[rr]& &[K\GL'_m\backslash \GL_m]\otimes \kappa.}$$
\end{proof}
\begin{proposition}\label{open-closed level m}
Each $\ES^{s'}_0$ is a union of connected components of $\ES^{\mathrm{cL},s}_0$. In particular, it is open and closed in $\ES^{\mathrm{cL},s}_0$.
\end{proposition}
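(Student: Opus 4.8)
The plan is to reduce the statement, by passing to a cover on which the truncated BT-$m$ is constant, to the elementary fact that an orbit of a \emph{finite} group scheme acting on a scheme of finite type over a field is a closed subset. First I would apply Lemma~\ref{constancy theorem without add str} to obtain a quasi-finite fppf cover $T=\Spec A\to\ES^{\mathrm{cL},s}_0$ over which $\A_T[p^m]$ is isomorphic to the constant BT-$m$ $(X_0)_T$ for a fixed $X_0$; after base change I may take $X_0$ and $T$ over $\overline\kappa$. A faithfully flat quasi-compact morphism is submersive, so a subset of $\ES^{\mathrm{cL},s}_0$ is closed (resp.\ open) exactly when its preimage in $T$ is; and a clopen subset of a scheme is automatically a union of connected components. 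Hence it suffices to show that the preimage in $T$ of each level $m$ stratum $\ES^{s'}_0\subseteq\ES^{\mathrm{cL},s}_0$ is closed: the clopen-ness then follows because there are only finitely many level $m$ strata, they are pairwise disjoint, and they cover $\ES^{\mathrm{cL},s}_0$, so the complement of any one is a finite union of closed sets.

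Over $T$ the underlying truncated display of $\A_T[p^m]$ is the constant $\Phi_m(X_0)_T$, and the only invariant that varies is the crystalline tensor $s_{\cris}$. Transported through the trivialization, $s_{\cris}$ becomes a morphism $\tau\colon T\to\mathbb V$, where $\mathbb V=\W_m(\underline{\bM})$ is the smooth affine $\overline\kappa$-scheme (Theorem~\ref{greenberg functor}) attached to the vector group $\underline{\bM}$ over $W(\overline\kappa)$ of the finite free summand of $\Dieu(X_0)^\otimes$ in which $s$ lives, and $\mathcal H:=\AAut(\Phi_m(X_0))$ acts on $\mathbb V$. By Theorem~\ref{Lau's main results}(1) the stack $\mathscr{D}isp_m$ is of dimension $0$, so $\mathcal H$ is a $0$-dimensional affine group scheme over $\overline\kappa$, hence \emph{finite}. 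Unwinding the definition of the level $m$ stratification, and using that over an algebraically closed field a BT-$m$ has the same isomorphism type as its truncated display (Theorem~\ref{Lau's main results}(3), since $\AAut^o$ is infinitesimal), one obtains that two geometric points of $T$ lie in the same level $m$ stratum if and only if their images under $\tau$ lie in the same $\mathcal H$-orbit; thus the preimage in $T$ of $\ES^{s'}_0$ equals $\tau^{-1}(\mathcal H\cdot\tau(x))$ for a chosen base point $x\in\ES^{s'}_0$. Since $\mathcal H$ is finite over $\overline\kappa$, the orbit $\mathcal H\cdot\tau(x)$ is the image of the orbit morphism $\mathcal H\to\mathbb V$, $h\mapsto h\cdot\tau(x)$, hence a finite set of closed points of $\mathbb V$, in particular closed; therefore $\tau^{-1}(\mathcal H\cdot\tau(x))$ is closed in $T$, as required.

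The step I expect to be the main obstacle is the middle one: identifying the level $m$ stratification on $T$ with the $\mathcal H$-orbit decomposition of $\tau(T)$, i.e.\ matching the isomorphisms of Dieudonné modules (equivalently truncated displays) appearing in the definition of the stratification with the $\overline\kappa$-points of the finite group scheme $\mathcal H$ acting on the tensor space, and checking that $s_{\cris}$ really determines a morphism of $\overline\kappa$-schemes $\tau$ — which it does, being an $F$-invariant section of a finite locally free sheaf over $\W_m(-)$.
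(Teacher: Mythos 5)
Your argument hinges on the claim that $\mathcal H=\AAut(\Phi_m(X_0))$ is a finite group scheme, and that inference is false. From Theorem~\ref{Lau's main results}(1) you conclude that because $\mathscr{D}isp_m$ has dimension~$0$, the automorphism group is $0$-dimensional. But for a quotient stack $[X/G]$ with $X,G$ smooth, the dimension is $\dim X - \dim G$, a global constant; at points whose orbit has smaller dimension the stabilizer has correspondingly larger dimension, so ``stack of dimension $0$'' does not bound the dimension of automorphism groups. Concretely, automorphism group schemes of BT-$m$'s are in general positive-dimensional: already for $m=1$, the supersingular BT-$1$ of height~$2$ (with Dieudonn\'e module $ke_1\oplus ke_2$, $Fe_1=Ve_1=0$, $Fe_2=e_1$, $Ve_2=be_1$) has automorphism group scheme isomorphic to $\mathbb{G}_a\rtimes\mu_{p^2-1}$, which is $1$-dimensional. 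By Theorem~\ref{Lau's main results}(3) the kernel $\AAut^o(X_0)$ is finite, so $\AAut(\Phi_m(X_0))$ has the same dimension as $\AAut(X_0)$ and is therefore not finite either. Note also that the paper itself, citing \cite{foliation-Oort}~Lemma~2.4, only asserts that $\AAut_{\overline\kappa}(\A_x[p^m])$ is of \emph{finite type}, not finite.

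Once finiteness fails, the key step of your argument collapses: an orbit of a finite-type but non-proper group scheme acting on an affine scheme need not be closed (think of $\mathbb{G}_m$ acting on $\mathbb A^1$), so you cannot conclude that $\tau^{-1}(\mathcal H\cdot\tau(x))$ is closed in $T$. The paper's proof is structured precisely to avoid needing properness of the stabilizer: it reduces the proposition to the specialization statement $(*)$ that, for $y$ a generic point of an irreducible component of $\ES^{\mathrm{cL},s}_0$ lying in $\ES^{s'}_0$, the whole closure $\overline{\{y\}}$ lies in $\ES^{s'}_0$. This is proved by a spreading-out argument: after trivializing $\A_T[p^m]$ over a cover $T'=\Spec B$ (via Lemma~\ref{constancy theorem without add str}), one composes the isomorphism $f$ (initially over a big field $\overline K$) with the trivialization $\alpha$ to land in the finite-type group scheme $\AAut_{\overline\kappa}(\A_x[p^m])$, and then uses $W_m(B\otimes\overline K)\subseteq W_m(L)$ to descend the tensor-preserving property from the fraction field $L$ to $B\otimes\overline K$. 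That argument works for any finite-type automorphism group scheme and is the essential idea your proposal is missing.
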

\begin{proof}
The proof is similar to \cite{foliation-Oort} Theorem 3.3, but we prefer to write down the details. The proposition follows from the following statement.

($*$) Let $y$ be the generic point of an irreducible component of $\ES^{\mathrm{cL},s}_0$, then $\overline{\{y\}}\subseteq \ES^{s'}_0$ if $y\in \ES^{s'}_0$.

This is because any irreducible component intersecting $\overline{\{y\}}$ lies in $\ES^{s'}_0$, while in one connected component of $\ES^{\mathrm{cL},s}_0$, one can joint two irreducible components which do not intersect by other irreducible components.

Now we prove ($*$). We fix an $x\in \ES_0^{s'}(\overline{\kappa})$. By passing to an affine cover, we can assume that $T:=\overline{\{y\}}=\Spec(A)$ with $A$ an integral domain of finite type over $\overline{\kappa}$. Let $\AAut_{\overline{\kappa}}(\A_x[p^m])$ be the sheaf of automorphism of $\A_x[p^m]$. By \cite{foliation-Oort} Lemma 2.4, it is represented by a $\overline{\kappa}$-scheme of finite type. Let $\overline{K}$ be an algebraically closed field containing $A$ and all residue fields $k(x)$, $x\in \AAut_{\overline{\kappa}}(\A_x[p^m])$. The assumption of ($*$) means that there is an isomorphism $f:\A_x[p^m]_{\overline{K}}\rightarrow \A_T[p^m]\otimes_A\overline{K}$, whose induced isomorphism on truncated displays respects the Hodge-Tate tensors. By Lemma \ref{constancy theorem without add str}, there is a quasi-finite and surjective morphism $T'=\Spec(B)\rightarrow T$, such that $\A_T[p^m]\times_T T'$. By taking the reduced induced structure and passing to affine covers of irreducible components, we can assume that $B$ is an integral domain of finite type over $\overline{\kappa}$.

Let's write $-\otimes -$ for tensor product over $\overline{\kappa}$. We have $A\hookrightarrow B\hookrightarrow \overline{K}$, and that $\overline{K}\otimes B$ is an integral domain with fraction field $L$. The isomorphism $f_L$ respects the Hodge-Tate tensors. By constancy of $\A_{T'}[p^m]$, we also have an isomorphism $\alpha:\A_T[p^m]\times_A(B\otimes \overline{K})\rightarrow \A_x[p^m]\otimes B\otimes \overline{K}$. The composition $\alpha_L\circ f_L$ is an automorphism of $\A_x[p^m]\otimes L$, so it is given by a $\overline{K}$-point $g$ of $\AAut_{\overline{\kappa}}(\A_x[p^m])$. Now the composition $\alpha^{-1}\circ g_{B\otimes \overline{K}}$ is such that its induced isomorphism on truncated displays respects the Hodge-Tate tensors after tensoring $W_m(L)$. But then it has to respect the tensors, as $W_m(B\otimes \overline{K})\subseteq W_m(L)$. In particular, ($*$) holds.
\end{proof}
\begin{remark}\label{union as schemes}
The previous proposition implies that $\ES^{\mathrm{cL},s}_0$ is a finite disjoint union of level $m$ strata as schemes, and hence smooth.
\end{remark}

\section[Geometry of Newton strata]{Geometry of Newton strata}

Let $T=\Spec(R)$ be an affine scheme over $\ES$ such that $R$ is $p$-adically complete, $p$-torsion free and equipped with a lift of Frobenius $\delta$. Let $M$ be $\mathbb{D}(\A)(R)$ and $\varphi$ be the Frobenius. If $\I_+(T)\neq\emptyset$, then $\varphi$ determines a $\intG(R)$-$\delta$-conjugacy class of $\intG(R)\mu^{\sigma}(p)\delta(\intG(R))$. More precisely, for $t\in \I_+(R)$ and $M^i:=t(L^i_R)$, $i=1,2$, we have a commutative diagram $$\xymatrix@C=0.45cm{
L_R\ar[r]^(0.3){\xi}\ar[drrr]_{\mu^{\sigma}(p)}&\delta^*(L^1_R\oplus L^0_R)\ar[rr]^(0.6){p\oplus \mathrm{id}}&&\delta^*L_R\ar@<-1ex>[d]_{\xi^{-1}}\ar[r]^(0.35){\delta^*t}&\delta^*(M^1\oplus M^0)\ar[rrr]^(0.6){(\frac{\varphi}{p})^\lin\oplus \varphi^\lin}&&&M\ar[r]^{t^{-1}}&L_R.\\
&&&L_R\ar@<-1ex>[u]_{\xi}\ar[urrrrr]_{g_t}}$$
Noting that the composition of horizontal morphisms is $\varphi^\lin$ and that $g_t\in \intG(R)$, we have $\varphi^\lin=g_t\mu^{\sigma}(p)$. So $\varphi^\lin$ gives $\{hg_t\mu^{\sigma}(p)\delta(h)^{-1}\mid h\in \intG(R)\}$ by considering all elements in $\I(R)$.

Let $F$ be the fraction field of $W:=W(\overline{\kappa})$. Let $C(\intG)$ (resp. $B(G)$) be the set of $\intG(W)$-$\sigma$-conjugacy (resp. $G(F)$-$\sigma$-conjugacy) classes in $G(F)$. Let $C(\intG,\mu^\sigma)$ be the set of $\intG(W)$-$\sigma$-conjugacy classes in $\intG(W)\mu^{\sigma}(p)\intG(W)$, and $B(\intG,\mu^\sigma)$ be the image of $C(\intG,\mu^\sigma)\hookrightarrow C(\intG)\twoheadrightarrow B(G)$. For a $\overline{\kappa}$-point $x$ of~$\ES_0$, it lifts to a $W$-point $\widetilde{x}$ of $\ES$, and $\I_{+,\widetilde{s}}(W)\neq\emptyset$. Applying the above construction to $R=W$, we get an element in $C(\intG,\mu^\sigma)$. This induces maps $\cent:\ES_0(\overline{\kappa})\rightarrow C(\intG,\mu^\sigma)$ and $\newt:\ES_0(\overline{\kappa})\rightarrow B(\intG,\mu^\sigma)$. The pre-image under $\cent$ (resp. $\newt$) of an element in $C(\intG,\mu^\sigma)$ (resp. $B(\intG,\mu^\sigma)$) is the set of $\overline{\kappa}$-points of a central leaf (resp. Newton stratum).

\subsection[Group theoretic preparations]{Group theoretic preparations}
Let $B\subseteq\intG$ be a Borel subgroup and $T\subseteq B$ be a maximal torus. Then $T$ splits over a unramified extension of $\mathbb{Q}_p$. Let $\Gamma$ be $\mathrm{Gal}(\overline{\kappa}/\mathbb{F}_p)$, then $X^*(T)$ (resp. $X_*(T)$), the group of characters (resp. cocharacters) is a $\Gamma$-module. Let $\pi_1(G)$ be the quotient of $X_*(T)$ by the coroot lattice. Let $W_G$ be the Weyl group of $G$, $\widetilde{W}_G:=\mathrm{Norm}_G(T)(F)/T(W)\cong W_G\ltimes X_*(T)$ be the extended affine Weyl group and $W_a$ be the affine Weyl group. We have a canonical exact sequence
$$\xymatrix{0\ar[r]&W_a\ar[r]&\widetilde{W}_G\ar[r]&W_G\ar[r]&0}.$$Let $\Omega\subseteq \widetilde{W}_G$ be the stabilizer of the alcove corresponding to the Iwahoric subgroup of $G(F)$ given by the preimage of $B(\overline{\kappa})$. We define the length function on
$\widetilde{W}_G$ by
\begin{subeqn}\label{length func}
l(wr)=l(w), \text{ for }w\in W_a, r\in \pi_1(G).
\end{subeqn}
\subsubsection{Results about $B(G)$}To a $G(F)$-$\sigma$-conjugacy class $[b]$, Kottwitz defines two functorial invariants $\nu_G(b)\in (X_*(T)_{\mathbb{Q}}/W_G)^\Gamma\cong X_*(T)_{\mathbb{Q},\mathrm{dom}}^\Gamma$ and $\kappa_G(b)\in \pi_1(G)_\Gamma$ in \cite{isocys with addi}. These two invariants determines $[b]$ uniquely.

We consider the partial order $\leq$ on $X_*(T)_{\mathbb{Q}}$ given by $\chi'\leq\chi$ if and only if $\chi-\chi'$ is a linear combination of positive roots with positive rational coefficients. We write $\overline{\sigmu}$ the average of its $\Gamma$-orbit.
\begin{proposition}
For $b\in \intG(W)\sigmu(p)\intG(W)$, we have $\nu_G(b)\leq \overline{\sigmu}$ and $\kappa_G(b)=\sigmu_*$. Here $\sigmu_*$ is the image of $\sigmu$ in $\pi_1(G)_\Gamma$.
\end{proposition}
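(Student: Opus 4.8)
The plan is to treat the two assertions separately: the statement on $\kappa_G$ is purely formal, and the inequality for $\nu_G$ is a form of Mazur's inequality. For $\kappa_G(b)=\sigmu_*$, recall that $\kappa_G\colon B(G)\to\pi_1(G)_\Gamma$ is induced by the functorial, $\sigma$-equivariant homomorphism $w_G\colon G(F)\to\pi_1(G)_\Gamma$ of \cite{isocys with addi}. Since $\Gamma$ acts trivially on the coinvariants $\pi_1(G)_\Gamma$, the map $b\mapsto w_G(b)$ is constant on $\sigma$-conjugacy classes, so $\kappa_G(b)=w_G(b)$. As $\intG$ is reductive over $\INT_p$, the subgroup $\intG(W)$ is hyperspecial and lies in the kernel of $w_G$, while $w_G(\sigmu(p))=\sigmu_*$ by construction. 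Writing $b=g_1\sigmu(p)g_2$ with $g_i\in\intG(W)$ and using that $w_G$ is a homomorphism then gives $\kappa_G(b)=w_G(g_1)+\sigmu_*+w_G(g_2)=\sigmu_*$.

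For $\nu_G(b)\le\overline{\sigmu}$, note that both sides lie in $X_*(T)^\Gamma_{\mathbb{Q},\mathrm{dom}}$, so by the standard description of the dominance order through pairings against $\Gamma$-invariant dominant characters it suffices to prove $\langle\nu_G(b),\omega\rangle\le\langle\overline{\sigmu},\omega\rangle$ for every such $\omega$. I would deduce this from Mazur's inequality by reduction to $\GL$: choose a representation $\rho_\omega\colon\intG\to\GL(V_\omega)$ admitting a $\intG$-stable $\INT_p$-lattice and having $G_{\bar F}$-highest weight $\omega$ — for instance, for the first fundamental weight of the Hodge embedding $\intG\hookrightarrow\GL(V_{\INT_p}^\vee)$, on which $\mu$ and hence $\sigmu$ acts with weights in $\{0,1\}$, this is simply the standard representation. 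Then $\rho_\omega(b)$ lies in the double coset of $\GL(V_\omega)$ determined by $\rho_\omega\circ\sigmu$, so $(V_\omega\otimes F,\rho_\omega(b)\sigma)$ is an $F$-crystal; Mazur's inequality — its Newton polygon lies on or above its Hodge polygon, with the same endpoints — forces the top Newton slope to be at most the top Hodge slope. The former equals $\langle\nu_G(b),\omega\rangle$ by the functoriality of the Newton map (\cite{isocys with addi}) together with $\nu_G(b)$ being dominant, and the latter equals $\langle\overline{\sigmu},\omega\rangle$ once one uses the $\Gamma$-invariance of $\omega$; this is the desired inequality.

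The step I expect to be the main obstacle is this last passage — recovering the group-theoretic inequality in $G$ from Mazur's inequality for the various $\GL(V_\omega)$: a single faithful representation does not detect the dominance order, so one must run the argument over a cofinal family of dominant weights $\omega$ while keeping track of $\Gamma$-invariance and of the reduction of the Newton point along each $\rho_\omega$, or else appeal to the group-theoretic form of Mazur's inequality directly. Since this precise statement — that membership in the hyperspecial double coset $\intG(W)\sigmu(p)\intG(W)$ forces $\kappa_G(b)=\sigmu_*$ and $\nu_G(b)\le\overline{\sigmu}$ — is exactly what Rapoport and Richartz establish in their work on $F$-isocrystals with additional structure, the cleanest write-up is to carry out the $\kappa_G$ computation as above and cite their result for the inequality, retaining the $\GL$-reduction only as motivation.
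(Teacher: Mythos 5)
Your plan coincides with the paper's in its key step: the paper proves the whole proposition by citing Rapoport--Richartz \cite{class of F-isocrys} Theorem~4.2, and you likewise defer the Newton-point inequality to that reference (correctly, since a careful representation-by-representation Mazur argument for a nonsplit group would require a cofinal family of weights and care with $\sigma$-conjugation, exactly as you note). Where you differ is in spelling out the $\kappa_G$ computation directly from the Kottwitz homomorphism $w_G$: using that the hyperspecial subgroup $\intG(W)$ lies in $\ker w_G$ and that $w_G(\sigmu(p))$ is the class of $\sigmu$ in $\pi_1(G)$, the double-coset membership immediately gives $\kappa_G(b)=\sigmu_*$. This is correct and slightly more informative than the bare citation; the only phrasing to tighten is that it is the composite $G(F)\stackrel{w_G}{\longrightarrow}\pi_1(G)\to\pi_1(G)_\Gamma$, not $w_G$ itself, that is constant on $\sigma$-conjugacy classes.
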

\begin{proof}
This is \cite{class of F-isocrys} Theorem 4.2.
\end{proof}
By works of Gashi, Kottwitz, Lucarelli, Rapoport and Richartz, we have $$B(\intG,\sigmu)=\{[b]\in B(G)\mid \nu_G(b)\leq \overline{\sigmu}\text{ and }\kappa_G(b)=\sigmu_*\}.\ \ \ \text{(See \cite{VW} 8.6).}$$

To each $G(K)$-$\sigma$-conjugacy class $[b]$, one defines $M_b$ to be the centralizer in $G$ of $\nu_G(b)$, and $J_b$ be the group scheme representing $$J_b(R)=\{g\in G(R\otimes_{\mathbb{Q}_p}K)\mid gb=b\sigma(g)\}.$$
The group $J_b$ is a inner form of $M_b$ which does not depend on the choices of representatives in $[b]$ (see \cite{isocys with addi} 5.2).
\begin{definition}
For $[b]\in B(G)$, the defect of $[b]$ is defined by $$\mathrm{def}_G(b)=\mathrm{rank}_{\mathbb{Q}_p}G-\mathrm{rank}_{\mathbb{Q}_p}J_b.$$
\end{definition}
Hamacher gives a formula for $\mathrm{def}_G(b)$ using root data.
\begin{proposition}
Let $w_1,\cdots,w_l$ be the sums over all elements in a Galois orbit of absolute fundamental weights of $G$. For $[b]\in B(G)$, we have $$\mathrm{def}_G(b)=2\cdot\sum_{i=1}^l\{\langle \nu_G(b),w_i\rangle\},$$
where $\{\cdot\}$ means the fractional part of a rational number.
\end{proposition}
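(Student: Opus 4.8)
The plan is to reduce to the case that $[b]$ is basic, where $\mathrm{def}_G(b)$ becomes the drop in $\RAT_p$-rank of an inner form, and then to compute that drop one simple factor at a time. Since $\nu_G(b)$ is dominant and $\Gamma$-invariant it is, up to scaling, a cocharacter of the maximal $\RAT_p$-split subtorus of a maximally split maximal torus $T\subseteq G$, so $M_b$ is a standard Levi subgroup of $G$ defined over $\RAT_p$. As a Levi of the quasi-split group $G$ it contains a maximal $\RAT_p$-split torus of $G$, hence $\mathrm{rank}_{\RAT_p}M_b=\mathrm{rank}_{\RAT_p}G$; moreover $J_b$ formed inside $G$ coincides with $J_b$ formed inside $M_b$, and $b$ is basic in $M_b$ because $\nu_{M_b}(b)=\nu_G(b)$ is central there. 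Thus $\mathrm{def}_G(b)=\mathrm{rank}_{\RAT_p}M_b-\mathrm{rank}_{\RAT_p}J_b$. On the other side, writing $\nu=\nu_G(b)$ and letting $I$ be the set of simple roots of $M_b$, one checks that $\langle\nu,\varpi_j\rangle\in\INT$ for every $\alpha_j\notin I$ (this pairing is the $\kappa$-invariant of the sub-object of the isocrystal cut out by the maximal parabolic attached to $\alpha_j$, which is integral), so $\sum_i\{\langle\nu,w_i\rangle\}$ is unchanged if the $w_i$ are restricted to $\Gamma$-orbits of nodes of $M_b$. Hence it suffices to treat $b$ basic and to prove $\mathrm{rank}_{\RAT_p}M-\mathrm{rank}_{\RAT_p}J_b=2\sum_i\{\langle\nu,w_i\rangle\}$ for $M:=M_b$, the sum running over $\Gamma$-orbits of the nodes of $M$.

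Next I would pass to adjoint simple factors. Since $J_b$ is an inner form of $M$, the two groups share the same centre, so the difference of $\RAT_p$-ranks is unchanged on replacing $M$ and $J_b$ by their adjoint quotients, and $(J_b)^{\mathrm{ad}}$ is the inner form of $M^{\mathrm{ad}}$ classified by the image of $\kappa_M(b)\in\pi_1(M)_\Gamma$ in $H^1(\RAT_p,M^{\mathrm{ad}})$ (Kottwitz, \cite{isocys with addi}). Writing $M^{\mathrm{ad}}=\prod_i\mathrm{Res}_{E_i/\RAT_p}M_i$ over the $\Gamma$-orbits of the Dynkin diagram, with each $M_i$ absolutely simple adjoint over an extension $E_i/\RAT_p$ which is unramified because $G$ splits over an unramified extension, Shapiro's lemma splits both sides over the factors $i$: since $\langle\nu,-\rangle$ is constant on $\Gamma$-orbits of weights, the $\Gamma$-orbit sums $w_i$ exactly recombine the individual fundamental weights of the $i$-th factor. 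One is thus reduced to the following statement about a single absolutely simple adjoint group $H$ over a $p$-adic field $E$, an inner twist ${}_{c}H$ of it, and the associated central rational cocharacter $\nu_c$: one has $\mathrm{rank}_EH-\mathrm{rank}_E({}_{c}H)=2\sum_j\{\langle\nu_c,\varpi_j\rangle\}$, the sum over the nodes of $H$.

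For this single-factor statement I would argue by type. In type $A$ one has ${}_{c}H=\mathrm{PGL}_m(D)$ for a central division algebra $D/E$ of index $d=n/m$ with $\mathrm{inv}(D)=c'/d$, $\gcd(c',d)=1$; here $\langle\nu_c,\varpi_j\rangle\equiv jc'/d\pmod{\INT}$, so the right-hand side equals $2\sum_{j=1}^{n-1}\{jc'/d\}=m(d-1)=n-m$ by the elementary identity $\sum_{j=1}^{d-1}\{jc'/d\}=\tfrac{d-1}{2}$, and the left-hand side is $(n-1)-(m-1)=n-m$. For the remaining types I would invoke the classification (Tits) of the necessarily inner, unramified forms over a $p$-adic field: in each case the non-trivial inner twist lowers the $\RAT_p$-rank precisely at the node(s) where $\langle\nu_c,\varpi_j\rangle$ fails to be integral, and the counts match directly. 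Summing over the factors $i$ recovers the Proposition.

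The step I expect to be the main obstacle is this last one: organising the type-by-type analysis together with the $X_*$–$X^*$ lattice bookkeeping needed to make $\langle\nu_G(b),\varpi_j\rangle$ precise when $G$ is neither simply connected nor adjoint, and to verify that it is the $\Gamma$-orbit sums $w_i$ rather than individual fundamental weights that enter (so that the formula is insensitive to the possible presence of torsion in $\pi_1$). If instead one is willing to cite Kottwitz's intrinsic computation of the $\RAT_p$-rank of $J_b$ for basic $b$ directly, this step collapses to a short verification.
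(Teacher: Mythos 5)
The paper's own ``proof'' is a one-line citation to Hamacher (\cite{geo of newt PEL}, Proposition 3.8), so there is no argument in the paper to compare against directly; your plan is, as far as I can tell, close in spirit to what Hamacher actually does. The reduction steps are sound: $M_b$ is a standard $\RAT_p$-rational Levi of the quasi-split $G$ and so has the same $\RAT_p$-rank; $J_b$ computed in $G$ and in $M_b$ agree, and $b$ is basic in $M_b$; and the $\RAT_p$-rank difference is insensitive to central isogeny, so one may pass to adjoint quotients and then, via Shapiro and the fact that $G$ splits over an unramified extension, to a single absolutely simple adjoint factor per $\Gamma$-orbit. Your type-$A$ verification is also correct. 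The two genuine gaps are the ones you flag yourself. First, the integrality $\langle\nu,\varpi_j\rangle\in\INT$ for $\alpha_j\notin I$ is load-bearing (it is what makes the right-hand side insensitive to replacing $G$ by $M_b$), and your parenthetical about ``the $\kappa$-invariant of the sub-object cut out by the maximal parabolic attached to $\alpha_j$'' gestures at the right fact — that $\varpi_j$ descends to an integral functional on $\pi_1(L_j)$ and that $\nu_G(b)$, $\kappa_G(b)$ have the same image in $\pi_1(L_j)_{\Gamma}\otimes\RAT$ — but is not yet an argument, and needs the lattice bookkeeping you mention when $G$ is neither simply connected nor adjoint. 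Second, the concluding per-factor rank computation for types other than $A$ is deferred to the classification of unramified inner forms, which is where the substance lies unless one directly cites Kottwitz's computation of $\mathrm{rank}_{\RAT_p}J_b$ for basic $b$, in which case the case analysis collapses as you say. As a plan this is a reasonable self-contained route where the paper merely cites; as written it is not yet a proof, but the skeleton matches the source and the gaps are isolated.
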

\begin{proof}
This is \cite{geo of newt PEL} Proposition 3.8.
\end{proof}

\subsubsection{Minimal points and fundamental elements}Let us write $K$ for $G_W(W)$, $K_1$ for $\mathrm{ker}(K\rightarrow G_W(\overline{\kappa}))$ and $\mathcal{I}$ for the Iwahoric subgroup attached to $B$.
\begin{definition}
A element $x\in G(F)$ is called minimal if for any $y\in K_1gK_1$, there is a $g\in K$ such that $y=gx\sigma(g)^{-1}$.
\end{definition}
By \cite{VW} Remark 9.1, if $x$ is minimal, then any element in the $K\text{-}\sigma$-orbit of $x$ is again minimal.
\begin{definition}
An element $x\in \widetilde{W}$ is fundamental if $\mathcal{I}x\mathcal{I}$ lies in a single $\mathcal{I}\text{-}\sigma$-orbit.
\end{definition}
If $x$ is a representative of a fundamental element, then it is minimal. By \cite{Nie Sian} Theorem 1.3, it is also straight.
\begin{theorem}
Each orbit in $B(\intG, \mu^{\sigma})$ contains a fundamental element in $W_G\mu^{\sigma}(p)W_G$.
\end{theorem}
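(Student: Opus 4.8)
The plan is to convert the statement into a purely combinatorial assertion about $\sigma$-conjugacy in the extended affine Weyl group $\widetilde{W}_G$ and then feed it into the reduction method of He and Nie. First I would record the tautological reformulation of membership in $B(\intG,\mu^{\sigma})$: since $C(\intG,\mu^{\sigma})$ consists by definition of the $\intG(W)$-$\sigma$-conjugacy classes meeting $\intG(W)\mu^{\sigma}(p)\intG(W)$, a class $[b]\in B(G)$ lies in $B(\intG,\mu^{\sigma})$ if and only if the $G(F)$-$\sigma$-conjugacy class $[b]$ meets the double coset $K\mu^{\sigma}(p)K$, where $K=\intG(W)$; this is of course compatible with the description $B(\intG,\mu^{\sigma})=\{[b]:\nu_G(b)\le\overline{\mu^{\sigma}},\ \kappa_G(b)=\mu^{\sigma}_{*}\}$ recalled above. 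So I fix $[b]\in B(\intG,\mu^{\sigma})$ and choose $g\in[b]\cap K\mu^{\sigma}(p)K$.

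Next I would bring in the Iwahori-refined Cartan decomposition for the hyperspecial maximal compact $K$:
\[
K\mu^{\sigma}(p)K=\coprod_{w\in W_G\mu^{\sigma}(p)W_G}\mathcal{I}w\mathcal{I},
\qquad W_G\mu^{\sigma}(p)W_G=\widetilde{W}_G\cap K\mu^{\sigma}(p)K .
\]
Thus $g\in\mathcal{I}w_0\mathcal{I}$ for some $w_0\in W_G\mu^{\sigma}(p)W_G$. Two structural facts will be used repeatedly: the subset $K\mu^{\sigma}(p)K\subseteq G(F)$ is stable under $\sigma$-conjugation by $K$ (because $\sigma(K)=K$), and $\sigma$ permutes the finite simple reflections, so $\sigma$-conjugation by a finite simple reflection — an element of $K$ — keeps one inside $K\mu^{\sigma}(p)K$.

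With this in place I would run the reduction in $\widetilde{W}_G$: by \cite{Nie Sian}, together with the theory of minimal length elements in extended affine Weyl groups, the $\sigma$-conjugacy class of $w_0$ in $\widetilde{W}_G$ contains a fundamental element $x$ — equivalently, a minimal-length and $\sigma$-straight element — reached from $w_0$ by a chain of length-non-increasing $\sigma$-conjugations and cyclic shifts that can be performed without leaving $W_G\mu^{\sigma}(p)W_G$. Since $x$ and $w_0$ (hence $g$) have the same image in $B(G)$ — the Newton point $\nu_G$ and the Kottwitz invariant $\kappa_G$ being $\sigma$-conjugacy invariants — the element $x$ is a fundamental element of $W_G\mu^{\sigma}(p)W_G$ lying in $[b]$, which is exactly the claim.

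The one point that is not formal is the last one: that the passage to a minimal-length representative can be carried out inside $W_G\mu^{\sigma}(p)W_G$. The difficulty is that a general reduction step in $\widetilde{W}_G$ may involve an affine simple reflection, which does not lie in $K$, so a priori it can push an element out of the double coset; ruling this out is precisely the content of the structure theory of fundamental elements in \cite{Nie Sian} (and of the analysis of $B(\intG,\mu^{\sigma})$ in \cite{VW}), and I expect the write-up to spend most of its effort there — in practice by reducing, via \cite{Nie Sian}, to the case of a basic class on a standard Levi subgroup, where the relevant length-$0$ element is visibly of the required form, and then transporting the conclusion back.
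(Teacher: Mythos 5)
Your proposal is essentially correct and follows the same route as the paper: the paper's entire proof is the single citation ``This is \cite{Nie Sian} Proposition 1.5,'' and your sketch simply unpacks the surrounding tautologies (that $[b]\in B(\intG,\mu^\sigma)$ iff $[b]$ meets $K\mu^\sigma(p)K$, and the Iwahori-cell decomposition of that double coset) while, correctly, delegating the genuinely hard step — that length-reduction/$\sigma$-conjugation can be carried out inside $W_G\mu^\sigma(p)W_G$ — to Nie's result. In other words you have identified precisely where the nontrivial content lies, and both you and the paper outsource that content to \cite{Nie Sian} Proposition 1.5.
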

\begin{proof}
This is \cite{Nie Sian} Proposition 1.5.
\end{proof}
\subsubsection{Truncations of level 1 and Ekedahl-Oort strata}
For a dominant cocharacter $\chi\in X_*(T)$, we write $W_\chi$ for the Weyl group of the centralizer of $\chi$, and ${}^\chi W$ for the set of elements $w$ which are the
shortest representatives of their cosets $W_\chi w$. Let $x_\chi=w_0w_{0,\chi}$ where $w_0$ denotes the longest element of $W_G$ and
where $w_{0,\chi}$ is the longest element of $W_\chi$. Let $\tau_\chi=x_\chi\chi(p)$. Then $\tau_\chi$ is the shortest element of $W_G\chi(p)W_G$.
\begin{theorem}
Let $\mathcal{T}=\{(w,\chi)\in W_G\times X_*(T)_{\mathrm{dom}}\mid w\in {}^\chi W\}$. Then the map assigning
to $(w, \chi)$ the $K\text{-}\sigma$-conjugacy class of $K_1w\tau_\chi K_1$ is a bijection between $\mathcal{T}$ and the set of
$K\text{-}\sigma$-conjugacy classes in $K_1\backslash G(F)/K_1$.
\end{theorem}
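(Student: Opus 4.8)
The plan is to decompose the set of $K$-$\sigma$-conjugacy classes along the Cartan decomposition $G(F)=\coprod_{\chi\in X_*(T)_{\mathrm{dom}}}K\chi(p)K$ and then, for each dominant $\chi$ separately, to identify the $K$-$\sigma$-conjugacy classes lying in $K_1\backslash K\chi(p)K/K_1$ with the set of orbits of the algebraic zip datum of type $\chi$, which is governed exactly by ${}^{\chi}W$. Since $K$-$\sigma$-conjugation preserves each $K\chi(p)K$, and since $w\tau_{\chi}=wx_{\chi}\,\chi(p)$ with $wx_{\chi}\in W_{G}$ lies in $W_{G}\chi(p)W_{G}\subseteq K\chi(p)K$, this reduces the theorem to showing that $w\mapsto[K_1w\tau_{\chi}K_1]$ is a bijection from ${}^{\chi}W$ onto the $K$-$\sigma$-conjugacy classes in $K_1\backslash K\chi(p)K/K_1$.

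For fixed dominant $\chi$ I would unwind the double coset as follows. Writing a general element of $K\chi(p)K$ as $a\chi(p)b$ with $a,b\in K$ identifies $K\chi(p)K$ with $(K\times K)/P$ for $P:=K\cap\mathrm{Int}(\chi(p))(K)$, where $q\in P$ acts by $(a,b)\mapsto(aq^{-1},\theta(q)b)$, with $\theta:=\mathrm{Int}(\chi(p)^{-1})$ a group isomorphism from $P$ onto $P^{-}:=K\cap\mathrm{Int}(\chi(p)^{-1})(K)$; the reductions mod $p$ of $P$ and $P^{-}$ are the two opposite parabolics $P^{-}_{\chi}(\overline{\kappa})$ and $P^{+}_{\chi}(\overline{\kappa})$ attached to $\chi$. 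Performing the quotient by the left $K_1$ (acting on $a$), the right $K_1$ (acting on $b$) and the $K$-$\sigma$-conjugation simultaneously, and using transitivity of the residual left $K$-action on the $a$-factor to normalise $a=1$, the remaining action on $\overline{b}\in G(\overline{\kappa})$ becomes $\overline{b}\mapsto p_{+}\,\overline{b}\,p_{-}^{-1}$ with $(p_{+},p_{-})$ ranging over the zip group $E_{\chi}$ attached to $\chi$ (a Frobenius-twisted fibre product over the common Levi $L_{\chi}$ of the parabolics $P^{\pm}_{\chi}$). The point requiring care is that $\theta$ does \emph{not} descend to a homomorphism $P^{-}_{\chi}(\overline{\kappa})\to P^{+}_{\chi}(\overline{\kappa})$: for fixed $\overline{q}$ the class $\overline{\theta(q)}$ is well defined only modulo a coset of $U^{+}_{\chi}(\overline{\kappa})$, and this indeterminacy is exactly what produces the unipotent radical of $E_{\chi}$, so that the \emph{full} zip group (and not merely its Levi $L_{\chi}$) acts. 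One concludes that the $K$-$\sigma$-conjugacy classes in $K_1\backslash K\chi(p)K/K_1$ are in bijection with the $E_{\chi}$-orbits on $G_{\overline{\kappa}}$.

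It then remains to invoke the classification of orbits of an algebraic zip datum (cf.\ \cite{zipaddi} and \cite{VW}): the $E_{\chi}$-orbits on $G_{\overline{\kappa}}$ are in canonical bijection with ${}^{\chi}W$, and one must match the orbit indexed by $w$ with the double coset of $w\tau_{\chi}$. Here the role of $x_{\chi}=w_{0}w_{0,\chi}$ is decisive: $\tau_{\chi}=x_{\chi}\chi(p)$ being the shortest element of $W_{G}\chi(p)W_{G}$, it occupies the ``open'' relative position with respect to $P^{-}_{\chi}$, so its image is the orbit of $w=e$ and $w\tau_{\chi}$ then lands in the orbit of $w$ for all $w\in{}^{\chi}W$; this compatibility between the affine- and the finite-Weyl-group parametrizations is by now standard, and is what underlies the description of Ekedahl--Oort strata via $K_1\backslash G(F)/K_1$. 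Assembling these bijections over all dominant $\chi$ gives the theorem.

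I expect the real work to be in the middle step --- the congruence-subgroup bookkeeping identifying the $K$-$\sigma$-conjugacy classes in $K_1\backslash K\chi(p)K/K_1$ with the $E_{\chi}$-orbits, and specifically the verification that the failure of $\theta$ to descend mod $p$ contributes exactly $U^{+}_{\chi}$ and nothing more or less --- together with the base-point matching in the last step (that $w\tau_{\chi}$ labels the $w$-th orbit), which is the other delicate point, since it is easy to be off by a Weyl-group twist or to confuse $P^{+}_{\chi}$ with its Levi.
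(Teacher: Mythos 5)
The paper does not prove this statement: it is cited verbatim from \cite{truncat level 1} Theorem~1.1(1), so there is no paper-internal proof to compare your proposal against. Your outline is a legitimate route, close in spirit to Viehmann's argument and relying, as it should, on the Pink--Wedhorn--Ziegler classification of orbits of an algebraic zip datum. The skeleton is sound: restrict to a single Cartan piece $K\chi(p)K$ (stable under $K$-$\sigma$-conjugation since $\sigma(K)=K$); write $g=a\chi(p)b$ with ambiguity group $P=K\cap\chi(p)K\chi(p)^{-1}$; quotient simultaneously by left/right $K_1$ and by $\sigma$-conjugation; use $\sigma$-conjugation to normalize $a=1$; and read off that the residual action on $\overline{b}\in G_{\overline{\kappa}}$ is a Frobenius-twisted zip action whose orbits are parametrized by ${}^{\chi}W$ by \cite{zipdata} (cf.\ also \cite{zipaddi}).

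The two steps you flag are indeed where the substance lies, and they do hold. For the first, the assertion that the failure of $\theta=\mathrm{Int}(\chi(p)^{-1})$ to descend mod $p$ produces exactly the unipotent radical of the opposite reduction (and nothing more) is a clean root-group computation: for $q\in K_1\cap P$ and a root $\alpha$, one has $\theta(q)_\alpha=p^{-\langle\alpha,\chi\rangle}q_\alpha$ with $q_\alpha\in p^{\max(1,\langle\alpha,\chi\rangle)}W$, which sweeps out all of $\overline{\kappa}$ when $\langle\alpha,\chi\rangle>0$ and reduces to $0$ otherwise --- precisely the unipotent radical, while the Levi coordinates of $\overline{\theta(q)}$ and $\overline{q}$ agree, which is what glues the two parabolics into the zip group. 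For the second (matching $w\tau_\chi$ to the $w$-th PWZ orbit), using $x_\chi=w_0w_{0,\chi}$ as the frame element of the zip datum is exactly right, and the matching is a routine but conventions-sensitive check, just as you warn. So: the paper itself gives no argument; your reconstruction is correct in outline, and you have correctly identified the two points that carry the real content.
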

\begin{proof}
This is \cite{truncat level 1} Theorem 1.1 (1).
\end{proof}
\begin{remark}\label{dim of minimal E-O}
Let $(w,\mu^{\sigma})\in\mathcal{T}$ be corresponding to the $K\text{-}\sigma$-conjugacy classes in $K_1\backslash G(F)/K_1$ attached to a fundamental element in $[b]\in B(\intG,\mu^{\sigma})$. Then by the group theoretic arguments in \cite{geo of newt PEL} 7.3,  $l(w)=\langle2\rho, \nu_G(b)\rangle$, where $\rho$ is the half-sum of positive roots of $G$. This is the dimension of the Ekedahl-Oort stratum corresponding to $w$ by \cite{EOZ}.
\end{remark}

\subsection[Central leaves]{Central leaves}
By Theorem \ref{cent are big level m}, all central leaves are given by level $m$ stratum for $m$ big enough. So we can apply results about level $m$ strata to central leaves.
\begin{corollary}
Each central leaf is a smooth, equi-dimensional locally closed subscheme of $\ES_{0,\overline{\kappa}}$. It is open and closed in the classical central leaf containing it, and closed in the Newton stratum containing it.
\end{corollary}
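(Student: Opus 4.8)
The plan is to reduce every assertion to the corresponding property of level $m$ strata, once $m$ is taken large enough that level $m$ strata coincide with central leaves. Concretely, by Theorem~\ref{cent are big level m} there is an $N$ such that for all $m'\ge m\ge N$ the transition map $[H_{m'}\backslash G_{m'}]\to[H_m\backslash G_m]$ is a homeomorphism; since an isomorphism of Dieudonn\'e modules respecting $\sdr$ exists exactly when compatible isomorphisms exist modulo $p^m$ for every $m$, this yields that for $m\ge N$ each central leaf $\ES_0^c$ of $\ES_{0,\overline\kappa}$ coincides, as a topological space, with a level $m$ stratum. Likewise, a $p$-divisible group of bounded height over an algebraically closed field is determined up to isomorphism by its $p^m$-torsion once $m$ is large, so for $m\gg0$ each classical central leaf coincides with a classical level $m$ stratum $\ES^{\mathrm{cL},s}_0$. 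I would fix one such $m$ at the outset.

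Granting this reduction, the first three assertions are immediate. By Theorem~\ref{main theorem level m--locally closed+sm}(1) the level $m$ stratum $\ES_0^c$ is locally closed in $\ES_{0,\overline\kappa}$ and, with its reduced induced structure, smooth and equi-dimensional. By Proposition~\ref{open-closed level m} it is a union of connected components of the classical level $m$ stratum $\ES^{\mathrm{cL},s}_0$ containing it, hence open and closed in the classical central leaf.

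It remains to show that $\ES_0^c$ is closed in the Newton stratum $\ES_0^b$ containing it; note first that $\ES_0^c$ does lie in a single Newton stratum, as isomorphic Dieudonn\'e modules with tensor are in particular isogenous. Let $\ES^{\mathrm{cN}}_0$ be the classical Newton stratum and $\ES^{\mathrm{cC}}_0$ the classical central leaf containing $\ES_0^c$, so $\ES_0^c\subseteq\ES_0^b\subseteq\ES^{\mathrm{cN}}_0$ and $\ES_0^c\subseteq\ES^{\mathrm{cC}}_0\subseteq\ES^{\mathrm{cN}}_0$. By \cite{foliation-Oort} Theorem~2.2 the classical central leaf $\ES^{\mathrm{cC}}_0$ is closed in $\ES^{\mathrm{cN}}_0$, and by the previous paragraph $\ES_0^c$ is closed in $\ES^{\mathrm{cC}}_0$; hence $\ES_0^c$ is closed in $\ES^{\mathrm{cN}}_0$. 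Since by \cite{Manin problem} Theorem~5.3.1(b) the Newton stratum $\ES_0^b$ is a union of connected components of $\ES^{\mathrm{cN}}_0$, intersecting with $\ES_0^b$ shows $\ES_0^c$ is closed in $\ES_0^b$. The one point requiring genuine care, the rest being formal bookkeeping, is the initial reduction: matching the homeomorphism of quotient stacks of Theorem~\ref{cent are big level m} with an actual identification of large‑level‑$m$ strata with central leaves, and its analogue for the classical (no‑tensor) stratifications; this is precisely the crystalline boundedness input borrowed from \cite{crysboud} and \cite{foliation-Oort}.
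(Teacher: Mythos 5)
Your proof is correct and follows essentially the same route as the paper: reduce to level $m$ strata via Vasiu's crystalline boundedness, invoke Theorem~\ref{main theorem level m--locally closed+sm} and Proposition~\ref{open-closed level m} for the first two assertions, and for closedness in the Newton stratum chain together closedness in the classical central leaf, Oort's closedness of classical central leaves in classical Newton strata, and the inclusion of the Newton stratum in the classical one. The only superfluous step is the appeal to \cite{Manin problem}: once $\ES_0^c$ is closed in $\ES_0^{\mathrm{cN}}$ and $\ES_0^c\subseteq\ES_0^b\subseteq\ES_0^{\mathrm{cN}}$, closedness of $\ES_0^c$ in $\ES_0^b$ is automatic for the subspace topology, with no need for $\ES_0^b$ to be a union of connected components.
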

\begin{proof}
We only need to explain why it is closed in the Newton stratum, as other statements following from Theorem \ref{main theorem level m--locally closed+sm} and Proposition \ref{open-closed level m}. But by \cite{foliation-Oort} Theorem 2.2, a classical central leaf is closed in the classical Newton stratum, so a central leaf is closed in the classical Newton stratum. In particular, it is closed in the Newton stratum.
\end{proof}
Let $x,y\in \ES_{0}(\overline{\kappa})$ be two points, and $X,Y$ be their attached $p$-divisible groups. For any $n\in \INT_{>0}$, we write $X_n$ (resp. $Y_n$) for its $p^n$-kernel. Let $I^{X,Y}$ (resp. $I_n^{X,Y}$) be the subset of elements $f\in \Isom(X,Y)$ (resp. $f\in \Isom(X_n,Y_n)$) such that $\Dieu(f):\Dieu(Y)\rightarrow \Dieu(X)$ (resp. $\Dieu(f):\Dieu(Y_n)\rightarrow \Dieu(X_n)$) respects the Hodge-Tate tensors. For $N\in \INT$ such that $N\geq n$, we write $\Phi_n:\Isom(X,Y)\rightarrow \Isom(X_n,Y_n)$ resp. $\Phi_n^N:\Isom(X_N,Y_N)\rightarrow \Isom(X_n,Y_n)$ for the natural maps induced by restricting to $p^n$-kernels.
\begin{lemma}
For each $n\in \INT$, there is an integer $N(X,Y,n)$, such that for any $N\geq N(X,Y,n)$, we have $\Phi_n(I^{X,Y})=\Phi_n^N(I^{X,Y}_N)$.
\end{lemma}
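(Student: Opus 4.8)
The first observation is that one inclusion is free: restricting an isomorphism $f\in I^{X,Y}$ first to the $p^N$-kernel and then to the $p^n$-kernel shows $\Phi_n(I^{X,Y})\subseteq\Phi_n^N(I_N^{X,Y})$ for every $N\geq n$, and both sides visibly land in $I_n^{X,Y}$. So the entire content is the reverse inclusion for $N$ large, and I would reformulate the lemma as the assertion that the descending chain of subsets $\Phi_n^N(I_N^{X,Y})\subseteq I_n^{X,Y}$, $N\geq n$, is eventually constant and that its stable value is $\Phi_n(I^{X,Y})$.

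The plan for the stabilization is to place everything inside honest finite-type schemes over $\overline{\kappa}$. For finite group schemes over a field, $\IIsom(X_N,Y_N)$ is representable by an affine $\overline{\kappa}$-scheme of finite type --- a locally closed subscheme of the affine space of $\overline{\kappa}$-linear maps $\mathcal O_{Y_N}\to\mathcal O_{X_N}$ --- and the condition that the induced map on truncated Dieudonn\'e modules respect $s_{\cris}$ is a closed condition, so $I_N^{X,Y}$ is cut out as a closed subscheme $\mathbf I_N$; being of finite type over the field $\overline{\kappa}$ it is Noetherian. Restriction to $p^{N'}$-kernels is a morphism $\mathbf I_N\to\mathbf I_{N'}$ for $N\geq N'$, so by Chevalley's theorem the image $E_N:=\mathrm{Im}(\mathbf I_N\to\mathbf I_n)$ is a constructible subset of $\mathbf I_n$; these form a descending chain of constructible subsets of a Noetherian scheme and hence stabilize, say $E_N=E_{N_0}$ for all $N\geq N_0=:N(X,Y,n)$. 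Since $\overline{\kappa}$ is algebraically closed, the image of $\overline{\kappa}$-points of $\mathbf I_N$ in those of $\mathbf I_n$ is exactly $E_N(\overline{\kappa})$, so $\Phi_n^N(I_N^{X,Y})$ is independent of $N\geq N_0$.

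To identify this stable value with $\Phi_n(I^{X,Y})$ I would invoke Mittag--Leffler. First one checks $I^{X,Y}=\varprojlim_N I_N^{X,Y}$ as sets: $\Hom(X,Y)=\varprojlim_N\Hom(X_N,Y_N)$ because $X=\varinjlim X_N$, being an isomorphism can be tested modulo each $p^N$, and the Hodge--Tate condition on $\Dieu(f)\colon\Dieu(Y)^\otimes\to\Dieu(X)^\otimes$ --- a homomorphism of finite free, hence $p$-adically separated, $W(\overline{\kappa})$-modules --- holds if and only if it holds modulo every $p^N$, which is precisely the condition defining $I_N^{X,Y}$ after reduction. Next, the stabilization argument above applies verbatim at every level $n'\geq n$, so the countable inverse system $\{I_N^{X,Y}\}_N$ satisfies the Mittag--Leffler condition; the standard lemma on Mittag--Leffler inverse systems of sets indexed by $\mathbb N$ then gives that the transition map $\varprojlim_N I_N^{X,Y}\to I_n^{X,Y}$ surjects onto the stable image $\bigcap_N\Phi_n^N(I_N^{X,Y})=E_{N_0}(\overline{\kappa})$. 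Reading off what this says, $\Phi_n(I^{X,Y})=\Phi_n^N(I_N^{X,Y})$ for all $N\geq N_0$.

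The steps are all formal once set up; the only points requiring a little care --- and hence the \emph{main obstacle} --- are the two inputs that feed the machinery: the representability of $\IIsom(X_N,Y_N)$ by a finite-type $\overline{\kappa}$-scheme (so that Chevalley and the descending chain condition apply) and the identification $I^{X,Y}=\varprojlim_N I_N^{X,Y}$ with matching tensor conditions at every finite level. Both are standard, so I expect no genuine difficulty; the one bookkeeping subtlety is simply to note that the Mittag--Leffler condition is used level by level, so the dependence of $N_0$ on $n$ is harmless.
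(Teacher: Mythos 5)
Your overall plan --- reduce to the reverse inclusion, get stabilization of the images $\Phi_n^N(I_N^{X,Y})$, then identify the stable value with $\Phi_n(I^{X,Y})$ via a Mittag--Leffler argument on $I^{X,Y}=\varprojlim_N I_N^{X,Y}$ --- is structurally the same as the argument in the paper (which, in turn, refers to the proof of Lemma~1.5 in Oort's foliation paper). However, there is a genuine gap in the stabilization step. You argue that $E_N:=\mathrm{Im}(\mathbf I_N\to\mathbf I_n)$ is constructible by Chevalley and then claim that ``these form a descending chain of constructible subsets of a Noetherian scheme and hence stabilize.'' This is false: constructible subsets of a Noetherian scheme do \emph{not} satisfy the descending chain condition. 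For instance, on $\mathbb{A}^1_{\overline{\kappa}}$ the open (hence constructible) subsets $\mathbb{A}^1\setminus\{x_1,\dots,x_N\}$ strictly descend. Only \emph{closed} subsets satisfy DCC in a Noetherian space, and there is no reason for the $E_N$ to be closed (the restriction morphisms $\mathbf I_N\to\mathbf I_n$ are not proper in general).

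What actually makes the chain stabilize is a finiteness statement that your argument omits and that the paper imports directly from Oort, Lemma~1.5: for $N$ large the image $\Phi_n^N(\Isom(X_N,Y_N))$ coincides with $\Phi_n(\Isom(X,Y))$, which is a \emph{finite} set (coming from the fact that $\Hom(X,Y)$ is a finitely generated $\mathbb{Z}_p$-module and $\ker(\Hom(X,Y)\to\Hom(X_n,Y_n))=p^n\Hom(X,Y)$). Once this finiteness is in hand, the sets $\Phi_n^N(I_N^{X,Y})$ are subsets of a fixed finite set, so the descending chain trivially stabilizes, and your Mittag--Leffler argument (using stabilization at every level $n'\geq n$) goes through exactly as you describe. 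So the fix is not to replace Oort's finiteness by Chevalley + Noetherianity --- that does not supply DCC --- but to keep the finiteness input and then run the rest of your argument unchanged.
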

\begin{proof}
This is essentially the proof of \cite{foliation-Oort} Lemma 1.5. More precisely, by the proof there, $\Phi_n(\Isom(X,Y))$ is finite, and there is $N_1$ such that $\forall\ N\geq N_1$, we have $\Phi_n(\Isom(X,Y))=\Phi_n^N(\Isom(X_N,Y_N)).$ In particular, $\forall\ N\geq N_1$, $\Phi_n(I^{X,Y})$, $\Phi_n^N(I^{X,Y}_N)$ and $\Phi_n^N(\Isom(X_N,Y_N))$ are finite. But $$\Phi_n(I^{X,Y})\subseteq \Phi_n^{N+1}(I^{X,Y}_{N+1})\subseteq \Phi_n^N(I^{X,Y}_N)\text{ and }\Phi_n(I^{X,Y})=\bigcap_N\Phi_n^N(I^{X,Y}_N),$$ so there is $N_0\geq N_1$ such that $\Phi_n(I^{X,Y})=\Phi_n^{N_0}(I^{X,Y}_{N_0})$ which proves the claim.
\end{proof}
\begin{remark}
In\cite{foliation-Oort}, Lemma 1.5. shows that there is $N(X,Y,n)$, such that for any $N\geq N(X,Y,n)$, $\Phi_n(\Hom(X,Y))=\Phi_n^{N}(\Hom(X_N,Y_N))$. One could ask for similar results for $\Hom^0$, homomorphisms respecting Hodge-Tate tensors. However, the phase \emph{homomorphisms from} $X_n$ \emph{to} $Y_n$ \emph{respecting Hodge-Tate tensors} does not quite make sense. As $\Dieu(X_n)^\otimes$ is constructed from $\Dieu(X_n)$ by taking tensors, duals, and symmetric/exterior powers, and there are no obvious maps between $\Dieu(X_n)^\otimes$ and $\Dieu(Y_n)^\otimes$ induced by $f\in \Hom(X_n,Y_n)$ unless $f$ is an isomorphism.
\end{remark}
For $x$ and $X$ as above, we denote by $G^X$ (resp. $G_n^X$) the group of elements in $\Aut(X)$ (resp. $\Aut(X_n)$) whose induced map on Dieudonn\'{e} module respects Hodge-Tate tensors. If $y\in \ES_0(\overline{\kappa})$ is in the central leaf crossing $x$, then $I^{X,Y}$ (resp. $I_n^{X,Y}$) is a trivial $G^X$-torsor (resp. $G_n^X$-torsor).
\begin{corollary}\label{boundedness in a central leaf}
Let $C\subseteq\ES_{0,\overline{\kappa}}$ be a central leaf. For each $n\in \INT_{>0}$, there is an integer $N(n,C)$, such that for any $N\geq N(n,C)$, we have $\Phi_n^N(I^{X,Y}_N)=\Phi_n(I^{X,Y})$ for any $x,y\in C(\overline{\kappa})$.
\end{corollary}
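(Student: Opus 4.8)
The plan is to deduce this uniform statement from the preceding lemma applied to a single fixed object, by translating all the sets in question into one fixed pair. Fix a base point $x_0\in C(\overline{\kappa})$ and set $X_0=\A_{x_0}[p^\infty]$. By the definition of the central leaf, for every $x\in C(\overline{\kappa})$ with $p$-divisible group $X$ there is an isomorphism of Dieudonn\'e modules between $\Dieu(X)$ and $\Dieu(X_0)$ matching $s_{\cris,x}$ with $s_{\cris,x_0}$; since $\overline{\kappa}$ is perfect, the contravariant Dieudonn\'e functor is an anti-equivalence, so $I^{X,X_0}$ is non-empty. Fix one element $g_x\in I^{X,X_0}$ for every $x$ (taking $g_{x_0}=\mathrm{id}$), so $g_x\colon X\to X_0$ and $\Dieu(g_x)\colon\Dieu(X_0)\to\Dieu(X)$ respects the Hodge-Tate tensors.

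Next I would use these $g_x$ to translate. For $x,y\in C(\overline{\kappa})$ the rule $f\mapsto g_y\circ f\circ g_x^{-1}$ is a bijection $\Isom(X,Y)\stackrel{\sim}{\longrightarrow}\Aut(X_0)$, and since $\Dieu(g_y\circ f\circ g_x^{-1})=\Dieu(g_x)^{-1}\circ\Dieu(f)\circ\Dieu(g_y)$ with $\Dieu(g_x),\Dieu(g_y)$ respecting the tensors, it restricts to a bijection $I^{X,Y}\stackrel{\sim}{\longrightarrow}G^{X_0}$. Reducing the chosen $g_x,g_y$ modulo $p^N$ produces in the same way a bijection $I^{X,Y}_N\stackrel{\sim}{\longrightarrow}G^{X_0}_N$ for every $N$; because the $g_x,g_y$ were fixed once and for all, independently of $N$, these bijections commute with the restriction maps $\Phi_n^N$ and with $\Phi_n$. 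Hence for $N\geq n$ the equality $\Phi_n^N(I^{X,Y}_N)=\Phi_n(I^{X,Y})$ holds if and only if $\Phi_n^N(G^{X_0}_N)=\Phi_n(G^{X_0})$, a condition that no longer involves $x$ or $y$.

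Finally I would invoke the previous lemma with $X=Y=X_0$: it produces an integer $N(X_0,X_0,n)$ with $\Phi_n^N(I^{X_0,X_0}_N)=\Phi_n(I^{X_0,X_0})$, i.e.\ $\Phi_n^N(G^{X_0}_N)=\Phi_n(G^{X_0})$, for all $N\geq N(X_0,X_0,n)$. Putting $N(n,C):=N(X_0,X_0,n)$ then yields the corollary uniformly in $x,y\in C(\overline{\kappa})$. I do not expect a genuine obstacle beyond bookkeeping: the two things to check are the non-emptiness of $I^{X,X_0}$, which is precisely the defining property of the central leaf, and the compatibility of the translation bijections with truncation, which is immediate once the $g_x$ are chosen independently of $N$. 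One could instead argue by constructibility of $(x,y)\mapsto\#\,\Phi_n^N(I^{X,Y}_N)$ over $C\times C$ using the torsor structures considered above, but the reduction to a fixed base point is shorter.
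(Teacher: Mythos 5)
Your proof is correct and is essentially the same argument as the paper's: the paper also fixes a single base point $x\in C(\overline{\kappa})$ with $p$-divisible group $X$, applies the preceding lemma to obtain $\Phi_n^N(G^X_N)=\Phi_n(G^X)$ for $N\geq N(X,n)$, and then transfers the statement to all $I^{X,Y}$ and $G^Y$ via the trivial $G^X$-torsor structure on $I^{X,Y}$ (which is exactly your translation by a fixed $g_y$). Your version spells out the two-sided translation $f\mapsto g_y\circ f\circ g_x^{-1}$ to handle arbitrary pairs $(x,y)$ directly, which is slightly more explicit than the paper's wording but amounts to the same bookkeeping.
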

\begin{proof}
Fix $x\in C(\overline{\kappa})$ with attached $p$-divisible group $X$, by the previous lemma, for any $n\in \INT_{>0}$, there is an integer $N(X,n)$, such that for $N\geq N(X,n)$, we have $\Phi_n^N(G^X_N)=\Phi_n(G^X)$. Then for any $y\in C(\overline{\kappa})$ with attached $p$-divisible group $Y$, we have $\Phi_n^N(I^{X,Y}_N)=\Phi_n(I^{X,Y})$ and $\Phi_n^N(G^Y_N)=\Phi_n(G^Y)$, as $I^{X,Y}$ (resp. $I_N^{X,Y}$) is a trivial $G^X$-torsor (resp. $G_N^X$-torsor). One takes $N(n,C)=N(X,n)$ and finishes the proof.
\end{proof}
\begin{proposition}\label{dim of cent}
Let $[b]\in B(\intG,\sigmu)$ be a conjugacy class, and $\ES_0^b$ be the corresponding Newton stratum. Then any central leaf in $\ES_0^b$ is of dimension $\langle2\rho,\nu_G(b)\rangle$.
\end{proposition}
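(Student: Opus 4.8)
The plan is to split the claim into two parts: (a) the Newton stratum $\ES_0^b$ contains at least one central leaf of dimension exactly $\langle 2\rho,\nu_G(b)\rangle$; and (b) any two central leaves contained in $\ES_0^b$ have the same dimension. Granting both, every central leaf in $\ES_0^b$ has dimension $\langle 2\rho,\nu_G(b)\rangle$.

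For (a) I would realise the relevant leaf as an Ekedahl-Oort stratum. By the Theorem above (\cite{Nie Sian} Proposition 1.5) the class $[b]\in B(\intG,\sigmu)$ contains a fundamental element, hence a minimal one; let $(w,\sigmu)\in\mathcal{T}$ be the attached datum, so the corresponding $K$-$\sigma$-conjugacy class in $K_1\backslash G(F)/K_1$ is that of $K_1 w\tau_{\sigmu}K_1$. Because minimality is preserved under $K$-$\sigma$-conjugation (\cite{VW} Remark 9.1), $[w\tau_{\sigmu}]$ is the only class in $C(\intG,\sigmu)$ meeting the double coset $K_1 w\tau_{\sigmu}K_1$; it follows that the Ekedahl-Oort stratum attached to $w$ coincides with the central leaf whose central invariant is $[w\tau_{\sigmu}]$. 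This leaf lies in $\ES_0^b$ since $[w\tau_{\sigmu}]$ maps to $[b]$, it is non-empty by the known non-emptiness results (\cite{LRKisin}, \cite{EOZ}), and by Remark \ref{dim of minimal E-O} (together with \cite{EOZ}) it is smooth of dimension $l(w)=\langle 2\rho,\nu_G(b)\rangle$.

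For (b), which I expect to be the main obstacle, I would transport Oort's isogeny argument from \cite{foliation-Oort} (see also \cite{Manin problem}) to the Hodge type setting. Given central leaves $C,C'\subseteq\ES_0^b$ and points $x\in C(\overline\kappa)$, $x'\in C'(\overline\kappa)$, the definition of the Newton stratum supplies a quasi-isogeny $\A_x[p^\infty]\to\A_{x'}[p^\infty]$ carrying $s_{\cris,x}$ to $s_{\cris,x'}$; after bounding its denominator this is an isogeny with kernel inside $\A_x[p^n]$ and compatible with the $G$-structure. One then considers the Hecke correspondence at $p$ on $\ES_0$ -- the moduli space of finite flat subgroup schemes $H\subseteq\A[p^n]$ whose quotient isogeny is again of Hodge type, with its two projections to $\ES_0$ -- which is quasi-finite over $\ES_0$ on each side and maps into $\ES_0^b$. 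Restricting the first projection over $C$, the constancy of the universal $p$-divisible group along a central leaf (Proposition \ref{constancy theorem}, together with the boundedness of Corollary \ref{boundedness in a central leaf}) makes this restriction finite flat over $C$, and its second projection then carries a finite flat cover of $C$ onto the central leaf through $x'$, which by the choice of quasi-isogeny is $C'$; comparing dimensions gives $\dim C'\le\dim C$, and symmetry yields $\dim C=\dim C'$. The genuinely non-formal ingredient is that Kisin's integral model is stable under the relevant $p$-power Hecke operators respecting the $G$-structure, so that this correspondence lives on $\ES_0$ at all; this is exactly what the Rapoport-Zink spaces for Hodge type and the associated uniformization provide (\cite{RZ for spin}, \cite{LRKisin}), and it, rather than the dimension bookkeeping, is where the real content lies.
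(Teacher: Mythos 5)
Your two-part decomposition matches the paper's own, and part (a) is a correct elaboration of what the paper leaves implicit: Remark~\ref{dim of minimal E-O} supplies $l(w)=\langle 2\rho,\nu_G(b)\rangle$ for the fundamental element of $[b]$, and minimality (the whole $K_1$-double coset being a single $K$-$\sigma$-orbit) means that Ekedahl--Oort stratum \emph{is} a central leaf. So once (b) is known, (a) gives the formula.

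For (b) your strategy is also the paper's strategy: build a quasi-finite correspondence $\ES_0^x\leftarrow T\rightarrow\ES_0^y$ via the constancy theorem (Proposition~\ref{constancy theorem} and Corollary~\ref{boundedness in a central leaf}) and a fixed isogeny, and compare dimensions. You also correctly flag that the nontrivial step is showing the second leg lands in $\ES_0$ rather than just in $\mathscr{A}$. The gap is precisely at that point: you defer it to ``Rapoport--Zink spaces for Hodge type and the associated uniformization,'' but that does not actually close the argument. The RZ functor in \S\ref{R-Z for Hodge} classifies triples $(X,\rho,t)$ with an \emph{integral} tensor $t$, and its representability (and the existence of $\Theta_G$) is established by pulling back from $\ES$ along the morphism to $\mathscr{A}$ — so to produce a map $T\rightarrow\RZ_{G_W}$ from the quotient family $\A_T/\EH$ you would need to already exhibit the integral tensor on $\mathbb{D}(\A_T/\EH)(\R)$, and the RZ space is defined over $\ES_W^\wedge$ in the first place. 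This tensor extension is exactly the content of claim (3) in the paper's proof: $\sdr'[\tfrac{1}{p}]$ extends to an integral section $\sdr'$ and $\mathbb{J}=\IIsom_{\R}\big((V_W,s)_{\R},(\M',\sdr')\big)$ is a $G_W$-torsor. The paper proves this by a substantive argument (choosing a symplectic embedding of good reduction, using the rank-one line from \cite{CIMK} Proposition~1.3.2 to cut out $G$, a flatness/dimension count to show $\mathbb{J}_1$ is a torsor, Nisnevich's theorem to trivialize over the DVR $\R_{(p)}$, and normality of $\R$ to extend over codimension one), and only then invokes \cite{LRKisin} 1.4.7 and 1.4.9 to conclude the factorization through $\ES_0$. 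Your write-up identifies where the difficulty lies but does not supply this extension argument, and it is not a formal consequence of the cited uniformization statements; so as written the proof has a genuine gap at its central step.

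A smaller point: the assertion that \emph{every} central leaf in $\ES_0^b$ is of the form $\ES_0^{gx}$ for suitable $g\in G(K)$ with $g^{-1}b\sigma(g)\in\intG(W)\sigmu(p)\intG(W)$ (equivalently, that any two central leaves in a Newton stratum contain tensor-compatibly isogenous points) is used implicitly in your set-up and deserves a sentence; the paper obtains it from \cite{LRKisin} Proposition 1.4.4 together with the description of $C(\intG,\sigmu)$.
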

\begin{proof}
We only need to prove that central leaves in $\ES_0^b$ are of the same dimension. The dimension formula then follows from Remark \ref{dim of minimal E-O}. Let $x\in \ES_0^b(\overline{\kappa})$ be a point, and $\ES_0^x$ be the connected component of the central leaf containing $x$. The $G'_{\overline{\kappa}}$-torsor $\I'_x$ is trivial, we will fix such a section as well as its lifting $t_x$ in $\I'(W)$\footnote{One could also do this by choosing a lifting $\widetilde{x}$ of $x$, and considering a section of $\I_{+,\widetilde{x}}$.}. The image of $t_x$ in $\I(W)$ will be denoted by the same notation. By \cite{LRKisin} Proposition 1.4.4, an element $g\in G(K)$ such that $g^{-1}b\sigma(g)\in \intG(W)\mu^\sigma(p)\intG(W)$ gives a point $gx\in \ES_0^b(\overline{\kappa})$, denoted by $y$, by considering the the Dieudonn\'{e} submodule $t_x(gV_W)$. Multiplying $g$ by some $p^r$ if necessary, we can assume that there is an isogeny $f:\A_x\rightarrow \A_y$ of degree $p^{n_1}$ whose induced morphism on Dieudonn\'{e} modules respects the Hodge-Tate tensors. We write $\ES_0^y$ for the connected of the central leaf containing $y$. Note that for each central leaf, there is a point of the form $gx$.

We will construct a connected regular affine $\overline{\kappa}$-scheme $T$, with dominant and quasi-finite morphisms $\ES_0^x\leftarrow T\rightarrow \ES_0^y$. We assume that the symplectic embedding we choose is of good reduction\footnote{This is always possible, for example, by Zarhin's trick, one can take $(V_{\INT_p}\oplus V_{\INT_p}^\vee)^4$.}. Let $\ES_0^x$ be the irreducible component of the central leaf containing (the image of)~$x$. Let $n_2$ be such that all level $n_2$ strata (in $\ES_0^x$) are central leaves, and $n$ be $n_1+n_2$. Let $N$ be $n+N(n,\ES_0^x)$ with $N(n,\ES_0^x)$ be as in the previous corollary. By Proposition \ref{constancy theorem}, there is a quasi-finite fppf cover $s:T=\Spec R\rightarrow \ES_0^x$, such that there exists an isomorphism $i:(\A_x[p^N])_T\rightarrow \A_T[p^N]$ whose induced map on Dieudonn\'{e} modules respects Hodge-Tate tensors. We write $i$ for the restriction of $i$ to $p^n$-kernels.

By passing to an irreducible component of $T$ containing a closed point $z$ mapping to $x$, we can assume that $T$ is integral, and that $\tau:T\rightarrow \ES_0^x$ is dominant, quasi-finite and of finite type. Modifying $x$ (and hence $y=gx$) by a prime to $p$ isogeny if necessary, we can assume that $z$ is in the smooth locus of $T$. We could then pass to a smooth affine neighborhood of $z$, which will still be denoted by $T$.

Let $H$ be $\mathrm{ker}(f)$ and $\EH$ be $i(H\times T)$ in $\A_T[p^n]$. We consider the abelian scheme $\A_T/\EH$ over $T$.

(1) The polarization and level structure on $\A_T$ descend to $\A_T/\EH$, which gives a quasi-finite morphism $\tau':T\rightarrow \mathscr{A}_{g,K'}$. The polarization descents as $H$ is isotropic for the form defined by the polarization, while $i$ respects these forms. The quasi-finiteness follows from the arguments in \cite{foliation-Oort} page 279, which shows that for $u\in \mathscr{A}_{g,K'}(\overline{\kappa})$, $\tau(\tau'^{-1}(u))$ is finite.

(2) For $u\in T(\overline{\kappa})$, $\tau'(u)$ factors through $\ES_0$. By our choice of $i$, the isomorphism $i_u:\A_x[p^n]\rightarrow \A_u[p^n]$ lifts to an isomorphism $i_0:\A_x[p^\infty]\rightarrow \A_u[p^\infty]$ respecting Hodge-Tate tensors. Choosing a $t_u\in\I(W)$ as at the beginning of the proof, and using the commutative diagram
$$\xymatrix{\A_x[p^\infty]\ar[r]^{i_0}\ar[d]^f&\A_u[p^\infty]\ar[d]\\
\A_y[p^\infty]\ar[r]&\A_u[p^\infty]/i_0(H),}$$
we can identify $\Dieu(\A_u[p^\infty]/i_0(H))\hookrightarrow \Dieu(\A_u[p^\infty])$ with $t_u(gV_W)$ for some $g\in G(K)$ such that $g^{-1}b\sigma(g)\in \intG(W)\mu^\sigma(p)\intG(W)$. By \cite{LRKisin} Proposition 1.4.4, this gives a point $gu\in \ES_0^b(\overline{\kappa})$.

Let $\R$ be the lifting of $R$ which is in particular regular. Let $\M$ (resp. $\M'$) be $\mathbb{D}(\A_T)(\R)$ (resp. $\mathbb{D}(\A_T/\EH)(\R)$). The isogeny $\A_T\rightarrow \A_T/\EH$ gives $\M'\hookrightarrow \M$ which induces an isomorphism $\M'[\frac{1}{p}]\rightarrow \M[\frac{1}{p}]$. One could then translate the section $\sdr\in \M^\otimes$ to a section of $\M'[\frac{1}{p}]^\otimes$, denoted by $\sdr'[\frac{1}{p}]$.

We claim the followings.

(3) $\sdr'[\frac{1}{p}]$ extends to a section $\sdr'$ over $\R$, and $\mathbb{J}:=\IIsom_{\R}\big((V_W,s)_{\R}, (\M',\sdr')\big)$ is a $G_W$-torsor.

We will show how to finish the proof with (3), and then give a proof of it.

(4) By (3), passing to a quasi-finite affine scheme which is \'{e}tale over $T$, we could assume that $\mathbb{J}(\R)$ and $\I(\R)$ are non-empty. One could identify $\M'\hookrightarrow \M$ as an element $g\in G(\R[\frac{1}{p}])$ by choosing an element in $\mathbb{J}(\R)$ resp. $\I(\R)$. One could then identify $$\xymatrix{\M'\ar[r]\ar[d]^{\varphi'}&\M\ar[d]^\varphi\\
\M'\ar[r]&\M} \text{\ \  and \ \ }\xymatrix{\M\ar[r]^{g\cdot}\ar[d]^{\varphi}&\M\ar[d]^\varphi\\
\M\ar[r]^{g\cdot}&\M.}$$

The Hodge filtration on $\M_T$ is induced by a cocharacter of $G_T$ which lifts to a cochatacter of $G_{\R}$ that is conjugate to $\mu$. Let $\M=\mathrm{Fil}^1(\M)\oplus \mathrm{Fil}^0(\M)$ be the splitting induced by this cocharacter. Here $\mathrm{Fil}^1(\M)$ is a lift of the Hodge filtration. Let $\mathrm{Fil}_0(\M)$ be the submodule of $\M$ generated by $\varphi(\mathrm{Fil}^0(\M))$. Then $\varphi$ induces a $\sigma$-isomorphism $\mathrm{Fil}^0(\M)\rightarrow \mathrm{Fil}_0(\M)$. By the above identification, we have $\M'=g\mathrm{Fil}^1(\M)\oplus g\mathrm{Fil}^0(\M)$, and $g\mathrm{Fil}_0(\M)\subseteq \M'$ a direct summand. The commutative diagram $$\xymatrix{\sigma^*(\mathrm{Fil}^0(\M))\ar[r]^{\varphi^\lin}\ar[d]^{g^\sigma}&\mathrm{Fil}_0(\M)\ar[d]^g\\
\sigma^*(g\mathrm{Fil}^0(\M))\ar[r]^{\varphi^\lin}&g\mathrm{Fil}_0(\M)}$$ is such that all but the lower horizontal map are isomorphisms, so it has to be an isomorphism. But then $g\mathrm{Fil}^1(\M)$ will be a lift of the Hodge filtration on $\M'$.

Now one verifies the conditions in \cite{LRKisin} 1.4.7, and applies \cite{LRKisin} Proposition 1.4.9. This implies that we have $\ES_0^x\stackrel{\tau}{\leftarrow}\Spec R\stackrel{\tau'}{\rightarrow}\ES_0^y$ with $\tau$ quasi-finite and dominant,  and $\tau'$ quasi-finite. So we have $\mathrm{dim}\ES_0^x\leq \mathrm{dim}\ES_0^y$. But we can get $\mathrm{dim}\ES_0^x\geq \mathrm{dim}\ES_0^y$ by exactly the same proof. So we have $\mathrm{dim}\ES_0^x= \mathrm{dim}\ES_0^y$ and that $\tau'$ is also dominant.

\emph{Proof of (3)}. By the construction of \cite{CIMK} Proposition 1.3.2, there is a line $L$ in some $\intG$-representation $\intV^{\frac{1}{2}\otimes}$ constructed from $\intV$ (by taking sums, tensor products, duels, symmetric/exterior products), such that $\intG$ is the stabilizer of $L$, and that $\intV^\otimes=\intV^{\frac{1}{2}\otimes}\otimes (\intV^{\frac{1}{2}\otimes})^\vee$. The line bundle $\mathcal{E}:=L\times^{\intG}\I_\R$ is a direct summand of $\M^{\frac{1}{2}\otimes}$, and $\mathcal{E}[1/p]\subseteq \M[1/p]^{\frac{1}{2}\otimes}\cong\M'[1/p]^{\frac{1}{2}\otimes}$ extends to a direct summand of rank one of $\M'^{\frac{1}{2}\otimes}_U$, denoted by $\mathcal{E}$. Here $U\subseteq \Spec \R$ is an open affine subscheme of form $\Spec \R_f$ with $f\notin(p)$.

(3.i) $\mathbb{J}_1:=\IIsom_{U}\big((V_W,L_W)_U, (\M',\mathcal{E}')\big)$, isomorphism mapping $L_U$ to $\mathcal{E}'$, is a $G_W$-torsor. We only need to check the faithfully flatness. It is already a $G_W$-torsor over $\R[\frac{1}{p}]$, so we only need to check this at points in the special fiber. Let $t$ be a closed point of $U_{\overline{\kappa}}$, and $s$ be a closed point of $\mathbb{J}_{1,t}$. By (2), $s$ is in the closure of $\mathbb{J}_1\times_UU[1/p]$ in $\IIsom_{U}(V_U, \M'_U)$, and $\mathbb{J}_{1,t}$ is a $G_{\kappa}$-torsor. In particular, we have $O_{\mathbb{J}_1,s}$ is flat over $W$, and
\begin{equation*}
\begin{split}
\mathrm{dim}(O_{\mathbb{J}_1,s})-\mathrm{dim}(O_{U,t})&=(\mathrm{dim}(O_{\mathbb{J}_1,s}[1/p])+1)-(\mathrm{dim}(O_{U,t}[1/p])+1)\\
&=\mathrm{dim}(G)=\mathrm{dim}(\mathbb{J}_{1,t}).
 \end{split}
 \end{equation*}
 This implies that $\mathbb{J}_1\rightarrow U$ is faithfully flat, and hence a $G_W$-torsor.

(3.ii) Noting that $\R_{(p)}\cong O_{U,(p)}$ is a DVR and $G_W$ is quasi-split, we see that $\mathbb{J}_{1,\R_{(p)}}$ is a trivial torsor by \cite{Nisnevich} Theorem 7.1. The existence of $\R_{(p)}$-sections implies that the rational map $\sdr'[\frac{1}{p}]:\R\dashrightarrow \V^\otimes$ is defined at all points of codimension 1, and hence extends to $\R$, as $\V^\otimes$ is a group and $\R$ is normal. This extended section will be denoted by $\sdr'$, and it is necessarily the closure of $\sdr'[\frac{1}{p}]$ in $\V^\otimes$.

(3.iii) By the same argument as in (3.i), $\mathbb{J}=\IIsom_{\R}\big((V_W,s)_{\R}, (\M',\sdr')\big)$ is a $G_W$-torsor.
\end{proof}

\subsubsection[Slope filtrations and completely slope divisible $p$-divisible groups]{Slope filtrations and completely slope divisible $p$-divisible groups}
We refer to \cite{family of p-div with cons NP} Definition 1.1 for the definition of slope filtrations and Definition 1.2 for that of completely slope divisible $p$-divisible groups.

Let $\G$ be a $p$-divisible group over $S$, and $\G_\bullet:0=\G_0\subseteq\G_1\subseteq\cdots\subseteq\G$ be its slope filtration. Then $\G^i:=\G_{i}/\G_{i-1}$ is again a $p$-divisible group over $S$. Assume that (contravariant) Dieudonn\'{e} modules exists for $p$-divisible groups over $S$, and write $\M$ for the Dieudonn\'{e} module of $\G$. The slope filtration induces surjections of $p$-divisible groups $\G\twoheadrightarrow \G/\G_1\twoheadrightarrow\G/\G_2\twoheadrightarrow\cdots\twoheadrightarrow\G/\G_{n-1}.$ Taking Dieudonn\'{e} modules, we get $\M^\bullet:0\subseteq \M^1\subseteq \M^2\subseteq\cdots\subseteq\M^n=\M$, with $\M^i$ the Dieudonn\'{e} module of $\G/\G_{n-i}$. Clearly, $\M_i:=\M^i/\M^{i-1}$ is the Dieudonn\'{e} module of $\G^{n-i}$. We will call $\M^\bullet$ the slope filtration of $\M$, it is such that $\M^i\subseteq \M^{i+1}$ is a locally direct summand.

For $x\in \ES_0(k)$, with $k$ algebraically closed, the slope filtration on $\A_x[p^\infty]$ induces a filtration on $M:=\mathbb{D}(\A_x[p^\infty])(W(k))$, denoted by $M^\bullet$. Fixing an isomorphism $t:\intV^\vee\otimes W(k)\rightarrow M$ respecting the Hodge-Tate tensors, we could view $G_{W(k)}$ as a subgroup of $\GL(M)$, and one can talk about $G_{W(k)}$-\emph{split filtrations} (see \cite{CIMK} 1.1.2 for the definition) of~$M$.
\begin{lemma}\label{slope fil is G-split}
The slope filtration on $M$ is $G_{W(k)}$-split.
\end{lemma}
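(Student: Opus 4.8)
The plan is to produce a cocharacter of $G_{W(k)}$ inducing the slope filtration on $M$, by splitting the slope filtration integrally as a Dieudonn\'e module and then recognising the resulting cocharacter of $\GL(M)$ as a power of the Newton cocharacter, which is known to be valued in $G$.

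First I would record the local data. As $k$ is algebraically closed the $G_{W(k)}$-torsor $\I$ is trivial, so the chosen isomorphism $t\colon\intV^\vee\otimes W(k)\xrightarrow{\sim}M$ realises $G_{W(k)}$ as a closed subgroup scheme of $\GL(M)$, and by Proposition~\ref{g_t lies in G} (with $A=k$) the linearised Frobenius becomes $\varphi^{\lin}=b\,\sigma$ with $b=g_t\sigmu(p)\in G(W(k)[1/p])$. Write $N=M[1/p]$, let $N=\bigoplus_\lambda N_\lambda$ be the slope decomposition of the isocrystal, and let $n$ be a common denominator of the slopes. The one external input is Kottwitz's theorem that the Newton point $\nu_G(b)$ is a fractional cocharacter of $G$; hence $n\nu_G(b)\colon\GG_{m,W(k)[1/p]}\to G_{W(k)[1/p]}$ is an honest cocharacter whose grading of $N$, through $G\hookrightarrow\GL(N)$, is exactly the slope grading, $N_\lambda$ sitting in weight $n\lambda$.

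Next I would split $M^\bullet$ as a Dieudonn\'e module. I would use the (standard) fact that over an algebraically closed field the slope filtration of a Dieudonn\'e module splits --- equivalently, for isoclinic Dieudonn\'e modules $M'$, $M''$ with the slope of $M''$ larger, $\mathrm{Ext}^1(M'',M')=0$ (which one checks by writing the obstruction as the cokernel of $b\mapsto F_{M'}\circ b-b\circ F_{M''}$ on $\Hom(M'',M')$, reduced mod $p$ to a Frobenius-semilinear isomorphism). Thus $M=\bigoplus_iM_i$ with each $M_i$ isoclinic of slope $\lambda_i$ and $M^\bullet$ the associated filtration. Let $\chi_0\colon\GG_{m,W(k)}\to\GL(M)$ be the cocharacter placing $M_i$ in weight $n\lambda_i$, so that the filtration attached to $\chi_0$ is $M^\bullet$.

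Finally I would descend $\chi_0$ into $G$. Over $W(k)[1/p]$ the cocharacter $\chi_0$ induces on $N$ the same grading as $n\nu_G(b)$, and a cocharacter of $\GL(N)$ is determined by its grading, so $\chi_{0,W(k)[1/p]}=n\nu_G(b)$ factors through $G_{W(k)[1/p]}$. Since $G_{W(k)}\hookrightarrow\GL(M)$ is a closed immersion, $\chi_0^{-1}(G_{W(k)})$ is a closed subscheme of $\GG_{m,W(k)}$ containing the generic fibre; as $\mathcal{O}(\GG_{m,W(k)})=W(k)[x,x^{-1}]$ is $p$-torsion free, its defining ideal vanishes, so $\chi_0$ factors through $G_{W(k)}$. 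Therefore $M^\bullet$ is induced by a cocharacter of $G_{W(k)}$, i.e.\ it is $G_{W(k)}$-split. The step I expect to require the most care is exactly the passage from the isocrystal (where the slope filtration is $G_{W(k)[1/p]}$-split by the Newton cocharacter) to the lattice: the integral splitting of the slope filtration and its Dieudonn\'e-module $\mathrm{Ext}^1$-vanishing is the essential technical input, after which matching $\chi_0$ with $n\nu_G(b)$ and the $p$-torsion-freeness argument are formal; one should also double-check the normalisation of conventions so that the grading directions of $M^\bullet$, of $\chi_0$ and of $n\nu_G(b)$ all agree, and that the decomposition $M=\bigoplus_iM_i$ respects the Verschiebung (automatic, since $V$ is recovered from $F$).
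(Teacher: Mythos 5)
Your argument is correct in outline but takes a genuinely different route from the paper's. The paper's proof is two lines: over $K$ the filtration $M_K^\bullet$ is $G_K$-split by Kottwitz (this is the same external input as your Newton-cocharacter step), and then \cite{CIMK} Proposition 1.1.4 descends $G$-splitness from $K$ to $W(k)$. That proposition packages precisely the closedness/properness of the scheme of $G$-split filtrations inside the full flag scheme of $\GL(M)$, so the extension over the DVR is automatic by the valuative criterion, with no need to split the filtration integrally. (The paper itself spells out the identical properness argument later, in the proof of Corollary \ref{slope fil on cent gives torsor}, via \cite{CIMK} Proposition 1.1.5.) Your route instead \emph{constructs} the cocharacter: you split the slope filtration integrally, $M=\bigoplus_iM_i$, define $\chi_0$ from the decomposition, identify $\chi_{0,K}$ with $n\nu_G(b)$ by comparing gradings of $N$, and then descend $\chi_0$ into $G_{W(k)}$ via $p$-torsion-freeness of $\mathcal{O}(\GG_{m,W(k)})$. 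Your $p$-torsion-freeness step is a hands-on substitute for the closedness of $G_W\text{-}\Fil\subseteq\Fil$ and is fine, but it is only applicable because you first produced an integral cocharacter, i.e.\ it presupposes the integral splitting, which is exactly the piece of work the paper avoids.

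The one place your sketch should be tightened is the $\Ext^1$ vanishing. As written, the map $b\mapsto F_{M'}\circ b-b\circ F_{M''}$ on $\Hom_{W(k)}(M'',M')$ does \emph{not} in general reduce mod $p$ to a semilinear isomorphism: when both slopes lie strictly in $(0,1)$, both $F_{M'}$ and $F_{M''}$ are non-invertible modulo $p$, and the raw cokernel is nonzero. What saves the claim is that a Dieudonn\'e-module extension must also be $V$-stable, which forces the class $c$ to lie in the sub-lattice $\{c:\ F_{M'}^{-1}\,c\,V_{M''}\ \text{integral}\}$; after this restriction (and the corresponding reparametrisation of the source or target), the map is indeed bijective modulo $p$ by successive approximation. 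The conclusion — that the slope filtration of a Dieudonn\'e module over $W(k)$, $k$ algebraically closed, splits — is correct, but stating the obstruction purely in terms of $F$ and claiming it is a mod-$p$ isomorphism is not, and this is precisely the ``step requiring the most care'' that you flagged. Since the paper's Kisin-1.1.4 shortcut sidesteps the integral splitting entirely, it is both shorter and immune to this subtlety; your version, once the $V$-compatibility is made explicit, has the mild advantage of producing the splitting cocharacter $\chi_0$ by hand.
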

\begin{proof}
Let $K$ be the fraction field of $W(k)$. By \cite{isocys with addi}, $M_K^\bullet$ is $G_K$-split, so by \cite{CIMK} Proposition 1.1.4, $M^\bullet$ is $G_{W(k)}$-split.
\end{proof}
\begin{lemma}
Let $\ES_0^b$ be a Newton stratum of $\ES_0$. Then there exists a point $x\in \ES_0^b(\overline{\kappa})$ such that $\A_x[p^\infty]$ is completely slope divisible.
\end{lemma}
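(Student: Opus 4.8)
The plan is to realize the Newton stratum $\ES_0^b$ through a distinguished representative of the associated $\sigma$-conjugacy class, chosen so that complete slope divisibility can be read off directly from its shape.

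Let $[b]\in B(\intG,\mu^{\sigma})$ be the class with Newton stratum $\ES_0^b$. By \cite{Nie Sian} Proposition 1.5 (the theorem recalled above) there is a fundamental element $w\in W_G\mu^{\sigma}(p)W_G$ with $[w]=[b]$. Writing $w=u_1\mu^{\sigma}(p)u_2$ with $u_i\in W_G$ and lifting the $u_i$ to $N_G(T)(W)$, we obtain a representative $\dot b\in N_G(T)(W)\,\mu^{\sigma}(p)\,N_G(T)(W)$; it lies in $N_G(T)(F)$ as well as in $\intG(W)\mu^{\sigma}(p)\intG(W)$, and $[\dot b]=[b]$ in $B(G)$. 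Now I would take any $x_0\in\ES_0^b(\overline{\kappa})$, a lift $\widetilde{x_0}\in\ES(W)$, and a section $t_0\in\I_{\widetilde{x_0}}(W)$; via $t_0$ the Frobenius on $\mathbb{D}(\A_{x_0}[p^\infty])(W)$ becomes $b_0\sigma$ on the standard lattice $V_W$, with $b_0\in\intG(W)\mu^{\sigma}(p)\intG(W)$ representing $[b]$. Choosing $g\in G(F)$ with $g^{-1}b_0\sigma(g)=\dot b$ (possible since $b_0$ and $\dot b$ both represent $[b]$ in $B(G)$), the inclusion $g^{-1}b_0\sigma(g)=\dot b\in\intG(W)\mu^{\sigma}(p)\intG(W)$ lets me apply \cite{LRKisin} Proposition 1.4.4 exactly as in the proof of Proposition \ref{dim of cent}: the lattice $t_0(gV_W)$ is $\mathbb{D}(\A_x[p^\infty])(W)$ for a point $x:=gx_0\in\ES_0^b(\overline{\kappa})$, and $t_0\circ g$ identifies the pair (Dieudonn\'e module, Frobenius) at $x$ with $(V_W,\dot b\sigma)$.

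It then remains to check that $(V_W,\dot b\sigma)$ is the Dieudonn\'e module of a completely slope divisible $p$-divisible group; the Hodge filtration plays no role here. Since $\dot b\in N_G(T)(F)$, on the weight-space decomposition $V_W=\bigoplus_{\chi}V_{W,\chi}$ the operator $\dot b\sigma$ is monomial: it carries $V_{W,\chi}$ isomorphically onto $V_{W,\eta(\chi)}$ up to a scalar $p^{a(\chi)}$, for a permutation $\eta$ of the finite weight set and integers $a(\chi)\ge 0$ (integrality because $\dot b\in\intG(W)\mu^{\sigma}(p)\intG(W)$ and $\mu$ has weights $0,1$ on $V$; the same remark shows $V=p\varphi^{-1}$ is integral, so $(V_W,\dot b\sigma)$ is a genuine Dieudonn\'e lattice). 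Taking $s$ a period of $\eta$, the map $(\dot b\sigma)^s$ preserves each $V_{W,\chi}$ and acts there as $p^{c(\chi)}$ times a $\sigma^s$-linear automorphism, where $c(\chi)=\sum_{j=0}^{s-1}a(\eta^{j}\chi)\ge 0$ is constant on $\eta$-orbits. Grouping the weights by the value of $c(\chi)$ then gives $V_W=\bigoplus_{j}M_j$, a decomposition into $\varphi$- and $V$-stable direct summands on which $p^{-m_j}(\dot b\sigma)^s$ is an isomorphism, the $m_j\ge 0$ pairwise distinct. Passing to $p$-divisible groups, $\A_x[p^\infty]$ decomposes accordingly, hence is completely slope divisible (with split slope filtration, in the sense of \cite{family of p-div with cons NP}).

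The one genuinely delicate point is the first step: one must be sure that \cite{LRKisin} Proposition 1.4.4 really produces a point of $\ES_0$ lying in $\ES_0^b$ whose Frobenius is exactly $\dot b\sigma$. This, however, is the identical application already made in the proof of Proposition \ref{dim of cent}, so no new input is required. The computation in the third paragraph is routine once one keeps track of weights of multiplicity $>1$ and of the Galois action on $X^*(T)$; alternatively it can be replaced by the general fact that a fundamental (indeed straight) element yields a completely slope divisible lattice in its isocrystal.
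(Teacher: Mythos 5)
Your proof is correct, but it takes a genuinely different route from the paper. The paper's argument is the classical one of Kottwitz (\cite{isocys with addi} 4.3): it constructs, via an isomorphism of two fiber functors defined over a finite unramified extension $E$ of $\mathbb{Q}_p$, an element $c\in G(K)$ conjugating the Frobenius $b$ to a \emph{decent} representative $b'$, i.e., one satisfying $b'\sigma(b')\cdots\sigma^{n-1}(b')={}^{c^{-1}}\!n\nu(b)$ (equation 4.3.3 of loc.\ cit.); decency then directly yields complete slope divisibility. The delicate step there is that one must enlarge $E$ and modify the fiber-functor isomorphism so as to also arrange $b'\in\intG(W)\mu^\sigma(p)\intG(W)$, which is needed to feed the result into \cite{LRKisin} Proposition 1.4.4. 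You instead invoke Nie's theorem (already recalled and used in the paper in \S4.1) to produce a fundamental element $w\in W_G\mu^\sigma(p)W_G$ in the class, lift it to a monomial representative $\dot b\in N_G(T)(F)\cap\intG(W)\mu^\sigma(p)\intG(W)$, and then verify complete slope divisibility by an explicit combinatorial computation on the weight decomposition of $V_W$. Your computation is sound: since $\mu$ is minuscule on $V$, the exponents $a(\chi)$ lie in $\{0,1\}$, the $\eta$-averages $c(\chi)$ are integers in $[0,s]$ constant on $\eta$-orbits, the isotypic groupings $W_c=\bigoplus_{c(\chi)=c}V_{W,\chi}$ are $\dot b\sigma$-stable, and $p^{-c}(\dot b\sigma)^s$ is a $\sigma^s$-linear automorphism of $W_c$, giving a split slope decomposition in the sense of Oort--Zink. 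Both proofs apply \cite{LRKisin} Proposition 1.4.4 in the same way to land in $\ES_0^b$. Your approach buys a very explicit (monomial) representative and sidesteps the fiber-functor bookkeeping; the price is relying on the harder input of Nie's classification of fundamental elements, which however the paper already uses for the dimension formula, so no new hypotheses are introduced. The Kottwitz route is more self-contained if one does not want to invoke fundamental elements at this stage.
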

\begin{proof}
The proof here is given by putting together \cite{isocys with addi} 4.3 and \cite{LRKisin} Proposition 1.4.4. Let $x\in \ES_0^b(\overline{\kappa})$ be a point and $M$ be the Dieudonn\'{e} module of $\A_x[p^\infty]$. Fixing an isomorphism $t:V_{W(\overline{\kappa})}\rightarrow M$ respecting the Hodge-Tate tensors, we identify the Frobenius of $M$ with an element in $g\in G(K)$ where $K=W(\overline{\kappa})[1/p]$, and hence a $\sigma\text{-}K$-space structure on $V_K$ (see \cite{isocys with addi} 3, with $L=K$). Let $n$ be such that $n\nu(g)\in \mathrm{Hom}_K(\mathbb{G}_{m,K},G_K)$, and $E\subseteq K$ be the fixed field of~$\sigma^n$.

We have two fiber functors from $\mathrm{Rep}(G)$ to the category of finite dimensional $E$-vector spaces. Namely, $w_1((V',\rho))=V'_E$, and $w_2((V',\rho))=(V'_K)^{\rho(g)}$, where $(V'_K)^{\rho(g)}$ is defined at the end of \cite{isocys with addi} 3. $\IIsom(w_1,w_2)$ is a $G_E$-torsor, so there is a finite extension $E'$ of $E$ in $K$, such that $w_1$ and $w_2$ become isomorphic over $E'$. By replacing $n$ by $n[E':E]$, we can assume that $w_1$ and $w_2$ are isomorphic over $E$. Moreover, the natural injection $(V'_K)^{\rho(g)}\rightarrow V'_K$ induces an isomorphism $(V'_K)^{\rho(g)}\otimes_E K\rightarrow V'_K$.

Let $\alpha:w_1\rightarrow w_2$ be such an isomorphism over $E$, and $\alpha_K$ be its base-change to $K$. Let $\beta:w_2\rightarrow w_1$ be the isomorphism given by $(V'_K)^{\rho(g)}\otimes_E K\rightarrow V'_K=V'_E\otimes_EK$. Then $\beta\circ \alpha_K$ gives an automorphism of $w_1$ over $K$, and hence an element $c$ in $G(K)$. Noting that $\alpha$ and $c$ determines each other uniquely, we could assume that $c$ is such that $cg\sigma(c)^{-1}\in \intG(W(\overline{\kappa}))\mu^{\sigma}(p)\intG(W(\overline{\kappa}))$, by enlarging $E$ and changing $\alpha$ if necessary. Then by \cite{LRKisin} 1.4.2 and Proposition 1.4.4, $c^{-1}\cdot V_{W(\overline{\kappa})}$ with Frobenius induced by $g$ is a Dieudonn\'{e} module, and it comes from a point $c^{-1}x\in \ES_0^b(\overline{\kappa})$. The slope filtration on $\mathbb{D}(\A_{c^{-1}x}[p^\infty])=c\cdot V_{W(\overline{\kappa})}$ is induced by the cocharacter ${}^{c^{-1}}\!n\nu(g)$. The equality 4.3.3 of \cite{isocys with addi} means that $$cg\sigma(c)^{-1}\cdot \sigma(cg\sigma(c)^{-1})\cdot \sigma^2(cg\sigma(c)^{-1})\cdots \sigma^{n-1}(cg\sigma(c)^{-1})={}^{c^{-1}}\!n\nu(g),$$ which implies that the Dieudonn\'{e} module structure on $c^{-1}\cdot V_{W(\overline{\kappa})}$ is completely slope divisible.
\end{proof}
Let $\ES_0^b\subseteq \ES_{0,\overline{\kappa}}$ be a Newton stratum, and $x\in \ES_0^b(\overline{\kappa})$ be a point such that $\A_x[p^\infty]$ is completely slope divisible. Let $C_x\subseteq \ES_0^b$ be the central leaf containing $x$. By Lemma \ref{slope fil is G-split}, $V_{W(\overline{\kappa})}^{\vee\bullet}:=t^{-1}(M^\bullet)$ is a $G_{W(\overline{\kappa})}$-split filtration of $V^\vee_{W(\overline{\kappa})}$. We write $P$ for $\mathrm{stab}_{G_{W(\overline{\kappa})}}(V_{W(\overline{\kappa})}^{\vee\bullet})$. Let $T=\Spec R$ be a $C_x$-scheme which is smooth over $\overline{\kappa}$. Let $\mathcal{R}$ be a lifting of $R$ which is smooth over $W(\overline{\kappa})$. Then by \cite{family of p-div with cons NP} Proposition 2.3, $\A_{C_x}[p^\infty]$ (and hence $\A_T[p^\infty]$) is completely slope divisible. In particular, $\A_T[p^\infty]$, and hence $\mathcal{M}:=\mathbb{D}(\A_T)(\mathcal{R})$ admits a slope filtration. We write $\mathcal{M}^\bullet$ for the slope filtration on $\mathcal{M}$.

\begin{corollary}\label{slope fil on cent gives torsor}
The scheme $I:=\IIsom_{\mathcal{R}}\big((V^\vee_{W(\overline{\kappa})}, V_{W(\overline{\kappa})}^{\vee\bullet}, s)_{\mathcal{R}},(\mathcal{M},\mathcal{M}^\bullet,\sdr)\big)$ of isomorphisms respecting the filtrations as well as the Hodge-Tate tensors is a $P$-torsor over $\mathcal{R}$.
\end{corollary}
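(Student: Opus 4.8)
The plan is to exhibit $I$ as the reduction to $P$ of a $G_{W(\overline{\kappa})}$-torsor. Put $I_0:=\IIsom_{\mathcal{R}}\big((V^\vee_{W(\overline{\kappa})},s)_{\mathcal{R}},(\mathcal{M},\sdr)\big)$; by the standard argument (as in the proof of Proposition \ref{dim of cent}) this is a $G_{W(\overline{\kappa})}$-torsor over $\mathcal{R}$, and $I$ is the closed subscheme of $I_0$ consisting of those isomorphisms that carry the filtration $V^{\vee\bullet}_{W(\overline{\kappa})}$ onto $\mathcal{M}^\bullet$. Since $P=\mathrm{stab}_{G_{W(\overline{\kappa})}}\big(V^{\vee\bullet}_{W(\overline{\kappa})}\big)$ preserves $V^{\vee\bullet}_{W(\overline{\kappa})}$, right translation restricts to a $P_{\mathcal{R}}$-action on $I$, and two $\mathcal{R}'$-points of $I$ differ by an element of $G_{W(\overline{\kappa})}(\mathcal{R}')$ that preserves $V^{\vee\bullet}_{W(\overline{\kappa})}$, hence lies in $P(\mathcal{R}')$; so $P$ acts simply transitively on the nonempty fibres of $I\to\Spec\mathcal{R}$. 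As $P$ is smooth, it remains to show that $I\to\Spec\mathcal{R}$ is faithfully flat, i.e. that fppf-locally on $\mathcal{R}$ some trivialisation of $I_0$ carries $V^{\vee\bullet}_{W(\overline{\kappa})}$ to $\mathcal{M}^\bullet$. Equivalently, setting $\mathbf{G}:=\AAut_{\mathcal{R}}(\mathcal{M},\sdr)$ (the inner form of $G_{\mathcal{R}}$ obtained by twisting $I_0$ by conjugation) and letting $\mathrm{Par}_P(\mathbf{G})\to\Spec\mathcal{R}$ be the smooth projective scheme of parabolic subgroups of $\mathbf{G}$ of the type of $P$, it suffices to show that $\mathcal{M}^\bullet$ defines a section of $\mathrm{Par}_P(\mathbf{G})$, since then $I$ is the $P$-subtorsor of $I_0$ cut out by that section.

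A filtration of $\mathcal{M}$ by locally direct summands defines such a section precisely when its graded pieces have the ranks of those of $V^{\vee\bullet}_{W(\overline{\kappa})}$ and it is $\mathbf{G}$-split in the sense of \cite{CIMK} §1.1. The rank condition holds because the slope filtration of a completely slope divisible $p$-divisible group has graded pieces whose ranks are determined by the Newton polygon, which is constant on the Newton stratum $\ES_0^b$, hence along the $C_x$-scheme $T$ (recall $C_x\subseteq\ES_0^b$); so these ranks coincide with the ones at the base point $x$, which by construction of $P$ are the ranks of the graded pieces of $V^{\vee\bullet}_{W(\overline{\kappa})}$.

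For the $\mathbf{G}$-splitness: after passing to connected components of $T$ we may assume $\mathcal{R}$ is a regular domain, and we note that $\mathcal{M}^\bullet$ — coming from the slope filtration of $\A_T[p^\infty]$ via the surjections onto its slope-filtration quotients, which by \cite{family of p-div with cons NP} are again $p$-divisible groups — is a filtration by sub-Dieudonn\'e-crystals of $\mathbb{D}(\A_T)$, hence is horizontal for the connection on $\mathcal{M}$; the tensor $\sdr$ is horizontal as well. At the geometric point of $T$ lying over $x$ the filtration $\mathcal{M}^\bullet$ is $G$-split: it is the image of $V^{\vee\bullet}_{W(\overline{\kappa})}$, which is $G$-split by construction of $P$ (Lemma \ref{slope fil is G-split}). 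One then propagates $\mathbf{G}$-splitness from this single point to all of $\Spec\mathcal{R}$ using horizontality of $\mathcal{M}^\bullet$ and $\sdr$ together with the connectedness of $\Spec\mathcal{R}$ — this is the role of the results on $G$-split filtrations in \cite{CIMK} §1.1 (Lemma 1.1.4 and its consequences). With both conditions verified, $\mathcal{M}^\bullet$ gives the desired section of $\mathrm{Par}_P(\mathbf{G})$, and $I$ is a $P$-torsor over $\mathcal{R}$.

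The main obstacle is the last step: upgrading the fibrewise $G$-splitness of the slope filtration — which is all that Lemma \ref{slope fil is G-split} gives directly — to $\mathbf{G}$-splitness over $\mathcal{R}$. One has to check carefully that the $G$-split descent statements of \cite{CIMK} §1.1 apply in this relative setting, the delicate case being the points of $\Spec\mathcal{R}$ in the generic fibre (where Lemma \ref{slope fil is G-split} does not directly apply and one must exploit horizontality of $\mathcal{M}^\bullet$ and $\sdr$; note that, $\mathcal{R}$ being Noetherian and $p$-adically complete, every point of $\Spec\mathcal{R}$ at least specialises into the special fibre), and one must also track the passage between the $W(\overline{\kappa})$-level at which Lemma \ref{slope fil is G-split} is phrased and the reduction modulo $p$. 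An alternative that avoids the general descent lemma is to show directly that $\mathrm{stab}_{\mathbf{G}}(\mathcal{M}^\bullet)$ is flat over $\mathcal{R}$ of the relative dimension of $P$ by a dimension count at the points of the special fibre, in the spirit of the argument for the torsor $\mathbb{J}_1$ in the proof of Proposition \ref{dim of cent}.
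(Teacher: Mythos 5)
Your overall strategy matches the paper's: realize $I$ as the reduction to $P$ of the $G_{W(\overline{\kappa})}$-torsor $I_0$, observe that $P$ acts simply transitively on nonempty fibres, and reduce the faithful-flatness of $I\to\Spec\mathcal{R}$ to showing that $\mathcal{M}^\bullet$ is a $\mathbf{G}$-split filtration of the correct type. You also correctly identify \cite{CIMK} \S 1.1 as the crucial external input. However, the step where you establish $\mathbf{G}$-splitness over all of $\Spec\mathcal{R}$ has a genuine gap.

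You invoke Lemma~\ref{slope fil is G-split} only at the chosen closed point $x$ of the special fibre $T$, and then try to ``propagate $\mathbf{G}$-splitness from this single point to all of $\Spec\mathcal{R}$ using horizontality of $\mathcal{M}^\bullet$ and $\sdr$ together with the connectedness of $\Spec\mathcal{R}$.'' This does not work as stated. By \cite{CIMK} Proposition~1.1.5 the scheme $G_W\text{-}\Fil$ of $G_W$-split filtrations is proper smooth, so the natural map $G_W\text{-}\Fil\to\Fil$ is a \emph{closed} immersion: the locus $Z\subseteq\Spec\mathcal{R}$ where the section defined by $\mathcal{M}^\bullet$ lands in $G_W\text{-}\Fil$ is closed. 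A closed subscheme containing a single closed point $x$ need not be everything, and connectedness does not rescue this; nor does horizontality, since a horizontal closed subscheme is not thereby open. What \emph{does} work, and is what the paper does, is to apply Lemma~\ref{slope fil is G-split} at the \emph{generic point} $\eta$ of $T$ (i.e.\ the prime $(p)\subseteq\mathcal{R}$). After flat descent along $\mathcal{R}_{(p)}\to W(\overline{k(\eta)})$ this shows $\mathcal{M}^\bullet\otimes\mathcal{K}$ is $G_{\mathcal{K}}$-split, where $\mathcal{K}=\mathrm{Frac}(\mathcal{R})$; then $Z$ contains the generic point of $\Spec\mathcal{R}$, and since $Z$ is closed and $\Spec\mathcal{R}$ is irreducible, $Z$ is all of $\Spec\mathcal{R}$. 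Finally, letting $\mathcal{P}=\mathrm{stab}_{G_{W(\overline{\kappa})}}(\mathcal{M}^\bullet)$ (a parabolic by \cite{CIMK} Lemma~1.1.1), one identifies $I$ with the transporter $\mathrm{trans}(P,\mathcal{P})$, which is a $P$-torsor because $P$ and $\mathcal{P}$ are of the same type fibrewise. Your alternative via a dimension count in the spirit of the $\mathbb{J}_1$ argument is closer to being correct, but note that in that argument the paper already knows the generic-fibre behaviour a priori; here one would still need the information at the generic point, which is precisely what Lemma~\ref{slope fil is G-split} at $\eta$ supplies.
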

\begin{proof}
There exists a morphism $\mathcal{R}\rightarrow \ES$ lifting $R\rightarrow C_x$, so we have
$$J:=\IIsom_{\mathcal{R}}\big((V^\vee_{W(\overline{\kappa})}, s)_{\mathcal{R}},(\mathcal{M},\sdr)\big)=\IIsom_{\mathcal{R}}\big((V^\vee_{W(\overline{\kappa})}, s)_{\mathcal{R}},(\V_{\mathcal{R}},\sdr)\big)$$is a $G_{W(\overline{\kappa})}$-torsor. By passing to a connected component of an fppf \'{e}tale affine cover, we can assume that $\mathcal{R}$ is an integral domain and that $J(\R)\neq\emptyset$. Fixing a $t\in J(\mathcal{R})$, we could view $\mathcal{M}^\bullet$ as a filtration on $V^\vee_{W(\overline{\kappa})}\otimes \mathcal{R}$. Applying Lemma \ref{slope fil is G-split} to the generic point of $T$, we see that $\M^\bullet_{\mathcal{K}}$ is induced by a cocharacter of $G$ which is geometrically conjugate to $\nu$. Here $\nu$ is a cocharacter (of $G_W$) inducing $V_{W(\overline{\kappa})}^{\vee\bullet}$.

Let $G_W\text{-}\Fil$ (resp. $\Fil$) be the $G_W$-split (resp. usual) filtrations on $V_{\R}$. By \cite{CIMK} Proposition 1.1.5, $G_W\text{-}\Fil$ is a proper smooth $\R$-scheme, and hence the morphism $G_W\text{-}\Fil\rightarrow \Fil$ is a closed immersion. The $\R$-point of $\Fil$ induced by $\M^\bullet$ is such that the generic point lies in $G_W\text{-}\Fil$, so $\M^\bullet$ is $G_W$-split.

Let $\mathcal{P}$ be $\mathrm{stab}_{G_{W(\overline{\kappa})}}(\mathcal{M}^\bullet)$. It is a parabolic subgroup of $G_{\R}$ by \cite{CIMK} Lemma 1.1.1. Then $\mathrm{trans}_{G_{W(\overline{\kappa})}}(P_\mathcal{S},\mathcal{P})\cong I_\mathcal{S}$. But $\mathrm{trans}_{G_{W(\overline{\kappa})}}(P_\mathcal{S},\mathcal{P})$ is a $P$-torsor as $P$ and $\mathcal{P}$ are of the same type (because they are of the same type at all points in $T$), so $I$ is a $P$-torsor.
\end{proof}

\subsubsection[Igusa towers]{Igusa towers}Let $x\in \ES_0^b(\overline{\kappa})$ be such that $\A_{x}[p^\infty]$ is completely slope divisible and $C_x$ be the central leaf containing $x$. Let $\G$ be $\A_{C_x}[p^\infty]$. We write $\G_{x,\bullet}$ (resp. $\G_{\bullet}$) for the slope filtration, and $\G_{x}^i$ (resp. $\G^i$) for $\G_{x,i}/\G_{x,i-1}$ (resp. $\G_{i}/\G_{i-1}$).

Let $M$ be the Dieudonn\'{e} module of $\G_x$, and $M^\bullet$ be the slope filtration. We fix, once an for all in this subsection, an isomorphism $t:V^\vee_{W(\overline{\kappa})}\rightarrow M$ respecting the Hodge-Tate tensors and hence identify $V^\vee_{W(\overline{\kappa})}$ and $M$. The canonical slope decomposition $\G_x=\oplus\G_{x}^i$ gives a decomposition of Dieudonn\'{e} modules $M=\oplus M_i$, and hence a cocharacter $\nu$ of $G_{W(\overline{\kappa})}$. We write $P$ (resp. $P'$) for the stabilizer of $M^\bullet$ in $G_{W(\overline{\kappa})}$ (resp. $\GL(M)$), $U$ (resp. $U'$) for the unipotent radical, and $L$ (resp. $L'$) for the centralizer of $\nu$.

Let $\Spec R$ be an affine scheme over $C_x$. By smoothness of $\ES_W$, there is a morphism $j:\Spec (W(R))\rightarrow \ES_W$. We write $\mathcal{W}$ for $j^*\V\cong\mathbb{D}(\G_R)(W(R))$, and $\mathcal{W}^\bullet$ for the slope filtration induced by that on $\mathbb{D}(\G_R)(W(R))$.

For $m\in \INT_+$, let $J'_m$ be the presheaf such that for a $R$-algebra $A$, $J'_m(A)$ is the set of $W_m(A)$-linear isomorphisms $M\otimes W_m(A)\rightarrow \mathcal{W}\otimes_{W(R)}W_m(A)$ mapping $M^\bullet|_{W_m(A)}$ to $\mathcal{W}^\bullet|_{W_m(A)}$. We define $J_m$ to be the sub presheaf of $J_m'$ such that the $W_m(A)$-linear isomorphisms $M\otimes W_m(A)\rightarrow \mathcal{W}\otimes_{W(R)}W_m(A)$ respect Hodge-Tate tensors.
\begin{lemma}\label{closed and indep of liftings}
\

(1) Both $J_m$ and $J'_m$ are represented by smooth affine $R$-schemes. Moreover, $J_m$ is a closed subscheme of $J'_m$.

(2) Both $J_m$ and $J'_m$ are independent of the choice of $j$.
\end{lemma}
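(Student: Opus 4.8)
The plan is to exhibit $J'_m$ and $J_m$ as torsors over $\Spec R$ under the group schemes $\W_m(P')$ and $\W_m(P)$, and then apply Theorem~\ref{greenberg functor}. Two remarks will be used throughout. First, since $\Spec R$ maps to the central leaf $C_x$, which lies in a single Newton stratum, the slope filtration $\mathcal{W}^\bullet$ of $\mathcal{W}=\Dieu(\G_R)(W(R))$ is a filtration by local direct summands whose type (the ranks of the graded pieces) is constant on $\Spec W(R)$ and equal to that of $M^\bullet$: indeed $\A_{C_x}[p^\infty]$ is completely slope divisible by \cite{family of p-div with cons NP} Proposition~2.3, and the slopes are constant on a Newton stratum. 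Second, $\mathcal{W}=j^*\V$ carries the section $j^*\sdr$ of $\mathcal{W}^\otimes$, while $M$ carries the section $s$ transported along the fixed isomorphism $t$.

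I would first construct over $W(R)$ a $P'_{W(R)}$-torsor $Z'$ and a $P_{W(R)}$-torsor $Z$. Let $Z'$ be the subscheme of the $\GL$-torsor $\IIsom_{W(R)}(M\otimes W(R),\mathcal{W})$ consisting of isomorphisms carrying $M^\bullet$ to $\mathcal{W}^\bullet$; it is a locally closed subscheme (the inclusions $t(M^i)\subseteq\mathcal{W}^i$ are closed conditions, surjectivity on graded pieces is open), and since both filtrations are split by direct summands of the same, globally constant ranks, $Z'$ is Zariski-locally on $\Spec W(R)$ isomorphic to $P'_{W(R)}:=P'\times_{W(\overline{\kappa})}W(R)$, hence a $P'_{W(R)}$-torsor, in particular smooth affine over $W(R)$. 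Let $Z\subseteq Z'$ be the closed subscheme cut out by the further (closed) condition of carrying $s$ to $j^*\sdr$. The analogue over $W(R)$ of Corollary~\ref{slope fil on cent gives torsor} — that $\mathcal{W}^\bullet$ is a $G_{W(R)}$-split filtration once $(\mathcal{W},j^*\sdr)$ has been trivialized to $(M,s)$ through the pulled-back torsor $j^*\I$ — makes $\mathrm{stab}_{G_{W(R)}}(\mathcal{W}^\bullet)$ a parabolic of the same type as $P_{W(R)}$; hence $Z$, being the transporter of $P_{W(R)}$ onto it exactly as in the proof of Corollary~\ref{slope fil on cent gives torsor}, is a $P_{W(R)}$-torsor, in particular smooth affine over $W(R)$.

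For the Greenberg step, note that by construction $J'_m(A)=Z'(W_m(A))$ and $J_m(A)=Z(W_m(A))$, where $W_m(A)$ is viewed as a $W(R)$-algebra via $W(R)\to W_m(R)\to W_m(A)$. Choosing an fppf cover $R\to R'$ that trivializes $Z'$ over $W(R')$, one gets for every $R'$-algebra $A$ that $Z'(W_m(A))=P'_{W(R')}(W_m(A))=P'(W_m(A))=\W_m(P')(A)$, so $J'_m$ is an fppf torsor over $\Spec R$ under $\W_m(P')$. By Theorem~\ref{greenberg functor} the group scheme $\W_m(P')$ is smooth affine over $\overline{\kappa}$, and a torsor under a smooth affine group scheme is representable by a smooth affine scheme, so $J'_m$ is a smooth affine $R$-scheme. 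The same argument with $Z$ shows $J_m$ is a smooth affine $R$-scheme, a torsor under $\W_m(P)$; and since $P\hookrightarrow P'$ is a closed immersion, so is $\W_m(P)\hookrightarrow\W_m(P')$ by Theorem~\ref{greenberg functor}(3), whence $J_m\hookrightarrow J'_m$ is a closed immersion by fppf descent. This gives (1).

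For (2), the crystal property of $\Dieu(\A[p^\infty])$ yields, for two liftings $j_1,j_2$ of $\Spec R\to\ES_0$, a canonical isomorphism $j_1^*\V\cong\Dieu(\A_{\Spec R}[p^\infty])(W(R))\cong j_2^*\V$ — the value of the crystal on the PD-thickening $\Spec R\hookrightarrow\Spec W(R)$ is independent of the lift — which respects the slope filtrations (intrinsic to $\A_{\Spec R}[p^\infty]$) and carries $j_1^*\sdr$ to $j_2^*\sdr$ ($\sdr$ being a section of the crystal $\V^\otimes$); post-composition with it identifies the two versions of $J'_m$, and of $J_m$. The point that needs care is the $G_{W(R)}$-splitness of $\mathcal{W}^\bullet$ used in the second paragraph: Corollary~\ref{slope fil on cent gives torsor} is stated over a smooth lift $\mathcal{R}$ of $R$, whereas $W(R)$ is neither smooth over $W(\overline{\kappa})$ nor in general normal. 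I would repair this by reducing, via an fppf cover of $\Spec R$ trivializing $j^*\I$ followed by passage to irreducible components, to the case where $R$ — hence $W(R)$ — is an integral domain, and then running the argument of Corollary~\ref{slope fil on cent gives torsor} (the generic-fibre computation via Lemma~\ref{slope fil is G-split} together with the properness of the scheme of $G$-split filtrations, \cite{CIMK} Proposition~1.1.5); alternatively one can pull the $P$-torsor of Corollary~\ref{slope fil on cent gives torsor} back along a lift $\mathcal{R}\to W(R)$ of a smooth affine chart of $C_x$. Everything else is a routine combination of Theorem~\ref{greenberg functor} with standard facts on torsors under affine group schemes.
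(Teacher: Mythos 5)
Your argument is essentially the paper's, with one structural difference: the paper never builds a torsor over $W(R)$. Instead it factors $j$ through an affine open $\Spec S\subseteq\ES_W$, sets $\R$ to be the $p$-adic completion of $S$, constructs the $P'$- and $P$-torsors $I'$ and $I$ over $\Spec\R$ (invoking Corollary~\ref{slope fil on cent gives torsor} for $I$), applies the Greenberg functor to get $\W_m(I')\rightarrow\W_m(\R)$ and $\W_m(I)\rightarrow\W_m(\R)$, and then pulls back along $\Spec R\rightarrow\W_m(\R)$ given by $\R\stackrel{j}{\rightarrow}W(R)\rightarrow W_m(R)$. This sidesteps entirely the question of $G$-splitness of $\mathcal{W}^\bullet$ over the ring $W(R)$, which is the one place your version genuinely struggles: $W(R)$ is neither Noetherian, regular, nor (for non-perfect $R$) $p$-adically complete, so the scheme-of-$G$-split-filtrations argument in the proof of Corollary~\ref{slope fil on cent gives torsor} does not transplant without care. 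You correctly identify this as the pressure point, and your second repair (pulling back $I$ along $\R\rightarrow W(R)$) is precisely what the paper does. Your first repair, however, has a gap: ``passage to irreducible components'' of an fppf cover $\Spec R'$ of $\Spec R$ is in general not an fppf morphism (the disjoint union of components of a non-normal scheme is not flat over it), so one cannot descend the torsor property back down this way; the paper's Corollary~\ref{slope fil on cent gives torsor} gets away with ``passing to a connected component'' only because there $\R$ is regular, so connected components are already integral and open-and-closed. Part (2), via the crystal property giving a canonical isomorphism $j_1^*\V\cong\mathbb{D}(\A_R)(W(R))\cong j_2^*\V$ compatible with $\sdr$ and the slope filtration, is exactly the paper's argument and is fine as stated.
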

\begin{proof}
For (1), we can assume that $j$ factors through an affine subscheme of $\ES_W$, whose $p$-adic completion is denoted by $\R$. The homomorphism $\R\rightarrow W(R)$ induced by $j$ will still be denoted by $j$. We can choose a Frobenius on $\R$ such that $j$ is compatible with Frobeni. Let $\mathcal{M}$ be $\mathbb{D}(\A[p^\infty])(\mathcal{R})$ and $\mathcal{M}^\bullet$ be the slope filtration. Let $I'$ be $\IIsom_{\R}\big((M,M^\bullet)_{\R},(\M,\M^\bullet)\big)$, and $I$ be $\IIsom_{\R}\big((M,M^\bullet,s)_{\R},(\M,\M^\bullet,\sdr)\big)$. Then $I'$ is a $P'$-torsor and by Corollary \ref{slope fil on cent gives torsor}, $I$ is a $P$-torsor. Now $J'_m$ (resp $J_m$) is given by first applying the Greenberg functor to $I'\rightarrow \Spec \R$ (resp. $I\rightarrow \Spec \R$), and then pulling back via $\Spec R\rightarrow \W_m(\R)$ given by $\R\stackrel{j}{\rightarrow} W(R)\rightarrow W_m(R)$. In particular, (1) holds.

For (2), let $j_1$ and $j_2$ be two homorphisms $\R\rightarrow W(R)$. Then the canonical isomorphism $i_{12}:j^*_1\V\rightarrow \mathbb{D}(\A_R)(W(R))\rightarrow j^*_2\V$ respects the Hodge-Tate tensors as well as the slope filtrations. This gives an isomorphism between the torsors induced by $j_1$ and $j_2$. It also satisfies the cocycle condition.
\end{proof}
\begin{definition}
The Igusa tower $J_{b,m}$ is the sheaf over $C_x$ which attaches to a $C_x$-scheme $T$ the set of isomorphisms $t:\oplus_i\G_x^i[p^m]\rightarrow \oplus_i\G^i_T[p^m]$, such that

(1) $t$ extends \'{e}tale locally to any $m'\geq m$;

(2) for any affine scheme $\Spec R$ over $T$, the element in $J'_m/U'_m(R)$ induced by $t_R$ lies in $J_m/U_m(R)$.
\end{definition}
Let $\Gamma_b$ be $L(W)\cap \mathrm{Aut}(\G_x)$ (here the intersection is via the Dieudonn\'{e} functor which is an equivalence), and $\Gamma_{b,m}$ is the quotient of $\Gamma_b$ by the subgroup which is identity on $\G_x[p^m]$. By Corollary \ref{boundedness in a central leaf}, there is in integer $N$, such that for any $ y\in C_x(\overline{\kappa})$ and any $m\geq N$, $J_{b,m,y}(\overline{\kappa})$ is a $\Gamma_{b,m}$-torsor. Here we use the canonical identifications $\G_x=\oplus_i\G_x^i[p^m]$ and $\G_y=\oplus_i\G_y^i[p^m]$.

\begin{proposition}
$J_{b,m}$ is representable. Moreover, for $m\geq N$, $J_{b,m}\rightarrow C_x$ is finite etal\'{e} and Galois, with Galois group $\Gamma_{b,m}$.
\end{proposition}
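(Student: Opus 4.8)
The plan is to realize $J_{b,m}$ as a locally closed subscheme of a representable $\IIsom$-scheme, and then, for $m\ge N$, to identify $J_{b,m}\to C_x$ with a $\Gamma_{b,m}$-torsor by combining the fibrewise statement recorded just above the proposition (a consequence of Corollary~\ref{boundedness in a central leaf}) with an infinitesimal computation.

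First I would address representability. The functor $T\mapsto\IIsom_T\big(\oplus_i\G_x^i[p^m]_T,\,\oplus_i\G^i_T[p^m]\big)$ of all isomorphisms of the truncated Barsotti--Tate group schemes (forgetting conditions (1) and (2)) is represented by an affine $C_x$-scheme of finite type, and over the central leaf $C_x$ it is even quasi-finite by the boundedness argument of the lemma preceding Corollary~\ref{boundedness in a central leaf} (our analogue of \cite{foliation-Oort} Lemma~1.5). Condition (1) cuts out a closed subscheme: by that same stabilisation lemma it suffices to impose extendability to one sufficiently large level $m'$, and over $C_x$ the restriction morphism from the level-$m'$ Isom-scheme to the level-$m$ one is finite, so its image is closed. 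Condition (2) is handled by the truncated display functor: passing to Dieudonn\'e modules (truncated displays) together with the slope filtration gives a $C_x$-morphism from the locus of (1) to $J'_m/U'_m$, and (2) asks exactly that this morphism factor through the closed subscheme $J_m/U_m\hookrightarrow J'_m/U'_m$ induced by Lemma~\ref{closed and indep of liftings}(1); so it is a closed condition. Hence $J_{b,m}$ is representable by a scheme, quasi-finite and of finite type over $C_x$.

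Next I would set up the group action and reduce to \'etaleness. The group $\Gamma_b=L(W)\cap\Aut(\G_x)$ preserves the slope decomposition, hence acts on $\oplus_i\G_x^i$, and its finite quotient, the constant group $\Gamma_{b,m}$, acts on $J_{b,m}$ over $C_x$ by $(\gamma,t)\mapsto t\circ\gamma^{-1}$; this preserves (1) because $\gamma$ itself lifts to all levels, and preserves (2) because $\gamma\in L(W)$ respects the Hodge--Tate tensors. By the fibrewise statement recorded before the proposition, for $m\ge N$ the action of $\Gamma_{b,m}$ on each geometric fibre $J_{b,m,y}(\overline{\kappa})$ is simply transitive; in particular $J_{b,m}\to C_x$ is surjective. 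It therefore suffices to show this morphism is \'etale for $m\ge N$: an \'etale quasi-finite morphism all of whose geometric fibres are $\Gamma_{b,m}$-torsors is \'etale-locally isomorphic to $\Gamma_{b,m}\times C_x\to C_x$, hence a $\Gamma_{b,m}$-torsor, hence finite, \'etale and Galois with group $\Gamma_{b,m}$.

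Finally, \'etaleness, which is where condition (1) is essential and which I expect to be the main obstacle. I would use the infinitesimal criterion over $C_x$: for a square-zero thickening $T_0\hookrightarrow T$ of affine $C_x$-schemes, one lifts a $T_0$-point of $J_{b,m}$ uniquely to a $T$-point. Existence of a lift follows from the smoothness of the stack of truncated Barsotti--Tate groups (Theorem~\ref{Lau's main results}(1)), with the higher-level condition (1) and the tensor condition (2) propagated to the lift using that the level-$m'$ objects and the relevant $p$-divisible groups (pulled back from $x\in\overline{\kappa}$, resp.\ from $C_x$) again deform and the Hodge--Tate tensors are parallel; this is exactly parallel to Mantovan's Igusa tower construction in \cite{coho of PEL}. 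For uniqueness one checks that an automorphism of $\oplus_i\G_x^i[p^m]_T$ which is the identity modulo the ideal of $T_0$ and respects (1) and (2) is the identity: it lifts to an automorphism of the $p$-divisible group $\oplus_i\G_x^i[p^\infty]$ lying in $\Gamma_b$, and $\Aut$ of a $p$-divisible group over a field (and its closed subgroup $\Gamma_b$ cut out by the slope decomposition and the Hodge--Tate tensors) is pro-(finite \'etale) by the rigidity of \cite{foliation-Oort} \S1, so $\Gamma_{b,m}$ is constant and $\Gamma_{b,m}(T)=\Gamma_{b,m}(T_0)$. Hence the lift is unique, $J_{b,m}\to C_x$ is formally \'etale, and being of finite type it is \'etale; combined with the previous paragraph this proves the proposition. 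The one new ingredient beyond \cite{coho of PEL} is the bookkeeping of the Hodge--Tate tensors, which is taken care of by Lemma~\ref{closed and indep of liftings} and Corollary~\ref{slope fil on cent gives torsor}.
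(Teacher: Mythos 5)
Your treatment of representability follows the paper's: realize $J_{b,m}$ as a closed subscheme of Mantovan's $J'_{b,m}$ by showing that condition~(2) is a closed condition via Lemma~\ref{closed and indep of liftings}. That part is fine.

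For the finite \'etale Galois assertion you take a genuinely different route from the paper. The paper follows \cite{coho of PEL} Proposition~4 and reduces to producing, over $R=\widehat{O}_{C_x,y}$ at each closed point $y$, a trivialization $t:\oplus_i\G_x^i\otimes R\to\oplus_i\G^i\otimes R$ whose induced map factors through $I/U$; this is done by a short but nontrivial three-step argument: take $t_0$ from \cite{unitary shv} Lemma~3.4 (which respects the slope decomposition but not a priori the tensors), pass to an algebraically closed field $K\supset R$ to obtain a tensor- and filtration-respecting $t_1$, observe that the discrepancy $g=t_1^{-1}\circ t_{0,K}$ is an automorphism of $\oplus_i\G_{x,K}^i$ and hence descends to $\overline{\kappa}$, and take $t_0\circ g_R^{-1}$. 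Once such a section exists over $\Spec R$, the $\Gamma_{b,m}$-translates fill out all $|\Gamma_{b,m}|$ connected components of $J_{b,m,R}\subseteq J'_{b,m,R}$ allowed by the fibrewise statement, and the conclusion follows. Your approach instead tries to verify the infinitesimal criterion directly. The uniqueness half can be salvaged cheaply: since $J_{b,m}\hookrightarrow J'_{b,m}$ is a monomorphism and $J'_{b,m}\to C_x$ is \'etale, the map $J_{b,m}(T)\to J_{b,m}(T_0)$ is a restriction of a bijection, hence injective.

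The existence half, however, has a genuine gap. Given $t_0\in J_{b,m}(T_0)$ and a square-zero thickening $T_0\hookrightarrow T$, the lift $t'\in J'_{b,m}(T)$ exists and is unique because $J'_{b,m}$ is \'etale; the hard point is to show that $t'$ still satisfies the tensor condition~(2). You assert this follows from ``the smoothness of the stack of truncated Barsotti--Tate groups'' and because ``the Hodge--Tate tensors are parallel,'' but neither of these gives it. Smoothness of the stack of BT-$m$'s says nothing about $\IIsom$-schemes of BT-$m$'s being smooth (indeed $\AAut^o$ of a BT-$m$ has an infinitesimal part, by Theorem~\ref{Lau's main results}(3)), and a closed condition like~(2) does not automatically persist under deformation just because it holds on the special fibre: $J_m/U_m$ is a proper closed subscheme of $J'_m/U'_m$ and there is no general reason the image of the deformed $t'$ stays inside it. This is exactly the difficulty that the paper's explicit local trivialization circumvents, and without it your \'etaleness argument does not close.
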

\begin{proof}
As in \cite{coho of PEL} section 4, the sheaf $J'_{b,m}/C_x$ such that $J'_{b,m}(T)$ is the set of isomorphisms $t:\oplus_i\G_x^i[p^m]\rightarrow \oplus_i\G^i_T[p^m]$ which extends \'{e}tale locally to any $m'\geq m$ is representable. For any open affine subscheme $T=\Spec R$ of $J'_{b,m}$, the pull back to $T$ of the universal isomorphism on $J'_{b,m}$ gives a morphism $T\rightarrow J'_m/U'_m$. By Lemma \ref{closed and indep of liftings} (1), to make it factor through $J_m/U_m$ gives a closed condition, and by Lemma \ref{closed and indep of liftings} (2), the closed subschemes obtained by using different $T$s glue to a closed sunscheme of $J'_{b,m}$. This is precisely $J_{b,m}$.

For the rest statements, as in \cite{coho of PEL} Proposition 4, it suffices to prove that for each closed point $y\in C_x$, there is an isomorphism $t:\oplus_i\G_x^i\otimes_{\overline{\kappa}}R\rightarrow \oplus_i\G^i\otimes_{C}R$ whose induced map $\R\rightarrow I'/U'$ factors through $I/U$. Here we write $C$ for $C_x$, and $R$ for $O_{C,y}^\wedge$ for simplicity, and $\R$, $I$ and $I'$ are as in the previous lemma. By \cite{unitary shv} Lemma 3.4, there is an isomorphism $t_0:\oplus_i\G_x^i\otimes_{\overline{\kappa}}R\rightarrow \oplus_i\G^i\otimes_{C}R$.

Let $K$ be a algebraically closed field containing $R$. Then there is an isomorphism $t_1:\G_x\otimes_{\overline{\kappa}}K\rightarrow \G_R\otimes_RK$ respecting both the slope filtrations and the Hodge-Tate tensors. Its induced map $\oplus_i\G_x^i\otimes_{\overline{\kappa}}K\rightarrow \oplus_i\G^i_R\otimes_RK$ will still be denoted by $t_1$. Then $g:=t^{-1}_1\circ t_{0,K}$ is a automorphism of $\oplus_i\G_{x,K}^i$, and hence it is defined over~$\overline{\kappa}$ and still denoted by $g$. Now $t_0\circ g^{-1}_R$ is precisely what we need.
\end{proof}
\textbf{From now on, we always assume }$m\geq N$ when working with $J_{b,m}$s. For $m'\geq m$, there is a natural projection $q:J_{b,m'}\rightarrow J_{b,m}$ induced by restricting to the $p^m$-torsion. This morphism is finite \'{e}tale. Let $J_{b}=\varprojlim_m J_{b,m'}$, it is equipped with the action of $\Gamma_b$. Let $T_b$ be the group of self quasi-isogenies of $\G_x$ respecting the the tensors. We will show, as in \cite{coho of PEL}, that the $\Gamma_b$-action on $J_b$ extends to a sub-monoid $S_b\subseteq T_b$.

For $\rho\in T_b$, we white $\rho=\oplus_i\rho^i$ for the decomposition to isoclinic factors. If $\rho^{-1}$ is an isogeny, we write $e_i(\rho)\geq f_i(\rho)$ respectively for the minimal and maximal integer such that $\mathrm{ker}(p^{f_i})\subseteq \mathrm{ker}({\rho^i}^{-1})\subseteq\mathrm{ker}(p^{e_i}).$ We define $$S_b=\{\rho\in T_b|\rho^{-1}\text{ is a isogeny}, f_{i-1}(\rho)\geq e_i(\rho), \forall\ i\geq2\}.$$It is not hard to see $S_b$ is a monoid, and that $p^{-1}$ and $fr^{-B}:=\oplus_ip^{-\lambda_iB}$ are in $S_b$. Moreover, the proof of \cite{unitary shv} Lemma 2.11 also works here, and hence $T_b=\langle S_b, p, fr^B\rangle$.
\begin{proposition}\label{rou action}
Let $m$, $\rho$ be as before, and $e=e_1(\rho)$. There is a unique finite flat group scheme $\mathcal {H}\subseteq \G_{J_{b,m}}[p^e]$, such that the corresponding subgroups in $\G^i_{J_{b,m}}$ are $t(\mathrm{ker}({\rho^{i}}^{-1}))$. The abelian scheme $\A/\mathcal{H}$, together with the polarization and level structure, induces a morphism $\rho_*:J_{b,m}\rightarrow \mathscr{A}$ which factors through $C_x$. Moreover, it induces a morphism $\rho:J_{b,m}\rightarrow J_{b,m-e}$ of Igusa towers.
\end{proposition}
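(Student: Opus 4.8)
The plan is to follow the construction of \cite{coho of PEL} Section~4 (and \cite{foliation-Oort}), the only genuinely new point being compatibility with the Hodge--Tate tensors, which is already built into the definitions of $T_b$ and $J_{b,m}$. First I would build $\EH$ by ascending induction along the slope filtration $0=\G_0\subseteq\G_1\subseteq\cdots\subseteq\G_n=\G_{J_{b,m}}$ pulled back to $J_{b,m}$. Over $J_{b,m}$ the graded pieces $\G^i[p^m]$ are trivialised by the universal isomorphism $t$, so $\EH^i:=t\big(\ker({\rho^i}^{-1})\times J_{b,m}\big)\subseteq\G^i[p^{e_i}]$ is a finite locally free subgroup scheme of constant rank, and $e=e_1\ge e_i$ for all $i$ by the chain $e_1\ge f_1\ge e_2\ge f_2\ge\cdots$ coming from the definition of $S_b$. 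Starting from $\EH_1=\EH^1\subseteq\G_1$, assume $\EH_{i-1}\subseteq\G_{i-1}$ has graded pieces $\EH^1,\dots,\EH^{i-1}$; let $\widetilde{\EH}_i\subseteq\G_i$ be the preimage of $\EH^i$ under $\G_i\twoheadrightarrow\G^i$, form the pushout $\mathcal{Q}:=\widetilde{\EH}_i/\EH_{i-1}$, an extension of $\G^i$ by $\G'_{i-1}:=\G_{i-1}/\EH_{i-1}$, and look for a closed subgroup scheme $\overline{\EH}^{\,i}\subseteq\mathcal{Q}$ mapping isomorphically onto $\EH^i$; its preimage in $\G_i$ is $\EH_i$, and $\EH:=\EH_n\subseteq\G_{J_{b,m}}[p^e]$. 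Since $\G'_{i-1}$ is $p$-divisible with all slopes strictly larger than the slope of $\G^i$, while $\G^i[p^{f_i}]\subseteq\EH^i\subseteq\G^i[p^{e_i}]$, the inequality $f_{i-1}(\rho)\ge e_i(\rho)$ defining $S_b$ is exactly what makes the relevant extension split and the splitting unique, by the rigidity of slope filtrations (\cite{family of p-div with cons NP}). \emph{I expect this step --- formulating the slope-filtration lifting precisely and isolating the exact role of $f_{i-1}\ge e_i$ (together with checking that $\EH$ is isotropic for the Weil pairing) --- to be the main obstacle; what follows is essentially book-keeping.}

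Next, $\A_{J_{b,m}}/\EH$ is an abelian scheme over $J_{b,m}$. It keeps the prime-to-$p$ level $K'$ structure (which involves no $p$-torsion), and it carries a polarisation, because $\rho\in T_b$ respects the polarisation tensor up to a scalar, so each $\ker({\rho^i}^{-1})$, and hence $\EH$ --- using that the Weil pairing is compatible with the slope filtration, pairing $\G^i$ with $\G^{n+1-i}$ --- is isotropic, and the pairing descends. This gives a morphism $\rho_*\colon J_{b,m}\to\mathscr{A}$. To see that $\rho_*$ factors through $\ES_0$ and then through $C_x$, one argues on points: because $\EH$ respects the Hodge--Tate tensors, \cite{LRKisin} Proposition~1.4.4 transports the tensor to the quotient $p$-divisible group and puts the corresponding point on $\ES_0$; and over a geometric point $y\in C_x$ the quotient $\G_y/\EH_y$, whose $i$-th graded piece $\G^i_y/\EH^i_y$ is isomorphic to $\G_x^i$ (via $\rho^i$ restricted to $\mathrm{im}({\rho^i}^{-1})$), lies in the same central leaf as $x$, by the same argument as in \cite{coho of PEL}.

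Finally, by construction $\rho_*^{*}\A_{C_x}=\A_{J_{b,m}}/\EH$, so to lift $\rho_*$ to $J_{b,m-e}$ I must exhibit a level-$(m-e)$ trivialisation of the graded pieces of $(\A_{J_{b,m}}/\EH)[p^\infty]=\G_{J_{b,m}}/\EH$, whose $i$-th graded piece is $\G^i/\EH^i$. The universal level-$m$ isomorphism $t$ identifies $\G_x^i[p^m]$ with $\G^i[p^m]$ carrying $\ker({\rho^i}^{-1})$ to $\EH^i$; since $\ker({\rho^i}^{-1})$ is killed by $p^{e_i}\le p^e$, this descends to an isomorphism $\G_x^i[p^{m-e}]\cong(\G^i/\EH^i)[p^{m-e}]$ (after composing with the canonical isomorphism $\G_x^i/\ker({\rho^i}^{-1})\cong\G_x^i$). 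One then checks that these isomorphisms assemble into an element of $J_{b,m-e}(J_{b,m})$: they extend étale locally to higher levels because $t$ does, and the induced section of $J'_{m-e}/U'_{m-e}$ lands in $J_{m-e}/U_{m-e}$ because $\rho$ and $t$ respect the Hodge--Tate tensors, using Lemma~\ref{closed and indep of liftings}. This produces $\rho\colon J_{b,m}\to J_{b,m-e}$ lying over $\rho_*\colon J_{b,m}\to C_x$, completing the proof.
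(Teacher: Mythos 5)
Your outline parallels the paper's in its broad strokes, but there is one genuine gap, and one place where you do more work than necessary.

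\textbf{Where you do more work than the paper.} The existence of the finite flat group scheme $\mathcal{H}$ is not reproved in the paper: it is simply cited from Mantovan, \emph{On certain unitary group Shimura varieties}, Lemma~3.6 (the reference \cite{unitary shv}). Your ascending induction along the slope filtration is in essence a rederivation of that lemma. You yourself flag the splitting step as the ``main obstacle''; it is, but it is exactly the content of the cited lemma, so you could spare yourself the effort. Similarly, isotropy of $\mathcal{H}$ and descent of the polarisation are part of the Siegel-level construction and are taken as known.

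\textbf{The genuine gap.} For the claim that $\rho_*\colon J_{b,m}\to\mathscr{A}$ factors through $\ES_0$ and then $C_x$, you argue entirely on $\overline{\kappa}$-points. That is not enough here, because the morphism $\ES_0\to\mathscr{A}_{\overline{\kappa}}$ is only \emph{finite} --- $\ES_K(G,X)$ is the normalisation of the Zariski closure of $\Sh_K(G,X)$ in $\mathscr{A}_{g,d,K'}$, so the special-fibre map is not an immersion and need not be a monomorphism. A morphism from a (reduced, smooth) scheme $J_{b,m}$ to $\mathscr{A}$ that lifts over every geometric point need not lift scheme-theoretically, and even if lifts at points exist they need not glue without a uniqueness statement. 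The paper handles this by repeating, over each open affine $\Spec(R)\subseteq J_{b,m}$, the argument used in the proof of Proposition~\ref{dim of cent}: one takes the lifting $\R$ of $R$, shows that the translated tensor $s_{\dr}'[\tfrac{1}{p}]$ on $\mathbb{D}(\A_T/\mathcal{H})(\R)^\otimes$ extends to an integral section $s_{\dr}'$ and that $\IIsom_{\R}\bigl((V_W,s)_{\R},(\M',s_{\dr}')\bigr)$ is a $G_W$-torsor (this is claim~(3) there, which uses Nisnevich's theorem and normality of $\R$), and then verifies the hypotheses of \cite{LRKisin}~1.4.7 and invokes \cite{LRKisin} Proposition~1.4.9. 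Proposition~1.4.9 is what gives a \emph{scheme-theoretic} factorisation through $\ES_0$ together with its \emph{uniqueness}, and it is the uniqueness that lets the local factorisations glue to a global morphism $J_{b,m}\to\ES_0$, which then lands in $C_x$ by the pointwise analysis. Your appeal to Proposition~1.4.4 alone is a statement about geometric points and cannot replace this.

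\textbf{What matches.} The construction of the level-$(m-e)$ trivialisation and the verification that it lies in $J_{b,m-e}$ (extends étale-locally to higher levels; lands in $J_{m-e}/U_{m-e}$ because $\rho$ and $t$ respect the Hodge--Tate tensors, via Lemma~\ref{closed and indep of liftings}) is essentially the paper's argument, which writes the isomorphism explicitly as
$\oplus_i\G_x^i[p^{m-e}]\xrightarrow{\rho^{-1}}\oplus_i(\G_x^i/H^i)[p^{m-e}]\xrightarrow{t}\oplus_i(\G^i/\mathcal{H}^i)[p^{m-e}]\cong\oplus_i\G^i[p^{m-e}]$.
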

\begin{proof}
The existence of $\mathcal{H}$ is proved in \cite{unitary shv} Lemma 3.6. For $z\in J_{b,m}(\overline{\kappa})$, it is a pair $(y,j)$ with $y\in C_x(\overline{\kappa})$ and $j:\G_x[p^m]\rightarrow \G_y[p^m]$ an isomorphism whose induced map on Dieudonn\'{e} modules respects the Hodge-Tate tensors. Here we use the canonical identifications $\G_x=\oplus_i\G_x^i[p^m]$ and $\G_y=\oplus_i\G_y^i[p^m]$. By our assumption, $j$ lifts to an isomorphism $j':\G_x\rightarrow \G_y$ whose induced map on Dieudonn\'{e} modules respects Hodge-Tate tensors. The isogeny $\rho^{-1}:\G_x\rightarrow \G_x$ gives a element $g\in L(K)$ such that $g^{-1}b\sigma(g)\in G_W(W)\sigmu G_W(W)$. It gives, via $j'$, a point $gy\in \ES_0(\overline{\kappa})$ by \cite{LRKisin}. The isomorphism $\G_x\rightarrow \G_{gy}$ which is the push out of $j'$ via $\rho^{-1}$ is an isomorphism respecting Hodge-Tate tensors. So $gy\in C_x(\overline{\kappa})$. By the proof of Proposition \ref{dim of cent}, for each open affine subscheme $\Spec(R)\subseteq J_{b,m}$, the composition $\Spec(R)\rightarrow J_{b,m}\stackrel{\rho_*}{\rightarrow} \mathscr{A}$ factors through $\ES_0$. This factorization is necessarily unique by \cite{LRKisin} Proposition 1.4.9, and hence  glue to a morphism $i:J_{b,m}\rightarrow \ES_0$ which necessarily factors through $C_x$.

Let $H$ be $\mathrm{ker}(\rho^{-1})$. Using the identification $\G_{J_{b,m}}/\mathcal{H}\cong i^*\G$, the isomorphism $$\oplus_i\G_x^i[p^{m-e}]\stackrel{\rho^{-1}}{\rightarrow} \oplus_i(\G_x^i/H^i)[p^{m-e}]\stackrel{t}{\rightarrow}\oplus_i(\G^i/\mathcal{H}^i)[p^{m-e}]\cong\oplus_i\G^i[p^{m-e}]$$
gives the morphism $J_{b,m}\rightarrow J_{b,m-e}$.
\end{proof}
\begin{remark}\label{Frob on Igusa}
As in \cite{coho of PEL}, by the same proof above, we can also define $\sigma$-semi-liner action of $F:\G_x\rightarrow \G_x^{(p)}$ on Igusa towers. More precisely, $Frob:J_{b,m}\rightarrow J_{b,m-1}$ is induced by the abelian scheme $\A^{(p)}=\A/\A[F]$. Here by $\sigma$-semi-liner, we mean the following diagram
$$\xymatrix{J_{b,m}\ar[r]^{Frob}\ar[d] &J_{b,m-1}\ar[d]\\
C_x\ar[r]^{\sigma}&C_x}$$ is commutative. Moreover, $Frob=q\circ \sigma_{J_{b,m}}$. Here we write $\sigma$ for the absolute Frobenius.
\end{remark}
\subsection[Foliations]{Foliations}
\subsubsection[Rapoport-Zink formal schemes of Hodge type]{Rapoport-Zink formal schemes of Hodge type}\label{R-Z for Hodge}
Rapoport-Zink formal schemes of Hodge type are first defined and constructed by Wansu Kim in \cite{RZ for Hodge}. Howard and Pappas give in \cite{RZ for spin} a more direct construction relying on the existence of the integral model. We will follow \cite{RZ for spin} in this paper.

Let's fix some notations as in \cite{RZ for spin} 2.1.1. We write $\Nil_W$ for the category of $W$-schemes $S$
such that $p$ is Zariski locally nilpotent in $O_S$. We write $\ANil_W\subseteq\Nil_W^\mathrm{op}$ for the full subcategory of Noetherian $W$-algebras in which $p$ is nilpotent, and  $\ANil^f_W$
for the category of Noetherian adic W-algebras in which $p$ is
(topologically) nilpotent, and embed
$\ANil_W\subseteq\ANil^f_W$ as a full subcategory by endowing any W-algebra in $\ANil_W$ with its $p$-adic
topology.
We say that an adic $W$-algebra $A$ is \emph{formally finitely generated} if $A$ is
Noetherian, and if $A/I$ is a finitely generated $W$-algebra for some ideal of
definition $I\subseteq A$. Thus $\Spf(A)$ is a formal scheme which is formally of finite
type over $\Spf(W)$. If, in addition, $p$ is nilpotent in $A$, then $A$ is a quotient
of $W/(p^n)[[x_1,\cdots, x_r]][y_1,\cdots, y_s]$ for some $n, r$, and $s$.
We will denote by
$\ANil^\fsm_W\subseteq\ANil^f_W$
the full subcategory whose objects are $W$-algebras that are formally finitely
generated and formally smooth over $W/(p^n)$, for some $n\geq1$.

We start with classical Rapoport-Zink
spaces. Let $X_0$ be a $p$-divisible group over $k=\overline{k}$. The Rapoport-Zink
space $\RRZ(X_0)$ of deformations of $X_0$ up to quasi-isogeny is the functor assigning to each
scheme $S$ in $\Nil_W$ the set of isomorphism classes of pairs $(X, \rho)$, where $X$ is a $p$-divisible group over $S$, and $\rho:X_0\times_k\overline{S}\dashrightarrow X\times_S\overline{S}$ is a quasi-isogeny. Here $\overline{S}:= S\times_Wk$. As in \cite{period of p-div}, $\RRZ(X_0)$ is represented by a formal scheme $\RZ(X_0)$ over $\Spf(W)$ that is formally smooth
and locally formally of finite type over $W$. If $(X_0,\lambda_0)$ is a principal polarized $p$-divisible group, one can also define $\RRZ(X_0,\lambda_0)$ (see \cite{RZ for spin} 2.3.1 or \cite{period of p-div}). It is represented by a closed sub formal scheme of $\RZ(X_0)$, denoted by $\RZ(X_0,\lambda_0)$, which is again formally smooth
and locally formally of finite type over $W$.

Now we consider the morphism $\ES_W\rightarrow \mathscr{A}_W$ as before, which is induced by an embedding of Shimura data $(G,X)\rightarrow (\GSp(V,\psi),X')$ which are both of good reduction at $p$. For $x\in\ES_W(W)$, we write $X_0$ for $\A_x[p^\infty]$. The $G'_W$-torsor $\I'_x$ is trivial, and its sections lifts to elements in $\I'(W)$. We fix such a lifting as before, and use it to translate the Dieudonn\'{e} module structure on $\mathbb{D}(X_0)(W)$ to $V_W$. Let $V_W=\mathrm{F}^1\oplus \mathrm{F}^0$ be the splitting on $V_W$ induced by $\mu$, then under the above identification, $\mathrm{F}^1$ gives the Hodge filtration on $V_W$. Moreover, as at the beginning of this section, the Frobenius of $X_0$ gives an element $b\in B(G_W,\sigmu)$.
\begin{definition}
We define the functor $\RRZ_{G_W}^\nil:\ANil_W\rightarrow \mathrm{Sets}$ as follows. For any $R\in \ANil_W$, $\RRZ_{G_W}^\nil(R)$ is the set of isomorphism classes of triples $(X,\rho,t)$, with $(X,\rho)\in \RZ(X_0)(R)$, and $t:\mathbf{1}\rightarrow \mathbb{D}(X)^\otimes$ be such that $t[1/p]$ is Frobenius equivariant, satisfying the following conditions.

(1) For some nilpotent ideal $J\subseteq R$ with $p\in J$, the pull-back of $t$ to
$\Spec(R/J)$ is identified with $s$ under the isomorphism of isocrystals
$$\mathbb{D}(\rho):\mathbb{D}(X_{R/J})^\otimes[1/p]
\rightarrow \mathbb{D}(X_0\times_{\overline{\kappa}}R/J)^\otimes[1/p]$$
induced by the quasi-isogeny $\rho$.

(2) $\IIsom\big((\mathbb{D}(X),t),(V_W, s)_R\big)$, the sheaf of isomorphisms
of crystals on $\Spec(R)$ respecting the tensors, is a crystal of $G_W$-torsors over the (big fppf) crystalline
site $\mathrm{CRIS}(\Spec(R)/W)$.

(3) $\IIsom_R\big((\mathbb{D}(X)(R),\Fil^1, t),(V_W, \mathrm{F}^1, s)_R\big)$ is a $P_+$-torsor. Here $\Fil^1\subseteq\mathbb{D}(X)(R)$ is the Hodge filtration.

An isomorphism $(X,\rho,t)\rightarrow (X',\rho',t')$ is an isomorphism of $p$-divisible groups $X\rightarrow X'$ compatible with additional structures in the obvious way.
\end{definition}
\begin{definition}
The functor $\RRZ^\fsm_{G_W}$ on $\ANil^\fsm_W$ is defined by setting
$$\RRZ^\fsm_{G_W}(A)=\varprojlim_n\RRZ^\nil_{G_W}(A/I^n),$$
where $I$ is an ideal of definition of $A$.
\end{definition}
We have the following results by \cite{RZ for spin} or \cite{RZ for Hodge} (see \cite{RZ for spin} Theorem 3.2.1).
\begin{theorem}
The functor $\RRZ^\fsm_{G_W}$ is represented by a closed sub formal scheme $\RZ_{G_W}(X_0)$ of $\RZ(X_0)$ which is formally smooth and formally locally of finite type over $\Spf(W)$
\end{theorem}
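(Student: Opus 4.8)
The plan is to deduce the statement from the representability of the bare Rapoport--Zink space $\RZ(X_0)$ together with crystalline deformation theory, which is the strategy of \cite{RZ for Hodge} and \cite{RZ for spin}. The first point is that the forgetful transformation $\RRZ^\nil_{G_W}\to\RRZ(X_0)$, $(X,\rho,t)\mapsto(X,\rho)$, is injective on $R$-points for every $R\in\ANil_W$: condition (1) forces $t[1/p]$ to be the Frobenius-equivariant section of $\mathbb{D}(X)^\otimes[1/p]$ matching $s$ under $\mathbb{D}(\rho)$ modulo some nilpotent PD-ideal, and such a section is unique if it exists, by rigidity of sections of crystals along nilpotent PD-thickenings. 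Hence $\RRZ^\nil_{G_W}$ is a subfunctor of the functor represented by $\RZ(X_0)$, and it suffices to show this subfunctor is represented by a closed formal subscheme $\RZ_{G_W}(X_0)$; the functor $\RRZ^\fsm_{G_W}$ is then automatically represented by the same closed subscheme (now viewed over $\Spf(W)$), since the limit $\varprojlim_n\RRZ^\nil_{G_W}(A/I^n)$ in the definition of $\RRZ^\fsm_{G_W}$ is compatible with closed immersions, and $\RZ(X_0)$ is locally formally of finite type over $\Spf(W)$ by \cite{period of p-div}.

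Next I would check closedness. Let $X^{\mathrm{univ}}$ be the universal $p$-divisible group over $\RZ(X_0)$ and $\mathbb{D}(X^{\mathrm{univ}})$ its Dieudonn\'e crystal. On the (open) locus where $\rho$ is defined modulo a nilpotent PD-ideal, $\mathbb{D}(\rho)$ carries $s$ to a section of $\mathbb{D}(X^{\mathrm{univ}})^\otimes[1/p]$, and working Zariski-locally on the Noetherian formal scheme $\RZ(X_0)$ one verifies that each of the three conditions defining $\RRZ^\nil_{G_W}$ cuts out a closed locus: (i) the requirement that this rational section actually lie in the integral lattice $\mathbb{D}(X^{\mathrm{univ}})^\otimes$ is the vanishing of finitely many coordinates after clearing denominators; (ii) once the integral tensor $t$ is fixed, demanding that $\IIsom\big((\mathbb{D}(X^{\mathrm{univ}}),t),(V_W,s)\big)$ be a crystal of $G_W$-torsors is closed because $G_W\subseteq\GL(V_W)$ is defined by $s$ and one is imposing that a map of vector bundles respecting marked tensors be an isomorphism; (iii) likewise the condition on $\IIsom_R\big((\mathbb{D}(X^{\mathrm{univ}})(R),\Fil^1,t),(V_W,\mathrm{F}^1,s)_R\big)$ being a $P_+$-torsor is closed, $P_+$ being the stabilizer in $G_W$ of the filtration and this being the locus where the Hodge filtration has the prescribed relative position. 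The intersection of these three closed conditions is the desired $\RZ_{G_W}(X_0)$, and it inherits ``formally locally of finite type over $\Spf(W)$'' from $\RZ(X_0)$.

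The heart of the matter is formal smoothness, which I would establish by Grothendieck--Messing/Faltings deformation theory, in the form already used in the excerpt around \cite{CIMK} Proposition~2.3.5. Let $R\to R_0$ be a surjection in $\ANil_W$ with square-zero kernel carrying the trivial PD-structure, and let $(X_0,\rho_0,t_0)\in\RRZ^\nil_{G_W}(R_0)$. Crystalline theory lifts the Dieudonn\'e datum and (by rigidity) the tensor $t_0$ canonically over $R$, and lifts of $X_0$ to a $p$-divisible group over $R$ form a torsor under the lifts of the Hodge filtration inside $\mathbb{D}(X_0)(R)$; among these one must select a lift which remains a $P_+$-filtration with respect to $t$, i.e.\ which preserves the $P_+$-torsor condition (3). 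Since $G_W$ is a reductive group scheme and $P_+\subseteq G_W$ a parabolic with unipotent radical $U_+$, such a lift exists: the reduction of structure group to the smooth subgroup $P_+$ on the special fibre extends to $R$ because the obstruction lives in a group built from $\mathrm{Lie}(U_+)$ and the space of such lifts is a nonempty torsor under it. This produces the required lift $(X,\rho,t)\in\RRZ^\nil_{G_W}(R)$, and comparing with the tangent-space computation for $\RZ(X_0)$ shows the deformation is as unobstructed as expected, giving formal smoothness.

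The step I expect to be the genuine obstacle is this last one: one must argue, without circularity, that after deforming along a square-zero thickening the crystalline tensor $t$ still lands in the integral structure \emph{and} that the $P_+$-reduction lifts simultaneously, so that all three torsor conditions hold at once. This is precisely where the smoothness of $G_W$ and the good behaviour of the Hodge cocharacter $\mu$ (via the parabolics $P_+\supseteq U_+$ introduced in \S1.4) are used essentially rather than formally; the closedness statements and the inheritance of ``formally locally of finite type'' are comparatively routine once $\RZ(X_0)$ is known. For the precise deformation-theoretic bookkeeping I would follow Kim \cite{RZ for Hodge} and Howard--Pappas \cite{RZ for spin}, of which this theorem is \cite{RZ for spin} Theorem~3.2.1.
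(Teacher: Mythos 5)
Your proposal takes a genuinely different route from the one the paper follows. The paper cites \cite{RZ for spin} Theorem~3.2.1 and, in the discussion after the theorem, sketches Howard--Pappas's construction: one does \emph{not} prove formal smoothness by Grothendieck--Messing on the abstract moduli problem. Instead one pulls back the symplectic Rapoport--Zink space $\RZ(X_0,\lambda_0)$ along the finite morphism $i:\ES_W^\wedge\to\mathscr{A}_W^\wedge$ of $p$-adic formal completions to get $\RZ_{G_W}^{\diamond}(X_0)$, which is automatically formally smooth and formally locally of finite type because $\ES_W$ is smooth and $i$ is finite; $\RZ_{G_W}(X_0)$ is then the open-and-closed locus where the universal quasi-isogeny carries $s_{\mathrm{cris}}$ to $s$, and only afterwards is it identified with the moduli functor (via \cite{RZ for spin} Propositions 3.2.7, 3.2.9, 3.2.11). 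Your plan -- realize $\RRZ^\nil_{G_W}$ directly as a closed subfunctor of $\RRZ(X_0)$, then verify formal smoothness by lifting the $P_+$-reduction of the Hodge filtration -- is closer in spirit to Kim \cite{RZ for Hodge} than to the construction the paper actually uses. The Howard--Pappas route buys one a free pass on smoothness and on the integrality/closedness questions, at the price of assuming the smooth integral model $\ES_W$ up front; your route is more intrinsic but must confront those questions head-on.

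That is exactly where your sketch has a gap: step (i) of the closedness argument is not a coordinate computation. Over a ring on which $p$ is nilpotent, $\mathbb{D}(X^{\mathrm{univ}})^\otimes[1/p]$ is zero, so the ``rational tensor'' $\mathbb{D}(\rho)^{-1}(s)$ only makes sense after evaluating the crystal on a $p$-torsion-free PD-thickening; deciding that it lands in the integral lattice there and that the resulting locus on the base is closed is nontrivial (this is a genuine technical point in Kim's treatment, handled by a specialization argument, not by ``clearing denominators''). A similar care is needed in (ii)--(iii): you must first know the integral $t$ exists before the torsor conditions even make sense. Finally, a small correction in your deformation step: lifts of the $P_+$-filtration along a square-zero thickening form a torsor under $\mathrm{Lie}(U_-)\cong\mathfrak g/\mathfrak p_+$, the tangent space of $G_W/P_+$, not under a group built from $\mathrm{Lie}(U_+)$.
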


Now we briefly discuss the construction of $\RZ_{G_W}(X_0)$ in \cite{RZ for spin}. Let $i:\ES_W^\wedge\rightarrow \mathscr{A}_W^\wedge$ be the (finite) morphism of smooth formal schemes obtained by $p$-adic completions. Let $\pi:X\rightarrow \mathscr{A}_W^\wedge$ be the formal $p$-divisible obtained from $\A$. We use the same notation for the pull back to $\ES_W^\wedge$ of $\pi$. By \cite{CIMK}, we have an $O_{\ES_W^\wedge}$-morphism of crystals $s_\mathrm{cris}:\mathbf{1}\rightarrow \mathbb{D}(X)^\otimes$ which gives $s$ when pulling back to $x$. Moreover, $s_\mathrm{cris}[1/p]$ is Frobenius equivariant.

Let $\RZ(X_0,\lambda_0)$ be the symplectic Rapoport-Zink space attached to $x$, viewed as a point of $\mathscr{A}_{W}$, and $\Theta_G:\RZ_{G_W}^{\diamond}(X_0)\rightarrow\ES_W^\wedge$ be the pull back via $i$ of the natural map $\Theta:\RZ(X_0,\lambda_0)\rightarrow \mathscr{A}_{W}^\wedge$. Then $i':\RZ_{G_W}^{\diamond}(X_0)\rightarrow \RZ(X_0,\lambda_0)$ is a finite morphism of formal schemes and $\RZ_{G_W}^{\diamond}(X_0)$ is formally smooth and locally formally of finite type (\cite{RZ for spin} Proposition 3.2.5). Let $\RZ_{G_W}(X_0)$ be presheaf on $\Nil_W$ which attaches to $S\in \Nil_W$ the set of elements $(X,\rho)\in\RZ_{G_W}^{\diamond}(X_0)(S)$ such that for any field extension $k'/k$ and any $y\in S(k')$, the isomorphism $$\mathbb{D}(\rho):\mathbb{D}(X_y)^{\otimes}(W')[1/p]\rightarrow V_{W'}^{\otimes}[1/p]$$maps $y^*(s_{\mathrm{cris}})(W')$ to $s$. Here $W'$ is the Cohen ring of $k'$. By \cite{RZ for spin} Proposition 3.2.7 $\RZ_{G_W}(X_0)$ is represented by smooth formal scheme which is open and closed in $\RZ_{G_W}^{\diamond}(X_0)$, and by \cite{RZ for spin} Proposition 3.2.9, $\RZ_{G_W}(X_0)$ represents $\RRZ^\fsm_{G_W}$. The composition $\RZ_{G_W}(X_0)\hookrightarrow \RZ_{G_W}^{\diamond}(X_0)\stackrel{i'}{\rightarrow }\RZ(X_0,\lambda_0)$ is actually a closed immersion (\cite{RZ for spin} Proposition 3.2.11).
\subsubsection[The almost product structure of Newton strata]{The almost product structure of Newton strata}Now we assume, in addition, that $x\in \ES_0(\overline{\kappa})$ is such that $X_0$ is completely slope divisible. Let $\G/C_x$ be as as in the previous subsection and $\G_\bullet$ be its the slope filtration. Then by \cite{coho of PEL} Lemma 8, for any pair of integers $d\geq0$ and $r\geq d/\delta f$, there exists a canonical isomorphism $\alpha:\G^{(q^r)}[p^d]\cong \oplus_i\G^{i,(q^r)}[p^d]$. Here $q=|\kappa|=p^f$, and $\delta=\mathrm{min}_{i=1,\cdots,k-1}(\lambda_i-\lambda_{i+1})$ with $\lambda_i$ the slope of $\G^i$. As in \cite{coho of PEL} Lemma 8, $\alpha$ is compatible with additional structures, but in the following sense.
\begin{lemma}\label{split of slope fil by fro}
Let $\Spec R/C_x$ be an affine scheme and $\Spec W_d(R)\rightarrow \ES_W$ be a lifting. Let $\mathcal{N}$ be $\mathbb{D}(\G^{(q^r)})(W(R))$, and $\mathcal{N}^\bullet$ be the slope filtration. We write $-_d$ for the reduction to $W_d$. Let $M=\oplus_iM_i$ be as at the beginning of the previous subsection, and $I=\IIsom_{W_d(R)}((M,M^\bullet,s),(\mathcal{N}, \mathcal{N}^\bullet,\sdr))$. Then $\alpha$ induces a section of the natural projection $I\rightarrow I/U$ via the identification $M=\oplus_iM_i$.
\end{lemma}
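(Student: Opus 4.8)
The plan is to run Mantovan's isomorphism $\alpha$ through the contravariant Dieudonn\'e functor: the outcome is a splitting of the slope filtration on $\mathcal{N}_d$ which is compatible with $\sdr$, and such a splitting, combined with the fixed splitting $M=\oplus_iM_i$ of the source, is exactly the datum of a section of $I\to I/U$.

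First I would pass to Dieudonn\'e modules. Since $r\geq d/\delta f$, the isomorphism $\alpha\colon\G^{(q^r)}[p^d]\cong\oplus_i\G^{i,(q^r)}[p^d]$ of BT-$d$s over $C_x$ is defined; pulling it back along $\Spec R\to C_x$ and applying the contravariant Dieudonn\'e functor over $W_d(R)$ (legitimate because $\Spec W_d(R)\to\ES_W$ is a lift and all the groups in sight are truncated Barsotti--Tate) produces an isomorphism of $W_d(R)$-modules $\beta\colon\bigoplus_i\mathrm{gr}^i(\mathcal{N}_d)\xrightarrow{\ \sim\ }\mathcal{N}_d$, where $\mathrm{gr}^i(\mathcal{N}_d)=\mathcal{N}_d^i/\mathcal{N}_d^{i-1}=\mathbb{D}(\G^{i,(q^r)})(W_d(R))$. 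Because $\alpha$ is compatible with the slope filtrations, $\beta$ carries $\bigoplus_{i\leq j}\mathrm{gr}^i(\mathcal{N}_d)$ onto $\mathcal{N}_d^j$ for every $j$, so $\beta$ is a splitting of the slope filtration $\mathcal{N}_d^\bullet$. The point I would then record is that $\beta$ also respects $\sdr$: since $\alpha$ is built from powers of the relative Frobenius and $s_{\mathrm{cris}}[1/p]$ is Frobenius equivariant, the argument for the polarization in \cite{coho of PEL} Lemma 8 goes through for $\sdr$ as well, and the integrality of this compatibility over $W_d(R)$ follows because $\sdr$ lies in the weight-$0$ part of $\mathcal{N}_d^\otimes$ for the slope cocharacter (Lemma \ref{slope fil is G-split}, together with the integrality arguments already used in the proof of Lemma \ref{working over R hat}), where the relevant power of Frobenius takes integral values.

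Given $\beta$, I would construct the section as follows. Recall that $M=\oplus_iM_i$ already carries the canonical splitting of its slope filtration $M^\bullet$, with $L\subseteq G_{W(\overline{\kappa})}$ the subgroup preserving the grading. Via $\beta$ the module $\mathcal{N}_d$ inherits a splitting of $\mathcal{N}_d^\bullet$ for which $\sdr$ is pure of weight $0$; let $I^{\mathrm{spl}}\subseteq I$ be the subfunctor of isomorphisms $M\to\mathcal{N}_d$ that in addition carry the splitting of $M$ to this splitting of $\mathcal{N}_d$. Any two splittings of a filtered module differ by a unipotent automorphism, so $I^{\mathrm{spl}}$ is fppf-locally nonempty; it is stable under the $L$-action on $I$, hence is an $L$-torsor, and the composite $I^{\mathrm{spl}}\hookrightarrow I\xrightarrow{\ \pi\ }I/U$ is $L$-equivariant. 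Since $I/U$ is itself an $L$-torsor (with $L=P/U$) and an equivariant map of torsors under the same group is an isomorphism, $\pi|_{I^{\mathrm{spl}}}$ is an isomorphism, and the desired section is the composite $I/U\xrightarrow{\ \sim\ }I^{\mathrm{spl}}\hookrightarrow I$, which visibly splits $\pi$. It is canonical because $\alpha$, and hence $\beta$, is canonical.

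The compatibilities of $\beta$ with the filtration and with the $L$-action, and the torsor identifications of the last paragraph, are routine. The main obstacle is the middle step, namely showing that $\mathbb{D}(\alpha)$ respects the Hodge-Tate tensor: \cite{coho of PEL} Lemma 8 only records compatibility with a polarization, so one must return to the Frobenius-theoretic construction of $\alpha$ and combine the Frobenius equivariance of $s_{\mathrm{cris}}$ with the weight-$0$ property of $\sdr$ for the slope cocharacter, which is what makes the compatibility hold integrally rather than only after inverting $p$.
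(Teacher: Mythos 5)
Your proof is correct and is essentially the same argument as the paper's, modulo a more functorial packaging. The paper's proof trivializes the $P$-torsor $I$ (after an \'{e}tale localization via Corollary \ref{slope fil on cent gives torsor}), fixes a section $t$, and then checks that the explicit composition $t_d^{-1}\circ\alpha\circ\overline{t_d}$ lies in $G_W(W_d(R))$ by identifying it as the $W_d$-reduction of an endomorphism of the form $p^\nu\varphi$ — with $\nu$ the slope pro-cocharacter of $G_W$ — and then observing that $p^\nu\varphi$ is an integral isomorphism (that is the content of complete slope divisibility, via \cite{unitary shv} Lemma 4.1), so that the Frobenius-equivariance of $s_{\cris}[1/p]$ together with the weight-$0$ property of $s$ for $\nu$ yields $p^\nu\varphi(s)=s$ integrally. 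Your argument carries out exactly this calculation, but at the level of the tensor $\sdr$ and the splitting $\beta$ rather than at the level of a matrix $t_d^{-1}\alpha\overline{t_d}$: the claim that $\beta$ respects $\sdr$ integrally (your middle step) is the same statement, deduced from the same two ingredients (Frobenius-equivariance of $s_{\cris}[1/p]$ and the $G$-split nature of the slope filtration from Lemma \ref{slope fil is G-split}, which makes $\sdr$ weight $0$ for the slope cocharacter). Once that is in hand, your identification of the section via the $L$-torsor $I^{\mathrm{spl}}\xrightarrow{\sim}I/U$ is a routine Levi-decomposition argument that the paper leaves implicit after showing the displayed composition lies in $G_W(W_d(R))$. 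The one place you should be a little more careful in write-up is the justification that the unipotent automorphism carrying the $t$-splitting to the $\beta$-splitting actually lies in $U\subseteq G$ rather than just in the unipotent of $\GL$: this is exactly where you need to have already established that $\beta$ respects $\sdr$, so the logical order (first the tensor compatibility, then the torsor formalism) must be kept as you have it.
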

\begin{proof}
It suffices to assume that $\Spec R$ is open affine in $C_x$. By passing to an \'{e}tale algebra, we can assume that the torsor constructed in Corollary \ref{slope fil on cent gives torsor} is trivial. A section $t$ there gives a $t_d\in I(W_d(R))$. To prove the lemma, we only need to check that the composition
$$\xymatrix{M_d= \oplus_iM_{i,d}\ar[r]^(0.6){t_d} &\oplus_i\mathcal{N}_{i,d}\ar[r]^{\alpha} &\mathcal{N}_d\ar[r]^{t_d^{-1}} &M_d}$$ lies in $G_W(W_d(R))$, where $\mathcal{N}_{i}=\mathbb{D}(\G^{i,(q^r)})(W(R))$, and $t_d:\oplus_iM_{i,d}\rightarrow\oplus_i\mathcal{N}_{i,d}$ is the composition $\oplus_iM_{i,d}=\oplus_iM_d^i/M_d^{i-1}\rightarrow \oplus_i\mathcal{N}_d^i/\mathcal{N}_d^{i-1}=\oplus_i\mathcal{N}_{i,d}$.

By the construction of $\alpha$ as in \cite{unitary shv} Lemma 4.1, we see that the composition above is base-change to $W_d(R)$ of $$\xymatrix{M= \oplus_iM_{i}\ar[r]^(0.6){t} &\oplus_i\mathcal{N}_{i}\ar[r]^{\alpha'} &\mathcal{N}\ar[r]^{t^{-1}} &M}$$ which is of the form $p^\nu\varphi$ for some pro-cocharacter $\nu$ of $G_W$. The construction implies that it is an isomorphism and hence respecs Hodge-Tate tensors.
\end{proof}
Let $\RZ_{G_W}^{n,d}(X_0)\subseteq\RZ_{G_W}(X_0)$ be the closed sub formal scheme classifying quasi-isogenies $\rho:X_0\times \overline{S}\dashrightarrow X\times_S\overline{S}$ such that $p^n\rho$ and $p^{d-n}\rho^{-1}$ are both isogenies (or equivalently, $p^n\rho$ is an isogeny whose kernel is killed by $p^d$). It is the pull back via $\RZ_{G_W}(X_0)\hookrightarrow \RZ(X_0)$ of $\RZ^{n,d}(X_0)$. The Let $\RZ_{G_W}^{n,d}(X_0)$s for a direct system as $n,d$ vary, and the direct limit is $\RZ_{G_W}(X_0)$. For $\rho\in T_b$, action by $\rho$ induces a morphism $\RZ_{G_W}^{n,d}(X_0)\rightarrow \RZ_{G_W}^{n+n(\rho),d+d(\rho)}(X_0)$, with $n(\rho)$ resp. $d(\rho)$ the smallest integer such that $p^{n(\rho)}\rho$ and $p^{d(\rho)-n(\rho)}\rho$ are isogenies. Similarly, the $\sigma$-semi-linear action of $F:X_0\rightarrow X_0^{(p)}$, $Frob:\RZ_{G_W}(X_0)\rightarrow \RZ_{G_W}(X_0)$, given by $(H,\rho)\rightarrow (H,\rho\circ F^{-1})$, induces a semi-linear morphism $\RZ_{G_W}^{n,d}(X_0)\rightarrow \RZ_{G_W}^{n+1,d+1}(X_0)$

Let $\RZ_{G_W}^{n,d,-}(X_0)$ be the reduced fiber of $\RZ_{G_W}^{n,d,-}(X_0)$. To simplify notations, we write $\bM$ for $\RZ_{G_W}(X_0)$ and $\overline{\bM}^{n,d}$ for $\RZ_{G_W}^{n,d,-}(X_0)$. For $m\geq d$, we define a morphism $\pi:J_{b,m}\times \overline{\bM}^{n,d}\rightarrow \mathscr{A}_{\overline{\kappa}}$ as follows. Let $t:\oplus_iX_0^i[p^m]\rightarrow \oplus_i\G_{J_{b,m}}^i[p^m]$ be the universal isomorphism. By the previous lemma, the isomorphism $$X_0^{(q^r)}[p^m]=\oplus_iX_0^{i,(q^r)}[p^m]\rightarrow \oplus_i{\G^{i,(q^r)}_{J_{b,m}}}[p^m]\stackrel{\alpha^{-1}}{\rightarrow} \G_{J_{b,m}}^{(q^r)}[p^m]$$ is compatible with additional structures. We still write $t$ for its restriction to $p^d$-kernels. Let $(X,\rho,s)$ be the universal object on $\overline{\bM}^{n,d}$, then $p^n\rho$ is an isogeny with $\mathrm{ker}(p^n\rho)\subseteq X_0[p^d]$. So $\mathrm{ker}(p^n\rho^{(p^r)})\subseteq X_0^{(p^r)}[p^d]$. The polarized abelian scheme $\mathrm{p}_1^*\A/\mathrm{p}_1^*t(\mathrm{p}_2^*\mathrm{ker}(p^n\rho^{(p^r)}))$ (with  level structure) gives $\pi$. Here $\A$ is the abelian scheme on $J_{b,m}$, and $\mathrm{p}_i$ is the projection of $J_{b,m}\times \overline{\bM}^{n,d}$ to the $i$-th factor.

\begin{lemma}\label{finiteness of almost prod map}
The morphism $\pi$ is finite.
\end{lemma}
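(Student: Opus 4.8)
The plan is to show that $\pi$ is proper and quasi-finite; since $J_{b,m}\times\overline{\bM}^{n,d}$ and $\mathscr{A}_{\overline{\kappa}}$ are separated and of finite type over $\overline{\kappa}$, this forces $\pi$ to be finite. Note first that, by \cite{LRKisin}, $\pi$ factors through the Newton stratum $\ES_0^b\subseteq\ES_{0,\overline{\kappa}}$: the abelian varieties in its image are isogenous, compatibly with the additional structure, to the $\A_y$ for $y\in C_x$.

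For properness I would apply the valuative criterion. Let $\mathcal{O}$ be a discrete valuation ring with fraction field $\mathcal{K}$, let $\xi_\mathcal{K}=(z_\mathcal{K},(X,\rho,s)_\mathcal{K})$ be a $\mathcal{K}$-point of $J_{b,m}\times\overline{\bM}^{n,d}$, and let $B$ be an $\mathcal{O}$-point of $\mathscr{A}_{\overline{\kappa}}$ extending $\pi(\xi_\mathcal{K})$; I must lift $B$ uniquely to the source. Write $y_\mathcal{K}\in C_x(\mathcal{K})$ for the image of $z_\mathcal{K}$. Since $B$ corresponds to an abelian scheme over $\mathcal{O}$ and $\A_{y_\mathcal{K}}$ is isogenous to $B_\mathcal{K}$, the abelian variety $\A_{y_\mathcal{K}}$ has good reduction; as $\ES_{0,\overline{\kappa}}\to\mathscr{A}_{\overline{\kappa}}$ is finite, hence proper, the point $y_\mathcal{K}$ extends uniquely to $y_\mathcal{O}\in\ES_{0,\overline{\kappa}}(\mathcal{O})$. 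The isogeny $\A_{y_\mathcal{K}}\to B_\mathcal{K}$ extends over $\mathcal{O}$ by N\'eron-model functoriality, so the special fibre of $y_\mathcal{O}$ lies in $\ES_0^b$; as $C_x$ is closed in $\ES_0^b$ and $y_\mathcal{K}\in C_x$, we get $y_\mathcal{O}\in C_x(\mathcal{O})$. Now $z_\mathcal{K}$ extends uniquely because $J_{b,m}\to C_x$ is finite, and $(X,\rho,s)_\mathcal{K}$ extends uniquely because $\overline{\bM}^{n,d}=\RZ_{G_W}^{n,d,-}(X_0)$ is proper (indeed projective) over $\overline{\kappa}$ — a standard fact about truncated Rapoport–Zink spaces, see \cite{RZ for spin}, \cite{coho of PEL}. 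These extensions agree on $\mathcal{K}$-points and, by separatedness of the source, yield the required unique lift; hence $\pi$ is proper.

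For quasi-finiteness I would analyse geometric fibres, following \cite{foliation-Oort} and \cite{coho of PEL}. Fix $B\in\ES_0^b(k)$. A point of $\pi^{-1}(B)$ is a triple $(y,z,(X,\rho,s))$ with $y\in C_x(k)$, $z\in J_{b,m,y}(k)$, and $(X,\rho,s)\in\overline{\bM}^{n,d}(k)$ such that, up to the fixed Frobenius twist, $\A_y$ modulo $t(\ker(p^n\rho^{(q^r)}))$ is $B$. Once $y$ and $z$ are fixed the universal isomorphism $t$ is fixed, and then the lattice $\mathbb{D}(B)\subseteq\mathbb{D}(\A_y)[1/p]$ recovers the subgroup $\ker(p^n\rho^{(q^r)})$ via $t^{-1}$, hence the quasi-isogeny $\rho$, hence $(X,\rho)$ and, over a geometric point, the tensor $s$ by condition (1) in the definition of $\RRZ^{\nil}_{G_W}$. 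Thus the fibre injects into the set of admissible pairs $(y,z)$; for each $y$ there are finitely many $z$ since $J_{b,m}\to C_x$ is finite \'etale, and the set of admissible $y$ is finite by the boundedness argument of \cite{foliation-Oort}: after fixing one point, the fibre is an orbit under the finite Galois group $\Gamma_{b,m}$ of the Igusa tower. Hence $\pi$ is quasi-finite; being also proper and of finite type, it is finite.

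The step I expect to be the main obstacle is the quasi-finiteness, and within it the finiteness of the set of admissible base points $y\in C_x(k)$ lying over a given $B$: here one must genuinely exploit the structure of central leaves and the boundedness of the relevant isogenies, as in \cite{foliation-Oort} and \cite{coho of PEL}, rather than merely cite it, and keep careful track of the Frobenius twists $(q^r)$ built into $\pi$. The properness part, although it chains together several inputs — isogeny-invariance of good reduction, finiteness of $\ES_{0,\overline{\kappa}}\to\mathscr{A}_{\overline{\kappa}}$, closedness of central leaves in Newton strata, finiteness of the Igusa tower over $C_x$, and properness of $\overline{\bM}^{n,d}$ over $\overline{\kappa}$ — is comparatively routine once those are granted.
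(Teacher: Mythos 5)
Your argument is genuinely different from the paper's. The paper reduces the statement to the Siegel/PEL case: it exhibits $\pi$ as a composition $J_{b,m}\times\overline{\bM}^{n,d}\stackrel{i}{\to}J_{\overline{b},m}^{\GSp}\times\overline{\bM}^{n,d}_{\GSp}\stackrel{\pi'}{\to}\mathscr{A}^{\overline{b}}$, observes that $i$ is finite by construction (the Igusa tower map is built from a closed embedding, the RZ map from a closed embedding), and invokes Mantovan's \cite{coho of PEL} Proposition 10 for the finiteness of $\pi'$. Your proposal instead argues properness via the valuative criterion plus quasi-finiteness on geometric fibres. The paper's route is shorter because all the subtle inputs (properness of $\overline{\bM}^{n,d}_{\GSp}$, boundedness of isogenies, isogeny-invariance of good reduction) are absorbed into the cited proposition; your route makes them explicit, which is fine, but one of them is mishandled.

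The genuine gap is at the step ``The isogeny $\A_{y_\mathcal{K}}\to B_\mathcal{K}$ extends over $\mathcal{O}$ by N\'eron-model functoriality, \emph{so} the special fibre of $y_\mathcal{O}$ lies in $\ES_0^b$.'' This ``so'' does not follow. You have chosen $B$ to be an arbitrary $\mathcal{O}$-point of $\mathscr{A}_{\overline{\kappa}}$ extending $\pi(\xi_\mathcal{K})$; the Newton polygon of $B_0$ can jump up under specialization (Grothendieck), so $B_0$ --- and hence the isogenous $\A_{y_{\mathcal{O},0}}$ --- need not lie in $\ES_0^b$, and then $y_{\mathcal{O},0}\notin C_x$ and there is no lift. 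In other words, as a morphism to $\mathscr{A}_{\overline{\kappa}}$, $\pi$ is \emph{not} proper (its image is set-theoretically contained in the locally closed stratum $\mathscr{A}^{\overline{b}}$, which is not closed unless $b$ is basic), so it cannot be finite in that sense. The lemma must be read, as the paper's proof makes explicit by constructing $\pi'$ with target $\mathscr{A}^{\overline{b}}$, as finiteness of the induced map to the Newton stratum. Once the target is replaced by $\mathscr{A}^{\overline{b}}$ (equivalently $\ES_0^b$), your valuative-criterion test is only run against $\mathcal{O}$-points with $B_0\in\mathscr{A}^{\overline{b}}$, the disputed implication becomes correct, and the rest of the properness argument goes through given the inputs you list (properness of $\overline{\bM}^{n,d}$, finiteness of $\ES_{0,\overline{\kappa}}\to\mathscr{A}_{\overline{\kappa}}$, isogeny-invariance of good reduction, closedness of $C_x$ in $\ES_0^b$, finiteness of $J_{b,m}\to C_x$).

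Two smaller points. In the quasi-finiteness part, the assertion that the set of admissible $y$ ``is an orbit under the finite Galois group $\Gamma_{b,m}$'' is not accurate: $\Gamma_{b,m}$ acts on the Igusa tower over a fixed $y$, not on the different base points $y$; the finiteness of the set of $y$ needs the Oort/Mantovan boundedness-of-isogenies argument, not a Galois-torsor statement, and it would be worth writing this out if you want a self-contained proof rather than ending up citing \cite{coho of PEL} Proposition 10 anyway. Also, you rely on the (true) fact that $\overline{\bM}^{n,d}$ is proper over $\overline{\kappa}$; in the paper's approach this is implicit in the reduction to the Siegel case, where the truncated symplectic RZ space is projective and $\overline{\bM}^{n,d}$ is closed in it, so you may as well use that argument to justify it.
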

\begin{proof}
It follows essentially from the Siegel cases. More precisely, let $C_x^{\GSp}$ be the central leaf crossing (the image of ) $x$ in $\mathscr{A}_{\overline{\kappa}}$, and $\overline{b}$ be the Newton polygon of $b$. Let $J_{\overline{b},m}^{\GSp}$ be the Igusa cover of $C_x^{\GSp}$. We have a commutative diagram
$$\xymatrix{J_{b,m}\ar[r]\ar[d]&J_{\overline{b},m}^{\GSp}\ar[d]\\
C_x\ar[r]&C_x^{\GSp}}$$ induced by the universal isomorphism on $J_{b,m}$. We also have a closed embedding $\overline{\bM}^{n,d}\hookrightarrow \overline{\bM}^{n,d}_{\GSp}$ such that the universal quasi-isogeny on $\overline{\bM}^{n,d}$ is the pull back of the one on $\overline{\bM}^{n,d}_{\GSp}$. Here $\overline{\bM}^{n,d}_{\GSp}$ is $\RRZ(X_0,\lambda_0)^{n,d}$. By doing the construction of $\pi$ to $J_{\overline{b},m}^{\GSp}\times\overline{\bM}^{n,d}_{\GSp}$, we get $\pi':J_{\overline{b},m}^{\GSp}\times\overline{\bM}^{n,d}_{\GSp}\rightarrow \mathscr{A}^{\overline{b}}$, such that the composition
$$\xymatrix{J_{b,m}\times\overline{\bM}^{n,d}\ar[r]^i&J_{\overline{b},m}^{\GSp}\times\overline{\bM}^{n,d}_{\GSp}\ar[r]&\mathscr{A}^{\overline{b}}}$$
is $\pi$. But $\pi'$ is finite By \cite{coho of PEL} Proposition 10, and $i$ is finite by construction, so $\pi$ is finite.
\end{proof}

Noting that $\overline{\bM}^{n,d}$ could be singular, but each of its connected components has an open dense smooth locus, we have the following ``weak foliation''.

\begin{proposition}
Let $U$ be the smooth locus of an irreducible component of $\overline{\bM}^{n,d}$. Then the morphism $\pi|_{J_{b,m}\times U}$ factors through $\ES_0^b$.
\end{proposition}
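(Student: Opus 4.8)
The plan is to argue Zariski-locally on $J_{b,m}\times U$, use Kisin's criterion (\cite{LRKisin} 1.4.7 and Proposition 1.4.9) for a morphism into the Siegel moduli space to factor through $\ES_W$, and then glue the local factorizations using the uniqueness built into that proposition. Note first why one restricts to $U$: since $J_{b,m}\to C_x$ is finite \'etale and both $C_x$ and $U$ are smooth over $\overline{\kappa}$, the product $J_{b,m}\times U$ is smooth over $\overline{\kappa}$, whereas over the singular locus of $\overline{\bM}^{n,d}$ one has no formally smooth lift to $W$ to feed into the deformation-theoretic criterion. So I would cover $J_{b,m}\times U$ by affine opens $\Spec R$ with $R$ a regular domain essentially of finite type over $\overline{\kappa}$, choose a $p$-adically complete, $p$-torsion free, formally smooth lift $\mathcal R$ of $R$ over $W$ together with a Frobenius lift, and a compatible lift $\Spec W_m(R)\to\ES_W$ as in the construction of the Igusa tower. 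By \cite{LRKisin} Proposition 1.4.9 any factorization of a morphism to $\mathscr A$ through $\ES_W$, and hence (after reduction mod $p$) through $\ES_0$, is unique; so it suffices to produce for each such $\Spec R$ a factorization of $\Spec R\to J_{b,m}\times U\to\mathscr A_{\overline{\kappa}}$ through $\ES_0^b$, and these will automatically glue.

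To produce such a factorization I would verify the hypotheses of \cite{LRKisin} 1.4.7 for the polarized abelian scheme $\mathrm{p}_1^*\A/\mathrm{p}_1^*t(\mathrm{p}_2^*\mathrm{ker}(p^n\rho^{(p^r)}))$ defining $\pi$, over the lift $\mathcal R$. Its Dieudonn\'e crystal is built from $\mathbb D(\mathrm{p}_1^*\G)$ — after the Frobenius twist by $q^r$ — via the isogeny encoding the universal Igusa isomorphism $t$ and the universal quasi-isogeny $\rho$ on $\overline{\bM}^{n,d}$. Using that $t$ is compatible with $\sdr$ through the identification $X_0^{(q^r)}[p^m]=\oplus_iX_0^{i,(q^r)}[p^m]\to\G^{(q^r)}_{J_{b,m}}[p^m]$ — which is Lemma \ref{split of slope fil by fro} — together with the fact that $\rho$ respects $s$ by definition of $\RRZ_{G_W}$ and Kisin's tensor $s_{\cris}$ on $\A_{C_x}$, one obtains a Frobenius-equivariant, horizontal section $\sdr$ of the new Dieudonn\'e crystal over $\mathcal R[1/p]$. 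One then checks it is integral exactly as in part (3) of the proof of Proposition \ref{dim of cent}: the section is already defined on $\Spec\mathcal R[1/p]$, $\mathcal R$ is normal with $\mathcal R/p=R$ a domain, so it suffices to extend across the height-one prime $(p)$, where triviality of the relevant $G_W$-torsor over the discrete valuation ring $\mathcal R_{(p)}$ (using that $G_W$ is quasi-split, cf. \cite{Nisnevich}) supplies a section; the closure in the affine space $\V^\otimes$ is then the desired integral section. The remaining conditions of \cite{LRKisin} 1.4.7 — the $G_W$-torsor condition for the crystalline/de Rham realizations and that the Hodge filtration is of type $\mu$ — are checked pointwise as in Proposition \ref{dim of cent}. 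Then \cite{LRKisin} Proposition 1.4.9 gives $\Spec\mathcal R\to\ES_W$ lifting $\Spec R\to\mathscr A_{\overline{\kappa}}$, hence $\Spec R\to\ES_0$; since over each geometric point the constructed $p$-divisible group is isogenous, compatibly with $s$, to $X_0=\A_x[p^\infty]$ while $x\in\ES_0^b$, the image lies in the Newton stratum $\ES_0^b$. Gluing by uniqueness yields $\pi|_{J_{b,m}\times U}\colon J_{b,m}\times U\to\ES_0^b$.

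The main obstacle is the construction and integrality of the Tate tensor in the second step: one must track the crystalline realization of $\sdr$ simultaneously through the Frobenius twist by $q^r$, through the slope-filtration splitting of Lemma \ref{split of slope fil by fro}, and through the isogeny defining $\pi$, and only after assembling a coherent rational section over $\mathcal R[1/p]$ can the codimension-one extension argument be applied. Once the input to \cite{LRKisin} 1.4.7 is in place, the factorization through $\ES_W$, the identification of the image with $\ES_0^b$, and the gluing are routine.
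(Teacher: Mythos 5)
Your proposal is correct and follows essentially the same route the paper takes: argue pointwise over $\overline{\kappa}$ as in the first half of the proof of Proposition \ref{rou action}, then promote the pointwise factorization to a factorization of schemes by covering $J_{b,m}\times U$ by smooth affines, lifting to a regular $p$-adically complete $W$-algebra, verifying the hypotheses of Kisin's criterion (\cite{LRKisin} 1.4.7, including the integrality of the Tate tensor via the codimension-one argument with Nisnevich as in part (3) of the proof of Proposition \ref{dim of cent}), and invoking \cite{LRKisin} Proposition 1.4.9, whose uniqueness statement yields the gluing. You are more explicit than the paper about why the smooth locus $U$ is needed and about how the Tate tensor is tracked through the $q^r$-Frobenius twist, the slope-filtration splitting of Lemma \ref{split of slope fil by fro}, and the isogeny defining $\pi$, but these are exactly the steps the paper compresses into its two citations.
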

\begin{proof}
As in the first half of the proof of Proposition \ref{rou action}, each $\overline{\kappa}$-point of $J_{b,m}\times U$ factors through $\ES_0$. But by the proof of Proposition \ref{dim of cent}, the statement follows from \cite{LRKisin} Proposition 1.4.9.
\end{proof}
The above result is weak, but it is enough to compute the dimension of Newton strata.
\begin{corollary}
The Newton stratum $\ES_0^b$ is of dimension $\langle\rho,\mu+\nu(b)\rangle-\frac{1}{2}\mathrm{def}(b).$
\end{corollary}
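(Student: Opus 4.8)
The plan is to read off $\dim\ES_0^b$ from the ``almost product'' morphism $\pi\colon J_{b,m}\times\overline{\bM}^{n,d}\to\mathscr{A}_{\overline{\kappa}}$ constructed above, combined with the dimension formula for central leaves (Proposition \ref{dim of cent}) and the dimension formula for affine Deligne--Lusztig varieties. I fix a point $x\in\ES_0^b(\overline{\kappa})$ with $\A_x[p^\infty]$ completely slope divisible, which exists by the lemma above, write $X_0=\A_x[p^\infty]$, and let $C_x$ be the central leaf through $x$. The first step is to reduce $\dim\ES_0^b$ to $\dim J_{b,m}$ and $\sup_{n,d}\dim\overline{\bM}^{n,d}$. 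By Lemma \ref{finiteness of almost prod map} the morphism $\pi$ is finite, and by the weak foliation proposition above it factors through $\ES_0^b$ after restriction to the smooth locus $U$ of any irreducible component of $\overline{\bM}^{n,d}$; since a finite morphism preserves dimension of images, each such restriction has image in $\ES_0^b$ of dimension $\dim J_{b,m}+\dim U$. Hence $\dim J_{b,m}+\sup_{n,d}\dim\overline{\bM}^{n,d}\le\dim\ES_0^b$.

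For the reverse inequality I need that these images cover $\ES_0^b$ up to a lower-dimensional subset. Given $y\in\ES_0^b(\overline{\kappa})$, since $X_0$ is completely slope divisible and $\A_y[p^\infty]$ lies in the same Newton stratum, there is a quasi-isogeny $X_0\dashrightarrow\A_y[p^\infty]$ that can be arranged to respect the crystalline tensors; this is exactly the type of input supplied by \cite{LRKisin} Proposition 1.4.4 that was used repeatedly in the proofs of Proposition \ref{dim of cent} and Proposition \ref{rou action}. Trivialising the graded pieces of the slope filtration of $\A_y[p^\infty]$ by a point of the Igusa tower then produces, for $n,d$ large enough, a point of $J_{b,m}\times\overline{\bM}^{n,d}$ lying over $y$. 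Therefore $\ES_0^b=\bigcup_{n,d}\mathrm{Im}(\pi)$, so $\dim\ES_0^b=\dim J_{b,m}+\sup_{n,d}\dim\overline{\bM}^{n,d}$.

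It remains to evaluate the two terms. Since $J_{b,m}\to C_x$ is finite \'etale, $\dim J_{b,m}=\dim C_x=\langle 2\rho,\nu_G(b)\rangle$ by Proposition \ref{dim of cent}. On the other hand $\sup_{n,d}\dim\overline{\bM}^{n,d}$ equals the dimension of the reduced special fibre of the Rapoport--Zink formal scheme $\RZ_{G_W}(X_0)$, because $\RZ_{G_W}(X_0)_{\mathrm{red}}=\bigcup_{n,d}\overline{\bM}^{n,d}$ is an increasing union of closed subschemes and the ambient space is of finite dimension. By \cite{zhu-aff gras in mixed char} this special fibre is identified with the affine Deligne--Lusztig variety $X_\mu(b)$ attached to the hyperspecial level and the (minuscule) cocharacter $\mu$, whose dimension, by the Rapoport dimension formula together with the defect computation of \cite{geo of newt PEL} recalled above, is $\langle\rho,\mu-\nu(b)\rangle-\frac{1}{2}\mathrm{def}(b)$. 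Adding the two contributions,
$$\langle 2\rho,\nu(b)\rangle+\langle\rho,\mu-\nu(b)\rangle-\tfrac{1}{2}\mathrm{def}(b)=\langle\rho,\mu+\nu(b)\rangle-\tfrac{1}{2}\mathrm{def}(b),$$
which is the asserted formula.

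The main obstacle is the covering (surjectivity/density) statement in the second paragraph: one must check that for an arbitrary $\overline{\kappa}$-point $y$ of the Newton stratum the quasi-isogeny from $X_0$ can be chosen simultaneously compatible with the Hodge-type tensors and with a trivialisation of the slope-graded pieces, so that it genuinely lifts to a point of $J_{b,m}\times\overline{\bM}^{n,d}$. This is the Hodge-type analogue of the covering results of Mantovan \cite{coho of PEL} and Oort \cite{foliation-Oort}, and it rests on the deformation-theoretic results of \cite{LRKisin}; a secondary routine point is the stabilisation of $\dim\overline{\bM}^{n,d}$, which is immediate once one knows $\RZ_{G_W}(X_0)_{\mathrm{red}}$ is of finite dimension.
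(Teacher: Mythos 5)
Your proof follows the same route as the paper: lower bound from the quasi-finiteness of $\pi|_{J_{b,m}\times U}$ and the weak foliation, upper bound from the surjectivity of $\pi$ on $\overline{\kappa}$-points for $d$ large, then $\dim C_x=\langle 2\rho,\nu_G(b)\rangle$ from Proposition \ref{dim of cent} and $\dim\overline{\bM}=\langle\rho,\mu-\nu(b)\rangle-\tfrac{1}{2}\mathrm{def}(b)$ from Zhu's Corollary~3.13. The only difference is that you spell out the covering step a bit more explicitly via the \cite{LRKisin} mechanism, which the paper simply asserts; otherwise the argument is the same.
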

\begin{proof}
Notations as above, we have $\pi|_{J_{b,m}\times U}$ factors through $\ES_0^b$, and it is quasi-finite by Lemma \ref{finiteness of almost prod map}. So we have $\mathrm{dim}(\ES_0^b)\geq\mathrm{dim}(J_{b,m})+\mathrm{dim}(\overline{\bM}^{n,d})$, for all $m,d,n,r$. But $\pi$ induces a finite surjection $J_{b,m}(\overline{\kappa})\times \overline{\bM}^{n,d}(\overline{\kappa})\rightarrow \ES_0^b(\overline{\kappa})$ when restricting to $\overline{\kappa}$-points for $d$ big enough. So we have $\mathrm{dim}(\ES_0^b)=\mathrm{dim}(C_x)+\mathrm{dim}(\overline{\bM})$. By \cite{zhu-aff gras in mixed char} Corollary 3.13,  $\mathrm{dim}(\overline{\bM})=\langle\rho,\mu-\nu(b)\rangle-\frac{1}{2}\mathrm{def}(b)$, and by  Theorem \ref{dim of cent}, $\mathrm{dim}(C_x)=\langle2\rho,\nu(b)\rangle$. One deduces the formula immediately.
\end{proof}
If one is willing to use a bigger $r$, one has the following ``strong foliation''
\begin{proposition}
For $r$ big enough, the morphism $\pi$ factors through $\ES_0^b$, and induces a finite morphism to it. Moreover, notations as before, we have the followings.

(1) $\pi=(fr^f\times 1)\circ \pi$;

(2) $\pi\circ q= \pi$, with $q:J_{b,m}\rightarrow J_{b,m'}$ the natural projection;

(3) $\pi\circ i= \pi$, with $i:\overline{\bM}^{n',d'}\rightarrow \overline{\bM}^{n,d}$ the natural immersion;

(4) $\pi\circ (\rho\times\rho)= \pi$, for $\rho\in S_b$, $m\geq d+d(\rho)+e(\rho)$, and $r\geq(d+d(\rho))/\delta f$;

(5) $\pi\circ (Forb^f\times Frob^f)=(1\times \sigma^f) \pi$, for $m\geq d+1$, and $r\geq(d+1)/\delta f$.
\end{proposition}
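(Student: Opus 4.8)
The plan is to show that for $r$ sufficiently large the morphism $\pi$ lands in $\ES_0^b$, that it is finite onto $\ES_0^b$, and then to verify the five compatibilities (1)--(5) one by one. The factorization through $\ES_0^b$ is essentially the previous proposition: I would first observe that every $\overline{\kappa}$-point of $J_{b,m}\times\overline{\bM}^{n,d}$ factors through $\ES_0$ by the argument in the first half of the proof of Proposition~\ref{rou action}, using \cite{LRKisin} Proposition 1.4.4, and then that the image lies in the Newton stratum $\ES_0^b$ because the quasi-isogeny $\rho$ does not change the Newton point. The scheme-theoretic factorization through $\ES_0^b$, on the smooth locus, is \cite{LRKisin} Proposition 1.4.9, exactly as in the weak foliation statement; but now, since $r$ is big enough that $\alpha:\G^{(q^r)}[p^d]\cong\oplus_i\G^{i,(q^r)}[p^d]$ is already defined over all of $\overline{\bM}^{n,d}$ (not just its smooth locus) and is compatible with additional structures by Lemma~\ref{split of slope fil by fro}, the construction of $\pi$ makes sense and factors through $\ES_0^b$ over the whole (possibly singular) $\overline{\bM}^{n,d}$. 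Finiteness then follows from Lemma~\ref{finiteness of almost prod map}, since $\pi$ is already known to be finite as a morphism to $\mathscr{A}_{\overline{\kappa}}$ and $\ES_0^b\hookrightarrow\mathscr{A}_{\overline{\kappa}}$ is a locally closed immersion.

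For the compatibilities I would argue entirely on the level of the universal abelian schemes and their $p$-divisible groups, reducing each identity to a statement about $p$-divisible groups with the tautological isomorphisms/quasi-isogenies, and then using that two morphisms to $\ES_0^b$ agreeing on the associated abelian schemes (with polarization and level structure) coincide. For (1): $fr^f$ acts on $J_{b,m}$ as $\oplus_i p^{-\lambda_i f}$ and the factor $q^r=p^{rf}$ absorbs this twist, so $\pi\circ(fr^f\times 1)$ quotients $\A$ by the same subgroup scheme as $\pi$; this is the Hodge-type analogue of \cite{coho of PEL} and \cite{unitary shv}. For (2): the projection $q:J_{b,m}\to J_{b,m'}$ only forgets truncation data, and since $m,m'\geq d$ the subgroup $t(\mathrm{ker}(p^n\rho^{(p^r)}))\subseteq\A[p^d]$ used to form the quotient depends only on the image in $J_{b,m'}$. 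For (3): $i:\overline{\bM}^{n',d'}\to\overline{\bM}^{n,d}$ is the inclusion of a closed sub formal scheme coming from a change of $(n,d)$, and $p^{n}\rho$ and $p^{n'}\rho$ cut out the same kernel inside $\A[p^{\max(d,d')}]$ up to the evident shift, so the quotients agree. For (4) and (5): these are the Frobenius/quasi-isogeny equivariance statements; I would unwind the definition of the $\rho$-action on $J_{b,m}$ from Proposition~\ref{rou action} and of $Frob$ from Remark~\ref{Frob on Igusa}, together with the $\rho$-action and $Frob$-action on $\RZ_{G_W}^{n,d}(X_0)$ described just before Lemma~\ref{finiteness of almost prod map}, and check that the two composite isogenies $\A\to\A/\mathcal{H}$ produced on each side differ by an isomorphism compatible with polarization, level structure and Hodge-Tate tensors — the tensor compatibility being guaranteed by Lemma~\ref{split of slope fil by fro} and the construction of the $\rho$-action, which was arranged to respect $\sdr$.

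The main obstacle I expect is bookkeeping rather than conceptual: one must pin down precisely \emph{how large} $r$ must be (namely $r\geq d/\delta f$, and for (4),(5) the stated $r\geq(d+d(\rho))/\delta f$ resp. $r\geq(d+1)/\delta f$) so that the canonical isomorphism $\alpha$ exists at the relevant truncation level on \emph{all} of $\overline{\bM}^{n,d}$ and remains compatible with the $G_W$-structure after the further quotients in (4) and (5); and one must check that the various subgroup schemes $\mathcal{H}$, $t(\mathrm{ker}(p^n\rho^{(p^r)}))$, $\mathrm{ker}(F)$ etc. are the \emph{same} finite flat subgroup scheme of $\A$ on the nose, not merely up to isogeny. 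Once the identifications of subgroup schemes are in place, each of (1)--(5) follows because the two abelian schemes being compared are literally equal together with all their structures, so the two maps to the moduli space $\mathscr{A}$ agree, and hence so do the induced maps to $\ES_0^b$. The finiteness in the first assertion I would not reprove from scratch — it is Lemma~\ref{finiteness of almost prod map} combined with the factorization just established.
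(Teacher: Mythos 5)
Your high-level plan (factor through $\ES_0^b$ via Kisin's result, finiteness from Lemma~\ref{finiteness of almost prod map}, reduce (1)--(5) to identities of abelian schemes with structure) is the right one, and your remarks on (2), (3) and on (1), (4), (5) following the Siegel/PEL case are consistent with what the paper does. But there is a genuine gap in how you handle the central difficulty, namely that $\overline{\bM}^{n,d}$ can be singular, so that \cite{LRKisin}~Proposition 1.4.9 does not apply to it directly. You claim this is fixed by taking $r$ large enough that ``$\alpha$ is defined over all of $\overline{\bM}^{n,d}$.'' This misidentifies both the locus where $\alpha$ lives and the obstacle: $\alpha$ is the canonical splitting of the slope filtration of $\G_{J_{b,m}}$ over $J_{b,m}$ (a smooth $C_x$-scheme), and is already defined globally there for $r\geq d/\delta f$ regardless of $\overline{\bM}^{n,d}$; the problem is that the target of $\pi$ is only known to be $\mathscr{A}_{\overline{\kappa}}$, and promoting this to a morphism into the finite subscheme $\ES_{0,\overline{\kappa}}$ requires regularity of the source, which $J_{b,m}\times\overline{\bM}^{n,d}$ need not have. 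Making $r$ larger does nothing for the regularity of $\overline{\bM}^{n,d}$.

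The paper's actual argument circumvents this by never working directly over $\overline{\bM}^{n,d}$. It takes open affine connected sub formal schemes $U=\Spf(A,I)$ of the full Rapoport-Zink formal scheme $\bM_0$ (which \emph{is} formally smooth, hence $A$ is regular), uses \cite{RZ for spin}~Remark~2.3.5(c) to promote the universal family from $\Spf(A,I)$ to $\Spec A$, and then applies the factorization argument to $J_{b,m}\times\Spec A$, increasing $m$ and $r$ so that the bound $p^{n'}\rho$ an isogeny with kernel killed by $p^{d'}$ holds on $\Spec A$. Covering $\overline{\bM}^{n,d}$ by finitely many such $U_i$ and invoking the compatibility (3) then shows the original $\pi$ factors. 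This step — passing from the formal neighborhood inside the smooth $\bM$ to the possibly singular truncation $\overline{\bM}^{n,d}$ — is the essential idea your proposal is missing, and without it the scheme-theoretic factorization of $\pi$ (as opposed to a factorization only at $\overline{\kappa}$-points or only over the smooth locus) is not justified.
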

\begin{proof}
As in the proof of \cite{coho of PEL} Proposition 9, (2) and (3) are direct consequences of the construction. Let $U=\Spf(A,I)$ be an open affine connected sub formal scheme of $\bM_0:=\bM\times \Spec\overline{\kappa}$, with $I=\sqrt{I}$. By \cite{RZ for spin} Remark 2.3.5 (c), the universal family on $\Spf(A,I)$ gives family on $\Spec A$. Let $\rho$ be the quasi-isogeny on $\Spec A$, and $n'\geq n,d'\geq d$ be such that $p^{n'}\rho$ is an isogeny whose kernel is killed by $p^{d'}$. Then for $r\geq d'/\delta f$ and $m\geq d'$, $\pi|_{J_{b,m}\times \Spec A}$ factors through $\ES_0$.

Now for $\overline{\bM}^{n,d}$, we can take finitely many $U_i$s as above such that $U_i\cap\overline{\bM}^{n,d}$ form an open cover of $\overline{\bM}^{n,d}$. For $m,n',d',r$ with $m\geq d'$, $d'\geq\mathrm{max}\{d_i\}_i$, $n'\geq\mathrm{max}\{n_i\}_i$ and $r\geq d'/\delta f$ such that $p^{n'}\rho$ is an isogeny whose kernel is killed by $p^{d'}$, each attached morphism $\pi_i:J_{b,m}\times \Spec A_i\rightarrow \mathscr{A}_0$ factors through $\ES_{0, \overline{\kappa}}$ uniquely. In particular, they induce a morphism $\pi':J_{b,m}\times (\cup_iU_i)\rightarrow \ES_{0, \overline{\kappa}}$. By (3), we have $\pi=\pi'|_{J_{b,m}\times (\cup_i(\overline{\bM}^{n,d}\cap U_i))}$, here $\pi$ is the morphism attached to $m,n,d,r$, with $m,r$ as at the beginning of this paragraph. The finiteness follows from Lemma \ref{finiteness of almost prod map}. The equalities (1), (4) and (5) follow as in \cite{coho of PEL}.
\end{proof}

\

\

\end{document}